\documentclass[10pt]{amsart}
\usepackage[margin=0.8in]{geometry} 
\usepackage{amsthm, amsrefs, amsmath,amsfonts,amssymb,euscript,hyperref,graphics,color,slashed,mathrsfs}
\hypersetup{hypertexnames=false}
\usepackage{pdflscape}
\usepackage{enumerate}
\usepackage{graphicx}
\usepackage[toc]{appendix}
\usepackage{comment}
\usepackage{import}
\usepackage{tikz}
\usetikzlibrary{3d,calc}
\usepackage{latexsym}
\usepackage{soul}
\usepackage{mathtools}
 
\usepackage[utf8]{inputenc}
\usepackage{enumitem}

\def\Xh{{\widehat{X}}}

\def\Ntop{{N_{_{\rm top}}}}

\def\ut{\tilde{u}}

\def\urr{\mathring{u}}

\def\Wr{\mathring{W}}

\def\Nt{N_{\textbf{temp}}}

\def\Xr{\mathring{X}}
\def\Tr{\mathring{T}}

\def\Ur{\mathring{U}}
\def\Rr{\mathring{R}}

\def\Ur{\mathring{U}}

\def\Zr{{\mathring{Z}}}

\def\Vr{\mathring{V}}

\def\kappar{{\mathring{\kappa}}}

\def\wb{\underline{w}}

\def\Ntop{{N_{_{\rm top}}}}

\def\Xh{{\widehat{X}}}

\def\Xr{\mathring{X}}
\def\Tr{\mathring{T}}

\def\Zr{{\mathring{Z}}}

\def\kappar{{\mathring{\kappa}}}

\def\Cb{{\underline{C}}}

\def\wb{\underline{w}}

\newtheorem{theorem}{Theorem}[section]
\newtheorem{lemma}[theorem]{Lemma}
\newtheorem{proposition}[theorem]{Proposition}
\newtheorem{corollary}[theorem]{Corollary}

\newtheorem{remark}[theorem]{Remark}

\numberwithin{equation}{section}

\begin{document}

\title { Multi-dimensional centered rarefaction waves of gas dynamics with vacuum state I}

\author{Jin JIA and Tao LUO}

\address{School of Mathematics, Hunan University\\ Changsha, China}
\email{jiajin2023@hnu.edu.cn}

\address{Department of Mathematics, City University of Hong Kong\\ Hong Kong, China}
\email{taoluo@cityu.edu.hk}

\maketitle

\begin{abstract}
We establish the local-in-time structural stability of two centered
rarefaction waves bounding a vacuum region for the two-dimensional
isentropic Euler equations with $p(\rho)=k\rho^\gamma$, for
$\gamma\in(1,1.3]\cup\{\frac75,\frac53\}$. For general sufficiently
small perturbations of the planar Riemann data, allowing nonzero
vorticity, we construct an entropy solution that is continuous for
positive time and whose five regions are separated by four
characteristic surfaces. The conservative solution has uniform
spatial $W^{2,\infty}$ bounds on each time slice of the central
region in coordinates $(x_1/t,x_2)$; for $\gamma=7/5,5/3$, we also
obtain spacetime $W^{2,\infty}$ regularity in $(t,x_1/t,x_2)$.
The proof develops characteristic geometry directly for the symmetric
first-order Euler system and uses two overlapping characteristic
charts to separate matching with the exterior flow from estimates
uniform up to vacuum. Fixed commutators without sound-speed weights
exhibit a first-order null structure, while subtraction of explicit
approximate solutions produces damping that cancels the leading
antidamping bulk terms in the energy identity. An explicit analysis
of the initial-slice ODEs identifies fractional powers and logarithmic
terms at zero sound speed. The logarithmic coefficients couple
vorticity to tangential variation of the initial data, revealing an
obstruction to higher spatial regularity even when the powers of the
sound speed have integer exponents. This analysis determines the finite approximation order
used to close the energy and geometric estimates in the stated range.
These ingredients provide the geometric and energy framework for the
forthcoming second paper of this series, which will use finite evolving
profiles to treat the full range $\gamma\in(1,\frac53]$.
\end{abstract}

\tableofcontents

\section{Introduction}

\subsection{Outline of main results}

We study centered rarefaction waves that connect an exterior gas flow
away from vacuum to a vacuum region for the two-dimensional isentropic
compressible Euler equations,
\begin{equation}\label{eq: Euler in rho v}
\begin{cases}
\partial_t\rho+\nabla\cdot(\rho v)=0,\\
\partial_t(\rho v)+\nabla\cdot\bigl(p\,\mathrm{Id}+\rho v\otimes v\bigr)=0.
\end{cases}
\end{equation}
Here $\rho\geq0$ is the density, $v=(v^1,v^2)$ is the velocity in the gas,
and $\mathrm{Id}$ is the $2\times2$ identity matrix. The pressure law is
$p(\rho)=k\rho^\gamma$, with $k>0$ and $\gamma\in(1,3)$, and the sound
speed is $c=\sqrt{p'(\rho)}=\sqrt{k\gamma}\,\rho^{(\gamma-1)/2}$.

For planar Riemann data consisting of two constant states of positive
density,
\[
(\rho,\rho v^1,\rho v^2)^T\big|_{t=0}=
\begin{cases}
\mathbf U_l=(\mathbf\rho_l,\mathbf\rho_l\mathbf v_l^1,
\mathbf\rho_l\mathbf v_l^2)^T,&x_1<0,\\
\mathbf U_r=(\mathbf\rho_r,\mathbf\rho_r\mathbf v_r^1,
\mathbf\rho_r\mathbf v_r^2)^T,&x_1>0,
\end{cases}
\]
the elementary waves are rarefaction waves ($R$), shock waves ($S$), and
vortex sheets ($V$). The resulting configurations, allowing for waves
of zero strength in the limiting cases, are
\[
S\text{--}V\text{--}S,\qquad R\text{--}V\text{--}R,\qquad
R\text{--}V\text{--}S,\qquad S\text{--}V\text{--}R,\qquad
R\text{--}\textbf{vacuum state}\text{--}R;
\]
see \cite{CourantFriedrichs,Dafermos}. Writing $\mathbf c_l$ and
$\mathbf c_r$ for the sound speeds of the two constant states, an open
vacuum region occurs when
\begin{equation}\notag
\mathbf c_l+\mathbf c_r
<\frac{\gamma-1}{2}(\mathbf v_r^1-\mathbf v_l^1).
\end{equation}
Thus a sufficiently large separating velocity creates a vacuum between
two expanding rarefaction fans, although both initial densities are
strictly positive. This configuration provides the reference solution
for the present paper.

Chen and Chen \cite{Chen-Chen} established stability and uniqueness of
planar rarefaction waves with vacuum within their class of bounded
entropy solutions, using relative entropy estimates. The construction
of a solution with a prescribed perturbed wave pattern requires, in
addition, control of the two fans and their moving boundaries. Away from vacuum, local rarefaction waves were constructed by Alinhac
\cite{AlinhacWaveRare1} and, by a geometric approach in the irrotational
setting, by Luo and Yu \cite{Luo-YuRare1,Luo-YuRare2}. We recall the
scope of these results in Section~\ref{subsec: away from vacuum review}
and describe the additional issues at vacuum in
Section~\ref{subsec: vacuum difficulties}.

This first paper of the series constructs the perturbed
$R$--\textbf{vacuum state}--$R$ configuration for
$\gamma\in(1,1.3]\cup\{\frac75,\frac53\}$, without an irrotationality
assumption. The argument combines a characteristic geometry for the
first-order Euler system with explicit initial-slice profiles and
estimates extending to zero sound speed. Theorem~\ref{theorem: stability of RVR}
states the result in conservative variables, describes the four
characteristic boundaries, and gives a comparison in each of the five
regions. Its $W^{2,\infty}$ estimates are spatial estimates at fixed
time in the indicated coordinates; the additional spacetime conclusion
for $\gamma=7/5,5/3$ is stated separately.

Although the theorem describes two rarefaction waves, its analytic core
is the construction of a single family. The right fan is associated
with the speed $v^1+c$; reflection of $x_1$ and $v^1$ gives the same
construction for the left fan, associated with $v^1-c$. The reflection
is applied to the left data separately, so no symmetry between the two
sets of initial data is assumed. The strict separation condition below
keeps the two vacuum boundaries apart. Each fan has zero conservative
trace at its vacuum boundary, and the two constructions can therefore
be joined by the zero conservative state in between; the mass,
momentum, and energy fluxes also tend to zero there. The reflection
and the matching are detailed in
Sections~\ref{subsec: proof main theorem}
and~\ref{subsubsec: left profile theorem}.

\begin{theorem}\label{theorem: stability of RVR}
Fix $k>0$ and $t^{*}>0$. Let $N_{0}$ be sufficiently large and let
$\gamma\in(1,1.3]\cup\{\frac{7}{5},\frac{5}{3}\}$. Let
\[
\mathbf U_{l}=(\mathbf\rho_l,\mathbf\rho_l\mathbf v_l^1,
\mathbf\rho_l\mathbf v_l^2)^T,\qquad
\mathbf U_{r}=(\mathbf\rho_r,\mathbf\rho_r\mathbf v_r^1,
\mathbf\rho_r\mathbf v_r^2)^T
\]
be constant states with positive densities. Set
$\mathbf c_l=\sqrt{k\gamma}\,\mathbf\rho_l^{(\gamma-1)/2}$ and
$\mathbf c_r=\sqrt{k\gamma}\,\mathbf\rho_r^{(\gamma-1)/2}$, and assume
\[
\mathbf v_l^1+\frac{2\mathbf c_l}{\gamma-1}
<\mathbf v_r^1-\frac{2\mathbf c_r}{\gamma-1}.
\]
Consider initial data in conservative variables,
\begin{equation}\label{eq: data for R-V-R}
U(0,x_1,x_2)=
\begin{cases}
U_l(x_1,x_2),&x_1<0,\\
U_r(x_1,x_2),&x_1>0,
\end{cases}\qquad x_2\in\mathbb T,
\end{equation}
where $U_l=(\rho_l,\rho_l v_l^1,\rho_l v_l^2)^T$ and
$U_r=(\rho_r,\rho_r v_r^1,\rho_r v_r^2)^T$ have positive densities and
\begin{equation}\notag
\|U_l-\mathbf U_l\|_{H^{N_0}((-\infty,0)\times\mathbb T)}
+\|U_r-\mathbf U_r\|_{H^{N_0}((0,\infty)\times\mathbb T)}\leq\varepsilon.
\end{equation}
Here and below $c_l=\sqrt{k\gamma}\,\rho_l^{(\gamma-1)/2}$ and
$c_r=\sqrt{k\gamma}\,\rho_r^{(\gamma-1)/2}$. The two-argument functions
$c_l(0,x_2)$, $v_l^i(0,x_2)$, $c_r(0,x_2)$, and $v_r^i(0,x_2)$ always
denote the \emph{one-sided spatial traces of the initial data at $x_1=0$}.

There is $\varepsilon_0>0$, depending on
$N_0,k,\gamma,\mathbf U_l,\mathbf U_r,t^{*}$, such that, for
$0\leq\varepsilon\leq\varepsilon_0$, \eqref{eq: Euler in rho v} admits an
entropy solution $U=(\rho,\rho v^1,\rho v^2)^T$ on
$[0,t^{*}]\times\mathbb R\times\mathbb T$, with initial data
\eqref{eq: data for R-V-R}, continuous for $t>0$, and having the following
properties. All constants $C$ below may depend on the same fixed parameters,
but are independent of $t$ and $\varepsilon$.

\smallskip\noindent\textbf{(i) The four boundaries and the five regions.}
The characteristic boundaries are graphs
\[
\begin{aligned}
C_0&=\{(t,x_1,x_2):x_1=f_0(t,x_2)\},&
C_{\textbf{vacuum}}&=\{(t,x_1,x_2):x_1=f_{\mathrm{vac}}(t,x_2)\},\\
\Cb_{\textbf{vacuum}}&=\{(t,x_1,x_2):x_1=\overline f_{\mathrm{vac}}(t,x_2)\},&
\Cb_0&=\{(t,x_1,x_2):x_1=\overline f_0(t,x_2)\},
\end{aligned}
\]
where $0\leq t\leq t^{*}$ and $x_2\in\mathbb T$. All four graph functions
are $C^2$ and vanish at $t=0$. For $t>0$ they satisfy
\[
\overline f_0(t,x_2)<\overline f_{\mathrm{vac}}(t,x_2)
<f_{\mathrm{vac}}(t,x_2)<f_0(t,x_2).
\]
More explicitly, let $y_r(t,x_2)$ and $y_l(t,x_2)$ be the inverses on
$\mathbb T$ determined by
\[
x_2=y_r+t v_r^2(0,y_r),\qquad x_2=y_l+t v_l^2(0,y_l).
\]
These inverses exist because, after decreasing $\varepsilon_0$ if necessary,
\[
1+t\partial_yv_r^2(0,y)\geq\tfrac12,\qquad
1+t\partial_yv_l^2(0,y)\geq\tfrac12
\quad(0\leq t\leq t^{*},\ y\in\mathbb T).
\]
The vacuum graphs are given exactly by
\[
\begin{aligned}
f_{\mathrm{vac}}(t,x_2)
&=t\left(v_r^1-\frac{2c_r}{\gamma-1}\right)(0,y_r(t,x_2)),\\
\overline f_{\mathrm{vac}}(t,x_2)
&=t\left(v_l^1+\frac{2c_l}{\gamma-1}\right)(0,y_l(t,x_2)).
\end{aligned}
\]
The outer graphs $f_0$ and $\overline f_0$ are the characteristic fronts
of the classical right and left exterior solutions, respectively, with
\[
\partial_tf_0(0,x_2)=(v_r^1+c_r)(0,x_2),\qquad
\partial_t\overline f_0(0,x_2)=(v_l^1-c_l)(0,x_2).
\]
Their defining graph equations are given in
Section~\ref{subsubsec: boundary graphs theorem} below. The four graphs obey
\[
\begin{aligned}
&\|f_0(t,\cdot)-t(\mathbf v_r^1+\mathbf c_r)\|_{W^{2,\infty}(\mathbb T)}
+\|f_{\mathrm{vac}}(t,\cdot)-t(\mathbf v_r^1-\tfrac{2\mathbf c_r}{\gamma-1})\|_{W^{2,\infty}(\mathbb T)}\\
&\quad+\|\overline f_{\mathrm{vac}}(t,\cdot)-t(\mathbf v_l^1+\tfrac{2\mathbf c_l}{\gamma-1})\|_{W^{2,\infty}(\mathbb T)}
+\|\overline f_0(t,\cdot)-t(\mathbf v_l^1-\mathbf c_l)\|_{W^{2,\infty}(\mathbb T)}
\leq C\varepsilon t.
\end{aligned}
\]
Thus, at each $t>0$, the five open regions, listed from left to right, are
\[
\begin{array}{ll}
\text{left exterior gas:}&x_1<\overline f_0(t,x_2),\\
\text{left rarefaction wave:}&\overline f_0(t,x_2)<x_1<\overline f_{\mathrm{vac}}(t,x_2),\\
\text{vacuum:}&\overline f_{\mathrm{vac}}(t,x_2)<x_1<f_{\mathrm{vac}}(t,x_2),\\
\text{right rarefaction wave:}&f_{\mathrm{vac}}(t,x_2)<x_1<f_0(t,x_2),\\
\text{right exterior gas:}&f_0(t,x_2)<x_1.
\end{array}
\]
The density is positive in the four gas regions and is zero in the vacuum
region. The solution matches the exterior classical solutions continuously
across $C_0$ and $\Cb_0$, and its conservative trace is zero on both vacuum
boundaries. No value of the velocity is prescribed in the vacuum region.

\smallskip\noindent\textbf{(ii) The two zeroth-order profiles.}
Set $\urr=x_1/t$. In the normalized variables
$(2c/(\gamma-1),v^1,v^2)^T$, the right and left profiles are, respectively,
\[
\Vr_0(\urr,x_2)=
\begin{pmatrix}
\dfrac{2c_r(0,x_2)}{\gamma-1}
+\dfrac{2}{\gamma+1}\bigl(\urr-(v_r^1+c_r)(0,x_2)\bigr)\\[3pt]
v_r^1(0,x_2)+\dfrac{2}{\gamma+1}\bigl(\urr-(v_r^1+c_r)(0,x_2)\bigr)\\[3pt]
v_r^2(0,x_2)
\end{pmatrix},
\]
\[
\underline{\Vr}_0(\urr,x_2)=
\begin{pmatrix}
\dfrac{2c_l(0,x_2)}{\gamma-1}
-\dfrac{2}{\gamma+1}\bigl(\urr-(v_l^1-c_l)(0,x_2)\bigr)\\[3pt]
v_l^1(0,x_2)+\dfrac{2}{\gamma+1}\bigl(\urr-(v_l^1-c_l)(0,x_2)\bigr)\\[3pt]
v_l^2(0,x_2)
\end{pmatrix}.
\]
The right profile agrees with the normalized right initial trace at
$\urr=(v_r^1+c_r)(0,x_2)$ and reaches zero sound speed at
$\urr=(v_r^1-2c_r/(\gamma-1))(0,x_2)$. The left profile agrees with the
normalized left initial trace at $\urr=(v_l^1-c_l)(0,x_2)$ and reaches zero
sound speed at $\urr=(v_l^1+2c_l/(\gamma-1))(0,x_2)$.
The displayed affine formulas define the profiles beyond these initial
intervals as well; they are evaluated on the actual moving fan domains
in the estimates below.

\smallskip\noindent\textbf{(iii) Comparison in each region and spatial regularity.}
For $0<t\leq t^{*}$, define the exterior slices in physical coordinates by
\[
\mathcal D_l(t)=\{(x_1,x_2):x_1<\overline f_0(t,x_2),\ x_2\in\mathbb T\},\qquad
\mathcal D_r(t)=\{(x_1,x_2):f_0(t,x_2)<x_1,\ x_2\in\mathbb T\},
\]
and the three central slices in blown-up coordinates by
\[
\begin{aligned}
\Sigma_t^l&=\{(\urr,x_2):\overline f_0(t,x_2)/t<\urr<\overline f_{\mathrm{vac}}(t,x_2)/t,\ x_2\in\mathbb T\},\\
\Sigma_t^{\mathrm{vac}}&=\{(\urr,x_2):\overline f_{\mathrm{vac}}(t,x_2)/t<\urr<f_{\mathrm{vac}}(t,x_2)/t,\ x_2\in\mathbb T\},\\
\Sigma_t^r&=\{(\urr,x_2):f_{\mathrm{vac}}(t,x_2)/t<\urr<f_0(t,x_2)/t,\ x_2\in\mathbb T\}.
\end{aligned}
\]
In the exterior regions the solution is a small perturbation of the
corresponding constant states:
\[
\|U(t,\cdot)-\mathbf U_l\|_{W^{2,\infty}(\mathcal D_l(t))}
+\|U(t,\cdot)-\mathbf U_r\|_{W^{2,\infty}(\mathcal D_r(t))}
\leq C\varepsilon.
\]
In the two rarefaction regions, with $c=\sqrt{k\gamma}\,\rho^{(\gamma-1)/2}$,
\[
\begin{aligned}
&\left\|\begin{pmatrix}2c/(\gamma-1)\\v^1\\v^2\end{pmatrix}
(t,t\urr,x_2)-\Vr_0(\urr,x_2)\right\|_{W^{2,\infty}(\Sigma_t^r)}
\leq C\varepsilon t,\\
&\left\|\begin{pmatrix}2c/(\gamma-1)\\v^1\\v^2\end{pmatrix}
(t,t\urr,x_2)-\underline{\Vr}_0(\urr,x_2)\right\|_{W^{2,\infty}(\Sigma_t^l)}
\leq C\varepsilon t.
\end{aligned}
\]
To express these comparisons in the conservative variables of the entropy
solution, define, for $z=(z^1,z^2,z^3)^T$,
\[
\mathcal Q(z)=
\left(\frac{\gamma-1}{2\sqrt{k\gamma}}\max\{z^1,0\}\right)^{\!\frac{2}{\gamma-1}}
\begin{pmatrix}1\\z^2\\z^3\end{pmatrix}.
\]
For a physical normalized state,
$\mathcal Q((2c/(\gamma-1),v^1,v^2)^T)=U$.
The positive part allows evaluation of the affine reference profiles on
the moving fan domains even if their first component is slightly negative.
The five comparisons for the actual entropy solution are therefore
\[
\begin{aligned}
\|U(t,\cdot)-\mathbf U_l\|_{W^{2,\infty}(\mathcal D_l(t))}&\leq C\varepsilon,\\
\|U(t,t\urr,x_2)-\mathcal Q(\underline{\Vr}_0(\urr,x_2))\|_{W^{2,\infty}(\Sigma_t^l)}&\leq C\varepsilon t,\\
U(t,t\urr,x_2)&=(0,0,0)^T\quad\text{on }\Sigma_t^{\mathrm{vac}},\\
\|U(t,t\urr,x_2)-\mathcal Q(\Vr_0(\urr,x_2))\|_{W^{2,\infty}(\Sigma_t^r)}&\leq C\varepsilon t,\\
\|U(t,\cdot)-\mathbf U_r\|_{W^{2,\infty}(\mathcal D_r(t))}&\leq C\varepsilon.
\end{aligned}
\]
Every norm in this part is a \emph{spatial norm at fixed $t$}: derivatives
are in $(x_1,x_2)$ on $\mathcal D_l(t),\mathcal D_r(t)$ and in
$(\urr,x_2)$ on $\Sigma_t^l,\Sigma_t^r$. For example,
\[
\|F\|_{W^{2,\infty}(\Sigma_t^r)}
=\sum_{a+b\leq2}\operatorname*{ess\,sup}_{(\urr,x_2)\in\Sigma_t^r}
|\partial_{\urr}^{a}\partial_{x_2}^{b}F(\urr,x_2)|.
\]
Moreover, writing
\[
\Sigma_t^{\mathrm{cen}}
=\{(\urr,x_2):\overline f_0(t,x_2)/t<\urr<f_0(t,x_2)/t,\ x_2\in\mathbb T\},
\]
the conservative solution satisfies
\[
\sup_{0<t\leq t^{*}}\|U(t,t\urr,x_2)\|_{W^{2,\infty}(\Sigma_t^{\mathrm{cen}})}\leq C.
\]
This includes the two vacuum interfaces, since the conservative state and
its first spatial derivatives have zero traces there.

\smallskip\noindent\textbf{(iv) Additional spacetime regularity for
$\gamma\in\{\frac{7}{5},\frac{5}{3}\}$.}
For these two exponents the stronger estimates are
\[
\begin{aligned}
&\left\|\begin{pmatrix}2c/(\gamma-1)\\v^1\\v^2\end{pmatrix}
(t,t\urr,x_2)-\Vr_0(\urr,x_2)-t\Vr_1(\urr,x_2)\right\|_{W^{2,\infty}(\Sigma_t^r)}
\leq C\varepsilon t^2,\\
&\left\|\begin{pmatrix}2c/(\gamma-1)\\v^1\\v^2\end{pmatrix}
(t,t\urr,x_2)-\underline{\Vr}_0(\urr,x_2)-t\underline{\Vr}_1(\urr,x_2)\right\|_{W^{2,\infty}(\Sigma_t^l)}
\leq C\varepsilon t^2.
\end{aligned}
\]
Here $\Vr_1$ is given explicitly in \eqref{eq: exact form of Vr_1}, and
$\underline{\Vr}_1$ is obtained by the reflection specified in
Section~\ref{subsubsec: left profile theorem}. Define the spacetime domains
\[
\mathcal W_r=\{(t,\urr,x_2):0<t<t^{*},\ (\urr,x_2)\in\Sigma_t^r\},\qquad
\mathcal W_l=\{(t,\urr,x_2):0<t<t^{*},\ (\urr,x_2)\in\Sigma_t^l\},
\]
and $\mathcal W_{\mathrm{cen}}=\{(t,\urr,x_2):0<t<t^{*},\
(\urr,x_2)\in\Sigma_t^{\mathrm{cen}}\}$. Then
\[
\begin{pmatrix}2c/(\gamma-1)\\v^1\\v^2\end{pmatrix}(t,t\urr,x_2)
\in W^{2,\infty}(\mathcal W_j),\quad j=l,r,
\]
\[
U(t,t\urr,x_2)\in W^{2,\infty}(\mathcal W_{\mathrm{cen}}).
\]
These norms
include all derivatives in $(t,\urr,x_2)$ of total order at most two and
are bounded by $C$ uniformly as $t\downarrow0$. This is a spacetime
assertion on the entire blown-up central region. It does not assert
$W^{2,\infty}$ regularity across the outer rarefaction fronts, nor uniform
regularity in the original coordinates $(t,x_1,x_2)$ as $t\downarrow0$.
The implication from the $t^2$ remainder estimates to the time-derivative
bounds uses the Euler equations, as detailed in
Section~\ref{subsubsec: time regularity theorem}.
\end{theorem}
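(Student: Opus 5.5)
The proof reduces the theorem to the construction of a single right fan adjacent to vacuum. The left fan is then obtained by reflecting $x_1\mapsto -x_1$, $v^1\mapsto -v^1$ and applying the same construction to the reflected left data. The pieces are glued along the zero conservative state.

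First I solve the exterior problems. On each side, away from vacuum, the symmetric hyperbolic system in the normalized variables $(2c/(\gamma-1),v^1,v^2)$ is solved classically on $[0,t^*]$ with $H^{N_0}$ data, which is possible since $\varepsilon$ is small. The outer fronts $C_0$ and $\Cb_0$ are defined as the characteristic surfaces of speed $v^1\pm c$ emanating from $\{t=0,x_1=0\}$. This gives $\partial_t f_0(0,x_2)=(v_r^1+c_r)(0,x_2)$ and the $C\varepsilon t$ bounds on $f_0-t(\mathbf v_r^1+\mathbf c_r)$, and it supplies the boundary data on $C_0$ for the fan.

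Inside the right fan I pass to blown-up coordinates $(t,\urr,x_2)$ with $\urr=x_1/t$. I then build two overlapping characteristic charts.
\begin{itemize}
\item The first chart is adapted to $C_0$ and handles matching with the exterior flow.
\item The second chart is adapted to the vacuum boundary. There the acoustic coordinate is $c$-weighted, so the estimates must hold uniformly up to $c=0$.
\end{itemize}
The approximate solution is $\Vr_0+t\Vr_1+\dots$, truncated at the finite order allowed by $\gamma$. Its coefficients come from the initial-slice ODEs obtained by expanding the Euler system in powers of $t$ at fixed $\urr$. Solving these ODEs explicitly produces fractional powers $c^{\alpha(\gamma)}$ and logarithms $\log c$ near vacuum. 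Their coefficients involve vorticity and $\partial_{x_2}$ of the traces, and the restriction $\gamma\in(1,1.3]\cup\{7/5,5/3\}$ is exactly what lets the chosen truncation remain $W^{2,\infty}$ up to vacuum.

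I then write $V=\Vr_0+t\Vr_1+\dots+W$ and run high-order energy estimates for $W$ with fixed commutators and no sound-speed weights. The key structural facts are the following.
\begin{itemize}
\item The commutator terms have a first-order null structure.
\item Subtracting the approximate solution creates a damping term $-\frac{m}{t}|W|^2$, where $m$ is the order of vanishing. This term cancels the leading antidamping bulk terms of order $1/t$.
\end{itemize}
Closing the bootstrap gives $\|W\|\lesssim\varepsilon t^{m}$ in high norms, and Sobolev embedding converts this into the $W^{2,\infty}$ comparison bounds $C\varepsilon t$, or $C\varepsilon t^2$ for $\gamma\in\{7/5,5/3\}$. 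The geometry is controlled simultaneously: the characteristic surfaces, the vacuum boundary, and the nondegeneracy of the charts.

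The vacuum boundary is the characteristic with $c=0$. Along it the Riemann invariant $v^1-2c/(\gamma-1)$ and the tangential velocity $v^2$ are transported, with $c\equiv 0$. Integrating gives $f_{\mathrm{vac}}=t(v_r^1-2c_r/(\gamma-1))(0,y_r)$ with $y_r$ the transported label, which is invertible for small $\varepsilon$.

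Finally I assemble the solution and derive the regularity statements.
\begin{itemize}
\item \textbf{Assembly.} Conservative variables are recovered through $\mathcal Q$. Since $\rho\sim c^{2/(\gamma-1)}$ with $2/(\gamma-1)>2$, the conservative state and its first derivatives vanish on the vacuum boundary. This gives the uniform $W^{2,\infty}$ bound on $\Sigma_t^{\mathrm{cen}}$ and continuity across it.
\item \textbf{Entropy condition.} The result is an entropy solution, because it is piecewise smooth and continuous with only characteristic interfaces.
\item \textbf{Time regularity.} When $\gamma\in\{7/5,5/3\}$, I obtain $t$-derivatives from the Euler equations written in $(t,\urr)$: $t\partial_t V=(\urr-A)\partial_{\urr}V+\dots$. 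The $t^2$ remainder then yields bounded $\partial_t$ and $\partial_t^2$.
\end{itemize}

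The main obstacle I expect is closing the energy estimates uniformly up to vacuum. This requires the damping/antidamping cancellation together with a precise identification of the log-terms in the initial-slice expansion, which dictate how far the expansion can be carried.
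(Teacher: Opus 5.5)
Your outline follows the paper's architecture: reflection to a single right fan, opened coordinates, two overlapping charts, initial-slice profiles with powers and logarithms, fixed commutators, and the Euler-equation argument for time derivatives. However, the step that closes the estimate uniformly up to vacuum is missing, and the mechanism you describe would not close it.

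For $V=\Tr^a\Xr^b(U-\Ur_k)$, the source $A_1(V)\Tr\Vr_0$ produced by subtracting the profile cancels the integration-by-parts antidamping $(V^1)^2+(V^2)^2$. This cancellation does not come from any $t^m$ weight. What remains is the bulk density
\[
-\frac{2(\gamma-1)}{\gamma+1}(V^1)^2-\frac{4}{\gamma+1}(V^2)^2
+\Bigl(\frac{2a(\gamma-1)}{\gamma+1}+\frac{2}{\gamma+1}\Bigr)\bigl\{(V^1-V^2)^2+(V^3)^2\bigr\}.
\]
Its null-form part is positive, and its coefficient grows with the number $a$ of transversal commutations.

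Away from vacuum, this part is absorbed by the characteristic flux $c\{(V^1-V^2)^2+(V^3)^2\}+\ldots$ through a Gronwall argument in $u$. Near vacuum the flux carries the weight $c\asymp-\tilde u$. The same argument then produces a factor $(\tilde u_*/\tilde u^*)^{C(a+1)}$, which diverges as the cutoff $\alpha\to0$.

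The paper instead bounds the whole form by its largest eigenvalue, $\mathbf J_1\le\lambda_{\max}\int_0^t\mathcal E(t')\,dt'/t'$. It then applies the singular Gronwall Lemma~\ref{lem: Gronwall}, which requires $\lambda_{\max}<2k+3$. The threshold $2k+3$ comes from $\|V_k\|\sim\varepsilon t^{k+1}$ and $\kappa\sim t$. Your damping ``$-\frac mt|W|^2$'', made precise by writing $W=t^m\widetilde W$, is only this threshold in disguise. It does not remove the need to verify the inequality.

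Because $\lambda_{\max}$ grows linearly in $a$, your ``high-order'' estimates up to vacuum cannot close this way. The paper works at the fixed order four in $\mathcal W_{2,\alpha}$. Its data on $C_{\tilde u_*}$ come from the high-order estimate in the $C_0$ chart.

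This also corrects your account of the $\gamma$-restriction. The point is not that the truncation stays $W^{2,\infty}$ up to vacuum: for $\gamma\in(1,1.3]$ the truncation is the affine $\Vr_0$. Two separate conditions are needed.
\begin{enumerate}
\item The next coefficient ($\Vr_1$ for $k=0$, $\Vr_2/2$ for $k=1$) must have four $\Tr,\Xr$-derivatives in $L^2$ uniformly up to $c=0$, so that $(\mathbf B_2)$ can start. For $k=0$ this condition alone would allow every $\gamma<4/3$.
\item $\lambda_{\max}<2k+3$ must hold for $a\le4$. The value $1.3$ comes from this condition, since $\lambda_{\max}<3$ fails just above $1.3$. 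For $\gamma\in\{7/5,5/3\}$ with $k=1$, one checks $\lambda_{\max}\le4.6<5$.
\end{enumerate}

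Separately, your remark that the exterior solution ``supplies the boundary data on $C_0$'' skips a real step. The energy flux on $C_0$ involves the transversal jets $\Tr^a\Xr^b(U-\Ur_k)$, which the exterior solution does not give directly. The paper obtains them, of size $C\varepsilon t^{k+1}$ and with a derivative loss, from the characteristic decomposition of Euler along $C_0$ and a transport system for $(T(U),r_3)$ and $\kappa$.
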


\begin{figure}[htbp]
  \centering
  \includegraphics[width=0.40\textwidth]{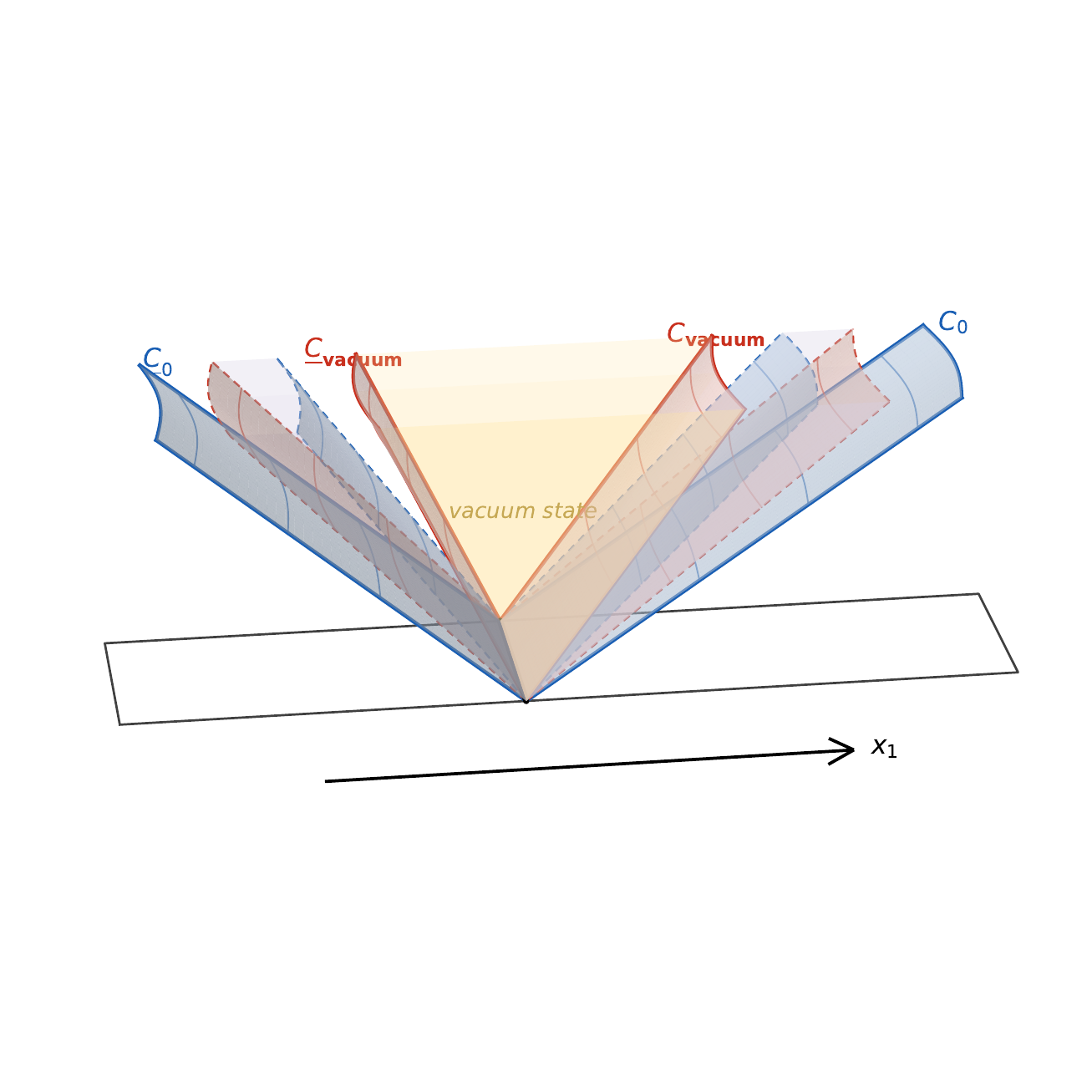}
  \caption{Rarefaction fronts in coordinates $(t, x_1, x_2)$.}
  \label{fig:physical-rarefaction}
\end{figure}

\begin{figure}[htbp]
  \centering
  \includegraphics[width=0.40\textwidth]{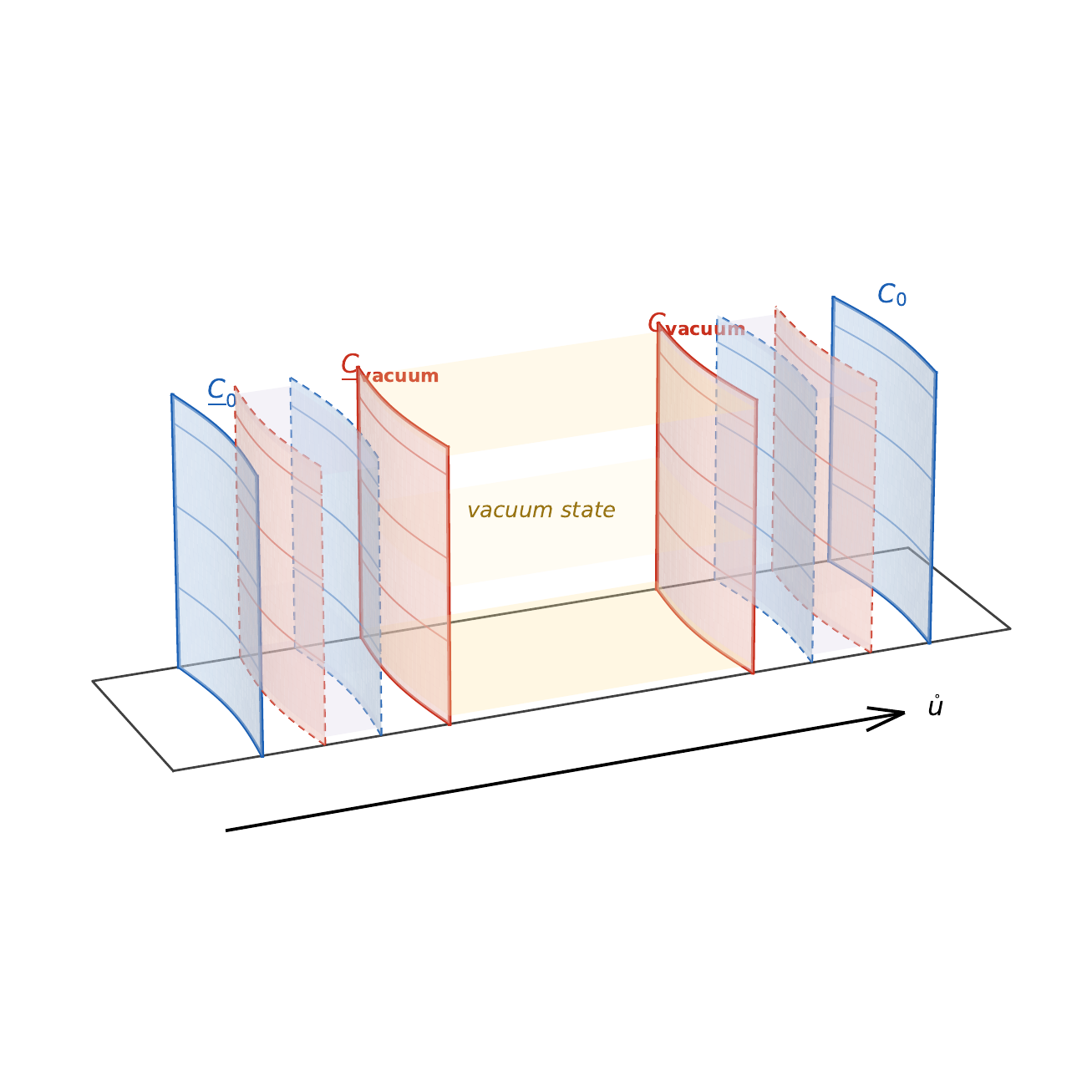}
  \caption{Rarefaction fronts in coordinates $(t, \mathring u=\frac{x_1}{t}, x_2)$.}
  \label{fig:blowup}
\end{figure} 

\begin{remark}
\begin{itemize}
\item The restriction to
$\gamma\in(1,1.3]\cup\{\frac{7}{5},\frac{5}{3}\}$
reflects the regularity of the approximate solutions and the energy
estimates used in the present proof; see
Remarks~\ref{rem: remark 4.1} and~\ref{rem: remark 4.2},
and Section~\ref{sec: energy estimates}.
Near the vacuum boundary, the coefficients in the initial-slice
expansion may contain powers of $c$ and logarithmic terms of the form
$c^{\alpha}(\ln c)^{\beta}$, whose higher spatial derivatives need not
have the regularity required by these estimates.

The forthcoming second paper of this series will treat the full range
$\gamma\in(1,\frac{5}{3}]$ by constructing finite evolving profiles
that incorporate the singular terms before the remainder is estimated.
The present range permits a complete proof using only the zeroth- or
first-order approximate solution. It therefore isolates the geometric
and energy arguments on which the further profile construction rests;
see Section~\ref{subsubsec: role in series}.

\item Figures~\ref{fig:physical-rarefaction} and~\ref{fig:blowup}
show the fan in physical and opened coordinates, respectively. The two
colors indicate the characteristic charts used in the right fan. The
blue chart is adapted to $C_0$, the boundary adjoining the exterior gas;
the red chart is adapted to $C_{\textbf{vacuum}}$, the boundary adjoining
vacuum. Their definitions and overlap are given in
Section~\ref{subsec: two charts}.
\end{itemize}
\end{remark}

\subsection{Review of previous works}
The Riemann problem describes how an initial jump is resolved into
elementary nonlinear waves. Two related questions guide much of the
theory: how singularities form from smooth initial data, and how waves
and their boundaries evolve from discontinuous initial data. We recall
the results most relevant to the second question and to the role of
vacuum in the present problem.

\textbf{One-dimensional theory.}
Riemann's work on gas dynamics \cite{Riemann} and Lax's theory of
hyperbolic systems \cite{Lax1957} identify the elementary waves and
their admissibility conditions. Glimm \cite{Glimm1965} constructed
global entropy solutions for small-BV data by controlling wave
interactions. Glimm and Lax \cite{GlimmLax} gave a pioneering systematic
study of decay and long-time behavior for genuinely nonlinear systems
of two conservation laws. Further results on decay and asymptotic
behavior were obtained by DiPerna \cite{Diperna3} and Liu \cite{liutp2}. The perturbed Riemann
problem was treated by Li and Yu \cite{LiTT}. General accounts are
given in \cite{Dafermos,Smoller,Liu2021book}.

\textbf{Multidimensional wave patterns.}
The BV estimates underlying the one-dimensional theory generally fail
in several dimensions \cite{Rauch}. Constructing a prescribed wave
pattern then requires simultaneous control of the solution and of the
unknown surfaces separating its different regions. Under the relevant
stability assumptions, local constructions were obtained for shock
fronts by Majda \cite{MajdaShock2,MajdaShock3}, for rarefaction waves
by Alinhac \cite{AlinhacWaveRare0,AlinhacWaveRare1,AlinhacWaveRare2},
and for two-dimensional compressible vortex sheets in the stable
supersonic regime by Coulombel and Secchi
\cite{Coulombel-Secchi1,Coulombel-Secchi2}. Luo and Yu
\cite{Luo-YuRare1,Luo-YuRare2} developed a geometric construction of
irrotational rarefaction waves away from vacuum; we discuss these
rarefaction results in Section~\ref{subsec: away from vacuum review}.

Configurations containing several waves were studied by Li
\cite{Li1991} and Chen and Li \cite{ChenLi}; see also Qu and Xiang
\cite{QuXiang} for steady corner flows. Jia and Luo \cite{JiaLuo}
studied the structural stability of mixed Riemann configurations in
which centered rarefaction waves coexist with shocks or vortex sheets.
The present configuration instead joins the two rarefaction fans to
a region of zero density, where strict hyperbolicity is lost.

\textbf{Formation and development of singularities.}
An early work treating shock formation and subsequent shock development
together is that of Yin \cite{Yin} for three-dimensional Euler
flow with spherically symmetric initial data. Under nondegeneracy
assumptions, Yin analyzes the first gradient singularity and constructs
a weak entropy solution with a shock issuing from the blowup point
in the radial variables, whose strength starts at zero and then increases.
Characteristic geometry also plays a central role in
Christodoulou's analysis of shock formation in three-dimensional
relativistic flow \cite{ChristodoulouShockFormation}, the classical
Euler analysis of Christodoulou and Miao \cite{ChristodoulouMiao}, and
the two-dimensional result with nonzero vorticity of Luk and Speck
\cite{LukSpeck2D}. Using modulated self-similar variables, Buckmaster,
Shkoller and Vicol constructed shocks with nonzero vorticity for
two-dimensional isentropic flow with an azimuthal structure
\cite{BuckmasterShkollerVicol2D}, and established stable point-shock
formation for three-dimensional isentropic Euler without symmetry
assumptions \cite{BuckmasterShkollerVicol3D}.
Klainerman, Wang, Yang and Yu
\cite{klainerman2026inevitableshockformation3d} recently proved
inevitable shock formation for smooth, sufficiently small, compactly
supported perturbations of a positive constant state in
three-dimensional irrotational flow, without a symmetry assumption.
Christodoulou \cite{ChristodoulouShockDevelopment} treated the
restricted shock development problem, while Shkoller and Vicol
\cite{Shkoller-Vicol} analyzed the geometry of maximal smooth
development beyond the first singular time. A different singularity
mechanism is described by the smooth self-similar imploding profiles
of Merle, Rapha\"el, Rodnianski and Szeftel \cite{MRRS-ImplosionI}.

\textbf{Rarefaction and vacuum.}
At a vacuum boundary the sound speed tends to zero. This introduces a
degeneracy distinct from the initial $t^{-1}$ derivative singularity
of a centered fan. Chen and Chen \cite{Chen-Chen} proved stability
results for planar rarefaction waves adjoining vacuum in a class of
bounded entropy solutions. Other settings in which vacuum enters
include the rarefaction-wave interactions discussed by Chang and Hsiao
\cite{ZhangtongXiaoling} and the supersonic nozzle flows approaching
vacuum at spatial infinity studied by Xu and Yin
\cite{GangXuYinHuicheng}. Here vacuum is created immediately from
an initial jump between two gas states away from vacuum. Our aim is to
construct the resulting multidimensional fans, determine their
vacuum boundaries, and control their regularity under perturbation.



\subsection{Centered rarefaction waves away from vacuum}\label{subsec: away from vacuum review}
A centered rarefaction wave resolves an initial jump by a continuous fan
whose width is proportional to $t$. Its derivatives in the original spatial
coordinates can therefore be of order $t^{-1}$, even when its profile is
smooth as a function of $x_1/t$. In several dimensions one must construct
both the solution inside this fan and the characteristic surfaces that
bound it. The works discussed below address this initial singularity in
regions where the density is bounded away from zero.

Alinhac constructed local rarefaction waves for isentropic Euler and, more
generally, for symmetrizable quasilinear hyperbolic systems, under
compatibility and regularity assumptions on the prescribed data
\cite{AlinhacWaveRare0,AlinhacWaveRare1}. In the Euler example the wave
belongs to an extreme genuinely nonlinear characteristic family. His
construction opens the fan into a fixed strip, builds a sufficiently
accurate approximate solution at $t=0$, and uses weighted anisotropic
Sobolev estimates and Nash--Moser iteration. The compatibility conditions
relate the data on the two sides to the required order. Uniqueness among
sufficiently regular rarefaction waves is established in
\cite{AlinhacWaveRare2}; see also the statements and explanation in
\cite[Sections 3--6]{AlinhacWaveRareSurvey}.

Luo and Yu developed a geometric construction for two-dimensional
isentropic Euler with $1<\gamma<3$ for irrotational flow away from
vacuum \cite{Luo-YuRare1,Luo-YuRare2}. Their first paper proves a priori
energy estimates for the acoustical wave equations, using characteristic
coordinates, two null frames, and additional vanishing of derivatives of
the characteristic speed. The estimates close at the stated energy order
without derivative loss within that argument. They are uniform as the
initial time $\delta$ tends to zero, with the sound speed bounded away
from zero; see \cite[Section 3.3]{Luo-YuRare1}.
Their second paper constructs the required data, obtains a rarefaction
wave attached to a prescribed exterior flow, and treats a Riemann
configuration consisting of two rarefaction waves and an intervening gas
region. The Riemann data are assumed distributionally irrotational, and
uniqueness is proved in the entropy class specified there; see
\cite[Definition 1.1, Theorems 1--3 and Proposition 2.12]{Luo-YuRare2}.
The domain in their single-wave Theorem 2 is separated from the reference
vacuum boundary by a fixed positive cutoff.

These results provide constructions at the initial singularity while the
gas stays away from vacuum. Theorem~\ref{theorem: stability of RVR}
concerns a different limiting boundary: each rarefaction wave reaches
$c=0$, and the two waves enclose an open vacuum region. Its data need not
be irrotational. The estimates required here must remain uniform as the
cutoff keeping the gas away from vacuum is removed.

\subsection{Centered rarefaction waves up to vacuum}\label{subsec: vacuum difficulties}
The limits $t\downarrow0$ and $c\downarrow0$ cause different difficulties.
The first collapses the width of the fan and is resolved geometrically by
the coordinate $\urr=x_1/t$. The second merges the characteristic speeds
and limits the regularity of the initial profiles. Resolving the first
limit does not by itself give estimates uniform in the second.

For example, the three characteristic speeds in the $x_1$ direction are
\[
v^1-c,\qquad v^1,\qquad v^1+c.
\]
Their gaps vanish with $c$. Likewise the acoustical metric
\[
g=-c^2dt^2+\sum_{i=1}^2(dx_i-v^i dt)^2
\]
becomes degenerate at $c=0$. Consequently, estimates that use inverse
characteristic gaps or inverse powers of $c$ require additional
information near vacuum. A simple commutation identity illustrates the
issue. If $L\psi=cF$ in the gas region, then
\[
c^{-1}L(\hat X\psi)
=\hat XF+\frac{\hat Xc}{c}F+c^{-1}[L,\hat X]\psi.
\]
Here $L$ is tangent to the outgoing characteristics and $\hat X$ is their
unit spatial tangent, as defined in Section~\ref{subsec: geo con}.
A bound for $F=c^{-1}L\psi$ alone does not control this commuted expression
uniformly as $c\to0$. The terms must be estimated together with the
geometry and the vanishing furnished by the equations. This identifies
an additional requirement for passage to vacuum; it does not rule out
other choices of variables, weights, or commutators.

A second issue concerns approximation at the initial slice. In the
formulas below we use the normalized unknown
$U=(2c/(\gamma-1),v^1,v^2)^T$, as in the proof, and set
$\beta=(\gamma+1)/(2(\gamma-1))$.
The importance of $\gamma$ in an analysis reaching vacuum is already
visible in DiPerna's work on convergence of artificial viscosity
\cite{diperna2}. His compensated-compactness analysis treats the
discrete exponents $\gamma=(2n+3)/(2n+1)$, $n=2,3,\ldots$, for which
the relevant entropy kernel is polynomial. Ding, Chen and Luo
\cite{DingChenLuo1989} subsequently treated the full interval
$1<\gamma\leq5/3$ by compensated compactness: they proved convergence
of the fractional-step Lax--Friedrichs and Godunov schemes for an
inhomogeneous isentropic gas system and obtained global solutions
for large data allowing vacuum. For the present profiles, the
dependence on $\gamma$ is explicit in
\[
\rho=\left(\frac{c}{\sqrt{k\gamma}}\right)^{\frac{2}{\gamma-1}},
\qquad
\begin{array}{c|cc}
\gamma&\frac{2}{\gamma-1}&\beta\\[4pt]\hline
\frac53&3&2\\[4pt]
\frac75&5&3
\end{array}
\]
Thus at $\gamma=5/3,7/5$ the density is a cubic or quintic function
of $c$, respectively, and the factors $c^\beta$, $c^{2\beta}$, and
$c^{\beta-1}$ occurring in the initial profiles have integer powers.
This removes the fractional-power obstruction to differentiating
these factors at $c=0$. The coefficient formulas and the remaining
logarithmic terms must still be examined: integer powers alone do
not imply smoothness of every profile. The historical progression
above concerns compensated compactness; in the present approximation
and energy argument, the dependence on $\gamma$ enters through the
regularity of the profiles at vacuum.

The explicit formulas in
Section~\ref{sec: high order approximate solution} contain fractional
powers of $c$ and, at the next order, logarithmic terms. The
vorticity-dependent resonance producing the logarithms is explained in
Section~\ref{subsubsec: profiles and logarithms}. Spatial derivatives
of the profiles need not belong to the same Sobolev space at
successive orders. For instance, at $\gamma=1.3$ one has $\beta=23/6$:
the fourth derivative of a $c^\beta$ term is proportional to $c^{-1/6}$,
whereas that of a $c^{\beta-1}$ term is proportional to $c^{-7/6}$.
On the initial fan, $\partial_{\urr}c=(\gamma-1)/(\gamma+1)$, and hence
\[
\int_0^\eta |c^{-1/6}|^2\,dc<\infty,\qquad
\int_0^\eta |c^{-7/6}|^2\,dc=\infty\qquad(\eta>0).
\]
These powers occur in $\Vr_1$ and $\Vr_2$, respectively; when the latter
coefficient is nonzero, a Taylor expansion through $t^2\Vr_2/2$ cannot
have a remainder bounded uniformly up to vacuum in the four-derivative
spatial norm used here. This is a restriction on that differentiated
expansion, not on the boundedness of the profiles themselves; see
Remarks~\ref{rem: remark 4.1} and~\ref{rem: remark 4.2}.

Thus the constructions away from vacuum do not directly supply the two
inputs needed for the present problem: estimates independent of a lower
bound for $c$, and initial profiles with precisely the spatial regularity
required by those estimates. We obtain both inputs for the range of
$\gamma$ in Theorem~\ref{theorem: stability of RVR}.

\subsection{New ingredients}
The proof combines a characteristic geometry for the first-order Euler
system with a finite, explicit analysis of the initial slice. These
ingredients give a complete construction up to vacuum for
$\gamma\in(1,1.3]\cup\{\frac75,\frac53\}$. We describe how they enter
the argument, using the normalized unknown $U$ just introduced. The
conservative entropy solution is recovered by the map $\mathcal Q$ in
Theorem~\ref{theorem: stability of RVR}.

\subsubsection{Characteristic geometry for the first-order system}
The normalized Euler equations have symmetric coefficient matrices
$A_1(U),A_2(U)$ whose entries remain regular at $c=0$; see
\eqref{eq: euler equation in c v}. We construct the outgoing
characteristics from the explicitly given eigenvalue
\[
\lambda_3(U,\xi)=v\cdot\xi+c|\xi|,\qquad \xi\ne0.
\]
With phase $-u$, the Hamiltonian $H=\tau+\lambda_3(U,\xi)$ acts on
$(\tau,\xi)=(-\partial_tu,-\nabla_xu)$. Set
\[
\hat T=\frac{\nabla_xu}{|\nabla_xu|},\qquad
\kappa=|\nabla_xu|^{-1},\qquad T=\kappa\hat T.
\]
Thus $Tu=1$; $\hat T$ describes the normal direction and $\kappa$
measures the spacing between neighboring characteristic surfaces.
With our orientation $\hat T^1\approx-1$, their generators are
$L=\partial_t+(v-c\hat T)\cdot\nabla_x$.
Hamilton's equations give transport equations for $\hat T$ and $\kappa$
directly from this characteristic speed; see
\eqref{eq: transport eq for geometry} and
\eqref{eq: final form of transport equation}.

For $c>0$, these characteristics agree with the outgoing null geometry
of the acoustical metric used by Christodoulou and Miao
\cite{ChristodoulouMiao} and Luo and Yu \cite[Section 2]{Luo-YuRare1}.
Indeed, for the metric $g$ defined
above and a covector $\tau\,dt+\xi\cdot dx$,
\[
\begin{aligned}
g^{-1}(\tau\,dt+\xi\cdot dx,\tau\,dt+\xi\cdot dx)
&=|\xi|^2-\frac{(\tau+v\cdot\xi)^2}{c^2}\\
&=-\frac{(\tau+v\cdot\xi-c|\xi|)
                 (\tau+\lambda_3(U,\xi))}{c^2}.
\end{aligned}
\]
On $H=0$ with $\xi\ne0$, the other factor is $-2c|\xi|\ne0$.
The two Hamiltonian vector fields are therefore proportional there:
they give the same rays up to reparametrization. With the same initial
phase, the null hypersurfaces of this branch are precisely our
characteristic hypersurfaces, and normalization by $Lt=1$ gives the same
$L$. The spatially normal field normalized by $Tu=1$ is also the same;
the inverse foliation density used in the acoustical formulation is
$(-g^{-1}(dt,du))^{-1}=c\kappa$. These identifications do not require
irrotationality. The acoustic cone accounts for the two acoustic branches;
the additional material characteristic $\tau+v\cdot\xi=0$ is not part of it.

The distinction lies in how this geometry enters the analysis. The
acoustical formulation supplies a Lorentzian metric, null frames, and
geometric wave equations. Here we derive the geometric transport
equations from a characteristic eigenvalue and couple them directly to
symmetric first-order energy identities. This construction of
characteristics also applies locally to a smooth simple real eigenvalue
of the principal symbol of a general first-order hyperbolic system,
even when its characteristic hypersurface in covector space is not a
branch of a Lorentzian quadratic cone. Thus it does not require a metric
representation of that branch. This observation concerns the geometric
framework; extending the rarefaction estimates to other systems would
require their own structural assumptions.

Hamiltonian characteristics and symmetric-system energy estimates are
classical tools, and Alinhac's theory already treats first-order systems.
Their use here couples the evolving fan geometry to Euler through the
vacuum limit, without imposing irrotationality. At $c=0$ the metric
degenerates and the characteristic eigenvalues coalesce, but the explicit
formula for $\lambda_3$ remains smooth for $\xi\ne0$. The analysis still
has to control both the initial collapse $\kappa\sim t$ and the factor
$c$ in the characteristic flux near vacuum; see the energy identities
\eqref{eq: linear energy estimate in t x_1 x_2} and
\eqref{eq: linear energy estimate in t ur x_2}.

\subsubsection{Two overlapping characteristic charts}
The two boundaries of the fan impose different requirements on the
characteristic coordinates. The chart $(t,u,\vartheta)$ is attached to
the exterior flow by $C_0=\{u=0\}$. On the opened initial slice,
\[
u=-\urr+(v_r^1+c_r)(0,x_2),\qquad
c(0,u,\vartheta)=c_r(0,\vartheta)-\frac{\gamma-1}{\gamma+1}u.
\]
Thus the initial vacuum boundary is generally not a constant level
of $u$. We use a second chart $(t,\tilde u,\vartheta)$, initialized by
\[
\tilde u=-\urr+\left(v_r^1-\frac{2c_r}{\gamma-1}\right)(0,x_2),
\qquad
c(0,\tilde u,\vartheta)=-\frac{\gamma-1}{\gamma+1}\tilde u.
\]
The estimate $|Lc|\leq C\varepsilon c$ then keeps $c$ comparable to
$-\tilde u$. Consequently, a cutoff $\tilde u^*<0$ can approach zero
while the estimates remain uniform; see
\eqref{eq: c almost constant on C_u}.

The proof uses the overlap in a definite order. We first obtain the
higher-regularity estimates in $\mathcal W_1$, where $c$ is bounded
away from zero. These provide the boundary data on
$C_{\tilde u_*}\subset\mathcal W_1$ for the four-derivative estimates
in $\mathcal W_{2,\alpha}$. We then remove the cutoff $\alpha$.
The two foliations and their transported angular coordinates are
constructed separately; the common coordinates $(t,\urr,x_2)$ allow
the solutions and fixed commuted derivatives to be compared on the
overlap. Section~\ref{subsec: two charts} gives the chart definitions
and this order of construction. The forthcoming second paper of this
series will retain this division between exterior matching and the
estimates near vacuum, together with the new evolving profiles.

\subsubsection{Initial-slice profiles, vorticity, and logarithmic terms}\label{subsubsec: profiles and logarithms}
The leading profile $\Vr_0$ is determined by the initial trace of the
right state and the simple-wave relation $v^1+c=\urr$; its formula and
the reflected left profile are given in
Theorem~\ref{theorem: stability of RVR}. We then solve the initial-slice
ODEs for $\Vr_1=\partial_tU|_{t=0}$ and
$\Vr_2=\partial_t^2U|_{t=0}$, and determine their spatial regularity
up to $c=0$. The component formulas are derived in
Section~\ref{sec: high order approximate solution}. Their logarithmic
terms record an interaction involving vorticity, which we now explain.

In the following initial-slice formulas, $c=c(0,\urr,x_2)$ and all
exterior factors are evaluated at $(t,x_1)=(0,0)$ after differentiation,
as specified in Section~\ref{subsec: notation conventions}. Recall
\[
\beta=\frac{\gamma+1}{2(\gamma-1)},\qquad
w_r=\frac{c_r}{\gamma-1}-\frac{v_r^1}{2},\qquad
\omega_r=\partial_{x_1}v_r^2-\partial_{x_2}v_r^1.
\]
At the initial boundary, $\hat T=(-1,0)$ and hence
$L=\partial_t+(v_r^1+c_r)\partial_{x_1}+v_r^2\partial_{x_2}$.
The transverse momentum equation gives
\[
\begin{aligned}
L(v_r^2)
&=c_r\partial_{x_1}v_r^2-\frac{2c_r}{\gamma-1}\partial_{x_2}c_r,\\
L(v_r^2)+2c_r\partial_{x_2}w_r&=c_r\omega_r.
\end{aligned}
\]
Consequently, the third component in \eqref{eq: exact form of Vr_1}
can be written as
\[
\Vr_1^3=-2c\,\partial_{x_2}w_r
+\left(\frac{c}{c_r}\right)^{2\beta}c_r\omega_r
-v_r^2\partial_{x_2}v_r^2.
\]
When substituted into the quadratic source of
\eqref{eq: ODE of Vr_2}, this vorticity-dependent term interacts with
$\partial_{x_2}w_r$. Its contribution has the same power
$c^{2\beta}$ as the homogeneous solution of the scalar ODE for
$\Vr_2^1-\Vr_2^2$. Integration therefore produces a logarithm:
at fixed $x_2$, the elementary identity is
\[
\left(c\partial_c-2\beta\right)
\left[\left(\frac{c}{c_r}\right)^{2\beta}
\ln\left(\frac{c}{c_r}\right)\right]
=\left(\frac{c}{c_r}\right)^{2\beta}.
\]
More precisely, the logarithmic contribution displayed in
\eqref{eq: exact logarithmic term of Vr_2} is
\[
\frac{4(\gamma+1)}{\gamma-1}\,
c_r\partial_{x_2}w_r\,\omega_r
\left(\frac{c}{c_r}\right)^{2\beta}
\ln\left(\frac{c}{c_r}\right).
\]
Its coefficient is nonzero exactly where
$\partial_{x_2}w_r\,\omega_r\ne0$. This contribution vanishes for
irrotational initial data and for data with no tangential variation of
$w_r$. Thus the logarithm records an interaction between vorticity
and tangential variation of the initial data.

The logarithmic profile itself tends to zero at vacuum, but its higher
spatial derivatives can be singular. This remains true when $\beta$
is an integer: the logarithmic factors are $c^4\ln(c/c_r)$ for
$\gamma=5/3$ and $c^6\ln(c/c_r)$ for $\gamma=7/5$, up to factors
depending smoothly on $x_2$. For example,
\[
\partial_c^4\left(c^4\ln\frac{c}{c_r}\right)
=24\ln\frac{c}{c_r}+50,\qquad
\partial_c^5\left(c^4\ln\frac{c}{c_r}\right)=\frac{24}{c}.
\]
Since $\partial_{\urr}c=(\gamma-1)/(\gamma+1)$ on the initial fan,
the fourth transverse derivative is square integrable up to $c=0$,
whereas the fifth is not when its coefficient is nonzero.
Remark~\ref{rem: remark 4.2} gives the corresponding profile bounds.
The construction allows this vorticity-dependent behavior and uses
only the finite regularity needed by the energy argument.

This initial-slice analysis determines which approximation can be
used in that argument:
\[
\begin{array}{c|c|c}
\text{range of }\gamma&\text{approximate solution}&\text{remainder}\\ \hline
(1,1.3]&\Ur_0=\Vr_0&V_0=U-\Ur_0\\[2pt]
\{7/5,5/3\}&\Ur_1=\Vr_0+t\Vr_1&V_1=U-\Ur_1.
\end{array}
\]
The next initial coefficient supplies the initial remainder estimate:
$\Vr_1$ for $V_0$, and $\Vr_2/2$ for $V_1$. In the respective cases
these coefficients have the four spatial derivatives in $L^2$ needed
near vacuum. For $\gamma=7/5,5/3$, the integer values $\beta=3,2$
also make $\Vr_1$ regular enough to serve as part of the approximate
solution. Proposition~\ref{prop: initial energy bounds on Sigma_0}
states the initial bounds, and
Proposition~\ref{prop: Res bounds of Ur_k} gives the residual estimates.
The matching with the exterior flow along $C_0$ is proved in
Section~\ref{sec: normal jets}. The regularity required of the original
data includes the derivative losses in that construction.

\subsubsection{Null structure of the first-order transversal commutators}
We commute with the fixed vector fields
\[
\Tr=-t\partial_{x_1}=-\partial_{\urr},\qquad
\Xr=\partial_{x_2},\qquad V_{k;a,b}=\Tr^a\Xr^b(U-\Ur_k).
\]
Thus the controlled derivatives carry no vanishing sound-speed weight.
The leading commutator has a cancellation that is best seen before
estimating its coefficients. In physical coordinates,
$[\Tr,t\partial_t]=-\Tr$, whereas the explicit profile satisfies
$\Tr\Vr_0=-\frac{2}{\gamma+1}(1,1,0)^T$.
Since $A_1$ is linear in $U$, the terms produced by the time commutator
and one transversal derivative of the leading profile combine as
\[
\begin{aligned}
aV_{k;a,b}+aA_1(\Tr\Vr_0)V_{k;a,b}
&=\frac{a(\gamma-1)}{\gamma+1}
\begin{pmatrix}
1&-1&0\\[-2pt]
-1&1&0\\[-2pt]
0&0&1
\end{pmatrix}V_{k;a,b}.
\end{aligned}
\]
This is the leading linear commutator contribution in
\eqref{eq: eq of V_k a b}; the terms containing $U-\Ur_k$ or
$t\Vr_1$ are retained separately there. Pairing the displayed expression
with $2V_{k;a,b}$ gives
\[
\frac{2a(\gamma-1)}{\gamma+1}
\left\{\big(V_{k;a,b}^{1}-V_{k;a,b}^{2}\big)^2
+\big(V_{k;a,b}^{3}\big)^2\right\}.
\]
In particular, this contribution contains no square of the outgoing
component $(V_{k;a,b}^{1}+V_{k;a,b}^{2})/\sqrt{2}$. Its matrix annihilates
$r_3(U,(1,0))=(1,1,0)^T/\sqrt{2}$, the same characteristic direction
as the non-small derivative $\Tr\Vr_0$. This is the precise cancellation
called the null structure in \eqref{eq: null structure in commutators}.

The connection with classical null forms is the suppression of a
particular characteristic interaction. Classical wave null forms vanish
on proportional null covectors \cite{KlainermanNull1986}. Here the
corresponding cancellation is in the Euler components of the commuted
unknown: the displayed bilinear form vanishes whenever either argument
is in the outgoing eigendirection. This does not assert a classical
null condition for the full Euler system. The acoustic family remains
genuinely nonlinear, and its self-interaction produces the rarefaction.

A second cancellation occurs in the energy identity. For $V=V_{k;a,b}$,
the leading normal-derivative bulk term from integration by parts is
\[
-\big(V,A_1(\Tr\Vr_0)V\big)
=(V^1)^2+(V^2)^2
-\frac{\gamma-1}{\gamma+1}(V^1-V^2)^2
+\frac{2}{\gamma+1}(V^3)^2.
\]
Thus, modulo the null forms in the last two terms, integration by parts
produces the \emph{antidamping} $(V^1)^2+(V^2)^2$. On the other hand,
subtracting $\Ur_k$ supplies the leading source $A_1(V)\Tr\Vr_0$,
whose contribution to the energy identity is
\[
\begin{aligned}
2\big(V,A_1(V)\Tr\Vr_0\big)
={}&-(V^1)^2-(V^2)^2
-\frac{2(\gamma-1)}{\gamma+1}(V^1)^2\\
&-\frac{4}{\gamma+1}(V^2)^2+(V^1-V^2)^2.
\end{aligned}
\]
The first two negative squares cancel the antidamping exactly, leaving
additional damping in the first two components, modulo null forms;
see \eqref{eq: damping mechanism} and
\eqref{eq: anti damping by integration by parts}. Including the
transversal commutator contribution computed above, the resulting
leading bulk density is
\[
\begin{aligned}
&-\frac{2(\gamma-1)}{\gamma+1}(V^1)^2
-\frac{4}{\gamma+1}(V^2)^2\\
&\qquad+
\left(\frac{2a(\gamma-1)}{\gamma+1}+\frac{2}{\gamma+1}\right)
\left\{(V^1-V^2)^2+(V^3)^2\right\}.
\end{aligned}
\]
This is the quadratic form in
\eqref{eq: simple higher order energy estimate}. On the outgoing
eigenline $V\in\operatorname{span}\{(1,1,0)^T\}$ it equals $-|V|^2$:
the profile subtraction more than compensates for the integration-by-parts
antidamping, while the leading commutator contributes zero.

The remaining null forms have a direct geometric meaning. For any
vector $V=(V^1,V^2,V^3)^T$, the characteristic flux density in
\eqref{eq: linear energy estimate in t x_1 x_2} is
\[
c\left\{(V^1+\hat T^1V^2+\hat T^2V^3)^2
+(\hat X^1V^2+\hat X^2V^3)^2\right\}.
\]
At the planar frame $\hat T=(-1,0)$ and $\hat X=(0,1)$, this becomes
$c\{(V^1-V^2)^2+(V^3)^2\}$: precisely the same quadratic form as
in the commutator, multiplied by $c$. The geometric estimates control
the change of frame. This explains why the flux can be used to control
the leading commutator terms away from vacuum; see
Remark~\ref{re: how to use null structure}. As $c\downarrow0$, however,
this flux loses its weight while the commutator coefficient does not.
The algebraic cancellation persists, but the vacuum argument must also
use the damping and the full energy quadratic form.

Null structures in acoustical wave formulations of Euler have been
studied by Luk and Speck \cite{LukSpeckNull} and are also used in the
rarefaction estimates of Luo and Yu \cite{Luo-YuRare1}. The feature
used here is the explicit cancellation in the transversally commuted
\emph{first-order} system, coupled to the damping of the profile
remainder and to estimates uniform up to vacuum. The commutator identity itself uses neither irrotationality nor division by $c$.

\subsubsection{Energy estimates and closure of the geometry}
For the vacuum estimate, we retain the full quadratic form left by the
cancellation above, including both the remaining damping and the null
terms. We estimate its largest eigenvalue in the energy inequality
\eqref{eq: simple higher order energy estimate}.

This gives the quantitative condition
$\lambda_{\max}<2k+3$ for commuted derivatives of order at most four:
$\lambda_{\max}<3$ when $k=0$ and $1<\gamma\leq1.3$, and
$\lambda_{\max}<5$ when $k=1$ and $\gamma\in\{7/5,5/3\}$.
The exponent $2k+3$ comes from the expected remainder size $t^{k+1}$
and the factor $\kappa\sim t$ in the energy.
The singular Gronwall estimate of Lemma~\ref{lem: Gronwall} then closes
$(\mathbf B_2)$ uniformly in the vacuum cutoff; see
\eqref{eq: close of B_2 up to vacuum}. Thus the range in the theorem
reflects both profile regularity and the coefficient in the energy
estimate.

The resulting spatial bounds also provide the vanishing needed to
control the characteristic geometry. Indeed,
$\Vr_0^2+\frac{\gamma-1}{2}\Vr_0^1=\urr$, so on the right fan
\[
v^1+c-\urr=(U-\Vr_0)^2+\frac{\gamma-1}{2}(U-\Vr_0)^1.
\]
Consequently, for the right fan slice $\Sigma_t^r$ defined in
Theorem~\ref{theorem: stability of RVR},
\[
\sum_{a+b\leq2}
\big\|\partial_{\urr}^a\partial_{x_2}^b
(v^1+c-\urr)(t,\cdot)\big\|_{L^\infty(\Sigma_t^r)}
\leq C\varepsilon t.
\]
This expresses in the present coordinates the additional vanishing of
the characteristic speed used by Luo and Yu \cite{Luo-YuRare1}. Here it
follows from the estimate for the explicit profile remainder. Together
with the corresponding bounds for $w$ and $\psi_2$, it yields improved
transport estimates for $\hat T$ and $\kappa$, and closes
$(\mathbf B_\infty)$; see
\eqref{eq: improved estimates for Zr(psi)} and
Section~\ref{sec: closing B infinity}. The spatial Sobolev inequalities
on the evolving slices, proved in
Section~\ref{sec: Preparations for the energy estimate}, connect these
pointwise bounds to the four-derivative energy estimate. Reflection
gives the left fan, and the limiting conservative traces allow the two
fans to be joined to the vacuum state.

\subsubsection{The role of this paper in the series}\label{subsubsec: role in series}
The series is organized around two stages of the analysis. This paper
establishes a complete existence and stability theorem in a range where
the zeroth- or first-order approximate solution suffices. This setting
makes the characteristic geometry, the two charts, the first-order null
structure, and the cancellation between damping and antidamping
visible without a further construction of singular profiles. These
ingredients close the nonlinear estimates and give the vacuum
configuration of Theorem~\ref{theorem: stability of RVR} within this paper.

The forthcoming second paper will extend the range to
$\gamma\in(1,5/3]$. Its additional task is to construct finite evolving
profiles that incorporate the singular spatial terms, match the
boundary data, and leave a residual with the time decay and spatial
regularity required by the energy estimates. The initial-slice
calculations here explain why those profiles are needed, while the
present proof identifies the estimates that their remainders must
satisfy. In particular, the chart $(t,\tilde u,\vartheta)$ is chosen
so that vacuum lies at $\tilde u=0$ and $c$ is comparable to
$-\tilde u$; see Section~\ref{subsec: two charts}. This gives the
singular factors a fixed boundary at which their derivatives and
integrability can be analyzed, while the overlapping chart retains
the matching to the exterior flow. The division into two papers thus
follows the dependence of the profile construction on the geometric
and energy framework established here.

\subsection{Organization of the paper}
Section~\ref{sec: set up} gives the geometric framework and the
structure of the proof. We first write Euler as a symmetric
first-order system, derive its characteristic transport and energy
identities, and open the initial fan by $\urr=x_1/t$. We then introduce
the two overlapping characteristic charts in
Section~\ref{subsec: two charts} and state the approximation bounds
and bootstrap assumptions in Section~\ref{subsec: bootstrap framework}.
Section~\ref{subsec: proof main theorem} assembles the entropy solution
and proves Theorem~\ref{theorem: stability of RVR}, assuming the
estimates established in the later sections. The conventions for
coordinates, unknowns, derivatives, norms, and the reusable symbols
$N$ and $C$ are collected in Section~\ref{subsec: notation conventions}.

Sections~\ref{sec: high order approximate solution}
and~\ref{sec: normal jets} prepare the data for the energy argument.
The former computes the initial-slice profiles and estimates the
residual of the approximate solution. The latter proves the matching
estimates along $C_0$, the characteristic boundary adjoining the
exterior right-hand gas, including the required transverse derivatives.

Sections~\ref{sec: Preparations for the energy estimate}--\ref{sec: closing B infinity}
close the two bootstrap arguments. First we control the characteristic
geometry and establish the Sobolev inequalities on the evolving time
slices. We then commute the first-order system and prove the energy
estimates that improve $(\mathbf B_2)$, separately away from and up to
vacuum. Finally, the transport equations and these energy bounds
improve $(\mathbf B_\infty)$. The estimates near vacuum are uniform in
the cutoff used in the construction, which allows its removal in
Section~\ref{subsec: proof main theorem}.

\section{Characteristic charts, energy identities, and the proof framework}\label{sec: set up}
We construct the right rarefaction wave first; the left wave follows
by reflection. The main point of this section is to separate the
common geometric identities from the two choices of characteristic
foliation used in the proof. Section~\ref{subsec: geo con} derives the
physical-coordinate identities. Section~\ref{subsec: opened fan}
opens the initial fan and identifies its leading profile.
Section~\ref{subsec: opened geometry} rewrites the geometry and energy
in these fixed coordinates. Section~\ref{subsec: two charts} then
specifies the two overlapping charts and their different boundary
roles. Section~\ref{subsec: bootstrap framework} states the inputs and
bootstrap estimates, and Section~\ref{subsec: proof main theorem}
assembles the solution. The last subsection,
Section~\ref{subsec: notation conventions}, records the conventions
for the whole paper, including the reuse of $N$ and $C$.

\subsection{The symmetric Euler system and its characteristic geometry}\label{subsec: geo con}

From this section onward, $U$ denotes the normalized unknown
$(2c/(\gamma-1),v^1,v^2)^T$ in the gas regions. The entropy solution in
conservative variables, denoted by $U$ in Theorem~\ref{theorem: stability of RVR},
is recovered by the map $\mathcal Q$ defined there. We use the same normalized
convention for the exterior states $U_l,U_r$ in the calculations below.

For $\rho>0$, the mechanical energy is a strictly convex
entropy in the conservative variables, and its Hessian gives a
symmetrizer; this is an instance of the convex-extension principle
of Friedrichs and Lax \cite{FriedrichsLax1971}. In the present isentropic
problem, the sound-speed normalization gives the following explicit
symmetric form of \eqref{eq: Euler in rho v}:
\begin{equation}\label{eq: euler equation in c v}
\partial_{t}U+A_{1}(U)\partial_{x_1}U+A_{2}(U)\partial_{x_2}U=0,
\end{equation}
where 
\[
U=\begin{pmatrix}
\frac{2c}{\gamma-1} \\ v^{1} \\ v^{2}
\end{pmatrix}, A_{1}(U)=\begin{pmatrix}
v^{1} & c & 0 \\
c & v^{1} & 0 \\
0 & 0 & v^{1}
\end{pmatrix}, A_{2}(U)=\begin{pmatrix}
v^{2} & 0 & c \\
0 & v^{2} & 0 \\
c & 0 & v^{2}
\end{pmatrix}.
\]
Both matrices are symmetric, so the identity matrix is a symmetrizer
for this normalized system. Their entries remain regular at $c=0$.
The change from conservative to normalized variables is invertible
for $\rho>0$; its behavior at vacuum must be treated separately.
For the general symmetric-system framework and its energy estimates,
see \cite[Chapters 1--2]{Benzoni-Gavage-Serre2007book}.

For $(\xi_{1}, \xi_{2})\in \mathbb{R}^{2}\setminus\{0\}$, the eigenvalues and associated eigenvectors of $A_{1}\xi_{1}+A_{2}\xi_{2}$ are given by
\[
\begin{cases}
\lambda_{1}(U, \xi)=v^{1}\xi_{1}+v^{2}\xi_{2}-c|\xi|\\
\lambda_{2}(U, \xi)=v^{1}\xi_{1}+v^{2}\xi_{2},\\
\lambda_{3}(U, \xi)=v^{1}\xi_{1}+v^{2}\xi_{2}+c|\xi|.
\end{cases}
\begin{cases}
r_{1}(U, \xi)=\frac{\sqrt{2}}{2}\big( 1, -\xi_{1}/|\xi|, -\xi_{2}/|\xi| \big)^{T},\\
r_{2}(U, \xi)=\big( 0, -\xi_{2}/|\xi|, \xi_{1}/|\xi| \big)^{T},\\
r_{3}(U, \xi)=\frac{\sqrt{2}}{2}\big( 1, \xi_{1}/|\xi|, \xi_{2}/|\xi| \big)^{T}.
\end{cases}
\]
The characteristic families have the following types:
\begin{itemize}
\item Genuinely nonlinear: $r_{m}\cdot \nabla_{U}\lambda_{m}\not=0, m=1, 3$. 
\item Linearly degenerate: $r_{2}\cdot \nabla_{U}\lambda_{2}=0$.
\end{itemize}
We focus on the genuinely nonlinear eigenvalues, which are also extremal in the sense $\lambda_{1}\leq \lambda_{2}\leq \lambda_{3}$ where the equality holds if and only if $c=0$. Therefore, vacuum corresponds to the degeneration of strict hyperbolicity of Euler equations.

\subsubsection{Characteristic hypersurfaces and the geometric frame}
We first record the characteristic construction for a smooth
background in physical coordinates. Its version on the opened initial
fan is given in Section~\ref{subsec: opened geometry}.
Let $C_u$ be a level hypersurface of a phase $u$, let $\Sigma_t$ be a
physical time slice, and set $S_{t,u}=\Sigma_t\cap C_u$.
The physical unit normal to $S_{t,u}$ is
$\hat T=\nabla_xu/|\nabla_xu|$. The phase satisfies\footnote{We use the convention that $u$ increases along the negative direction of $x_{1}$ axis.}
\begin{equation}\label{eq: Ekinol equation}
\partial_{t}u+\lambda_{3}(U, -\hat{T})\big(-\hat{T}(u)\big)=0\quad\Longleftrightarrow\quad \partial_{t}u-\lambda_{3}(U, -\nabla u)=0.
\end{equation}
which is a nonlinear first-order PDE called the eikonal equation. In applications, we can prescribe $S_{0, u}$ on $\Sigma_{0}$ as initial foliation and then we need to solve \eqref{eq: Ekinol equation} to construct $C_{u}$ as space time foliation on $[0, t^{*}]\times \mathbb{R}\times \mathbb{T}$ with $S_{0, u}=C_{u}\cap \Sigma_{0}$. 

For a smooth background and a smooth initial phase with nonvanishing
spatial gradient, the eikonal equation is solved locally by the
Hamiltonian method of characteristics; see
\cite[Chapter II]{CourantHilbertII}. The construction remains valid
while the projection of the Hamiltonian flow to spacetime is
invertible. The bicharacteristic equations are
\begin{equation}\notag
\begin{cases}
\frac{\partial t}{\partial s}=\frac{\partial H}{\partial \tau},\\
\frac{\partial x_{i}}{\partial s}=\frac{\partial H}{\partial \xi_{i}},\\
\frac{\partial \tau}{\partial s}=-\frac{\partial H}{\partial t},\\
\frac{\partial \xi_{i}}{\partial s}=-\frac{\partial H}{\partial x_{i}}.
\end{cases}
\end{equation}
where 
\[
H(t, x_{1}, x_{2}, \tau, \xi_{1}, \xi_{2}):=\tau+\lambda_{3}\big( U(t, x_{1}, x_{2}), \xi_{1}, \xi_{2} \big).
\]
The Hamiltonian acts on the phase $-u$, so $(\tau,\xi)=(-\partial_tu,-\nabla u)$. Its initial data lie in $H=0$ and are given by
\[
\begin{cases}
t(0)=0,\\
x_{i}(0)=y_{i},\\
\tau(0)=-\lambda_{3}\big(U(0,y_{1},y_{2}),\xi(0)\big),\\
\xi_{i}(0)=-\partial_{x_i}u(0,y_{1},y_{2}).
\end{cases} \ \ (y_{1}, y_{2})\in \mathbb{R}\times \mathbb{T}.
\]
Since $\partial_\tau H=1$, the parameter $s$ agrees with $t$.
The projections of the Hamiltonian trajectories issuing from each
initial curve $S_{0,u}$ sweep out its characteristic hypersurface
$C_u$. Thus the construction evolves both the hypersurfaces and their
normal covectors.

For the present argument, the useful output of Hamilton's equations
is a closed set of transport identities for the normal direction and
the spacing of the characteristic hypersurfaces. We record these
identities before introducing the characteristic coordinates.

We use $\hat{T}^{i}:=-\frac{\xi_{i}}{|\xi|}$ to denote the unit normal vector of the embedding $S_{t, u}\subset \Sigma_{t}$, $\hat{X}:=(\hat{T}^{2}, -\hat{T}^{1})$ to denote the unit tangent vector of the embedding $S_{t, u}\subset \Sigma_{t}$ and $\kappa:=\frac{1}{|\xi|}$ to denote the inverse density of  the embedding $S_{t, u}\subset \Sigma_{t}$. Set $T:=\kappa\hat{T}$, so that $T(u)=1$. We also use $L$ to denote the projection of generators\footnote{As tangent vector fields on $T^{*}\big( [0, t^{*}]\times \mathbb{R}\times \mathbb{T}$\big).} of bi-characteristic onto tangent vector fields on $[0, t^{*}]\times \mathbb{R}\times \mathbb{T}$. Then we have
\begin{equation}\notag
\begin{cases}
L=\frac{\partial}{\partial t}+\frac{\partial \lambda_{3}}{\partial \xi_{j}}(U, -\hat{T})\frac{\partial}{\partial x_{j}},\\
L(\hat{T}^{i})=\hat{X}(U)\cdot \nabla_{U}\lambda_{3}(U, -\hat{T})\hat{X}^{i},\\
L(\kappa)=-T(U)\cdot \nabla_{U}\lambda_{3}(U, -\hat{T}).
\end{cases}
\end{equation}
In particular, for isentropic Euler equation, $\lambda_{3}(U, \xi)=v^{1}\xi_{1}+v^{2}\xi_{2}+c|\xi|$, we have
\begin{equation}\label{eq: transport eq for geometry}
\begin{cases}
L=\partial_{t}+(v^{1}-c\hat{T}^{1})\partial_{x_1}+(v^{2}-c\hat{T}^{2})\partial_{x_2},\\
L(\hat{T}^{i})=\big( \hat{X}(c)+\hat{T}^{j}\hat{X}(\psi_{j}) \big)\hat{X}^{i},\\
L(\kappa)=-T(c)-\hat{T}^{j}T(\psi_{j})
\end{cases}
\end{equation}
where we used the notation $\psi_{1}=-v^{1}, \psi_{2}=-v^{2}$. We also record commutators for future use,\enlargethispage{6pt}
\begin{equation}\label{eq: commutators t x_1 x_1}
\begin{cases}
[T, \hat{X}]=-\kappa\theta\cdot \hat{X},\ \ \theta:=-\hat{X}(\hat{X}^{2})\hat{X}^{1}+\hat{X}(\hat{X}^{1})\hat{X}^{2},\\
[L, \hat{X}]=-\chi\cdot \hat{X},\ \ \chi:=-\hat{X}^{j}\hat{X}(\psi_{j})+c\hat{X}(\hat{X}^{2})\hat{X}^{1}-c\hat{X}(\hat{X}^{1})\hat{X}^{2},\\
[L, T]=-(\zeta+\eta)\cdot \hat{X},\ \ (\zeta+\eta):=-\kappa\hat{T}^{j}\hat{X}(\psi_{j})-\hat{X}^{j}T(\psi_{j})-\kappa\hat{X}(c)+c\hat{X}(\kappa).
\end{cases}
\end{equation}

To parametrize $S_{t, u}=\Sigma_{t}\cap C_{u}$, we also need the third parameter $\vartheta$ defined by
\begin{equation}\label{eq: angular coordinate in t x_1 x_2}
\begin{cases}
L(\vartheta)=0,\\
\vartheta=\vartheta_{0, u}, \ \textbf{on $S_{0, u}$}.
\end{cases}
\end{equation}
Then we have a new coordinate system $(t, u, \vartheta)$. Setting $\slashed{g}$ and $\Xi$ such that
\begin{equation}\label{eq: expression of T and Xh}
\begin{cases}
\frac{\partial}{\partial \vartheta}=\sqrt{\slashed{g}}\cdot \hat{X},\\
\frac{\partial}{\partial u}=T+\Xi\sqrt{\slashed{g}}\cdot \hat{X}.
\end{cases}
\end{equation}
and $\slashed{g}$ and $\Xi$ satisfy the transport equation along $L$ as follows,
\begin{equation}\label{eq: tansport for g and Xi}
\begin{cases}
L(\slashed{g})=2\chi\cdot \slashed{g},\\
L(\Xi)=\frac{1}{\sqrt{\slashed{g}}}(\zeta+\eta).
\end{cases}
\end{equation}

 \subsubsection{The symmetric energy and characteristic flux} Characteristic coordinates describe the integration region by fixed intervals in the characteristic labels. 
 
Consider the linear system
\[
\partial_{t}V+A_{1}(U)\partial_{x_1}V+A_{2}(U)\partial_{x_2}V=F.
\]
Taking the Euclidean inner product with $2V$, we obtain
\[
\partial_{t}\big(V, V\big)+\partial_{x_{1}}\big( V, A_{1}(U)V \big)+\partial_{x_{2}}\big( V, A_{2}(U)V \big)=2(V, F)+\big( V, \frac{\partial A_{1}(U)}{\partial U^{m}}V \big)\partial_{x_1}U^{m}+\big( V, \frac{\partial A_{2}(U)}{\partial U^{m}}V \big)\partial_{x_2}U^{m}.
\]
Integrating this identity over the region 
\[
\mathcal{D}(t_{1}, t_{2}; u_{1}, u_{2}):=\{(t, u, \vartheta)\in [t_{1}, t_{2}]\times [u_{1}, u_{2}]\times \mathbb{T}\}
\]
and applying divergence theorem, we have

\begin{equation}\label{eq: linear energy estimate in t x_1 x_2}
\begin{split}
&\mathcal{E}(V)(t_{2}; u_{1}, u_{2})+\mathcal{F}(V)(t_{1}, t_{2}; u_{2})=\mathcal{E}(V)(t_{1}; u_{1},  u_{2})+\mathcal{F}(V)(t_{1}, t_{2}; u_{1})\\
&+\int_{t_{1}}^{t_{2}}\int_{u_{1}}^{u_{2}}\int_{\mathbb{T}}\Big( (V, \frac{\partial A_{1}}{\partial U^{m}}(U)V)\partial_{x_1}(U^{m})+(V, \frac{\partial A_{2}}{\partial U^{m}}(U)V)\Xr(U^{m})\Big)\kappa \sqrt{\slashed{g}} dt' du d\vartheta\\
&+\int_{t_{1}}^{t_{2}}\int_{u_{1}}^{u_{2}}\int_{\mathbb{T}} 2(V, F) \kappa\sqrt{\slashed{g}} dt' du d\vartheta.
\end{split}
\end{equation}
where 
\begin{equation}\notag
\begin{cases}
\mathcal{E}(V)(t; u_{1}, u):=\int_{u_{1}}^{u} \int_{\mathbb{T}} (V, V)(t, u', \vartheta) \kappa \sqrt{\slashed{g}} du' d\vartheta,\\
\mathcal{F}(V)(t_{1}, t; u):=\int_{t_{1}}^{t} \int_{\mathbb{T}} \Big(2c(V, \alpha_{1})^{2}+c(V, \alpha_{2})^{2}\Big)(t', u, \vartheta) \sqrt{\slashed{g}} dt' d\vartheta.
\end{cases}
\end{equation}
Here $\psi^{(\hat T)}:=\psi_1\hat T^1+\psi_2\hat T^2=-v\cdot\hat T$.
The following eigenpairs use the physical symbol of
$A_1(U)(-\hat T^1)+A_2(U)(-\hat T^2)$; thus
$\lambda_j=\lambda_j(U,-\hat T)$ and $\alpha_j=r_j(U,-\hat T)$:
\begin{equation}\notag
\begin{cases}
\lambda_{1}=\psi^{(\hat{T})}-c,\ \ \alpha_{1}=\frac{\sqrt{2}}{2}(1, \hat{T}^{1}, \hat{T}^{2})^T,\\
\lambda_{2}=\psi^{(\hat{T})},\ \ \alpha_{2}=(0, \hat{X}^{1}, \hat{X}^{2})^T,\\
\lambda_{3}=\psi^{(\hat{T})}+c,\ \ \alpha_{3}=\frac{\sqrt{2}}{2}(1, -\hat{T}^{1}, -\hat{T}^{2})^T.
\end{cases}
\end{equation}

\subsection{Opening the fan and determining the initial profiles}\label{subsec: opened fan}

Let $(t, x_1, x_2)\in [0, t^{*}]\times \mathbb{R}\times \mathbb{T}, t^{*}=1$ be standard coordinate. Consider isentropic Euler equations with $p(\rho)=k\cdot \rho^{\gamma}, \gamma\in (1, 3)$,
Given initial data $U_{r}(0, x_{1}, x_{2})$ defined on $x_{1}\geq 0$, which satisfies
\begin{equation}\notag
\|U_{r}(0,\cdot,\cdot)-(\frac{2\mathbf{c}_{*}}{\gamma-1}, 0, 0)^{T}\|_{H^{N_{0}}([0,\infty)\times\mathbb{T})}\leq \varepsilon.
\end{equation}
We normalize the reference state and time interval by
$\mathbf c_*=1$ and $t^*=1$. As in Theorem~\ref{theorem: stability of RVR},
$N_0$ is chosen sufficiently large, and $\varepsilon$ is sufficiently
small. The convention for the lower regularity index $N$ is stated in
Section~\ref{subsec: notation conventions}.

We will use $\mathcal{D}_{\textbf{right}}$ to denote the region completely determined by the initial data $U_{r}(0, x_{1}, x_{2})$ defined on $\{t=0, x_{1}\geq 0\}$ and we will also use $U_{r}$ to denote the solution on $\mathcal{D}_{\textbf{right}}$.

We will construct $U_{\textbf{rare}}$ (We will use $U$ instead of $U_{\textbf{rare}}$ later for simplicity) on complete rarefaction wave region $\mathcal{W}_{\textbf{complete}}$, which connects $\mathcal{D}_{\textbf{right}}$ with interface $C_{0}$ such that
\begin{equation}\label{eq: continu along C0}
U_{\textbf{rare}}=U_{r} \ \ \textbf{on $C_{0}$}.
\end{equation}
where $C_{0}$ denotes both the left boundary of $\mathcal{D}_{\textbf{right}}$ and right boundary of $\mathcal{W}_{\textbf{complete}}$.

We denote by $C_{\textbf{vacuum}}$ the limiting level set
$\{c_{\textbf{rare}}=0\}$, which separates the complete rarefaction
region $\mathcal{W}_{\textbf{complete}}$ from the vacuum region.

\begin{remark}
The entropy solution across vacuum is described in the conservative
variables $(\rho,\rho v^1,\rho v^2)$. We first construct the gas flow
where $c>0$, using the normalized variables
$U=(2c/(\gamma-1),v^1,v^2)^T$ and estimates independent of a positive
lower bound for $c$. The conservative variables then have a well-defined
zero trace at the vacuum boundary, even though velocity itself is not
prescribed in vacuum. 
\end{remark}

The boundary $C_{0}$ is determined completely by the initial data $U_{r}$ on $x_{1}\geq 0$. Moreover, given 
\[
C_{*}:=\{(t,x_1,x_2):0\leq t\leq t^*,\ x_2\in\mathbb T,\ x_1=t\mathbf c_*\}.
\]
For any $t\in [0, t^{*}]$, we use $\Sigma_{t}$ to denote the time slice, then $\Sigma_{t}\cap C_{0}$ is contained in $C\varepsilon\cdot t$ neighborhood of $\Sigma_{t}\cap C_{*}$.

We will show that the vacuum boundary $C_{\textbf{vacuum}}$ is given by ruled surfaces
\[
\begin{cases}
t=s,\\
x_{1}=s\cdot (v_{r}^{1}-\frac{2}{\gamma-1}c_{r})(0, y_{2}),\\
x_{2}=y_{2}+s\cdot v_{r}^{2}(0, y_{2}).
\end{cases} 
\]
parametrized by $(s, y_{2})\in [0, t^{*}]\times \mathbb{T}$. 

The complete rarefaction wave region $\mathcal{W}_{\textbf{complete}}$ is strictly contained in the region $\Wr$,
\[
\Wr:=\{(t,x_1,x_2):0\leq t\leq t^*,\ x_2\in\mathbb T,\ -\frac{4t}{\gamma-1}\mathbf c_*\leq x_1\leq2t\mathbf c_*\}.
\]
which motivates us to parametrize $\Wr$ in a new way
\[
\begin{cases}
t=t,\\
x_{1}=t\cdot \mathring{u},\\
x_{2}=x_{2}
\end{cases}
\]
where $(t,\urr,x_2)$ ranges over
$[0,t^*]\times[-4\mathbf c_*/(\gamma-1),2\mathbf c_*]\times\mathbb T$.
For $t>0$ this is a change of coordinates. At $t=0$ it replaces the
initial set $\mathbf S_*=\{t=0,x_1=0\}$ by an interval of fan labels.
We use the same symbol for a field and its expression in the new
coordinates. The physical time derivative and the derivative at fixed
$(\urr,x_2)$ are related by
\[
\left.\partial_t\right|_{x_1,x_2}
=\left.\partial_t\right|_{\urr,x_2}-\frac{\urr}{t}\partial_{\urr}.
\]
In equations expressed in $(t,\urr,x_2)$, $\partial_t$ keeps
$(\urr,x_2)$ fixed. In the physical-coordinate description of $C_0$
below, $\partial_t$ in $L$ instead keeps $(x_1,x_2)$ fixed;
$\partial_tf(t,x_2)$ keeps $x_2$ fixed.
As in the constructions away from vacuum reviewed in
Section~\ref{subsec: away from vacuum review}, we work with solutions
regular on the opened fan. The additional task is to obtain estimates
independent of the cutoff separating the gas from vacuum; the passage to vacuum is
described in Section~\ref{subsec: proof main theorem}.

\subsubsection{$C_{0}$ in coordinate $(t, x_{1}, x_{2})$}\label{subsubsec: C_0 in t ur x_2}

We use $S_{t, 0}$ to denote 
\[
S_{t, 0}:=\Sigma_{t}\cap C_{0}\subset \Sigma_{t}.
\]
and $\hat{T}$ to denote unit normal vector of the embedding $S_{t, 0}\subset \Sigma_{t}$ with $\hat{T}^{1}\approx -1$. Then $C_{0}$ is ruled by integral curves of $L=\partial_{t}+(v^{1}-c\hat{T}^{1})\partial_{x_1}+(v^{2}-c\hat{T}^{2})\partial_{x_2}$ emanating from $\mathbf{S}_{*}=\{t=0, x_{1}=0\}$. Note that $C_{0}$ can be parametrized by $x_{1}=f(t, x_{2})$ in coordinate $(t, x_{1}, x_{2})$ with some regular $f$ with $f(0, x_{2})=0$. Here $f=f_0$ in the notation of Theorem~\ref{theorem: stability of RVR}. Moreover, we have 
\[
L\big( x_{1}-f(t, x_{2}) \big)=0\implies (v_{r}^{1}-c_{r}\hat{T}^{1})=\frac{\partial f}{\partial t}(t, x_{2})+\frac{\partial f}{\partial x_2}(t, x_{2})\cdot (v_{r}^{2}-c_{r}\hat{T}^{2}).
\]
Taking $t=0$, we have $\frac{\partial f}{\partial t}(0, x_{2})=(v_{r}^{1}+c_{r})(0, x_{2})$.

\subsubsection{$C_{0}$ in coordinate $(t, \mathring{u}, x_{2})$}
We still use $\Sigma_{t}$ to denote the time slice in coordinate $(t, \mathring{u}, x_{2})$. Note that $\mathring{u}=\frac{f(t, x_{2})}{t}$ is also a regular function and $C_{0}$ can be viewed as a hypersurface (still denoted by $C_{0}$) in coordinate $(t, \mathring{u}, x_{2})$. Moreover, the intersection of $C_{0}$ and the initial slice $\Sigma_{0}:=\{t=0\}$ is given by
\[
\{ (0, \mathring{u}, x_{2}): \mathring{u}=(v_{r}^{1}+c_{r})(0, x_{2}) \}.
\]
and we have $U\equiv U_{r}$\footnote{We write $U_{\textbf{rare}}$ as $U$ for simplicity.} along $C_{0}$ in view of \eqref{eq: continu along C0}. In particular, we have
\begin{equation}\label{eq: initial matching}
U\big(0, (v_{r}^{1}+c_{r})(0, x_2), x_{2} \big)=U_{r}(0, x_{2}).
\end{equation}

\subsubsection{Euler equations in coordinate $(t, \mathring{u}, x_{2})$}

We write the Euler equations \eqref{eq: euler equation in c v} in coordinate $(t, \mathring{u}, x_{2})$, 
\begin{equation}\notag
\frac{\partial U}{\partial t}+\frac{1}{t}\big( A_{1}(U)-\mathring{u} \big)\frac{\partial U}{\partial \mathring{u}}+A_{2}(U)\partial_{x_2}U=0.
\end{equation}
Multiplying $t$ on both side and taking $t=0$, 
\[
\Big( A_{1}(U)-\mathring{u} \Big)\frac{\partial U}{\partial \mathring{u}}=0.
\]
On the one hand, $\mathring{u}$ should be eigenvalue of $A_{1}(U)$. In view of \eqref{eq: initial matching}, we have 
\[
U\big(0, (v_{r}^{1}+c_{r})(0, x_{2}), x_{2}\big)=U_{r}(0, x_{2}), x_{2}\in \mathbb{T}. 
\]
and $\mathring{u}$ must be the third eigenvalue of $A_{1}(U)$,
\[
\mathring{u}=\lambda_{3}(U)=v^{1}+c.
\]
On the other hand, $\frac{\partial U}{\partial \mathring{u}}$ must be parallel to the third eigenvector $r_{3}:=\frac{\sqrt{2}}{2}(1, 1, 0)^{T}$, which implies that
\[
\frac{\partial}{\partial \mathring{u}}(\frac{2c}{\gamma-1}-v^{1})=0, \frac{\partial}{\partial \mathring{u}}(v^{2})=0.
\]
and the solution $U=(\frac{2c}{\gamma-1}, v^{1}, v^{2})^T$ must have the following simple form on initial slice $\Sigma_{0}$,
\begin{equation}\label{eq: data on t=0}
\begin{cases}
\frac{2}{\gamma-1}c(0, \mathring{u}, x_{2})=\frac{2}{\gamma-1}\cdot \Big( c_{r}(0, x_{2})+\frac{\gamma-1}{\gamma+1}\big(\mathring{u}-(v_{r}^{1}+c_{r})(0, x_{2})\big)\Big),\\
v^{1}(0, \mathring{u}, x_{2})=v_{r}^{1}(0, x_{2})+\frac{2}{\gamma+1}\big(\mathring{u}-(v_{r}^{1}+c_{r})(0, x_{2})\big),\\
v^{2}(0, \mathring{u}, x_{2})=v_{r}^{2}(0, x_{2}).
\end{cases}
\end{equation}
Therefore, we have both data of $U$ on initial slice $\Sigma_{0}$ and data of $U$ on $C_{0}$. We will show that they are \textbf{compatible on $\Sigma_{0}\cap C_{0}$ up to highest order}, see corollary \ref{cor: compatibility on S_0,0}. Moreover, all the jets of $U$ along $C_{0}$ can be determined by Solving transport ODEs along $C_{0}$ and we refer to section \ref{sec: normal jets} for details.

In view of \eqref{eq: data on t=0}, the intersection of vacuum boundary $C_{\textbf{vacuum}}$ and initial slice $\Sigma_{0}$ can also be determined,
\[
S_{0, \textbf{vacuum}}:=C_{\textbf{vacuum}}\cap \Sigma_{0}:=\{(0, \mathring{u}, x_{2}), c_{r}(0, x_{2})+\frac{\gamma-1}{\gamma+1}\big(\mathring{u}-(v_{r}^{1}+c_{r})(0, x_{2})\big)=0\}.
\] 

Moreover, we can solve $(\partial_{t}^{k}c, \partial_{t}^{k}v^{1}, \partial_{t}^{k}v^{2})(0, \mathring{u}, x_{2})$ (Taking derivative with respect to $t$ and setting $t=0$) for $k\geq 1$ inductively. In particular, we have obtained 
\begin{equation}\notag
\begin{cases}
\Vr_{0}(\mathring{u}, x_{2}):=U(0, \mathring{u}, x_{2}),\\
\Vr_{1}(\mathring{u}, x_{2}):=\partial_{t}U(0, \mathring{u}, x_{2}),\\
\Vr_{2}(\mathring{u}, x_{2}):=\partial_{t}^{2}U(0, \mathring{u}, x_{2}).
\end{cases}
\end{equation}
which
motivates us to construct approximate solution as follows,
\begin{equation}\notag
\begin{cases}
\Ur_{0}(t, \mathring{u}, x_{2}):=\Vr_{0}(\mathring{u}, x_{2}),\\
\Ur_{1}(t, \mathring{u}, x_{2}):=\Vr_{0}(\mathring{u}, x_{2})+t\cdot \Vr_{1}(\mathring{u}, x_{2}).
\end{cases}
\end{equation}
The properties of $\Vr_{1}, \Vr_{2}$ will be detailed in section \ref{sec: high order approximate solution}, which are summarized as propositions \ref{prop: initial energy bounds on Sigma_0},\ref{prop: Res bounds of Ur_k} and \ref{prop: matching of U and Ur_k along C_0}. We also refer to remarks \ref{rem: remark 4.1} and \ref{rem: remark 4.2} on discussion of interesting behaviors of $\Vr_{1}, \Vr_{2}$ near \textbf{vacuum}.

\subsection{Characteristic geometry and energy in the opened fan}\label{subsec: opened geometry}
The identities used later are the geometric transport system
\eqref{eq: formulas to control the geometry} and the energy identity
\eqref{eq: linear energy estimate in t ur x_2}. We derive them in
the opened coordinates to keep track of the factors of $t$ at the
initial slice. The choice of characteristic foliation remains general
until Section~\ref{subsec: two charts}. 

\subsubsection{Transport equations in the opened coordinates}\enlargethispage{6pt}
We first allow a general initial phase $u=-\urr+h(x_2)$.
The formulas in this subsection will be used for both choices of $h$
in Section~\ref{subsec: two charts}; the geometric fields are
reconstructed for each choice.
Recall that in coordinate $(t, \mathring{u}, x_{2})$, Euler equations take the form,
\[
\frac{\partial U}{\partial t}+\frac{1}{t}\big( A_{1}(U)-\mathring{u} \big)\frac{\partial U}{\partial \mathring{u}}+A_{2}(U)\partial_{x_2}U=0.
\]
The maximal eigenvalue $\lambda_{3}(U, \xi_{1}, \xi_{2})$ associate with direction $(\xi_{1}, \xi_{2})$ is given by
\[
\begin{split}
\lambda_{3}(U, \xi_{1}, \xi_{2})&=\frac{1}{t}(v^{1}-\mathring{u})\xi_{1}+v^{2}\xi_{2}+c\sqrt{\frac{\xi_{1}^{2}}{t^{2}}+\xi_{2}^{2}}\\
&=\frac{v^{1}+c-\mathring{u}}{t}\xi_{1}+v^{2}\xi_{2}
  +\frac{ct\xi_{2}^{2}}{\sqrt{\xi_{1}^{2}+t^{2}\xi_{2}^{2}}+\xi_{1}},\qquad \xi_{1}>0.
\end{split}
\]
For $t>0$ the two expressions agree; the second gives the regular extension to $t=0$. It is regular in $(t, \mathring{u}, x_{2})$ since $\mathring{u}=v^{1}+c$ on initial slice $\Sigma_{0}$ whenever $(\xi_{1}, \xi_{2})\approx (1, 0)$ and $c>0$. Let $u$ be a function defined on the initial slice $\Sigma_{0}$ of form
\[
u=-\mathring{u}+h(x_{2}),\ |\frac{d^{k}h}{dx_{2}^{k}}|\leq C\varepsilon,\ 1\leq k\leq \Ntop.
\]
whose level sets\footnote{The level set of $u$ is almost parallel to the level set of $\mathring{u}$.} give a foliation of the initial slice $\Sigma_{0}$. We can construct characteristic surfaces associated with initial foliation of $u$ by solving Eikonal equation,
\[
\partial_{t}u-\lambda_{3}(U, -\partial_{\mathring{u}}u, -\partial_{x_2}u)=0.
\]
Here $\partial_t$ is taken at fixed $(\mathring{u},x_2)$, and
$(\tau,\xi_1,\xi_2)=(-\partial_tu,-\partial_{\mathring{u}}u,-\partial_{x_2}u)$
are the components of $d(-u)$ in this chart. With $H=\tau+\lambda_3(U,\xi_1,\xi_2)$,
where $\lambda_3$ is the symbol displayed above, the Hamilton--Jacobi approach gives the ODE system
\begin{equation}\label{eq: singular HJ}
\begin{cases}
\frac{dt}{ds}=\frac{\partial H}{\partial \tau}=1,\\
\frac{d\mathring{u}}{ds}=\frac{\partial H}{\partial \xi_{1}}=\frac{1}{t}\big( v^{1}-\mathring{u}+c\cdot\frac{\xi_{1}}{\sqrt{(\xi_{1})^{2}+t^{2}(\xi_{2})^{2}}} \big),\\
\frac{d x_{2}}{ds}=\frac{\partial H}{\partial \xi_{2}}=v^{2}+c\cdot \frac{t\xi_{2}}{\sqrt{(\xi_{1})^{2}+t^{2}(\xi_{2})^{2}}},\\
\frac{d\tau}{ds}=-\frac{\partial H}{\partial t},\\
\frac{d\xi_{1}}{ds}=-\frac{\partial H}{\partial \mathring{u}}=-\frac{1}{t}(\frac{\partial v^{1}}{\partial \mathring{u}}-1)\cdot \xi_{1}-\frac{\partial v^{2}}{\partial \mathring{u}}\cdot \xi_{2}-\frac{\partial c}{\partial \mathring{u}}\cdot \frac{\sqrt{(\xi_{1})^{2}+t^{2}(\xi_{2})^{2}}}{t},\\
\frac{d\xi_{2}}{ds}=-\frac{\partial H}{\partial x_{2}}=-\frac{1}{t}\frac{\partial v^{1}}{\partial x_2}\cdot \xi_{1}-\frac{\partial v^{2}}{\partial x_2}\cdot \xi_{2}-\frac{\partial c}{\partial x_2}\cdot \frac{\sqrt{(\xi_{1})^{2}+t^{2}(\xi_{2})^{2}}}{t}.
\end{cases}
\end{equation}
with initial data 
\begin{equation}\notag
\begin{cases}
t(0)=0, \mathring{u}(0)=y_{1}, x_{2}(0)=y_{2},\\
\xi_{1}(0)=1, \xi_{2}(0)=-\frac{dh}{dx_{2}}(y_{2}),\\
\tau(0)=-\partial_t(v^{1}+c)(0,y_{1},y_{2})
         +v^{2}(0,y_{1},y_{2})\frac{dh}{dx_{2}}(y_{2}).
\end{cases} 
\end{equation}
The value of $\tau(0)$ is $-\lambda_3|_{t=0}$, with $\lambda_3$ evaluated by the regular extension above. Therefore, the local existence of $u$ in coordinate $(t, \mathring{u}, x_{2})$ is ensured. The projection of generators of bi-characteristics onto tangent vector fields on $[0, t^{*}]\times [-\frac{4}{\gamma-1}\mathbf{c}_{*}, 2\mathbf{c}_{*}]\times \mathbb{T}$ is given by
\begin{equation}\notag
L=\frac{\partial}{\partial t}+\frac{1}{t}\big( v^{1}-\mathring{u}+c\cdot\frac{\xi_{1}}{\sqrt{\xi_{1}^{2}+t^{2}\xi_{2}^{2}}} \big)\frac{\partial}{\partial \mathring{u}}+\big( v^{2}+c\cdot \frac{t\xi_{2}}{\sqrt{\xi_{1}^{2}+t^{2}\xi_{2}^{2}}} \big)\frac{\partial}{\partial x_{2}}
\end{equation}
For $t>0$, the physical spatial gradient is $\nabla_xu=(-\xi_1/t,-\xi_2)$. Thus the definitions $\hat{T}=\nabla_xu/|\nabla_xu|$, $\kappa=|\nabla_xu|^{-1}$ and $T=\kappa\hat{T}$ take the following forms. We study the transport equation of geometric quantities $\hat{T}^{1}, \hat{T}^{2}$ and $\kappa$\footnote{In micro viewpoint, we can also study the transport equation for $\hat{\tilde{T}}^{i}:=-\frac{\xi_{i}}{|\xi|}$ and $\tilde{\kappa}:=\frac{1}{|\xi|}$ providing that $U$ is known. However, we found it easier to derive a priori estimate using $\hat{T}^{1}, \hat{T}^{2}, \kappa$ rather than $\hat{\tilde{T}}^{1}, \hat{\tilde{T}}^{2}, \tilde{\kappa}$.},
\begin{equation}\notag
\begin{cases}
(\hat{T}^{1}, \hat{T}^{2})=(-\hat{X}^{2}, \hat{X}^{1}):=\big(-\frac{\xi_{1}}{\sqrt{(\xi_1)^{2}+t^{2}(\xi_2)^{2}}}, -\frac{t\xi_{2}}{\sqrt{(\xi_1)^{2}+t^{2}(\xi_2)^{2}}}\big),\\
\kappa:=\frac{t}{\sqrt{(\xi_1)^{2}+t^{2}(\xi_2)^{2}}},\\
T:=\frac{\kappa\hat{T}^{1}}{t}\cdot \frac{\partial}{\partial \mathring{u}}+\kappa \hat{T}^{2}\cdot \frac{\partial}{\partial x_2},\\
\hat{X}:=\frac{\hat{X}^{1}}{t}\cdot \frac{\partial}{\partial \mathring{u}}+\hat{X}^{2}\frac{\partial}{\partial x_2}.
\end{cases}
\end{equation}
Using \eqref{eq: singular HJ}, we obtain
\begin{equation}\label{eq: final form of transport equation}
\begin{cases}
L=\frac{\partial}{\partial t}+\frac{1}{t}\big( v^{1}-\mathring{u}-c\hat{T}^{1} \big)\frac{\partial}{\partial \mathring{u}}+\big( v^{2}-c\hat{T}^{2} \big)\frac{\partial}{\partial x_2},\\
L(\hat{T}^{i})=\big( \hat{X}(c)+\hat{T}^{j}\hat{X}(\psi_{j}) \big)\hat{X}^{i},\\
L(\kappa)=-T(c)-\hat{T}^{j}T(\psi_{j}).
\end{cases}
\end{equation}
with initial data
\begin{equation}\notag
\begin{cases}
\hat{T}^{1}(0, \mathring{u}, x_2)=-1, \hat{T}^{2}(0, \mathring{u}, x_2)=0,\\
\kappa(0, \mathring{u}, x_2)=0.
\end{cases}
\end{equation}
Note that \eqref{eq: final form of transport equation} can be viewed as the blowing up version of \eqref{eq: transport eq for geometry}. 

We also have commutators formulas,
\begin{equation}\notag
\begin{cases}
[T, \hat{X}]=-\kappa\theta\cdot \hat{X},\ \ \theta:=-\hat{X}(\hat{X}^{2})\hat{X}^{1}+\hat{X}(\hat{X}^{1})\hat{X}^{2},\\
[L, \hat{X}]=-\chi\cdot \hat{X},\ \ \chi:=-\hat{X}^{j}\hat{X}(\psi_{j})+c\hat{X}(\hat{X}^{2})\hat{X}^{1}-c\hat{X}(\hat{X}^{1})\hat{X}^{2},\\
[L, T]=-(\zeta+\eta)\cdot \hat{X},\ \ (\zeta+\eta):=-\kappa\hat{T}^{j}\hat{X}(\psi_{j})-\hat{X}^{j}T(\psi_{j})-\kappa\hat{X}(c)+c\hat{X}(\kappa).
\end{cases}
\end{equation}
which can be viewed as the blowing up version of \eqref{eq: commutators t x_1 x_1}.

To parametrize $S_{t, u}=\Sigma_{t}\cap C_{u}$, we also need the third parameter $\vartheta$ defined by
\begin{equation}\notag
\begin{cases}
L(\vartheta)=0,\\
\vartheta(0, \mathring{u}, x_{2})=x_{2}.
\end{cases}
\end{equation}
which can be viewed as the blowing up version of \eqref{eq: angular coordinate in t x_1 x_2}.

Then we have a new coordinate system $(t, u, \vartheta)$. Setting $\slashed{g}$ and $\Xi$ such that
\begin{equation}\notag
\begin{cases}
\frac{\partial}{\partial \vartheta}=\sqrt{\slashed{g}}\cdot \hat{X},\\
\frac{\partial}{\partial u}=T+\Xi\sqrt{\slashed{g}}\cdot \hat{X}.
\end{cases}
\end{equation}
which can be viewed as the blowing up version of \eqref{eq: expression of T and Xh}.

Then $\slashed{g}$ and $\Xi$ satisfy the transport equation along $L$ as follows,
\begin{equation}\notag
\begin{cases}
L(\slashed{g})=2\chi\cdot \slashed{g},\\
\slashed{g}(0, \mathring{u}, x_2)=1,\\
L(\Xi)=\frac{1}{\sqrt{\slashed{g}}}(\zeta+\eta),\\
\Xi(0, \mathring{u}, x_{2})=0.
\end{cases}
\end{equation}
which can be viewed as the blowing up version of \eqref{eq: tansport for g and Xi}. 

To summarize, we have
\begin{equation}\label{eq: formulas to control the geometry}
\begin{cases}
L=\frac{\partial}{\partial t}+\frac{1}{t}\big( v^{1}-\mathring{u}-c\hat{T}^{1} \big)\frac{\partial}{\partial \mathring{u}}+\big( v^{2}-c\hat{T}^{2} \big)\frac{\partial}{\partial x_2},\\
L(\hat{T}^{i})=\big( \hat{X}(c)+\hat{T}^{j}\hat{X}(\psi_{j}) \big)\hat{X}^{i},\\
\hat{T}^{1}(0, \mathring{u}, x_2)=-1, \hat{T}^{2}(0, \mathring{u}, x_2)=0,\\
L(\kappa)=-T(c)-\hat{T}^{j}T(\psi_{j}),\\
\kappa(0, \mathring{u}, x_{2})=0,\\
[T, \hat{X}]=-\kappa\theta\cdot \hat{X},\ \ \theta:=-\hat{X}(\hat{X}^{2})\hat{X}^{1}+\hat{X}(\hat{X}^{1})\hat{X}^{2},\\
[L, \hat{X}]=-\chi\cdot \hat{X},\ \ \chi:=-\hat{X}^{j}\hat{X}(\psi_{j})+c\hat{X}(\hat{X}^{2})\hat{X}^{1}-c\hat{X}(\hat{X}^{1})\hat{X}^{2},\\
[L, T]=-(\zeta+\eta)\cdot \hat{X},\ \ (\zeta+\eta):=-\kappa\hat{T}^{j}\hat{X}(\psi_{j})-\hat{X}^{j}T(\psi_{j})-\kappa\hat{X}(c)+c\hat{X}(\kappa),\\
L(\vartheta)=0,\\
\vartheta(0, \mathring{u}, x_{2})=x_{2},\\
\frac{\partial}{\partial \vartheta}=\sqrt{\slashed{g}}\cdot \hat{X},\\
\frac{\partial}{\partial u}=T+\Xi\sqrt{\slashed{g}}\cdot \hat{X},\\
L(\slashed{g})=2\chi\cdot \slashed{g},\\
\slashed{g}(0, \mathring{u}, x_2)=1,\\
L(\Xi)=\frac{1}{\sqrt{\slashed{g}}}(\zeta+\eta),\\
\Xi(0, \mathring{u}, x_{2})=0.
\end{cases}
\end{equation}

We also record the form of Euler equations in frame $\{L, \hat{T}, \hat{X}\}$ in terms of both $(c, v^{1}, v^{2})$ and Riemann invariants $(\wb, w, \psi_{2})$,
\begin{equation}\notag
\begin{cases}
L (\frac{2}{\gamma-1}c) &= -c \widehat{T}(\frac{2}{\gamma-1}c)+c \widehat{T}(\psi_i)\widehat{T}^i +c\Xh(\psi_i)\Xh^i,\\
L (\psi_1) &= -c \widehat{T}(\psi_1)+\frac{2}{\gamma-1}c \widehat{T}(c)\widehat{T}^1+\frac{2}{\gamma-1}c\Xh(c)\Xh^1,\\
L (\psi_2) &= -c \widehat{T}(\psi_2)+\frac{2}{\gamma-1}c \widehat{T}(c)\widehat{T}^2+\frac{2}{\gamma-1}c\Xh(c)\Xh^2.
\end{cases}
\end{equation}
and
\begin{equation}\label{Euler equations:form 2}
\begin{cases}
L (\wb) &= -c \widehat{T}(\wb)(\widehat{T}^1+1)+\frac{1}{2}c \widehat{T}(\psi_2)\widehat{T}^2 +\frac{1}{2}c \Xh(\psi_2)\Xh^2-c\Xh(\wb)\Xh^1,\\
L (w) &= 	c \widehat{T}(w)(\widehat{T}^1-1)	+\frac{1}{2}c \widehat{T}(\psi_2)\widehat{T}^2 +c\Xh(w)\Xh^1+\frac{1}{2}c\Xh(\psi_2)\Xh^2,\\
L (\psi_2) &= -c \widehat{T}(\psi_2)+c\widehat{T}( w+\wb)\widehat{T}^2+c\Xh( w+\wb)
\Xh^2.
\end{cases}
\end{equation}
where we have used the conventions
\[
\begin{cases}
\wb=\frac{c}{\gamma-1}+\frac{v^{1}}{2}, w=\frac{c}{\gamma-1}-\frac{v^{1}}{2},\\
\psi_{1}=-v^{1}, \psi_{2}=-v^{2}.
\end{cases}
\]

\subsubsection{The energy identity in the opened fan}\enlargethispage{2pt}

Taking the Euclidean inner product with $2tV$\footnote{The power of $t$ is choose to be $1$ to ensure that we can obtain the right blowing up version of \eqref{eq: linear energy estimate in t x_1 x_2}.} on both sides of
\[
\frac{\partial V}{\partial t}+\frac{1}{t}\big( A_{1}(U)-\mathring{u} \big)\frac{\partial V}{\partial \mathring{u}}+A_{2}(U)\partial_{x_2}V=F.
\]

and we obtain that
\[
\begin{split}
&\frac{\partial}{\partial t} \big(t\cdot (V, V)  \big)+\frac{\partial}{\partial \mathring{u}}\Big( V, \big(A_{1}(U)-\mathring{u}\big)V \Big)+t\cdot \frac{\partial}{\partial x_2}\big( V, A_{2}(U)V \big)=\\
&2t\cdot (V, F)+(V, V)+\big( V, \frac{\partial A_{1}(U)}{\partial U^{m}}V \big)\frac{\partial U^{m}}{\partial \mathring{u}}-(V, V)+t\cdot \big( V, \frac{\partial A_{2}(U)}{\partial U^{m}}V \big)\frac{\partial U^{m}}{\partial x_{2}}.
\end{split}
\]

Integrating by parts on the region
\[
\mathcal{D}(t; u_{1}, u_{2}):=\{(t', u, \vartheta)\in [0, t]\times [u_{1}, u_{2}]\times \mathbb{T}\},
\]
we have
\begin{equation}\label{eq: linear energy estimate in t ur x_2}
\begin{split}
&\mathcal{E}(V)(t; u_{1}, u_{2})+\mathcal{F}(V)(0, t; u_{2})=\mathcal{E}(V)(0; u_{1}, u_{2})+\mathcal{F}(V)(0, t; u_{1})\\
&+\int_{0}^{t}\int_{u_{1}}^{u_{2}}\int_{\mathbb{T}}\Big( (V, \frac{\partial A_{1}}{\partial U^{m}}(U)V)\frac{\partial U^{m}}{\partial \mathring{u}}+t'(V, \frac{\partial A_{2}}{\partial U^{m}}(U)V)\frac{\partial U^{m}}{\partial x_2}\Big)\frac{\kappa}{t'} \sqrt{\slashed{g}} dt' du d\vartheta\\
&+\int_{0}^{t}\int_{u_{1}}^{u_{2}}\int_{\mathbb{T}} 2t'\cdot (V, F) \frac{\kappa}{t'}\sqrt{\slashed{g}} dt' du d\vartheta.
\end{split}
\end{equation}
where 
\begin{equation}\notag
\begin{cases}
\mathcal{E}(V)(t; u_{1}, u):=\int_{u_{1}}^{u} \int_{\mathbb{T}} t\cdot (V, V)(t, u', \vartheta) \frac{\kappa}{t} \sqrt{\slashed{g}} du' d\vartheta,\\
\mathcal{F}(V)(0, t; u):=\int_{0}^{t} \int_{\mathbb{T}} \Big(2c(V, \alpha_{1})^{2}+c(V, \alpha_{2})^{2}\Big)(t', u, \vartheta) \sqrt{\slashed{g}} dt' d\vartheta.
\end{cases}
\end{equation}
Here $\psi^{(\hat T)}:=\psi_1\hat T^1+\psi_2\hat T^2=-v\cdot\hat T$.
The following eigenpairs use the physical symbol of
$A_1(U)(-\hat T^1)+A_2(U)(-\hat T^2)$; thus
$\lambda_j=\lambda_j(U,-\hat T)$ and $\alpha_j=r_j(U,-\hat T)$:
\begin{equation}\notag
\begin{cases}
\lambda_{1}=\psi^{(\hat{T})}-c, \alpha_{1}=\frac{\sqrt{2}}{2}(1, \hat{T}^{1}, \hat{T}^{2})^T,\\
\lambda_{2}=\psi^{(\hat{T})}, \alpha_{2}=(0, \hat{X}^{1}, \hat{X}^{2})^T,\\
\lambda_{3}=\psi^{(\hat{T})}+c, \alpha_{3}=\frac{\sqrt{2}}{2}(1, -\hat{T}^{1}, -\hat{T}^{2})^T.
\end{cases}
\end{equation}

\begin{remark}
On the one hand, in {\bf away from vacuum case}, $c\approx \mathbf{c}_{*}$, since
\[
(V, \alpha_{1})^{2}=\frac{1}{2}\big(V, \begin{pmatrix}
1&\hat{T}^{1}&\hat{T}^{2}\\
\hat{T}^{1}&(\hat{T}^{1})^{2}&\hat{T}^{1}\hat{T}^{2}\\
\hat{T}^{2}&\hat{T}^{1}\hat{T}^{2}&(\hat{T}^{2})^{2}
\end{pmatrix}V\big),  (V, \alpha_{2})^{2}=\big(V, \begin{pmatrix}
0&0&0\\
0&(\hat{X}^{1})^{2}&\hat{X}^{1}\hat{X}^{2}\\
0&\hat{X}^{1}\hat{X}^{2}&(\hat{X}^{2})^{2}
\end{pmatrix}V\big)
\]
and $\hat{T}^{1}+1=1-\hat{X}^{2}=O(\varepsilon^{2}t^{2}), \hat{T}^{2}=\hat{X}^{1}=O(\varepsilon t)$, the fluxes can be exploited to control dangerous linear terms 
\[
\big(V, \begin{pmatrix}
1&-1&0\\
-1&1&0\\
0&0&0
\end{pmatrix}V\big)
\]
produced by commutating $\Tr=-\frac{\partial}{\partial \mathring{u}}$ with Euler equations, which plays the role of \textbf{null forms} and makes it possible to control all the derivatives up to highest order in \textbf{away from vacuum case}. We refer to section \ref{sec: energy estimates} for details.

On the other hand, in \textbf{up to vacuum case}, the fluxes suffers degeneration in $c$ which makes it useless to control the dangerous linear terms above produced by commutating $\Tr=-\frac{\partial}{\partial \mathring{u}}$ with Euler equations. We refer to section \ref{sec: energy estimates} for details.
\end{remark}

\subsection{Two overlapping characteristic charts}\label{subsec: two charts}
The same characteristic identities are used with two different initial
foliations. The first makes the exterior boundary $C_0$ a coordinate
surface and supports the higher-regularity estimates away from vacuum.
The second makes the initial sound speed a function of its normal
coordinate alone and supports estimates uniform as the vacuum cutoff
is removed. Their coordinate ranges are
\[
\begin{aligned}
\mathcal W_1:\quad &(t,u,\vartheta)\in
 [0,t^*]\times[0,u^*]\times\mathbb T,\\
\mathcal W_{2,\alpha}:\quad &(t,\tilde u,\vartheta)\in
 [0,t^*]\times[\tilde u_*,\tilde u^*]\times\mathbb T,
 \qquad 0<\alpha<1/100.
\end{aligned}
\]
For fixed $\alpha>0$, their union stops short of vacuum.
The union over the cutoffs covers the gas part of the full fan;
the vacuum boundary is recovered as a limiting boundary.
The overlap supplies the data needed to pass from the first estimate
to the second.

The symbols $L,T,\hat T,\hat X,\kappa,\slashed g,\Xi,\vartheta$ are
used for the fields of the chart under discussion. In particular,
$L\vartheta=0$ is solved separately in each chart. The two sets of
fields are not identified on the overlap. Comparisons there use their
maps to the common coordinates $(t,\urr,x_2)$; see
Section~\ref{subsec: notation conventions}.

\subsubsection{The chart attached to the exterior flow}
 The intersection of $C_{0}$ and $\Sigma_{0}$ in coordinate $(t, \mathring{u}, x_{2})$ is given by
\[
\{ (0, \mathring{u}, x_{2}): \mathring{u}=(v_{r}^{1}+c_{r})(0, x_{2}) \}.
\]
which motivates us to define $(u, \vartheta)$ as follows,
\begin{equation}\label{eq: first characteristic coordinate}
\begin{cases}
Lu=0,\\
u(0, \mathring{u}, x_{2})=-\big(\mathring{u}-(v_{r}^{1}+c_{r})(0, x_{2})\big),\\
L(\vartheta)=0,\\
\vartheta(0, \mathring{u}, x_{2})=x_{2}.
\end{cases}
\end{equation}
The minus sign makes $u$ increase from $C_0$ into the fan.
The initial level $u=0$ generates $C_0$, so $C_0=\{u=0\}$.
The transport and energy identities
\eqref{eq: formulas to control the geometry} and
\eqref{eq: linear energy estimate in t ur x_2} apply with this phase.

\begin{tikzpicture}[scale=1.1, >=stealth]
\draw[black!80, line width=0.8pt] (0,0) 
    -- (7.2,0) 
    -- (7.2,2) 
    -- (0,2) 
    -- cycle;

\draw[->, black!80, line width=0.6pt] (0,-0.4) -- (7.5,-0.4);
\node at (7.7,-0.4) {$\mathring{u}$};
\draw[->, black!80, line width=0.6pt] (-0.4,0) -- (-0.4,2.3);
\node at (-0.4,2.5) {$x_2$};

\foreach \x in {0.3,1.1,1.9,2.7,3.5,4.3,5.1,5.9,6.7}{
    \draw[dashed, gray!70, line width=0.55pt] (\x,0) -- (\x,2);
}

\foreach \xb in {2.52,3.12,3.72,4.32,4.92,5.52}{
    \draw[dashed, blue!70, line width=0.55pt] plot[smooth] coordinates {
        (\xb,0) (\xb-0.09,0.5) (\xb-0.12,1) (\xb-0.09,1.5) (\xb,2)
    };
}

\draw[blue!80, line width=0.7pt] plot[smooth] coordinates {
    (5.88,0) (5.79,0.5) (5.76,1) (5.79,1.5) (5.88,2)
};
\node[blue!80, above] at (5.88,2.15) {\small $C_0$};

\node at (3.6,2.5) {\small $t=0\quad (\Sigma_0)$};
\end{tikzpicture}

Recall that on initial slice $\Sigma_{0}$, the solution $U$ takes the form
\[
U(0, u, \vartheta)=\begin{pmatrix}
\frac{2}{\gamma-1}\cdot \big( c_{r}(0, \vartheta)-\frac{\gamma-1}{\gamma+1}u \big)\\
v^{1}_{r}(0, \vartheta)-\frac{2}{\gamma+1}u\\
v^{2}_{r}(0, \vartheta)
\end{pmatrix}
\]
in coordinate $(u, \vartheta)$. The intersection $C_{\textbf{vacuum}}\cap \Sigma_{0}$ is given by
\[
\{(0, u, \vartheta), u=\frac{\gamma+1}{\gamma-1}c_{r}(0, \vartheta)\}.
\]
which means $u$ does not align with the $C_{\textbf{vacuum}}$. Therefore, we restrict the range of $u$ in $[0, u^{*}]$,  
\[
u^{*}:=\frac{1}{2}\frac{\gamma+1}{\gamma-1}\mathbf{c}_{*},
\]
Then we will show that the sound speed $c$ is strictly bounded from below and above,
\[
\frac{1}{4} \mathbf{c}_{*}\leq c(t, u, \vartheta)\leq 2\mathbf{c}_{*}. 
\]
in the region $\mathcal{W}_{1}$ defined as follows,
\begin{equation}\notag
\mathcal{W}_{1}:=\{(t, u, \vartheta)\in[0, t^{*}]\times [0, u^{*}]\times \mathbb{T}\}.
\end{equation}
and we remark that $\mathcal{W}_{1}$ is foliated by $C_{u}$ as follows,
\[
\mathcal{W}_{1}=\cup_{u\in [0, u^{*}]}C_{u}.
\]
\begin{remark}
The choice of $(t, u, \vartheta)$ for \textbf{away from vacuum case} essentially coincides with Yu-Luo's last slice argument where they have constructed $(u_{\delta}, \vartheta_{\delta})$ and obtained $\delta$ independent estimates as $\delta\rightarrow 0$.
\end{remark}

\subsubsection{The chart adapted to the vacuum limit}

The intersection $C_{\textbf{vacuum}}\cap \Sigma_{0}$ is given by
\[
\{(0, u, \vartheta), u=\frac{\gamma+1}{\gamma-1}c_{r}(0, \vartheta)\}.
\]
We therefore initialize the second phase by
$\tilde u=-\urr+(v_r^1-2c_r/(\gamma-1))(0,x_2)$.
The resulting system is
\begin{equation}\label{eq: second characteristic coordinate}
\begin{cases}
L(\tilde{u})=0,\\
\tilde{u}(0, \mathring{u}, x_{2})=-\frac{\gamma+1}{\gamma-1}c_{r}(0, x_{2})-\big( \mathring{u}-(v_{r}^{1}+c_{r})(0, x_{2}) \big)\\
L(\vartheta)=0,\\
\vartheta(0, \mathring{u}, x_{2})=x_{2}.
\end{cases}
\end{equation}
On $\Sigma_0$ this choice gives the exact relation
$c(0,\tilde u,\vartheta)=-(\gamma-1)\tilde u/(\gamma+1)$.
Here and in \eqref{eq: second characteristic coordinate}, $L$ is
constructed from $\tilde u$ by the Hamiltonian equations, rather than
being the already constructed generator of the first chart.
We restrict the range of $\tilde u$ to $[\tilde u_*,\tilde u^*]$, 
\[
\begin{cases}
\tilde{u}^{*}=-\alpha\cdot \frac{\gamma+1}{\gamma-1}\mathbf{c}_{*},\\
\tilde{u}_{*}=-\frac{3}{4}\cdot \frac{\gamma+1}{\gamma-1}\mathbf{c}_{*}.
\end{cases} \ \ \alpha\in (0, \frac{1}{100})
\]

On the one hand, on the initial slice $\Sigma_{0}$, the curve $\tilde{u}=\tilde{u}_{*}$ is given by
\[
u=\frac{1}{4}\frac{\gamma+1}{\gamma-1}\mathbf{c}_{*}+\frac{\gamma+1}{\gamma-1}\big( c_{r}(0, x_{2})-\mathbf{c}_{*} \big)\in [0, u^{*}]
\]
Thus the two initial coordinate strips have an open overlap.
The geometric estimates preserve this overlap for $0\leq t\leq t^*$,
and $C_{\tilde u_*}$ lies in the interior of $\mathcal W_1$. On the other hand, on the initial slice $\Sigma_{0}$, the sound speed takes value $\alpha\cdot \mathbf{c}_{*}$ along the curve $\tilde{u}=\tilde{u}^{*}$. Moreover, we will show that\footnote{Though $C_{\tilde{u}}$ is not level set of $c$, it is almost level set of $c$ and we refer to \ref{sec: Preparations for the energy estimate} for details.}
\[
-\frac{1}{2}\frac{\gamma-1}{\gamma+1}\tilde{u} \leq c(t, \tilde{u}, \vartheta) \leq -2\frac{\gamma-1}{\gamma+1}\tilde{u}
\]
in the region $\mathcal{W}_{2, \alpha}$ defined as follows,
\[
\mathcal{W}_{2, \alpha}:=\{(t, \tilde{u}, \vartheta), (t, \tilde{u}, \vartheta)\in [0, t^{*}]\times [\tilde{u}_{*}, \tilde{u}^{*}]\times \mathbb{T}\}.
\]
We remark that \eqref{eq: formulas to control the geometry} and \eqref{eq: linear energy estimate in t ur x_2} apply to this particular choice of $\tilde{u}$. Moreover, the limit of level set $C_{\tilde{u}^{*}}$ as $\alpha\rightarrow 0$,
\[
\lim_{\alpha\rightarrow 0} C_{\tilde{u}^{*}}
\]
aligns with the vacuum boundary $C_{\textbf{vacuum}}$. 

\begin{tikzpicture}[scale=1.1, >=stealth]
\draw[black!80, line width=0.8pt] (0,0) 
    -- (7.2,0) 
    -- (7.2,2) 
    -- (0,2) 
    -- cycle;

\draw[->, black!80, line width=0.6pt] (0,-0.4) -- (7.5,-0.4);
\node at (7.7,-0.4) {$\mathring{u}$};
\draw[->, black!80, line width=0.6pt] (-0.4,0) -- (-0.4,2.3);
\node at (-0.4,2.5) {$x_2$};

\foreach \x in {0.3,1.1,1.9,2.7,3.5,4.3,5.1,5.9,6.7}{
    \draw[dashed, gray!70, line width=0.55pt] (\x,0) -- (\x,2);
}

\foreach \xb in {1.44,2.04,2.64,3.24,3.96}{
    \draw[dashed, red!70, line width=0.55pt] plot[smooth] coordinates {
        (\xb,0) (\xb+0.07,0.5) (\xb+0.09,1) (\xb+0.07,1.5) (\xb,2)
    };
}

\draw[red!80, line width=0.7pt] plot[smooth] coordinates {
    (0.84,0) (0.91,0.5) (0.93,1) (0.91,1.5) (0.84,2)
};
\node[red!80, above] at (0.84,2.15) {\small $C_{\textbf{vacuum}}$};

\foreach \xb in {2.52,3.12,3.72,4.32,4.92,5.52}{
    \draw[dashed, blue!70, line width=0.55pt] plot[smooth] coordinates {
        (\xb,0) (\xb-0.09,0.5) (\xb-0.12,1) (\xb-0.09,1.5) (\xb,2)
    };
}

\draw[blue!80, line width=0.7pt] plot[smooth] coordinates {
    (5.88,0) (5.79,0.5) (5.76,1) (5.79,1.5) (5.88,2)
};
\node[blue!80, above] at (5.88,2.15) {\small $C_0$};

\node at (3.6,2.5) {\small $t=0\quad (\Sigma_0)$};
\end{tikzpicture}

\subsubsection{How the overlap enters the construction}
The proof first estimates $U-\Ur_k$ in $\mathcal W_1$.
Since $C_{\tilde u_*}$ is an interior hypersurface of that region,
these estimates give its traces and the incoming flux needed for
$\mathcal W_{2,\alpha}$. The data are compared through
$(t,\urr,x_2)$, where the commutators $\Tr,\Xr$ are fixed.
The second estimate is uniform in $\alpha$ and hence permits
$\tilde u^*\uparrow0$. The domain-of-dependence argument in
Section~\ref{subsec: proof main theorem} identifies the solutions on
overlaps and for nested cutoffs. This is the role of the two charts:
they organize exterior matching, the transfer of boundary estimates,
and the limiting construction at vacuum.

\subsection{Approximation bounds and the two bootstrap arguments}\label{subsec: bootstrap framework}
We use fixed spatial commutators in the opened coordinates,
so that the same differentiated remainder is available in both charts:
\begin{equation}\notag
\Xr:=\frac{\partial}{\partial x_2},\qquad
\Tr=-\frac{\partial}{\partial\mathring u}=-t\partial_{x_1}.
\end{equation}
We also set $\kappar=t$ and $\hat{\Tr}=-\partial_{x_1}$, so
$\Tr=\kappar\hat{\Tr}$ for $t>0$.
The quantities needed to start the bootstrap are listed next;
their proofs are in Sections~\ref{sec: high order approximate solution}
and~\ref{sec: normal jets}.
\begin{remark}
In $\mathcal{W}_{2, \alpha}$, the authors believe more natural choice of commuting vectors for $\Xr$ should be
\[
\Xr_{\textbf{new}}:=\frac{\partial}{\partial x_{2}}-\big( \frac{2}{\gamma-1}\partial_{x_2}c_{r}-\partial_{x_2}v_{r}^{1} \big)(0, x_{2})\frac{\partial}{\partial \mathring{u}}
\]
which aligns the level set of $c(0, \mathring{u}, x_{2})$ on initial slice $\Sigma_{0}$,
\[
\Xr_{\textbf{new}}\Big( c_{r}(0, x_{2})+\frac{\gamma-1}{\gamma+1}\big( \mathring{u}-(v_{r}^{1}+c_{r})(0, x_{2}) \big) \Big)=0
\]
since all potential singularities near the vacuum boundary has the form $c^{\alpha}\cdot \ln(c)^{\beta}$, see remark \ref{rem: remark 4.1} and remark \ref{rem: remark 4.2}. For the construction in this paper, we retain the fixed commutators
$\Xr=\frac{\partial}{\partial x_{2}}$ and
$\Tr=-\frac{\partial}{\partial\mathring{u}}$.
Together with the approximate solutions used below, they suffice
to close the estimates for
$\gamma\in(1,1.3]\cup\{\frac{7}{5},\frac{5}{3}\}$.
\end{remark}

\subsubsection{Initial coefficients, residuals, and boundary matching}
The approximate solutions $\Ur_{k}, k=0, 1$ enjoy good properties under the action of $\Zr\in \{\Tr, \Xr\}$. In fact, we have
\begin{proposition}\label{prop: initial energy bounds on Sigma_0}
In both \textbf{region away from vacuum} and \textbf{region near vacuum} (independent of $\alpha\in (0, \frac{1}{100})$), for $\gamma\in (1, 3)$, we have
\begin{equation}\notag
\begin{cases}
\lim_{t\rightarrow 0} \frac{(U-\Ur_{0})(t, \mathring{u}, x_{2})}{t}=\Vr_{1}(\mathring{u}, x_2),\\
\lim_{t\rightarrow 0} \frac{(U-\Ur_{1})(t, \mathring{u}, x_{2})}{t^{2}}=\frac{1}{2}\Vr_{2}(\mathring{u}, x_2).
\end{cases}
\end{equation}
Moreover, we have quantitative information higher order on Sobolev norms.
\begin{itemize}
\item In \textbf{region $\mathcal{W}_{1}$ away from vacuum}, for any $\gamma\in (1, 3), k=0, 1$ and $a+b\leq N$, 
\begin{equation}\notag
\lim_{t\rightarrow 0}\frac{\|\Tr^{a}\Xr^{b}(U-\Ur_{k})\|_{L^{2}(\Sigma_{t}^{0, u^{*}})}}{t^{k+1}}=\frac{1}{k+1}\|\Tr^{a}\Xr^{b}(\Vr_{k+1})\|_{L^{2}(\Sigma_{0}^{0, u^{*}})}\leq C\varepsilon.
\end{equation}
\item In \textbf{region $\mathcal{W}_{2, \alpha}$ near vacuum} (independent of $\alpha\in (0, \frac{1}{100})$), for $\gamma\in (1, 1.3]$ and $a+b\leq 4$,
\begin{equation}\notag
\lim_{t\rightarrow 0}\frac{\|\Tr^{a}\Xr^{b}(U-\Ur_{0})\|_{L^{2}(\Sigma_{t}^{\ut_{*}, \ut^{*}})}}{t^{1}}=\|\Tr^{a}\Xr^{b}(\Vr_{1})\|_{L^{2}(\Sigma_{0}^{\ut_{*}, \ut^{*}})}\leq C\varepsilon.
\end{equation}
\item In \textbf{region $\mathcal{W}_{2, \alpha}$ near vacuum} (independent of $\alpha\in (0, \frac{1}{100})$), for $\gamma\in \{\frac{7}{5}, \frac{5}{3}\}$ and $a+b\leq 4$,
\begin{equation}\notag
\lim_{t\rightarrow 0}\frac{\|\Tr^{a}\Xr^{b}(U-\Ur_{1})\|_{L^{2}(\Sigma_{t}^{\ut_{*}, \ut^{*}})}}{t^{2}}=\frac{1}{2}\|\Tr^{a}\Xr^{b}(\Vr_{2})\|_{L^{2}(\Sigma_{0}^{\ut_{*}, \ut^{*}})}\leq C\varepsilon.
\end{equation}
\end{itemize}
\end{proposition}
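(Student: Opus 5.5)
The plan has two parts. First, compute the initial coefficients $\Vr_1,\Vr_2$ explicitly and bound their $\Tr^a\Xr^b$ derivatives on $\Sigma_0$. Second, obtain the limit identities from a Taylor expansion in $t$ at fixed $(\urr,x_2)$.

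\emph{Computing $\Vr_1$ and $\Vr_2$.} Multiply the Euler system in the opened coordinates by $t$:
\[
t\partial_tU+\big(A_1(U)-\urr\big)\partial_{\urr}U+tA_2(U)\partial_{x_2}U=0 .
\]
Differentiate once and twice in $t$ and set $t=0$. Because $A_1$ is linear in $U$, the $m$-th coefficient satisfies
\[
m\Vr_m+\big(A_1(\Vr_0)-\urr\big)\partial_{\urr}\Vr_m+A_1(\Vr_m)\partial_{\urr}\Vr_0=G_m ,
\]
where $G_m$ is built from $\Vr_0,\dots,\Vr_{m-1}$ and their $x_2$-derivatives.

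Project this linear system onto the eigenbasis $r_1,r_2,r_3$ of $A_1(\Vr_0)$. On the fan $\urr=v^1+c$ holds along $\Vr_0$, so $A_1(\Vr_0)-\urr$ has eigenvalues $-2c$, $-c$, $0$.
\begin{itemize}
\item In the $r_3$ direction the system degenerates to an algebraic relation. It determines the $r_3$-component once the other components are known, after using $\partial_{\urr}\Vr_0=\frac{2}{\gamma+1}(1,1,0)^T$.
\item The remaining components satisfy scalar first-order ODEs in $\urr$, equivalently in $c$ since $\partial_{\urr}c=\frac{\gamma-1}{\gamma+1}$. These have the form $(c\partial_c-\mu)y=\text{source}$, with exponents $\mu$ that are explicit multiples of $\beta$ (namely $\beta$, $2\beta$, and shifts by $1$).
\item The initial values at $\urr=(v_r^1+c_r)(0,x_2)$ come from the jets along $C_0$. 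These are the transport ODEs along $C_0$ of Section~\ref{sec: normal jets}, for instance $L(v_r^2)+2c_r\partial_{x_2}w_r=c_r\omega_r$.
\end{itemize}
Solving by variation of constants gives $\Vr_1$ as a combination of $c$, $c^\beta$, $c^{2\beta}$ with coefficients that are smooth functions of the traces at $x_1=0$. It gives $\Vr_2$ with, in addition, the powers $c^{\beta-1}$ and the resonant term $c^{2\beta}\ln(c/c_r)$, which arises exactly as in the identity displayed in Section~\ref{subsubsec: profiles and logarithms}.

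\emph{Profile bounds.}
\begin{itemize}
\item In $\mathcal W_1$ the sound speed satisfies $c\geq\mathbf c_*/4$, so every term is smooth. Each coefficient is either small (a derivative of the data) or multiplied by a difference vanishing at the reference state, which gives $\|\Tr^a\Xr^b\Vr_{k+1}\|_{L^2}\leq C\varepsilon$ with $C$ absorbing the finite loss of derivatives from $N_0$ to $N$.
\item Near vacuum, $\Tr$ differentiates in $c$ and $\Xr$ only hits smooth coefficients. Hence four derivatives of $c^\mu$ give at worst $c^{\mu-4}$, which lies in $L^2(dc)$ near $0$ iff $\mu>7/2$.
\item For $k=0$ we need $\Vr_1$. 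Its worst exponent is $\beta$, and $\beta>7/2$ holds for $\gamma<4/3$, in particular for $\gamma\le 1.3$ where $\beta\ge 23/6$.
\item For $k=1$ and $\gamma\in\{\frac75,\frac53\}$ we need $\Vr_2$. Since $\beta\in\{3,2\}$ is an integer, $c^\beta$, $c^{2\beta}$ and $c^{\beta-1}$ are polynomials in $c$. The only nonsmooth term is $c^{2\beta}\ln c$, and with $2\beta\ge4$ its fourth $c$-derivative is $O(|\ln c|)\in L^2$.
\end{itemize}
All these bounds hold on the whole half-interval up to $c=0$, hence uniformly in $\alpha$.

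\emph{Taylor expansion in $t$.} Write
\[
U-\Ur_0=t\int_0^1\partial_tU(st)\,ds,\qquad
U-\Ur_1=t^2\int_0^1(1-s)\,\partial_t^2U(st)\,ds .
\]
Commute $\Tr^a\Xr^b$ inside the integrals; this is legitimate because these are fixed coordinate fields commuting with $\partial_t|_{\urr,x_2}$. The limits then follow once $\Tr^a\Xr^b\partial_t^{k+1}U(t)\to\Tr^a\Xr^b\partial_t^{k+1}U(0)=\Tr^a\Xr^b\Vr_{k+1}$ in $L^2$ on the relevant slices. The slices $\Sigma_t^{\ut_*,\ut^*}$ move with $t$; handle this by comparing them with $\Sigma_0$ through the chart maps, whose Jacobians tend to the identity.

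\emph{Main obstacle.} The hardest step is justifying this continuity at $t=0$ for the actual solution, uniformly up to vacuum. I would first try to obtain it from the bootstrap bounds $(\mathbf B_2)$ and $(\mathbf B_\infty)$: use the equation to express $\partial_t^{k+1}U$ through spatial derivatives, as in Section~\ref{subsubsec: time regularity theorem}. Then exploit that the solution is smooth in $(t,\urr,x_2)$ on each $\mathcal W_{2,\alpha}$, and pass to the limit with the $\alpha$-uniform profile bounds above controlling the region near $\ut^*$ by dominated convergence.
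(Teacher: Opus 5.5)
Your proposal follows the paper's route: Section~\ref{sec: high order approximate solution} obtains \eqref{eq: ODE of Vr_1} and \eqref{eq: ODE of Vr_2} by differentiating the $t$-multiplied opened Euler system at $t=0$. It solves them from the data at $C_0\cap\Sigma_0$ to exhibit the terms $c^{\beta}$, $c^{\beta-1}$ and $c^{2\beta}\ln(c/c_r)$, reads off the four-derivative $L^2$ bounds up to $c=0$ exactly as you do (Remarks~\ref{rem: remark 4.1} and~\ref{rem: remark 4.2}), and treats the limits as Taylor's theorem for $\Vr_j=\partial_t^jU|_{t=0}$.

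There are two small corrections. First, $\Xr=\partial_{x_2}$ at fixed $\urr$ does differentiate $c$, since $\partial_{x_2}c(0,\urr,x_2)=\partial_{x_2}\bigl(c_r-\tfrac{\gamma-1}{\gamma+1}(v_r^1+c_r)\bigr)(0,x_2)\neq0$ in general; this is harmless, because each derivative still lowers the power of $c$ by at most one. Second, you should not appeal to $(\mathbf B_2)$ or $(\mathbf B_\infty)$ for the continuity at $t=0$, because this proposition is what initializes those bootstrap assumptions. On each fixed $\mathcal W_{2,\alpha}$ the sound speed stays near its initial value $-\frac{\gamma-1}{\gamma+1}\tilde u\geq\alpha\mathbf c_*$ for small $t$, so the limit needs only the regularity of $U$ in $(t,\urr,x_2)$ there. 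Uniformity in $\alpha$ enters solely through the explicit bound on $\Vr_{k+1}$ over the whole interval up to $c=0$, so no dominated-convergence argument is required.
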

This proposition will be proved in section \ref{sec: high order approximate solution}. Moreover, we refer to Remarks~\ref{rem: remark 4.1} and~\ref{rem: remark 4.2} on the sharpness of the choice of $\Ntop=4$ for \textbf{region $\mathcal{W}_{2, \alpha}$ near vacuum} with $\gamma\in (1, 1.3]\cup \{\frac{7}{5}, \frac{5}{3}\}$.

As a byproduct of the proof of the proposition \ref{prop: initial energy bounds on Sigma_0} in section \ref{sec: high order approximate solution}, we have
\begin{proposition}\label{prop: Res bounds of Ur_k}
In both \textbf{region $\mathcal{W}_{1}$ away from vacuum} and \textbf{region $\mathcal{W}_{2, \alpha}$ near vacuum} (independent of $\alpha\in (0, \frac{1}{100})$), for $a+b\leq N$, we have
\begin{itemize}
\item For $k=0, \gamma\in (1, 3)$ and $a+b\leq N$,                  
\begin{equation}\notag
|\Tr^{a}\Xr^{b}\Big( t\partial_{t}\Ur_{0}+\big(A_{1}(\Ur_{0})-\mathring{u}\big)\partial_{\mathring{u}}\Ur_{0}+tA_{2}(\Ur_{0})\partial_{x_2}\Ur_{0} \Big)|\leq C\varepsilon t.
\end{equation}
\item For $k=1, \gamma\in \{\frac{7}{5}, \frac{5}{3}\}$ and $a+b\leq N$, 
\begin{equation}\notag
|\Tr^{a}\Xr^{b}\Big( t\partial_{t}\Ur_{1}+\big(A_{1}(\Ur_{1})-\mathring{u}\big)\partial_{\mathring{u}}\Ur_{1}+tA_{2}(\Ur_{1})\partial_{x_2}\Ur_{1} \Big)|\leq C\varepsilon t^{2}.
\end{equation}
\end{itemize}
\end{proposition}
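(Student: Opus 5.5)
The residual is a Taylor remainder of an exact identity that holds at the initial slice. Set
\[
\mathcal R_k:=t\partial_t\Ur_k+\big(A_1(\Ur_k)-\urr\big)\partial_{\urr}\Ur_k+tA_2(\Ur_k)\partial_{x_2}\Ur_k .
\]
Because $\Ur_k$ is polynomial in $t$ with coefficients $\Vr_j(\urr,x_2)$, and $A_1,A_2$ are linear in $U$, $\mathcal R_k$ is a polynomial in $t$. Its coefficients are explicit bilinear expressions in $\Vr_0,\Vr_1$ and their first derivatives.

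For $k=0$ the $t^0$ coefficient is $(A_1(\Vr_0)-\urr)\partial_{\urr}\Vr_0$. This vanishes identically: by \eqref{eq: data on t=0}, $\partial_{\urr}\Vr_0=\frac{2}{\gamma+1}(1,1,0)^T$, and $\urr=v^1+c$ on the profile, so $(A_1(\Vr_0)-\urr)(1,1,0)^T=(c-c,\,c-c,\,0)^T=0$. This is an exact algebraic identity, valid on the whole strip including where the affine $c$ becomes small or negative. Hence
\[
\mathcal R_0=tA_2(\Vr_0)\partial_{x_2}\Vr_0 .
\]
Applying $\Tr^a\Xr^b$ then gives $|\Tr^a\Xr^b\mathcal R_0|\le Ct\,\sup_{a'+b'\le N+1}|\partial^{a'}_{\urr}\partial^{b'}_{x_2}\Vr_0|\cdot|\partial_{x_2}\Vr_0|_{C^{N}}$. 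Since $\Vr_0$ is affine in $\urr$ with coefficients built from the traces $c_r(0,\cdot),v_r^i(0,\cdot)$, every $x_2$-derivative is $O(\varepsilon)$ by Sobolev embedding of the $H^{N_0}$ data, with $N_0\gg N$. This gives the bound $C\varepsilon t$ uniformly on both charts, independently of $\alpha$, because no negative power of $c$ appears.

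For $k=1$ I would expand
\[
\mathcal R_1=t\big[\Vr_1+(A_1(\Vr_0)-\urr)\partial_{\urr}\Vr_1+A_1(\Vr_1)\partial_{\urr}\Vr_0+A_2(\Vr_0)\partial_{x_2}\Vr_0\big]+t^2\big[A_1(\Vr_1)\partial_{\urr}\Vr_1+A_2(\Vr_0)\partial_{x_2}\Vr_1+A_2(\Vr_1)\partial_{x_2}\Vr_0\big]+t^3A_2(\Vr_1)\partial_{x_2}\Vr_1 .
\]
The bracket multiplying $t$ is exactly the first-order initial-slice ODE defining $\Vr_1$: differentiate the opened Euler system $t\partial_tU+(A_1(U)-\urr)\partial_{\urr}U+tA_2\partial_{x_2}U=0$ once in $t$ and set $t=0$. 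Since $\Vr_1$ is constructed in Section~\ref{sec: high order approximate solution} as the solution of that ODE, matching $C_0$ through \eqref{eq: initial matching}, this bracket vanishes identically. What remains is $O(t^2)$, with coefficients quadratic in $(\Vr_0,\Vr_1)$ and containing at most one derivative of $\Vr_1$. It therefore suffices to show that
\[
\sup\big|\partial_{\urr}^{a}\partial_{x_2}^{b}\Vr_1\big|\le C\varepsilon,\qquad a+b\le N+1,
\]
uniformly up to $c=0$, together with $|\partial\Vr_0|\le C$. The smallness comes from the fact that every term of $\Vr_1$ carries a factor that is an $x_2$-derivative of the trace, or a vorticity/$\partial_{x_1}$ trace of the exterior data, hence is $O(\varepsilon)$. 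In $\mathcal W_1$, where $c\ge \mathbf c_*/4$, this holds for all $\gamma$, and the same algebra covers $k=0$ there.

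The main obstacle is the uniform pointwise control of derivatives of $\Vr_1$ near vacuum, which is where the restriction $\gamma\in\{\frac75,\frac53\}$ enters. From the explicit formula \eqref{eq: exact form of Vr_1}, $\Vr_1$ is a combination of $c$, $c^{2\beta}$ and $c^{\beta}$-type terms, with smooth $x_2$-dependent coefficients; compare the displayed formula for $\Vr_1^3$. Here $c=c(0,\urr,x_2)$ is affine in $\urr$. For $\beta=2,3$ these are polynomials in $c$, hence smooth in $(\urr,x_2)$ on the whole strip, with no logarithms: the logarithmic resonance only appears in $\Vr_2$. All derivatives are then bounded by $C\varepsilon$.

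I would verify this by writing $\Vr_1$ explicitly as a polynomial in $c$ and checking that no negative power (for instance $c^{\beta-1}$ divided by $c$) survives. If some $c^{\beta-1}$ term appeared with $\beta=2$, it is still a polynomial, so the argument is unchanged. For $N$ derivatives, the needed smoothness is merely finite: $N$ here is at most the low index of Section~\ref{subsec: notation conventions}, fixed below $N_0$. This accounts for the derivative loss on the data.
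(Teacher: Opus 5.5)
Your proposal is correct and is essentially the paper's argument. In Section~\ref{sec: high order approximate solution} the paper expands the residual as a polynomial in $t$. It uses $\urr=v^1+c$ on $\Vr_0$ to get $\Rr_0=tA_2(\Ur_0)\partial_{x_2}\Ur_0$, and the ODE \eqref{eq: ODE of Vr_1} to remove the $t^1$ coefficient of $\Rr_1$ (Lemma~\ref{lem: bounds on Res}), exactly as you do. The bounds then rest, as in your last step, on $\Vr_1$ being a polynomial in the affine sound speed with $O(\varepsilon)$ coefficients when $\beta\in\{2,3\}$ (Remark~\ref{rem: remark 4.1}). Your observation that this also justifies evaluating the profiles where the affine $c$ is slightly negative is a point the paper leaves implicit.
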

which means $\Ur_{0}, \Ur_{1}$ serves as good approximations in certain cases. We also refer to Remarks~\ref{rem: remark 4.1} and~\ref{rem: remark 4.2} on the discussion on the different treatment for different $\gamma$. 

Setting $\wb:=\frac{c}{\gamma-1}+\frac{1}{2}v^{1}, w=\frac{c}{\gamma-1}-\frac{1}{2}v^{1}, \psi_{2}:=-v^{2}$. As a byproduct of the proof of the proposition \ref{prop: initial energy bounds on Sigma_0} in section \ref{sec: high order approximate solution}, we also have
\begin{proposition}\label{prop: formulas for c^-1 L(psi)}
For both charts $\mathcal{W}_{1}$ with coordinate $(t, u, \vartheta)$ and $\mathcal{W}_{2, \alpha}$ with coordinate $(t, \ut, \vartheta)$, we have explicit formula for $\wb, w, \psi_{2}, c^{-1}L(\wb), c^{-1}L(w), c^{-1}L(\psi_{2})$ on $\Sigma_{0}^{0, u^{*}}\cup \Sigma_{0}^{\ut_{*}, \ut^{*}}$. And we have for $Z\in \{T, \hat{X}\}, |\alpha|\leq N, 1\leq |\beta|\leq N$ and $\psi\in \{\wb, w, \psi_{2}\}$, on $\Sigma_{0}^{0, u^{*}}\cup \Sigma_{0}^{\ut_{*}, \ut^{*}}$.
\begin{equation}\notag
\begin{cases}
T(\wb)+\frac{2}{\gamma+1}=0, T(w)=0, T(\psi_{2})=0, \\
TZ^{\beta}(\psi)=0,\\
|(\hat{X})^{m}\psi|\leq C\varepsilon,\quad 1\leq m\leq N, \\
|c^{-1}L(\psi)|\leq C\varepsilon.
\end{cases}
\end{equation}
\end{proposition}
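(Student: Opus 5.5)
The plan is to compute everything on $\Sigma_0$ directly from the explicit profile $\Vr_0$ in \eqref{eq: data on t=0} and the first-order coefficient $\Vr_1=\partial_tU|_{t=0}$. We treat the two charts simultaneously. Their initial phases are $u=-\urr+h(x_2)$ with $h=(v_r^1+c_r)(0,\cdot)$ or $h=(v_r^1-\frac{2c_r}{\gamma-1})(0,\cdot)$, and in both cases $|h^{(k)}|\le C\varepsilon$ for $1\le k\le N$, by the trace theorem and $N_0$ large.

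\textbf{Step 1: the frame at $t=0$.} From the formulas preceding \eqref{eq: final form of transport equation}, $\kappa/t=(\xi_1^2+t^2\xi_2^2)^{-1/2}$ and $\hat X^1/t=-\xi_2(\xi_1^2+t^2\xi_2^2)^{-1/2}$. With $\xi_1(0)=1$ and $\xi_2(0)=-h'$, this gives on $\Sigma_0$
\[
T=-\partial_{\urr},\qquad \hat X=\partial_{x_2}+h'(x_2)\partial_{\urr}.
\]
Both fields are tangent to $\Sigma_0$, and their coefficients are independent of $\urr$, so they commute on $\Sigma_0$.

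\textbf{Step 2: the algebraic identities.} By \eqref{eq: data on t=0}, on $\Sigma_0$ we have
\[
\partial_{\urr}c=\tfrac{\gamma-1}{\gamma+1},\qquad \partial_{\urr}v^1=\tfrac{2}{\gamma+1},\qquad \partial_{\urr}v^2=0.
\]
Hence $\partial_{\urr}w=\frac{1}{\gamma+1}-\frac{1}{\gamma+1}=0$ and $\partial_{\urr}\wb=\frac{2}{\gamma+1}$. This yields $T(w)=T(\psi_2)=0$ and $T(\wb)=-\frac{2}{\gamma+1}$.

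Each $\psi\in\{\wb,w,\psi_2\}$ is affine in $\urr$ with constant slope. Therefore any single $Z$-derivative of $\psi$ is independent of $\urr$: $T$ gives a constant, and $\hat X$ gives $\partial_{x_2}\psi+h'\partial_{\urr}\psi$, which is a function of $x_2$ alone. Since all $Z$'s commute and have $\urr$-independent coefficients, the same holds for every $Z^\beta\psi$ with $|\beta|\ge1$, and so $TZ^\beta\psi=0$.

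The bound $|\hat X^m\psi|\le C\varepsilon$ follows by expanding $\hat X^m\psi$ into products of $h^{(j)}$ and $\partial_{x_2}^j$ of the traces $c_r,v_r^i(0,\cdot)$, each of size $O(\varepsilon)$ for $j\ge1$.

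\textbf{Step 3: the bound $|c^{-1}L\psi|\le C\varepsilon$.} This is where I expect the real work, because $L$ contains the singular coefficient $t^{-1}(v^1-\urr-c\hat T^1)$. I would evaluate $L\psi|_{t=0}$ as a limit. Write
\[
t^{-1}(v^1+c-\urr)\to(\Vr_1^2+\tfrac{\gamma-1}{2}\Vr_1^1)\qquad\text{and}\qquad t^{-1}c(\hat T^1+1)\to0,
\]
the latter since $\hat T^1+1=O(t^2)$ from the transport equation. This gives
\[
L\psi|_{t=0}=\partial_\psi\Vr_1+(\Vr_1^2+\tfrac{\gamma-1}{2}\Vr_1^1)\,\partial_{\urr}\psi+v^2\partial_{x_2}\psi,
\]
where $\partial_\psi\Vr_1$ denotes the corresponding linear combination of components of $\Vr_1$. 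Inserting the explicit $\Vr_1$ of \eqref{eq: exact form of Vr_1} then finishes the computation. That formula is obtained by solving the order-$t$ transport ODE in $\urr$,
\[
(A_1(\Vr_0)-\urr)\partial_{\urr}\Vr_1+\Vr_1+A_1(\Vr_1)\partial_{\urr}\Vr_0+A_2(\Vr_0)\partial_{x_2}\Vr_0=0,
\]
with data at $\urr=(v_r^1+c_r)(0,x_2)$ given by the first normal jet along $C_0$ (Section~\ref{sec: normal jets}).

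One must then check that each resulting expression carries a factor $c$. For $\psi_2$ this is visible from the displayed form of $\Vr_1^3$: the $-v_r^2\partial_{x_2}v_r^2$ term cancels against $v^2\partial_{x_2}\psi_2$, leaving
\[
-2c\,\partial_{x_2}w_r+(c/c_r)^{2\beta}c_r\omega_r,
\]
which is $c$ times an $O(\varepsilon)$ factor because $2\beta>1$. For $w$ the analogous algebraic components of $\Vr_1$ combine in the same way. For $\wb$ (the outgoing component) the scalar ODE has homogeneous solution $\propto c^{2\beta}$ plus forcing proportional to $c$; both are divisible by $c$ with $\varepsilon$-small coefficients.

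The main obstacle is this divisibility and the $\varepsilon$-smallness, uniformly up to $c=0$ in the chart $\mathcal W_{2,\alpha}$. It holds because the constant reference state makes every source vanish, so all coefficients are linear in derivatives of the perturbed traces.
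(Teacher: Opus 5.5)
Your route is the paper's. On $\Sigma_0$ you identify $T=\partial_u=-\partial_{\urr}$ and $\hat X=\partial_\vartheta=\partial_{x_2}+h'\partial_{\urr}$, read the $T$- and $\hat X$-identities off the affine profile, and evaluate $LU|_{t=0}=\Vr_1+(\Vr_1^2+\frac{\gamma-1}{2}\Vr_1^1)\partial_{\urr}\Vr_0+v_r^2\partial_{x_2}\Vr_0$ before inserting \eqref{eq: exact form of Vr_1}. Steps~1--2, this formula, and your treatment of $\psi_2$ are correct.

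The flaw is in the divisibility check you yourself call the crux. For $\wb$ your description is wrong. The $(1,1,0)$-projection of \eqref{eq: ODE of Vr_1} contains no $\partial_{\urr}$ term at all; this is the algebraic degeneration of Remark~\ref{rem: remark 4.1}. So there is no homogeneous solution $c^{2\beta}$. Its source $v_r^2\partial_{x_2}(\Vr_0^1+\Vr_0^2)+c\,\partial_{x_2}v_r^2$ is also not proportional to $c$. What actually happens is that this algebraic relation is literally
\[
2L(\wb)|_{t=0}+c\,\partial_{x_2}v_r^2=0 .
\]
Its source term without a factor $c$ is $2v_r^2\partial_{x_2}\wb$, i.e.\ twice the transport part of $L(\wb)|_{t=0}$, and its $\Vr_1$-terms are exactly the remaining terms of $2L(\wb)|_{t=0}$.

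For $w$, ``combine in the same way'' hides the mechanism. Set $D=\Vr_1^1-\Vr_1^2$. The $(1,-1,0)$-projection reads $-2c\,\partial_{\urr}D+D+2v_r^2\partial_{x_2}w_r+c\,\partial_{x_2}v_r^2=0$, while $L(w)|_{t=0}=\frac12D+v_r^2\partial_{x_2}w_r$ because $\partial_{\urr}w=0$ on $\Sigma_0$. Hence
\[
c^{-1}L(w)|_{t=0}=\partial_{\urr}D-\tfrac12\partial_{x_2}v_r^2 .
\]
This is $O(\varepsilon)$ uniformly up to $c=0$ because the homogeneous exponent here is $\beta=\frac{\gamma+1}{2(\gamma-1)}>1$, not $2\beta$, so $\partial_{\urr}D$ only produces $c^{\beta-1}$. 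In the same way the third row gives $c^{-1}L(v^2)|_{t=0}=\partial_{\urr}\Vr_1^3-\frac{4}{\gamma+1}\partial_{x_2}w_r$.

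Thus the factor $c$ comes from the characteristic projections of the $\Vr_1$ system itself, together with $\beta,2\beta>1$. The vanishing of all sources at the constant state, which you invoke, only accounts for the $\varepsilon$-smallness. With these corrections your computation reproduces \eqref{eq: formula for L(psi)}.
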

We refer to \eqref{eq: formula for L(psi)} for detailed formula of $c^{-1}\cdot L(\wb), c^{-1}\cdot L(w), c^{-1}\cdot L(\psi_{2})$.

At last, we need higher order matching of $U$ and $\Ur_{k}$ along $C_{0}$.
\begin{proposition}\label{prop: matching of U and Ur_k along C_0}
For any $a+b\leq N, k=0, 1$ and $\gamma\in (1, 3)$, we have
\begin{equation}\notag
|\Tr^{a}\Xr^{b}(U-\Ur_{k})|(t, \mathring{u}, x_{2})\leq C\varepsilon t^{k+1}, \ \ \textbf{along $C_{0}$}.
\end{equation}
\end{proposition}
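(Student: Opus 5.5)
We plan to prove Proposition~\ref{prop: matching of U and Ur_k along C_0} by reducing it to a statement about jets of the exterior solution along $C_0$. The plan has two steps: first compute all $(t,\urr,x_2)$-jets of $U$ on $C_0$ to the needed order, then compare them with the explicit jets of $\Ur_k$. Along $C_0$ we have $U=U_r$ by \eqref{eq: continu along C0}. Here $U_r$ is the classical exterior solution on $\mathcal D_{\textbf{right}}$, which has $H^{N_0}$ regularity with $N_0\gg N$. In the opened coordinates, $C_0=\{\urr=f_0(t,x_2)/t\}$ and $f_0(t,x_2)/t$ is a regular function of $(t,x_2)$. The tangential derivatives $\Xr$ and $t\partial_t$ of $U$ along $C_0$ are therefore determined by $U_r$ directly. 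The transversal derivatives $\Tr^a U$ on $C_0$ are not given by $U_r$, since $U$ is only continuous across $C_0$. They must be produced as normal jets, following Section~\ref{sec: normal jets}.

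\textbf{Step 1: transport ODEs for the normal jets.} We write the Euler system in the opened coordinates in the frame $\{L,\hat T,\hat X\}$, with $L$ tangent to $C_0$. The equation then gives $c\hat T(\cdot)$ of two characteristic components in terms of tangential derivatives. The remaining outgoing component, $\hat T\underline w$, is not determined by the equation. It satisfies a transport equation along $L$, obtained by commuting $T$ with \eqref{Euler equations:form 2}. Applying $\Tr^a\Xr^b$ repeatedly gives, for each order, a linear ODE along the generators of $C_0$ for the top unknown $\Tr^{a}\Xr^b\underline w|_{C_0}$. Its coefficients and sources involve only lower jets and $U_r$. The initial values at $t=0$ come from \eqref{eq: data on t=0} and, for the $t$-jets, from the inductively solved $\Vr_1,\Vr_2$. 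The compatibility at $\Sigma_0\cap C_0$ (Corollary~\ref{cor: compatibility on S_0,0}) ensures that these ODEs are regular at $t=0$. The $1/t$ in $L$ multiplies $v^1+c-\urr-c(\hat T^1+1)$, which vanishes at $t=0$.

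\textbf{Step 2: comparison with $\Ur_k$ by Taylor expansion in $t$.} By construction $\Vr_j=\partial_t^jU(0,\cdot)$ in the $(\urr,x_2)$ chart, including at the initial point $\urr=(v_r^1+c_r)(0,x_2)$ of $C_0$. Define $E=\Tr^a\Xr^b(U-\Ur_k)$ restricted to $C_0$ and parametrized by $(t,x_2)$. The $t$-jets of $E$ of order $\le k$ vanish at $t=0$. Hence Taylor's formula gives $|E|\le \tfrac{t^{k+1}}{(k+1)!}\sup|\partial_t^{k+1}E|$. It then suffices to bound $\partial_t^{k+1}E$ along $C_0$ by $C\varepsilon$. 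Since $\Ur_k$ is polynomial in $t$, we have $\partial_t^{k+1}\Ur_k=0$, so we only need $\partial_t^{k+1}\Tr^a\Xr^bU|_{C_0}=O(\varepsilon)$. This holds because every such jet is a smooth function of jets of $U_r$ of order at most $N+k+1+(\text{loss})\le N_0$. It equals its value for the exact planar rarefaction, namely zero (the planar profile is $t$-independent in $\urr$), plus $O(\varepsilon)$, since the data are $\varepsilon$-close to the constant state in $H^{N_0}$ and Sobolev embedding applies. No vacuum issue arises: $c\approx\mathbf c_*$ on $C_0$, which is why the statement holds for all $\gamma\in(1,3)$.

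\textbf{Main obstacle.} The main obstacle is Step 1 near $t=0$. The singular transport ODEs must be solved for the normal jets with bounds uniform in $t$, and the $\partial_t$ derivatives at fixed $\urr$ must be related to $L$ and $\Tr$ along the curved surface $C_0$. The plan here is to solve the geometry of $C_0$ first: $\hat T,\kappa$ from \eqref{eq: transport eq for geometry} with the exterior $U_r$, which is entirely determined by the data. Then we write each jet ODE as $t\frac{d}{dt}J+\lambda J=S$ with $\lambda\ge0$ explicit, computed from $\Tr\Vr_0$. Regular solutions of such equations are unique and bounded by $\sup|S|$. Finally, we track the derivative loss, which is finite and fixed per order, so that it is absorbed by taking $N_0$ large.
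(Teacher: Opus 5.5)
Your route differs from the paper's. You compute jets of $U$ on $C_0$ and compare with $\Ur_k$ by Taylor expansion along $C_0$ and perturbation from the planar fan, whereas the paper writes the equations \eqref{eq: U-Ur_1 in L T Xh} for $V_k=U-\Ur_k$ directly. The steps your route rests on are not established, and one fails as stated.

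(i) $\kappa$ on $C_0$ cannot be solved ``first'' from the exterior solution. $L\kappa=-T(c)-\hat T^jT(\psi_j)$ involves the \emph{rarefaction-side} normal derivative, in particular the free component $(T(U),r_3)$. Inserting $U_r$ gives $L\kappa=-\kappa\,O(\varepsilon)$, hence $\kappa\equiv0$ from $\kappa(0)=0$, instead of $\kappa\approx t$. Only $L,\hat T,\hat X$ are fixed by $C_0$ and $U_r$; this is why the paper solves $(T(U),r_3)$ and $\kappa$ together in \eqref{eq: ODE for T(psi) and kappa}.

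(ii) The normal jets of $U$ at time $t$ are not functions of the jets of $U_r$ at that point. They solve ODEs along the generators from $t=0$. So ``planar value plus $O(\varepsilon)$'' for $\partial_t^{k+1}\Tr^a\Xr^bU$ is really a stability estimate for those ODEs after $k+1$ time differentiations, uniform down to $t=0$. You assert this rather than prove it. Also, your bound by $\sup|S|$ needs $\lambda>0$: for $\lambda=0$ the regular solution is neither unique nor so bounded.

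(iii) Decisively for $k=1$, Taylor's formula needs $\frac{d}{dt}E|_{t=0}=0$ for $a\geq1$. That is, the $t$-derivative at the corner of your normal-jet solutions must equal the $\urr$-derivatives of $\Vr_1$ produced by \eqref{eq: ODE of Vr_1}. Corollary~\ref{cor: compatibility on S_0,0} only matches the $t=0$ values $T^mU$ with $\Vr_0$. You cannot impose the $t$-jets as initial data, because the jet ODEs themselves determine $J'(0)$ (and, in your singular form $tJ'+\lambda J=S$, also $J(0)$). ``By construction $\Vr_j=\partial_t^jU(0,\cdot)$'' presupposes a solution smooth up to the corner, which this proposition is an input for. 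This first-order corner compatibility can be proved, for instance from uniqueness of the formal expansion of the opened system at the corner, but you have not supplied that argument. Only the \emph{tangential} vanishing $\partial_t^j(U-\Ur_k)(0,x_2)=0$, $j\leq k$, along $C_0$ is genuinely by construction, via \eqref{eq: initial data of Vr_1}.

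The missing idea is to linearize at the level of the equations rather than the Taylor coefficients. Your ``linear ODE with $\lambda$ computed from $\Tr\Vr_0$'' is in effect the equation for the difference $V_k$, and once you write it no Taylor step is needed.

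In the paper, $(T(V_k),r_i)$ for $i=1,2$ are algebraic in $\kappa L(V_k)$, $\hat X(V_k)$, $V_k$ and $\Rr_k$. The tangential derivatives of $V_k$ on $C_0$ are $O(\varepsilon t^{k+1})$, or $O(\varepsilon t^{k})$ after one $L$, by the tangential matching above. The remaining component satisfies \eqref{eq: transport equation for TV_1}. Its coefficient $-\frac1t\frac{1}{\gamma+1}(1-\hat T^1)(\frac{\gamma-1}{2}-\hat T^1)\approx-\frac1t$ comes from $A_1(V_k)\partial_{\urr}\Ur_k$. With $O(\varepsilon t^k)$ sources, the regular solution is $t^{-1}\int_0^t s\,O(\varepsilon s^k)\,ds=O(\varepsilon t^{k+1})$, using only the trivial datum $(T(V_k),r_3)(0)=0$.

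The damping enforces the higher-order vanishing automatically. Hence neither a $t$-jet compatibility at the corner nor a comparison with the planar fan is needed. Commuting with $(\hat X)^{n-m}T^{m-1}$ preserves the damping, and converting $T,\hat X$ back to $\Tr,\Xr$ completes the proof.
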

Note that $C_{0}$ is away from vacuum and the case $k=0, 1$ can be treated exactly the same way regardless of $\gamma\in (1, 3)$ and we refer to section \ref{sec: normal jets} for the proof.

\subsubsection{The bootstrap away from vacuum} 

For $k=0, 1$, we define 
\[
V_{k; a, b}:=\Tr^{a}\Xr^{b}(U-\Ur_{k}).
\]  
for any fixed $\gamma\in (1, 3)$. We use $\Sigma_{t}^{0, u^{*}}, (t, u)\in[0, t^{*}]\times [0, u^{*}]$ to denote the time slice in $\mathcal{W}_{1}$.

\begin{itemize}
\item In view of propositions \ref{prop: initial energy bounds on Sigma_0}, \ref{prop: matching of U and Ur_k along C_0}, we can assume that there exists $M>0$ such that for all $(t, u)\in [0, t^{*}]\times [0, u^{*}]$, the following inequalities hold:
\begin{equation}\notag{(\mathbf{B}_{2})}
\quad\mathcal{E}(V_{k; a, b})(t;0, u^{*})+\mathcal{F}(V_{k; a,b})(0, t; u)\leq M\varepsilon^{2}t^{2k+3},\quad \forall a+b\leq \Ntop.
\end{equation}

\item In view of proposition \ref{prop: formulas for c^-1 L(psi)}, we can assume that there exists $M>0$ such that for all $t\in [0, t^{*}]$ and $Z\in \{T, \hat{X}\}$, the following inequalities hold:
\begin{equation}\notag{(\mathbf{B}_{\infty})}
\begin{cases}
\|c^{-1}L(\underline{w})\|_{L^{\infty}(\Sigma_{t}^{0, u^{*}})}+\|c^{-1}L(w)\|_{L^{\infty}(\Sigma_{t}^{0, u^{*}})}+\|c^{-1}L(\psi_{2})\|_{L^{\infty}(\Sigma_{t}^{0, u^{*}})}\leq M\varepsilon,\\
\|\hat{X}Z^{\beta}(\psi)\|_{L^{2}(\Sigma_{t}^{0, u^{*}})}\leq M\varepsilon, \psi\in \{\wb, w, \psi_{2}\}, 2\leq |\beta|\leq 3,\\
\|\hat{X}Z^{\beta}(\psi)\|_{L^{\infty}(\Sigma_{t}^{0, u^{*}})}\leq M\varepsilon, \psi\in \{\wb, w, \psi_{2}\}, |\beta|\leq 1,\\
\|T(w)\|_{L^{\infty}(\Sigma_{t}^{0, u^{*}})}+\|T(\psi_{2})\|_{L^{\infty}(\Sigma_{t}^{0, u^{*}})}+\|T(\underline{w})+\frac{2}{\gamma+1}\|_{L^{\infty}(\Sigma_{t}^{0, u^{*}})}\leq M\varepsilon t,\\
\|TZ^{\beta}(\psi)\|_{L^{2}(\Sigma_{t}^{0, u^{*}})}\leq M\varepsilon t, 2\leq |\beta|\leq 3,\\
\|TZ^{\beta}(\psi)\|_{L^{\infty}(\Sigma_{t}^{0, u^{*}})}\leq M\varepsilon t, |\beta|=1.
\end{cases}
\end{equation}

\item In view of \eqref{eq: formulas to control the geometry}, we can assume that for all $t\in [0, t^{*}]$ the following inequalities hold:
\begin{equation}\notag{(\mathbf{B}_{\text{diffeomorphism}})}
\begin{cases}
|\hat{T}^{1}+1|\leq \frac{1}{100},\\
|\frac{\partial u}{\partial \mathring{u}}+1|\leq \frac{1}{100}, |\frac{\partial u}{\partial x_{2}}|\leq \frac{1}{100},\\
|\frac{\partial \vartheta}{\partial \mathring{u}}|\leq \frac{1}{100}, |\frac{\partial \vartheta}{\partial x_{2}}-1|\leq \frac{1}{100}.
\end{cases}
\end{equation}
\end{itemize}
The assumption $(\mathbf B_{\text{diffeomorphism}})$ controls the
spatial coordinate map at each fixed time:
\[
(u,\vartheta)\longmapsto(\mathring u,x_2),\qquad
(u,\vartheta)\in [0, u^{*}]\times\mathbb T.
\]
It is a $W^{1,\infty}$ diffeomorphism onto its image with uniform
bounds for its spatial Jacobian and inverse. Time derivatives of the
map are given separately by its generator $L$.

Now, we can state a priori estimate for solution $U$ in $\mathcal{W}_{1}$ as follows,
\begin{theorem}\label{theorem: a prior estimate in W_1}
For any $k\in \{0, 1\}, \gamma\in (1, 3)$ and $\Ntop\geq 6$, we can close the bootstrap assumptions 
\[
(\mathbf{B}_{2}), (\mathbf{B}_{\infty}), (\mathbf{B}_{\text{diffeomorphism}})
\] 
providing that $\varepsilon$ is sufficiently small. Moreover, the smallness only depends on $\mathbf{c}_{*}, t^{*}, \Ntop, \gamma$. 
\end{theorem}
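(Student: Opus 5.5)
The proof is a continuity (bootstrap) argument on $[0,t^*]$ in the chart $(t,u,\vartheta)$ of $\mathcal W_1$. Assume $(\mathbf B_2)$, $(\mathbf B_\infty)$, $(\mathbf B_{\text{diffeomorphism}})$ hold on $[0,t_0]$ with a large constant $M$. The goal is to recover each of them with $M$ replaced by $M/2$, for $\varepsilon$ small depending on $M,\mathbf c_*,t^*,\Ntop,\gamma$.

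The first step is the geometry. We integrate the transport system \eqref{eq: formulas to control the geometry} along $L$, starting from the data $\hat T=(-1,0)$, $\kappa=0$, $\slashed g=1$, $\Xi=0$. Throughout $\mathcal W_1$ we use $c\in[\mathbf c_*/4,2\mathbf c_*]$, which itself follows from $|Lc|\lesssim M\varepsilon c$ by $(\mathbf B_\infty)$.

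From $L(\hat T^i)=(\hat X c+\hat T^j\hat X\psi_j)\hat X^i$ and the $L^\infty$ part of $(\mathbf B_\infty)$, we get $|\hat T^1+1|+|\hat T^2|\lesssim M\varepsilon t$. From $L\kappa=-Tc-\hat T^jT\psi_j$, with $T\underline w=-\frac{2}{\gamma+1}+O(M\varepsilon t)$ and $Tw,T\psi_2=O(M\varepsilon t)$, we get $\kappa=t\bigl(1+O(M\varepsilon t)\bigr)$. Similarly $\slashed g$ and $\Xi$ stay close to $1$ and $0$. This closes $(\mathbf B_{\text{diffeomorphism}})$ with room to spare.

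On top of this we prove Sobolev inequalities on $\Sigma_t^{0,u^*}$ in the measure $\frac{\kappa}{t}\sqrt{\slashed g}\,du\,d\vartheta$. Because $\kappa/t\approx1$, these convert the $\mathcal E$-bounds for $V_{k;a,b}$ with $a+b\le\Ntop$ into $L^\infty$ bounds
\[
|\Tr^a\Xr^b(U-\Ur_k)|\lesssim \sqrt M\,\varepsilon\, t^{k+1},\qquad a+b\le \Ntop-2.
\]
Here $\Ntop\ge6$ leaves enough room.

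The second step is the energy estimate, which improves $(\mathbf B_2)$. Commute the opened Euler system with $\Tr^a\Xr^b$ to get the equation for $V_{k;a,b}$, with source
\[
F=-\tfrac1t\,\mathrm{Res}(\Ur_k)-\tfrac1t A_1(V)\partial_{\urr}\Ur_k+\text{commutators}.
\]
Then apply the identity \eqref{eq: linear energy estimate in t ur x_2} on $\mathcal D(t;0,u^*)$.

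The incoming flux on $C_0=\{u=0\}$ is $O(\varepsilon^2t^{2k+3})$ by Proposition~\ref{prop: matching of U and Ur_k along C_0}. The initial energy vanishes at rate $t^{2k+3}$ by Proposition~\ref{prop: initial energy bounds on Sigma_0}. The residual contributes $\int (C\varepsilon t^{k+1})\sqrt{\mathcal E}\,dt/t$, using Proposition~\ref{prop: Res bounds of Ur_k}.

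The dangerous pieces are of size $O(1)\,|V|^2/t$, and they come from three sources:
\begin{itemize}
\item the integration-by-parts bulk term $(V,\partial_U A_1 V)\partial_{\urr}\Vr_0$;
\item the leading commutator $aV+aA_1(\Tr\Vr_0)V$;
\item the linearized source $A_1(V)\Tr\Vr_0$.
\end{itemize}
The damping/antidamping cancellation from the introduction turns their sum into the quadratic form
\[
-\tfrac{2(\gamma-1)}{\gamma+1}(V^1)^2-\tfrac{4}{\gamma+1}(V^2)^2+C_a\bigl\{(V^1-V^2)^2+(V^3)^2\bigr\}.
\]
Away from vacuum, the null part $C_a\{(V^1-V^2)^2+(V^3)^2\}$ is controlled by the characteristic flux. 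Since $c\ge\mathbf c_*/4$ and the frame satisfies $\hat T=(-1,0)+O(M\varepsilon t)$, the flux density dominates $\mathbf c_*\{(V^1-V^2)^2+(V^3)^2\}$ up to errors $O(M\varepsilon t)|V|^2$.

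To absorb the null part, we integrate the bulk term first in $u$. Concretely, we estimate $\int_0^{u^*}\mathcal F(V)(0,t;u)\,du$, which is why $(\mathbf B_2)$ contains the flux at every $u$. This yields a bound of the form
\[
\mathcal E(t)+\mathcal F(t;u)\le C\varepsilon^2t^{2k+3}+\int_0^t \frac{C}{t'}\Bigl(\int_0^{u^*}\mathcal F(t';u')\,du'\Bigr)\frac{dt'}{t'}+\ldots
\]
That form is not directly usable, so we expect to need a Gronwall-type argument with weight $t^{-(2k+3)}$. The damping terms have the favourable sign. The lower-order commutator terms, involving $\Tr^{a'}\Xr^{b'}\Vr_0$ and products with $U-\Ur_k$, are bounded by $(\mathbf B_2)$ and Sobolev at induction level $a+b-1$, inducting on $a$.

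This absorption of the $O(1/t)$ null terms is the main obstacle. What we would try first is a fixed weight $e^{-\mu u}$ in the energy identity: it produces a positive bulk term $\mu\,c\,\{\text{flux form}\}\,\kappa/t$ that dominates $C_a\{(V^1-V^2)^2+(V^3)^2\}$ once $\mu\gtrsim C_{\Ntop}/\mathbf c_*$. This is harmless because $u^*$ is bounded. If that fails, the fallback is to accept the $t^{-1}$ loss and use the singular Gronwall Lemma~\ref{lem: Gronwall} with exponent margin $2k+3$.

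The third step improves $(\mathbf B_\infty)$. The quantities $c^{-1}L\psi$ follow from \eqref{Euler equations:form 2}, since $c^{-1}L\psi$ is a combination of $\hat X\psi$ and $(\hat T^1\mp1)\hat T\psi$, and $|\hat T^1+1|\lesssim M\varepsilon t$. The bounds on $\hat XZ^\beta\psi$ and on $TZ^\beta\psi$ up to $|\beta|\le3$ follow from the relation $v^1+c-\urr=(U-\Vr_0)^2+\frac{\gamma-1}{2}(U-\Vr_0)^1$. We express $T,\hat X$ as $\kappa\hat T^1\partial_{\urr}/t+\ldots$ and use the improved $\Tr^a\Xr^b(U-\Ur_k)$ bounds from Sobolev. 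This gives $O(\sqrt M\varepsilon)$ in $L^2$ and $L^\infty$, plus the needed factor $t$ for $T$-derivatives, since $T(\psi-\Vr_0\text{-part})$ carries one power of $t$.

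Finally, choosing $M$ larger than all the $\sqrt M$-dependent constants, and then $\varepsilon$ small, closes all three bootstraps. The constants depend only on $\mathbf c_*,t^*,\Ntop,\gamma$.
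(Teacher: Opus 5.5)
Your first two steps follow the paper's route, but your third step has a genuine gap. Integrating the null part of $\mathbf J_1$ against the weight $e^{-\mu u}$ is a legitimate substitute for the paper's Gronwall argument in $u$. There that part is bounded by $C_a\int_0^u\mathcal F(V_{k;a,b})(t,u')\,du'$ with no extra factor of $1/t'$, so your displayed inequality (a time integral of the flux weighted by $t'^{-2}$) is not the right form. Three smaller points:
\begin{itemize}
\item Your fallback would fail. The null coefficient $2a\frac{\gamma-1}{\gamma+1}+\frac{2}{\gamma+1}$ exceeds $2k+3$ for large $a\le\Ntop$, and Remark~\ref{re: how to use null structure} shows the singular Gronwall inequality then gives nothing.
\item Since $\mathcal E\asymp t\|V\|_{L^2}^2$, the residual contributes $C\varepsilon\int_0^t t'^{k+1/2}\mathcal E^{1/2}\,dt'$, not $C\varepsilon\int_0^t t'^{k}\mathcal E^{1/2}\,dt'$. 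The latter loses a factor $t^{-1/2}$.
\item In $c^{-1}L\wb$ the term $\hat T(\wb)(\hat T^1+1)$ has $\hat T\wb\sim t^{-1}$, so $|\hat T^1+1|\lesssim M\varepsilon t$ is too weak. Use $|\hat T^1+1|\le(\hat T^2)^2$.
\end{itemize}

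The gap is in closing $(\mathbf B_\infty)$ for $\wb$, equivalently $v^1+c$. Writing $\hat X=\hat X^{(\Tr)}\Tr+\hat X^{(\Xr)}\Xr$ and $T=T^{(\Tr)}\Tr+T^{(\Xr)}\Xr$ is harmless for $w,\psi_2$, because $\Tr w,\Tr\psi_2=O(\varepsilon t)$. But $\Tr(v^1+c)=-1+O(\varepsilon t)$ has size one, so, modulo $C\varepsilon t+P(M)\varepsilon^2t$,
\[
\hat X(v^1+c)\approx\frac{\hat T^2}{t},\qquad T(v^1+c)+1\approx1-\frac{\kappa}{t}.
\]
The right-hand sides are controlled only through your first step, that is, through $(\mathbf B_\infty)$ itself. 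From $L\hat T^2\approx\hat X(v^1+c)=\frac{\gamma+1}{2}\hat X\wb+\frac{\gamma-3}{2}\hat Xw$ you get $|\hat T^2|/t\le\frac{\gamma+1}{2}M\varepsilon+C\varepsilon$. Hence you get only $|\hat X\wb|\le M\varepsilon+C\varepsilon$, which reproduces the bootstrap constant without improving it, so your claimed $O(\sqrt M\varepsilon)$ is unfounded. The same circularity affects $T\wb+\frac{2}{\gamma+1}$, all higher $\hat XZ^\beta\wb$ and $TZ^\beta\wb$, and $c^{-1}L\psi_2$ through the term $\hat T(\wb)\hat T^2\approx-\frac{2}{\gamma+1}\hat T^2/t$.

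The missing idea is to use the transport equations as identities rather than as bounds. We have $L\hat T^2=\hat X(v^1+c)+P(M)\varepsilon^2t$ and $L(\kappa-t)=-(T(v^1+c)+1)+P(M)\varepsilon^2t$. Hence $f=\hat X(v^1+c)$ satisfies, along each generator,
\[
f(t)=\frac1t\int_0^tf(\tau)\,d\tau+O\big(C\varepsilon t+P(M)\varepsilon^2t\big),
\]
and $g=T(v^1+c)+1$ satisfies the same relation. The average has coefficient exactly one, so $F=\int_0^tf$ obeys $(F/t)'=O(C\varepsilon+P(M)\varepsilon^2)$. This gives $|f(t)|\le|f(0)|+C\varepsilon t+P(M)\varepsilon^2t$, with $f(0)=\partial_\vartheta h=O(\varepsilon)$ and $g(0)=0$, and these constants are independent of $M$. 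The paper repeats this for the main error terms $(\hat X)^{n-1}(\hat X^{(\Tr)})\Tr(v^1+c)$ and $Z^\beta(T^{(\Tr)})\Tr(v^1+c)$ up to total order four. Without this argument $(\mathbf B_\infty)$ does not close.
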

The proof of theorem \ref{theorem: a prior estimate in W_1} will be accomplished in last three sections.  

As a byproduct of theorem \ref{theorem: a prior estimate in W_1} and its proof, we have
\begin{corollary}
For any $t\in [0, t^{*}]$, and $a+b\leq \Ntop-2$, we have
\begin{equation}\label{eq: L infty bound in W_1}
\|V_{k; a, b}\|_{L^{\infty}(\Sigma_{t}^{0, u^{*}})}\leq C\varepsilon t^{k+1}, \ \ k=0,1.
\end{equation}
\end{corollary}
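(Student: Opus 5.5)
The plan is a Sobolev embedding on the slices $\Sigma_t^{0,u^*}$. This is possible because $(\mathbf B_2)$ controls exactly the $L^2$ norms of the fixed commuted remainders, and the weight in the energy is benign in $\mathcal W_1$. Once Theorem~\ref{theorem: a prior estimate in W_1} is proved, $(\mathbf B_2)$ holds with an improved constant:
\[
\mathcal E(V_{k;a,b})(t;0,u^*)\le C\varepsilon^2 t^{2k+3}\quad\text{for all } a+b\le\Ntop .
\]
Recall that $\mathcal E(V)(t;0,u^*)=\int\!\!\int (V,V)\,\kappa\sqrt{\slashed g}\,du\,d\vartheta$. The bootstrap controls the geometry, giving $\kappa\sim t$ and $\slashed g\sim 1$; concretely, $L\kappa=-T(c)-\hat T^jT(\psi_j)\approx \frac{\gamma+1}{2}\cdot\frac{2}{\gamma+1}$-type constant from $(\mathbf B_\infty)$, with $\kappa(0)=0$. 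Hence
\[
\|V_{k;a,b}\|_{L^2(du\,d\vartheta)}^2\le C\varepsilon^2 t^{2k+2}.
\]
By $(\mathbf B_{\text{diffeomorphism}})$ the Jacobian of $(u,\vartheta)\mapsto(\urr,x_2)$ is uniformly bounded above and below. The measure $du\,d\vartheta$ is therefore comparable to $d\urr\,dx_2$, and we obtain
\[
\|\Tr^a\Xr^b(U-\Ur_k)\|_{L^2(\Sigma_t^{0,u^*})}\le C\varepsilon t^{k+1}
\]
in the fixed coordinates $(\urr,x_2)$, for all $a+b\le\Ntop$.

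Next we apply the two-dimensional Sobolev embedding $H^2\hookrightarrow L^\infty$ on the slice domain $\{(\urr,x_2): u\in[0,u^*]\}$. This costs exactly two derivatives, so $a+b\le\Ntop-2$ is what survives. Since $\Tr=-\partial_{\urr}$ and $\Xr=\partial_{x_2}$ are the coordinate derivatives, we get
\[
\|V_{k;a,b}\|_{L^\infty}\le C\sum_{a'+b'\le2}\|V_{k;a+a',b+b'}\|_{L^2}\le C\varepsilon t^{k+1}.
\]

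The only delicate point is that the slice domain is not a product. Its boundaries $C_0\cap\Sigma_t$ and $C_{u^*}\cap\Sigma_t$ are graphs $\urr=g(t,x_2)$ with $|\partial_{x_2}g|\le C\varepsilon$; this follows from the bootstrap on $\partial u/\partial x_2$ together with the $W^{2,\infty}$-type bounds on $\hat T$. We handle this in one of two ways. The first option is to use a uniform extension operator: the domains are uniformly Lipschitz with constants independent of $t$, so the Sobolev constant is uniform. The second, more hands-on, option is to pull back to $(u,\vartheta)\in[0,u^*]\times\mathbb T$, where the embedding holds on a fixed rectangle. There we express $\partial_u,\partial_\vartheta$ through $\Tr,\Xr$ using the Jacobian, and the needed bound on its first derivatives comes from the $L^\infty$/$L^2$ parts of $(\mathbf B_\infty)$ on $\hat X Z^\beta\psi$ and $TZ^\beta\psi$, via the transport equations \eqref{eq: formulas to control the geometry}. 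I expect this uniformity of the Sobolev constant as $t\downarrow0$ to be the main (though mild) obstacle. It is precisely the content of the slice Sobolev inequalities in Section~\ref{sec: Preparations for the energy estimate}, which I would invoke there.
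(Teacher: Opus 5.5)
Your proposal is correct and follows the paper's argument. The paper likewise combines the closed $(\mathbf B_2)$ bound with $\kappa\asymp t$ and $\sqrt{\slashed g}\asymp 1$ to get $\|V_{k;a,b}\|_{L^2(\Sigma_t^{0,u^*})}\le C\varepsilon t^{k+1}$ for $a+b\le\Ntop$, and then applies the slice inequality \eqref{eq: W22 to L infty}, carried over to the first chart, to reach $a+b\le\Ntop-2$. That is your second option; your first option (a uniform Lipschitz/cone condition for the strip in $(\urr,x_2)$, where $\Tr,\Xr$ are coordinate derivatives) also works from $(\mathbf B_{\text{diffeomorphism}})$ alone, without the $\Zr\slashed g$, $\Zr\Xi$ estimates the paper's rectangle pullback requires.
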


\subsubsection{The bootstrap uniform up to vacuum} 
For $k=0, 1$, we still define 
\[
V_{k; a,b}:=\Tr^{a}\Xr^{b}(U-\Ur_{k}).
\]
However, we stress that we need to treat different $\gamma$ in separate ways, to be more precise,
\begin{equation}\label{eq: different treatment}
\begin{cases}
k=0, \ \ \gamma\in (1, 1.3],\\
k=1, \ \ \gamma\in \{\frac{7}{5}, \frac{5}{3}\}.
\end{cases}
\end{equation}   
We use $\Sigma_{t}^{\tilde{u}_{*}, \tilde{u}}, (t, \tilde u)\in[0, t^{*}]\times [\tilde{u}_{*}, \ut^{*}]$ to denote the time slice in $\mathcal{W}_{2, \alpha}$.

\begin{itemize}

\item In view of propositions \ref{prop: initial energy bounds on Sigma_0}, \ref{prop: matching of U and Ur_k along C_0} and \eqref{eq: L infty bound in W_1}, we can assume that there exists $M>0$ such that for all $(t, \tilde u)\in [0, t^{*}]\times [\ut_{*}, \ut^{*}]$, the following inequalities hold:
\begin{equation}\notag{(\mathbf{B}_{2})}
\quad\mathcal{E}(V_{k; a, b})(t; \ut_{*}, \ut^{*})+\mathcal{F}(V_{k; a,b})(0, t; \tilde u)\leq M\varepsilon^{2}t^{2k+3},\quad \forall a+b\leq 4.
\end{equation}
In view of \eqref{eq: different treatment}, $(\mathbf{B}_{2})$ should be understood as
\begin{equation}\notag{(\mathbf{B}_{2})}
\begin{cases}
\mathcal{E}(V_{0; a, b})(t; \ut_{*}, \ut^{*})+\mathcal{F}(V_{0; a,b})(0, t; \ut)\leq M\varepsilon^{2}t^{3},\ \gamma\in (1, 1.3],\\
\mathcal{E}(V_{1; a, b})(t; \ut_{*}, \ut^{*})+\mathcal{F}(V_{1; a,b})(0, t; \tilde{u})\leq M\varepsilon^{2}t^{5}, \ \gamma\in \{\frac{7}{5}, \frac{5}{3}\}.
\end{cases}
\end{equation}

\item In view of proposition \ref{prop: formulas for c^-1 L(psi)}, we assume that there exists $M>0$ such that for all $t\in [0, t^{*}]$ and $Z\in \{T, \hat{X}\}$, the following inequalities hold:
\begin{equation}\notag{(\mathbf{B}_{\infty})}
\begin{cases}
\|c^{-1}L(\underline{w})\|_{L^{\infty}(\Sigma_{t}^{\ut_{*}, \tilde{u}^{*}})}+\|c^{-1}L(w)\|_{L^{\infty}(\Sigma_{t}^{\ut_{*}, \tilde{u}^{*}})}+\|c^{-1}L(\psi_{2})\|_{L^{\infty}(\Sigma_{t}^{\ut_{*}, \tilde{u}^{*}})}\leq M\varepsilon,\\
\|\hat{X}Z^{\beta}(\psi)\|_{L^{2}(\Sigma_{t}^{\ut_{*}, \tilde{u}^{*}})}\leq M\varepsilon, \psi\in \{\wb, w, \psi_{2}\}, 2\leq |\beta|\leq 3,\\
\|\hat{X}Z^{\beta}(\psi)\|_{L^{\infty}(\Sigma_{t}^{\ut_{*}, \tilde{u}^{*}})}\leq M\varepsilon, \psi\in \{\wb, w, \psi_{2}\}, |\beta|\leq 1,\\
\|T(w)\|_{L^{\infty}(\Sigma_{t}^{\ut_{*}, \tilde{u}^{*}})}+\|T(\psi_{2})\|_{L^{\infty}(\Sigma_{t}^{\ut_{*}, \tilde{u}^{*}})}+\|T(\underline{w})+\frac{2}{\gamma+1}\|_{L^{\infty}(\Sigma_{t}^{\ut_{*}, \tilde{u}^{*}})}\leq M\varepsilon t,\\
\|TZ^{\beta}(\psi)\|_{L^{2}(\Sigma_{t}^{\ut_{*}, \tilde{u}^{*}})}\leq M\varepsilon t, 2\leq |\beta|\leq 3,\\
\|TZ^{\beta}(\psi)\|_{L^{\infty}(\Sigma_{t}^{\ut_{*}, \tilde{u}^{*}})}\leq M\varepsilon t, |\beta|=1.
\end{cases}
\end{equation}

\item In view of \eqref{eq: formulas to control the geometry}, we can assume that for all $t\in [0, t^{*}]$ the following inequalities hold:
\begin{equation}{(\mathbf{B}_{\text{diffeomorphism}})}\label{eq: B diff}
\begin{cases}
|\hat{T}^{1}+1|\leq \frac{1}{100},\\
|\frac{\partial \tilde u}{\partial \mathring{u}}+1|\leq \frac{1}{100}, |\frac{\partial \tilde u}{\partial x_{2}}|\leq \frac{1}{100},\\
|\frac{\partial \vartheta}{\partial \mathring{u}}|\leq \frac{1}{100}, |\frac{\partial \vartheta}{\partial x_{2}}-1|\leq \frac{1}{100}.
\end{cases}
\end{equation}
\end{itemize}
The assumption $(\mathbf B_{\text{diffeomorphism}})$ controls the
spatial coordinate map at each fixed time:
\[
(\ut,\vartheta)\longmapsto(\mathring u,x_2),\qquad
(\ut,\vartheta)\in [\ut_{*}, \ut^{*}]\times\mathbb T.
\]
It is a $W^{1,\infty}$ diffeomorphism onto its image with uniform
bounds for its spatial Jacobian and inverse. Time derivatives of the
map are given separately by its generator $L$.

Now, we can state a priori estimate for solution $U$ in $\mathcal{W}_{2, \alpha}$ as follows,
\begin{theorem}\label{theorem: a prior estimate in W_2}
For any $\gamma\in (1,1.3]\cup \{\frac{7}{5}, \frac{5}{3}\},\Ntop=4$ and \textbf{any $\alpha\in (0, \frac{1}{100})$}, we can close the bootstrap assumptions 
\[
(\mathbf{B}_{2}), (\mathbf{B}_{\infty}), (\mathbf{B}_{\text{diffeomorphism}})
\] 
for corresponding $k$ (see \eqref{eq: different treatment}) providing that $\varepsilon$ is sufficiently small. Moreover, the smallness only depends on $\mathbf{c}_{*}, t^{*}, \Ntop, \gamma$ and \textbf{does not depend on $\alpha\in (0, \frac{1}{100})$}.
\end{theorem}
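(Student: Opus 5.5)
The proof will be a continuity (bootstrap) argument in $t$ on $\mathcal W_{2,\alpha}$. We assume $(\mathbf B_2)$, $(\mathbf B_\infty)$ and $(\mathbf B_{\text{diffeomorphism}})$ with a large constant $M$ on $[0,t_1]$. We then improve each of them to the same bound with $M/2$, using constants that do not depend on $\alpha$. Three structural facts are available at the start:
\begin{itemize}
\item Theorem~\ref{theorem: a prior estimate in W_1} and \eqref{eq: L infty bound in W_1} supply the data on the interior hypersurface $C_{\tilde u_*}\subset\mathcal W_1$.
\item Proposition~\ref{prop: initial energy bounds on Sigma_0} gives the initial behaviour $t^{-(2k+3)}\mathcal E(V_{k;a,b})\to$ (norm of $\Vr_{k+1}$) $\le C\varepsilon^2$, uniformly in $\alpha$. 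This is where $\gamma\in(1,1.3]\cup\{\frac75,\frac53\}$ first enters.
\item Proposition~\ref{prop: Res bounds of Ur_k} bounds the residual of $\Ur_k$ by $C\varepsilon t^{k+1}$.
\end{itemize}

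\textbf{Geometry first.} Under the bootstrap assumptions, the next step is to integrate the transport system \eqref{eq: formulas to control the geometry} along $L$ from $t=0$. Writing $v^1+c-\urr=(U-\Vr_0)^2+\frac{\gamma-1}{2}(U-\Vr_0)^1$, the coefficient $\frac1t(v^1-\urr-c\hat T^1)$ of $L$ is bounded, by the Sobolev inequalities on $\Sigma_t$ combined with $(\mathbf B_2)$. This gives $|\hat T^1+1|\lesssim\varepsilon^2t^2$, $|\hat T^2|\lesssim\varepsilon t$, $\kappa=t(1+O(\varepsilon))$, $\slashed g=1+O(\varepsilon t)$ and $\Xi=O(\varepsilon t)$, which improves $(\mathbf B_{\text{diffeomorphism}})$. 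Combining $|Lc|\le C\varepsilon c$ (from the $c^{-1}L\psi$ bounds in $(\mathbf B_\infty)$) with $c(0,\tilde u,\vartheta)=-\frac{\gamma-1}{\gamma+1}\tilde u$ gives $c\approx-\tilde u$ on all of $\mathcal W_{2,\alpha}$, uniformly in $\alpha$. This comparability is what makes every later constant independent of the cutoff.

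\textbf{Energy estimates for $(\mathbf B_2)$.} The plan is to commute the opened Euler system with $\Tr^a\Xr^b$ for $a+b\le4$ and derive the equation for $V_{k;a,b}$. Its linear part is $aV+aA_1(\Tr\Vr_0)V$ plus the source $A_1(V)\Tr\Vr_0$ coming from subtracting $\Ur_k$. The remaining terms are:
\begin{itemize}
\item nonlinear terms quadratic in $V$ and its lower derivatives;
\item terms carrying $t\Vr_1$ or $U-\Ur_k$, which are $O(\varepsilon t)$-small;
\item the commuted residual, which is $O(\varepsilon t^{k+1})$.
\end{itemize}
Insert these into \eqref{eq: linear energy estimate in t ur x_2} on $[0,t]\times[\tilde u_*,\tilde u]\times\mathbb T$. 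The incoming flux on $C_{\tilde u_*}$ is bounded by $C\varepsilon^2t^{2k+3}$ through the $\mathcal W_1$ estimates. This step requires comparing the two charts through $(t,\urr,x_2)$, where $\Tr,\Xr$ are fixed, and using that $c\approx\mathbf c_*$ there. The outgoing flux at $\tilde u$ is nonnegative and is kept on the left side. The crucial bulk density is the quadratic form obtained from combining integration-by-parts antidamping, profile damping and the null commutator term:
\[
-\tfrac{2(\gamma-1)}{\gamma+1}(V^1)^2-\tfrac{4}{\gamma+1}(V^2)^2+\Big(\tfrac{2a(\gamma-1)}{\gamma+1}+\tfrac{2}{\gamma+1}\Big)\big\{(V^1-V^2)^2+(V^3)^2\big\},
\]
multiplied by $\kappa/t\approx1$. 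Because the flux degenerates with $c$, it cannot absorb the null terms near vacuum. The plan instead bounds this form by $\lambda_{\max}|V|^2$ and checks numerically that $\lambda_{\max}<2k+3$: for $a\le4$, this holds when $k=0$ and $\gamma\le1.3$, and when $k=1$ and $\gamma\in\{\frac75,\frac53\}$. The energy inequality then reads $\mathcal E(t)\le\mathcal E_{\text{data}}+\int_0^t(\lambda_{\max}+C\varepsilon)\mathcal E/t'\,dt'+C\varepsilon^2t^{2k+3}$. The singular Gronwall Lemma~\ref{lem: Gronwall}, fed with the $t\to0$ limit from Proposition~\ref{prop: initial energy bounds on Sigma_0}, gives $\mathcal E\le C\varepsilon^2t^{2k+3}$ precisely because $\lambda_{\max}<2k+3$. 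The top-order commutator terms with $\Tr$ are handled by ordering the induction in $a$: lower $a$ are estimated first, so the $a$-dependent coefficient appears only on the diagonal. Choosing $M$ large relative to $C$ improves $(\mathbf B_2)$.

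\textbf{Closing $(\mathbf B_\infty)$.} The spatial Sobolev inequalities on $\Sigma_t^{\tilde u_*,\tilde u^*}$ turn the improved energies into pointwise bounds for $V_{k;a,b}$ with $a+b\le2$, and into $L^2$ bounds up to order $4$. Converting $\Tr,\Xr$ into $T,\hat X$ via the geometry then recovers the $L^2$ and $L^\infty$ parts of $(\mathbf B_\infty)$. For example, $T\wb+\frac2{\gamma+1}=O(\varepsilon t)$ follows from $\Tr\Vr_0$ plus the remainder estimate. The bounds on $c^{-1}L\psi$ use the Riemann-invariant form \eqref{Euler equations:form 2}: each right-hand side carries an explicit factor $c$, so $c^{-1}L\psi$ is a combination of frame derivatives of $\psi$, and no division by $c$ remains.

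\textbf{Main obstacle.} The hardest step is the bulk estimate up to vacuum: verifying $\lambda_{\max}<2k+3$ at order four, uniformly in $\alpha$, while controlling all the error terms (frame deviation, $\kappa/t-1$, $t\Vr_1$ contributions) by $C\varepsilon\,\mathcal E/t$ without using the flux.
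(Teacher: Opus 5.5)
Your treatment of $(\mathbf B_2)$ is essentially the paper's: the same combined quadratic form, the same check $\lambda_{\max}<2k+3$ for $a\leq 4$, the flux on $C_{\tilde u_*}$ taken from the $\mathcal W_1$ bounds, and Lemma~\ref{lem: Gronwall}. The gap is in closing $(\mathbf B_\infty)$ for the $\wb$-components.

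You assert that $T\wb+\frac{2}{\gamma+1}=O(\varepsilon t)$ ``follows from $\Tr\Vr_0$ plus the remainder estimate'', and that the rest follows by converting $\Tr,\Xr$ into $T,\hat X$. For $w$ and $\psi_2$ this works. Since $\Tr\mathring w=\Tr\mathring v^2=0$, every conversion error is a product of two small factors, hence of size $P(M)\varepsilon^2$. For $\wb$, equivalently $v^1+c$, it fails, because $\Tr(v^1+c)=-1+O(\varepsilon t)$ is of order one. The conversion multiplies it by $T^{(\Tr)}-1=-\frac{\kappa}{t}\hat T^1-1$ and by $\hat X^{(\Tr)}=-\hat T^2/t$, and at higher order by $Z^\beta(T^{(\Tr)})$ and $(\hat X)^{n-1}(\hat X^{(\Tr)})$. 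Your geometry step controls $\kappa-t$ and $\hat T^2$ only by integrating transport equations whose sources were bounded using $(\mathbf B_\infty)$. So these coefficients are only $P(M)\varepsilon t$ and $P(M)\varepsilon$. The conversion then returns $\|T\wb+\frac{2}{\gamma+1}\|\leq P(M)\varepsilon t$ and $\|(\hat X)^n\wb\|\leq P(M)\varepsilon$. These are the same size as the bootstrap assumptions, with $M$-dependent constants, so nothing is improved. With the bound $\kappa=t(1+O(\varepsilon))$ you actually state, you even lose the factor $t$. This is precisely the main error term that must be handled separately.

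The missing idea is that $v^1+c$ and the geometry must be closed together. The transport equations give
\[
L(\kappa-t)=-\bigl(T(v^1+c)+1\bigr)+O(P(M)\varepsilon^2t),\qquad L\hat T^2=\hat X(v^1+c)+O(P(M)\varepsilon^2t).
\]
The frame conversion gives
\[
T(v^1+c)+1=1-\frac{\kappa}{t}+O(C\varepsilon t+P(M)\varepsilon^2t),\qquad \hat X(v^1+c)=\frac{\hat T^2}{t}+O(C\varepsilon t+P(M)\varepsilon^2t).
\]
Eliminate $\kappa$ and $\hat T^2$. Then $f=T(v^1+c)+1$ and $g=(\hat X)^n(v^1+c)$, and likewise the mixed words $Z^\beta T(v^1+c)$, satisfy relations of the form $g=\frac1t\int_0^t g+h$ with $|h|\leq C\varepsilon t+P(M)\varepsilon^2t$. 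Write $G(t)=\int_0^t g$. Then $|(G/t)'|\leq|h|/t$, and $G/t\to g(0)$. Here $g(0)=\partial_{x_2}^n\bigl(v_r^1-\frac{2c_r}{\gamma-1}\bigr)(0,\cdot)=O(\varepsilon)$ and $f(0)=0$. This yields $M$-independent bounds once $\varepsilon$ is small.

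Note that the kernel has coefficient exactly one, so iterating sup bounds does not contract; you must use the initial value. Together with $v^1+c=\frac{\gamma+1}{2}\wb+\frac{\gamma-3}{2}w$, this closes the $\wb$ parts of $(\mathbf B_\infty)$ and also improves $\hat T^2$ and $\kappa-t$. You should also include the commuted geometric estimates up to order three in $L^2$. Both the order-$3$ and order-$4$ conversions and the $\alpha$-uniform $W^{2,2}\to L^\infty$ inequality rely on them.
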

The proof of theorem \ref{theorem: a prior estimate in W_2} will be accomplished in last three sections.

\subsection{Completion of the solution and proof of Theorem~\ref{theorem: stability of RVR}}\label{subsec: proof main theorem}
We now use the approximation and matching propositions and
Theorems~\ref{theorem: a prior estimate in W_1} and
\ref{theorem: a prior estimate in W_2} to construct the entropy solution.
The order is: prepare the initial and exterior boundary data, construct
the part away from vacuum, extend across the overlap with estimates
uniform in the cutoff, and remove the cutoff. The
domain-of-dependence viewpoint used in
\cite{Luo-YuRare1,Luo-YuRare2} leads to two one-sided constructions.
We carry out the right construction, associated with
$\lambda_3(U,(1,0))=v^1+c$. For the left data, the Euler symmetry
\[
(x_1,c,v^1,v^2)\longmapsto(-x_1,c,-v^1,v^2)
\]
converts the construction associated with
$\lambda_1(U,(1,0))=v^1-c$ into the same right-hand problem.
This uses the reflected left data, not an assumption that the left
and right data are reflections of one another. The strict gap in
Theorem~\ref{theorem: stability of RVR} and the boundary estimates
keep the resulting fans disjoint. Their conservative states and
fluxes tend to zero at the vacuum interfaces, so filling the region
between them with $(\rho,\rho v^1,\rho v^2)=(0,0,0)$ introduces no
boundary source in the conservation laws or the mechanical energy
inequality. The reflection of the profiles and the required trace
estimates are given in Section~\ref{subsubsec: left profile theorem}.
It therefore suffices to develop estimates for the right family.

\begin{itemize}
\item \textbf{(A) Initial and exterior boundary data.} According to proposition \ref{prop: initial energy bounds on Sigma_0}, \ref{prop: formulas for c^-1 L(psi)}, we can bound higher order Sobolev errors for $U-\Ur_k$ on $\Sigma_{0}$ up to $\Ntop\geq 6$ in region $\Sigma_{0}\cap\mathcal{W}_{1}$ and up to $4$ in region $\Sigma_{0}\cap\mathcal{W}_{2, \alpha}$  (independent of $\alpha\in (0, \frac{1}{100})$).  According to proposition \ref{prop: matching of U and Ur_k along C_0}, we can also bound higher order Sobolev errors for $U-\Ur_k$ on $C_{0}$ up to $\Ntop\geq 6$, which serves as initial data for a priori estimates. We remark that there exists universal constant integer $l_{0}$ such that $\frac{N_{0}}{4}-l_{0}\leq \Ntop\leq N_{0}$, which ensures that we can make $\Ntop\geq 8$ by taking sufficiently large $N_{0}$.  

\begin{tikzpicture}[scale=1.1, >=stealth]
\draw[black!80, line width=0.8pt] (0,0) 
    -- (7.2,0) 
    -- (7.8,-0.3) 
    -- (0.6,-0.3) 
    -- cycle;
\node at (3.9, -0.7) {$\Sigma_0$};

\foreach \x in {0.3,1.1,1.9,2.7,3.5,4.3,5.1,5.9,6.7}{
    \fill[gray!15, opacity=0.45] 
        (\x,0) -- (\x+0.6,-0.3) -- (\x+0.6,2.1) -- (\x,2.4) -- cycle;
    \draw[dashed, gray!70, line width=0.55pt] 
        (\x,0) -- (\x+0.6,-0.3) -- (\x+0.6,2.1) -- (\x,2.4) -- cycle;
}

\fill[blue!15, opacity=0.5]
    plot[smooth] coordinates {
        (5.88,0) (5.90,-0.03) (5.92,-0.06) (5.94,-0.075) (5.96,-0.09) (6.00,-0.12) (6.06,-0.15) (6.12,-0.18) (6.20,-0.21) (6.24,-0.225) (6.28,-0.24) (6.38,-0.27) (6.48,-0.3)
    }
    -- (6.24,2.1)
    plot[smooth] coordinates {
        (6.24,2.1) (6.14,2.13) (6.04,2.16) (6.00,2.175) (5.96,2.19) (5.88,2.22) (5.82,2.25) (5.76,2.28) (5.72,2.31) (5.70,2.325) (5.68,2.34) (5.66,2.37) (5.64,2.4)
    }
    -- (5.88,0);
\draw[blue!80, line width=0.7pt] plot[smooth] coordinates {(5.88,0) (5.90,-0.03) (5.92,-0.06) (5.94,-0.075) (5.96,-0.09) (6.00,-0.12) (6.06,-0.15) (6.12,-0.18) (6.20,-0.21) (6.24,-0.225) (6.28,-0.24) (6.38,-0.27) (6.48,-0.3)};
\draw[blue!80, line width=0.7pt] (6.48,-0.3) -- (6.24,2.1);
\draw[blue!80, line width=0.7pt] plot[smooth] coordinates {(6.24,2.1) (6.14,2.13) (6.04,2.16) (6.00,2.175) (5.96,2.19) (5.88,2.22) (5.82,2.25) (5.76,2.28) (5.72,2.31) (5.70,2.325) (5.68,2.34) (5.66,2.37) (5.64,2.4)};
\draw[blue!80, line width=0.7pt] (5.64,2.4) -- (5.88,0);
\node[blue!80, above] at (6.0,2.5) {\small $C_0$};

\draw[->, black!80, line width=0.6pt] (3.3,-1.5) -- (3.3,-1.0);
\node at (3.3,-0.85) {$t$};
\draw[->, black!80, line width=0.6pt] (3.3,-1.5) -- (4.3,-1.5);
\node at (4.5,-1.5) {$\mathring{u}$};
\draw[->, black!80, line width=0.6pt] (3.3,-1.5) -- (3.8,-1.8);
\node at (4.0,-1.9) {$x_2$};
\end{tikzpicture}

\item \textbf{(B) Construction away from vacuum.} According the theorem \ref{theorem: a prior estimate in W_1}, we will derive a priori energy estimate on $\mathcal{W}_{1}$ up to highest order $\Ntop$. In particularly, we have in $\mathcal{W}_{1}$, which parametrized by characteristic $(t, u, \vartheta)\in [0, t^{*}]\times [0, u^{*}]\times \mathbb{T}$,
\[
\begin{cases}
\|(\frac{\partial}{\partial \mathring{u}})^{a}(\frac{\partial}{\partial x_2})^{b}(U-\Ur_{k})\|_{L^{2}(\Sigma_{t}^{0, u^{*}})}\leq C\varepsilon t^{k+1}, a+b\leq \Ntop,\\
\|(\frac{\partial}{\partial \mathring{u}})^{a}(\frac{\partial}{\partial x_2})^{b}(U-\Ur_{k})\|_{L^{\infty}(\Sigma_{t}^{0, u^{*}})}\leq C\varepsilon t^{k+1}, a+b\leq \Ntop-2.
\end{cases}
\]
where 
\[
\|f\|_{L^{2}(\Sigma_{t}^{0, u^{*}})}:=\big(\int_{0}^{u^{*}}\int_{\mathbb{T}} |f|^{2}(t, u', \vartheta') \sqrt{\slashed{g}} du' d\vartheta'\big)^{\frac{1}{2}},
\|f\|_{L^{\infty}(\Sigma_{t}^{0, u^{*}})}:=\operatorname*{ess\,sup}_{(u', \vartheta')\in [0, u^{*}]\times \mathbb{T}} |f|(t, u', \vartheta').
\]
\begin{tikzpicture}[scale=1.1, >=stealth]
\draw[black!80, line width=0.8pt] (0,0) 
    -- (7.2,0) 
    -- (7.8,-0.3) 
    -- (0.6,-0.3) 
    -- cycle;
\node at (3.9, -0.7) {$\Sigma_0$};

\foreach \x in {0.3,1.1,1.9,2.7,3.5,4.3,5.1,5.9,6.7}{
    \fill[gray!15, opacity=0.45] 
        (\x,0) -- (\x+0.6,-0.3) -- (\x+0.6,2.1) -- (\x,2.4) -- cycle;
    \draw[dashed, gray!70, line width=0.55pt] 
        (\x,0) -- (\x+0.6,-0.3) -- (\x+0.6,2.1) -- (\x,2.4) -- cycle;
}

\foreach \xb in {2.52,3.12,3.72,4.32,4.92,5.52}{
\fill[blue!12, opacity=0.45]
    plot[smooth] coordinates {
        (\xb,0) (\xb+0.02,-0.03) (\xb+0.04,-0.06) (\xb+0.06,-0.075) (\xb+0.08,-0.09) (\xb+0.12,-0.12) (\xb+0.18,-0.15) (\xb+0.24,-0.18) (\xb+0.32,-0.21) (\xb+0.36,-0.225) (\xb+0.40,-0.24) (\xb+0.50,-0.27) (\xb+0.6,-0.3)
    }
    -- (\xb+0.36,2.1)
    plot[smooth] coordinates {
        (\xb+0.36,2.1) (\xb+0.26,2.13) (\xb+0.16,2.16) (\xb+0.12,2.175) (\xb+0.08,2.19) (\xb+0.0,2.22) (\xb-0.06,2.25) (\xb-0.12,2.28) (\xb-0.16,2.31) (\xb-0.18,2.325) (\xb-0.20,2.34) (\xb-0.22,2.37) (\xb-0.24,2.4)
    }
    -- (\xb,0);
\draw[dashed, blue!70, line width=0.55pt] plot[smooth] coordinates {
    (\xb,0) (\xb+0.02,-0.03) (\xb+0.04,-0.06) (\xb+0.06,-0.075) (\xb+0.08,-0.09) (\xb+0.12,-0.12) (\xb+0.18,-0.15) (\xb+0.24,-0.18) (\xb+0.32,-0.21) (\xb+0.36,-0.225) (\xb+0.40,-0.24) (\xb+0.50,-0.27) (\xb+0.6,-0.3)
};
\draw[dashed, blue!70, line width=0.55pt] (\xb+0.6,-0.3) -- (\xb+0.36,2.1);
\draw[dashed, blue!70, line width=0.55pt] plot[smooth] coordinates {
    (\xb+0.36,2.1) (\xb+0.26,2.13) (\xb+0.16,2.16) (\xb+0.12,2.175) (\xb+0.08,2.19) (\xb+0.0,2.22) (\xb-0.06,2.25) (\xb-0.12,2.28) (\xb-0.16,2.31) (\xb-0.18,2.325) (\xb-0.20,2.34) (\xb-0.22,2.37) (\xb-0.24,2.4)
};
\draw[dashed, blue!70, line width=0.55pt] (\xb-0.24,2.4) -- (\xb,0);
}

\fill[blue!15, opacity=0.5]
    plot[smooth] coordinates {
        (5.88,0) (5.90,-0.03) (5.92,-0.06) (5.94,-0.075) (5.96,-0.09) (6.00,-0.12) (6.06,-0.15) (6.12,-0.18) (6.20,-0.21) (6.24,-0.225) (6.28,-0.24) (6.38,-0.27) (6.48,-0.3)
    }
    -- (6.24,2.1)
    plot[smooth] coordinates {
        (6.24,2.1) (6.14,2.13) (6.04,2.16) (6.00,2.175) (5.96,2.19) (5.88,2.22) (5.82,2.25) (5.76,2.28) (5.72,2.31) (5.70,2.325) (5.68,2.34) (5.66,2.37) (5.64,2.4)
    }
    -- (5.88,0);
\draw[blue!80, line width=0.7pt] plot[smooth] coordinates {(5.88,0) (5.90,-0.03) (5.92,-0.06) (5.94,-0.075) (5.96,-0.09) (6.00,-0.12) (6.06,-0.15) (6.12,-0.18) (6.20,-0.21) (6.24,-0.225) (6.28,-0.24) (6.38,-0.27) (6.48,-0.3)};
\draw[blue!80, line width=0.7pt] (6.48,-0.3) -- (6.24,2.1);
\draw[blue!80, line width=0.7pt] plot[smooth] coordinates {(6.24,2.1) (6.14,2.13) (6.04,2.16) (6.00,2.175) (5.96,2.19) (5.88,2.22) (5.82,2.25) (5.76,2.28) (5.72,2.31) (5.70,2.325) (5.68,2.34) (5.66,2.37) (5.64,2.4)};
\draw[blue!80, line width=0.7pt] (5.64,2.4) -- (5.88,0);
\node[blue!80, above] at (6.0,2.5) {\small $C_0$};

\draw[->, black!80, line width=0.6pt] (3.3,-1.5) -- (3.3,-1.0);
\node at (3.3,-0.85) {$t$};
\draw[->, black!80, line width=0.6pt] (3.3,-1.5) -- (4.3,-1.5);
\node at (4.5,-1.5) {$\mathring{u}$};
\draw[->, black!80, line width=0.6pt] (3.3,-1.5) -- (3.8,-1.8);
\node at (4.0,-1.9) {$x_2$};
\end{tikzpicture}

\item \textbf{(C) Transfer through the overlap.} Since $C_{\tilde{u}_{*}}$ is contained in the interior of $\mathcal{W}_{1}$, the preceding estimates give Sobolev norms of the data on $C_{\tilde{u}_{*}}$ up to order $4$ by Sobolev embedding and trace theorem. Then we will derive \textbf{a priori energy estimate on $\mathcal{W}_{2, \alpha}$ independent of $\alpha$} up to order $4$, which finishes the construction of solution in \textbf{the region $\mathcal{W}_{2, \alpha}$ near vacuum} (independent of $\alpha\in (0, \frac{1}{100})$). In particularly, we have in $\mathcal{W}_{2, \alpha}$, which parametrized by characteristic $(t, \ut, \vartheta)\in [0, t^{*}]\times [\ut_{*}, \ut^{*}]\times \mathbb{T}$,
\[
\begin{cases}
\|(\frac{\partial}{\partial \mathring{u}})^{a}(\frac{\partial}{\partial x_2})^{b}(U-\Ur_{k})\|_{L^{2}(\Sigma_{t}^{\ut_{*}, \ut^{*}})}\leq C\varepsilon t^{k+1}, a+b\leq 4,\\
\|(\frac{\partial}{\partial \mathring{u}})^{a}(\frac{\partial}{\partial x_2})^{b}(U-\Ur_{k})\|_{L^{\infty}(\Sigma_{t}^{\ut_{*}, \ut^{*}})}\leq C\varepsilon t^{k+1}, a+b\leq 2.
\end{cases}
\]
where 
\[
\|f\|_{L^{2}(\Sigma_{t}^{\ut_{*}, \ut^{*}})}:=\big(\int_{\ut_{*}}^{\ut^{*}}\int_{\mathbb{T}} |f|^{2}(t, \ut', \vartheta') \sqrt{\slashed{g}} d\ut' d\vartheta'\big)^{\frac{1}{2}},
\|f\|_{L^{\infty}(\Sigma_{t}^{\ut_{*}, \ut^{*}})}:=\operatorname*{ess\,sup}_{(\ut', \vartheta')\in [\ut_{*}, \ut^{*}]\times \mathbb{T}} |f|(t, \ut', \vartheta').
\]

\begin{tikzpicture}[scale=1.1, >=stealth]
\draw[black!80, line width=0.8pt] (0,0) 
    -- (7.2,0) 
    -- (7.8,-0.3) 
    -- (0.6,-0.3) 
    -- cycle;
\node at (3.9, -0.7) {$\Sigma_0$};

\foreach \x in {0.3,1.1,1.9,2.7,3.5,4.3,5.1,5.9,6.7}{
    \fill[gray!15, opacity=0.45] 
        (\x,0) -- (\x+0.6,-0.3) -- (\x+0.6,2.1) -- (\x,2.4) -- cycle;
    \draw[dashed, gray!70, line width=0.55pt] 
        (\x,0) -- (\x+0.6,-0.3) -- (\x+0.6,2.1) -- (\x,2.4) -- cycle;
}

\foreach \xb in {1.44,2.04,2.64,3.24,3.96}{
\fill[red!12, opacity=0.45]
    plot[smooth] coordinates {
        (\xb,0) (\xb+0.09,-0.03) (\xb+0.18,-0.06) (\xb+0.22,-0.075) (\xb+0.26,-0.09) (\xb+0.33,-0.12) (\xb+0.39,-0.15) (\xb+0.45,-0.18) (\xb+0.50,-0.21) (\xb+0.52,-0.225) (\xb+0.54,-0.24) (\xb+0.57,-0.27) (\xb+0.6,-0.3)
    }
    -- (\xb+0.89,2.1)
    plot[smooth] coordinates {
        (\xb+0.89,2.1) (\xb+0.86,2.13) (\xb+0.83,2.16) (\xb+0.81,2.175) (\xb+0.79,2.19) (\xb+0.74,2.22) (\xb+0.68,2.25) (\xb+0.62,2.28) (\xb+0.55,2.31) (\xb+0.51,2.325) (\xb+0.47,2.34) (\xb+0.38,2.37) (\xb+0.29,2.4)
    }
    -- (\xb,0);
\draw[dashed, red!70, line width=0.55pt] plot[smooth] coordinates {
    (\xb,0) (\xb+0.09,-0.03) (\xb+0.18,-0.06) (\xb+0.22,-0.075) (\xb+0.26,-0.09) (\xb+0.33,-0.12) (\xb+0.39,-0.15) (\xb+0.45,-0.18) (\xb+0.50,-0.21) (\xb+0.52,-0.225) (\xb+0.54,-0.24) (\xb+0.57,-0.27) (\xb+0.6,-0.3)
};
\draw[dashed, red!70, line width=0.55pt] (\xb+0.6,-0.3) -- (\xb+0.89,2.1);
\draw[dashed, red!70, line width=0.55pt] plot[smooth] coordinates {
    (\xb+0.89,2.1) (\xb+0.86,2.13) (\xb+0.83,2.16) (\xb+0.81,2.175) (\xb+0.79,2.19) (\xb+0.74,2.22) (\xb+0.68,2.25) (\xb+0.62,2.28) (\xb+0.55,2.31) (\xb+0.51,2.325) (\xb+0.47,2.34) (\xb+0.38,2.37) (\xb+0.29,2.4)
};
\draw[dashed, red!70, line width=0.55pt] (\xb+0.29,2.4) -- (\xb,0);
}

\fill[red!15, opacity=0.5]
    plot[smooth] coordinates {
        (0.84,0) (0.93,-0.03) (1.02,-0.06) (1.06,-0.075) (1.10,-0.09) (1.17,-0.12) (1.23,-0.15) (1.29,-0.18) (1.34,-0.21) (1.36,-0.225) (1.38,-0.24) (1.41,-0.27) (1.44,-0.3)
    }
    -- (1.73,2.1)
    plot[smooth] coordinates {
        (1.73,2.1) (1.70,2.13) (1.67,2.16) (1.65,2.175) (1.63,2.19) (1.58,2.22) (1.52,2.25) (1.46,2.28) (1.39,2.31) (1.35,2.325) (1.31,2.34) (1.22,2.37) (1.13,2.4)
    }
    -- (0.84,0);
\draw[dashed, red!80, line width=0.7pt] plot[smooth] coordinates {(0.84,0) (0.93,-0.03) (1.02,-0.06) (1.06,-0.075) (1.10,-0.09) (1.17,-0.12) (1.23,-0.15) (1.29,-0.18) (1.34,-0.21) (1.36,-0.225) (1.38,-0.24) (1.41,-0.27) (1.44,-0.3)};
\draw[dashed, red!80, line width=0.7pt] (1.44,-0.3) -- (1.73,2.1);
\draw[dashed, red!80, line width=0.7pt] plot[smooth] coordinates {(1.73,2.1) (1.70,2.13) (1.67,2.16) (1.65,2.175) (1.63,2.19) (1.58,2.22) (1.52,2.25) (1.46,2.28) (1.39,2.31) (1.35,2.325) (1.31,2.34) (1.22,2.37) (1.13,2.4)};
\draw[dashed, red!80, line width=0.7pt] (1.13,2.4) -- (0.84,0);
\node[red!80, above] at (1.25,2.5) {\small $C_{\ut^*}$};

\foreach \xb in {2.52,3.12,3.72,4.32,4.92,5.52}{
\fill[blue!12, opacity=0.45]
    plot[smooth] coordinates {
        (\xb,0) (\xb+0.02,-0.03) (\xb+0.04,-0.06) (\xb+0.06,-0.075) (\xb+0.08,-0.09) (\xb+0.12,-0.12) (\xb+0.18,-0.15) (\xb+0.24,-0.18) (\xb+0.32,-0.21) (\xb+0.36,-0.225) (\xb+0.40,-0.24) (\xb+0.50,-0.27) (\xb+0.6,-0.3)
    }
    -- (\xb+0.36,2.1)
    plot[smooth] coordinates {
        (\xb+0.36,2.1) (\xb+0.26,2.13) (\xb+0.16,2.16) (\xb+0.12,2.175) (\xb+0.08,2.19) (\xb+0.0,2.22) (\xb-0.06,2.25) (\xb-0.12,2.28) (\xb-0.16,2.31) (\xb-0.18,2.325) (\xb-0.20,2.34) (\xb-0.22,2.37) (\xb-0.24,2.4)
    }
    -- (\xb,0);
\draw[dashed, blue!70, line width=0.55pt] plot[smooth] coordinates {
    (\xb,0) (\xb+0.02,-0.03) (\xb+0.04,-0.06) (\xb+0.06,-0.075) (\xb+0.08,-0.09) (\xb+0.12,-0.12) (\xb+0.18,-0.15) (\xb+0.24,-0.18) (\xb+0.32,-0.21) (\xb+0.36,-0.225) (\xb+0.40,-0.24) (\xb+0.50,-0.27) (\xb+0.6,-0.3)
};
\draw[dashed, blue!70, line width=0.55pt] (\xb+0.6,-0.3) -- (\xb+0.36,2.1);
\draw[dashed, blue!70, line width=0.55pt] plot[smooth] coordinates {
    (\xb+0.36,2.1) (\xb+0.26,2.13) (\xb+0.16,2.16) (\xb+0.12,2.175) (\xb+0.08,2.19) (\xb+0.0,2.22) (\xb-0.06,2.25) (\xb-0.12,2.28) (\xb-0.16,2.31) (\xb-0.18,2.325) (\xb-0.20,2.34) (\xb-0.22,2.37) (\xb-0.24,2.4)
};
\draw[dashed, blue!70, line width=0.55pt] (\xb-0.24,2.4) -- (\xb,0);
}

\fill[blue!15, opacity=0.5]
    plot[smooth] coordinates {
        (5.88,0) (5.90,-0.03) (5.92,-0.06) (5.94,-0.075) (5.96,-0.09) (6.00,-0.12) (6.06,-0.15) (6.12,-0.18) (6.20,-0.21) (6.24,-0.225) (6.28,-0.24) (6.38,-0.27) (6.48,-0.3)
    }
    -- (6.24,2.1)
    plot[smooth] coordinates {
        (6.24,2.1) (6.14,2.13) (6.04,2.16) (6.00,2.175) (5.96,2.19) (5.88,2.22) (5.82,2.25) (5.76,2.28) (5.72,2.31) (5.70,2.325) (5.68,2.34) (5.66,2.37) (5.64,2.4)
    }
    -- (5.88,0);
\draw[blue!80, line width=0.7pt] plot[smooth] coordinates {(5.88,0) (5.90,-0.03) (5.92,-0.06) (5.94,-0.075) (5.96,-0.09) (6.00,-0.12) (6.06,-0.15) (6.12,-0.18) (6.20,-0.21) (6.24,-0.225) (6.28,-0.24) (6.38,-0.27) (6.48,-0.3)};
\draw[blue!80, line width=0.7pt] (6.48,-0.3) -- (6.24,2.1);
\draw[blue!80, line width=0.7pt] plot[smooth] coordinates {(6.24,2.1) (6.14,2.13) (6.04,2.16) (6.00,2.175) (5.96,2.19) (5.88,2.22) (5.82,2.25) (5.76,2.28) (5.72,2.31) (5.70,2.325) (5.68,2.34) (5.66,2.37) (5.64,2.4)};
\draw[blue!80, line width=0.7pt] (5.64,2.4) -- (5.88,0);
\node[blue!80, above] at (6.0,2.5) {\small $C_0$};

\draw[->, black!80, line width=0.6pt] (3.3,-1.5) -- (3.3,-1.0);
\node at (3.3,-0.85) {$t$};
\draw[->, black!80, line width=0.6pt] (3.3,-1.5) -- (4.3,-1.5);
\node at (4.5,-1.5) {$\mathring{u}$};
\draw[->, black!80, line width=0.6pt] (3.3,-1.5) -- (3.8,-1.8);
\node at (4.0,-1.9) {$x_2$};
\end{tikzpicture}

\item \textbf{(D) Removal of the vacuum cutoff.} In view of (B) and (C), we have constructed a solution in rarefaction wave region
\[
\mathcal{W}_{1}\cup \mathcal{W}_{2, \alpha}.
\]
and solution $U_{\alpha}$. Moreover, in view of our construction and domain of dependence argument, for any $\alpha_{2}<\alpha_{1}\in (0, \frac{1}{100})$, we have
\[
\begin{cases}
\mathcal{W}_{1}\cup\mathcal{W}_{2,\alpha_{1}}\subset \mathcal{W}_{1}\cup\mathcal{W}_{2,\alpha_{2}},\\
U_{\alpha_{2}}|_{\mathcal{W}_{1}\cup\mathcal{W}_{2,\alpha_{1}}}=U_{\alpha_{1}}.
\end{cases}
\]
Taking $\alpha\rightarrow 0$, we can obtain complete rarefaction wave region $\mathcal{W}_{\textbf{complete}}$ together with the solution $U$ such that for any $\alpha\in (0, \frac{1}{100})$,
\[
\begin{cases}
\mathcal{W}_{\textbf{complete}}\cap \big(\mathcal{W}_{1}\cup \mathcal{W}_{2, \alpha}\big)=\mathcal{W}_{1}\cup \mathcal{W}_{2, \alpha},\\
U|_{\mathcal{W}_{1}\cup \mathcal{W}_{2, \alpha}}=U_{\alpha}.
\end{cases}
\]
Moreover, since $|L(v^{i})|\leq C\varepsilon\cdot c, i=1, 2$, the vacuum boundary $C_{\textbf{vacuum}}$ is given by
\[
\begin{cases}
t=t,\\
\mathring{u}=\big(v_{r}^{1}-\frac{2}{\gamma-1}c_{r}\big)(0, y_2),\\
x_{2}=y_{2}+t\cdot v_{r}^{2}(0, y_2).
\end{cases}
\]
where $(t, y_2)\in [0, t^{*}]\times \mathbb{T}$ parametrizes a ruled surface. The graph descriptions, conservative comparisons, and additional regularity assertions of Theorem~\ref{theorem: stability of RVR} are verified in the following subsections.

\begin{tikzpicture}[scale=1.1, >=stealth]
\draw[black!80, line width=0.8pt] (0,0) 
    -- (7.2,0) 
    -- (7.8,-0.3) 
    -- (0.6,-0.3) 
    -- cycle;
\node at (3.9, -0.7) {$\Sigma_0$};

\foreach \x in {0.3,1.1,1.9,2.7,3.5,4.3,5.1,5.9,6.7}{
    \fill[gray!15, opacity=0.45] 
        (\x,0) -- (\x+0.6,-0.3) -- (\x+0.6,2.1) -- (\x,2.4) -- cycle;
    \draw[dashed, gray!70, line width=0.55pt] 
        (\x,0) -- (\x+0.6,-0.3) -- (\x+0.6,2.1) -- (\x,2.4) -- cycle;
}

\foreach \xb in {1.44,2.04,2.64,3.24,3.96}{
\fill[red!12, opacity=0.45]
    plot[smooth] coordinates {
        (\xb,0) (\xb+0.09,-0.03) (\xb+0.18,-0.06) (\xb+0.22,-0.075) (\xb+0.26,-0.09) (\xb+0.33,-0.12) (\xb+0.39,-0.15) (\xb+0.45,-0.18) (\xb+0.50,-0.21) (\xb+0.52,-0.225) (\xb+0.54,-0.24) (\xb+0.57,-0.27) (\xb+0.6,-0.3)
    }
    -- (\xb+0.89,2.1)
    plot[smooth] coordinates {
        (\xb+0.89,2.1) (\xb+0.86,2.13) (\xb+0.83,2.16) (\xb+0.81,2.175) (\xb+0.79,2.19) (\xb+0.74,2.22) (\xb+0.68,2.25) (\xb+0.62,2.28) (\xb+0.55,2.31) (\xb+0.51,2.325) (\xb+0.47,2.34) (\xb+0.38,2.37) (\xb+0.29,2.4)
    }
    -- (\xb,0);
\draw[dashed, red!70, line width=0.55pt] plot[smooth] coordinates {
    (\xb,0) (\xb+0.09,-0.03) (\xb+0.18,-0.06) (\xb+0.22,-0.075) (\xb+0.26,-0.09) (\xb+0.33,-0.12) (\xb+0.39,-0.15) (\xb+0.45,-0.18) (\xb+0.50,-0.21) (\xb+0.52,-0.225) (\xb+0.54,-0.24) (\xb+0.57,-0.27) (\xb+0.6,-0.3)
};
\draw[dashed, red!70, line width=0.55pt] (\xb+0.6,-0.3) -- (\xb+0.89,2.1);
\draw[dashed, red!70, line width=0.55pt] plot[smooth] coordinates {
    (\xb+0.89,2.1) (\xb+0.86,2.13) (\xb+0.83,2.16) (\xb+0.81,2.175) (\xb+0.79,2.19) (\xb+0.74,2.22) (\xb+0.68,2.25) (\xb+0.62,2.28) (\xb+0.55,2.31) (\xb+0.51,2.325) (\xb+0.47,2.34) (\xb+0.38,2.37) (\xb+0.29,2.4)
};
\draw[dashed, red!70, line width=0.55pt] (\xb+0.29,2.4) -- (\xb,0);
}

\fill[red!15, opacity=0.5]
    plot[smooth] coordinates {
        (0.84,0) (0.93,-0.03) (1.02,-0.06) (1.06,-0.075) (1.10,-0.09) (1.17,-0.12) (1.23,-0.15) (1.29,-0.18) (1.34,-0.21) (1.36,-0.225) (1.38,-0.24) (1.41,-0.27) (1.44,-0.3)
    }
    -- (1.73,2.1)
    plot[smooth] coordinates {
        (1.73,2.1) (1.70,2.13) (1.67,2.16) (1.65,2.175) (1.63,2.19) (1.58,2.22) (1.52,2.25) (1.46,2.28) (1.39,2.31) (1.35,2.325) (1.31,2.34) (1.22,2.37) (1.13,2.4)
    }
    -- (0.84,0);
\draw[red!80, line width=0.7pt] plot[smooth] coordinates {(0.84,0) (0.93,-0.03) (1.02,-0.06) (1.06,-0.075) (1.10,-0.09) (1.17,-0.12) (1.23,-0.15) (1.29,-0.18) (1.34,-0.21) (1.36,-0.225) (1.38,-0.24) (1.41,-0.27) (1.44,-0.3)};
\draw[red!80, line width=0.7pt] (1.44,-0.3) -- (1.73,2.1);
\draw[red!80, line width=0.7pt] plot[smooth] coordinates {(1.73,2.1) (1.70,2.13) (1.67,2.16) (1.65,2.175) (1.63,2.19) (1.58,2.22) (1.52,2.25) (1.46,2.28) (1.39,2.31) (1.35,2.325) (1.31,2.34) (1.22,2.37) (1.13,2.4)};
\draw[red!80, line width=0.7pt] (1.13,2.4) -- (0.84,0);
\node[red!80, above] at (1.25,2.5) {\small $C_{\textbf{vacuum}}$};

\foreach \xb in {2.52,3.12,3.72,4.32,4.92,5.52}{
\fill[blue!12, opacity=0.45]
    plot[smooth] coordinates {
        (\xb,0) (\xb+0.02,-0.03) (\xb+0.04,-0.06) (\xb+0.06,-0.075) (\xb+0.08,-0.09) (\xb+0.12,-0.12) (\xb+0.18,-0.15) (\xb+0.24,-0.18) (\xb+0.32,-0.21) (\xb+0.36,-0.225) (\xb+0.40,-0.24) (\xb+0.50,-0.27) (\xb+0.6,-0.3)
    }
    -- (\xb+0.36,2.1)
    plot[smooth] coordinates {
        (\xb+0.36,2.1) (\xb+0.26,2.13) (\xb+0.16,2.16) (\xb+0.12,2.175) (\xb+0.08,2.19) (\xb+0.0,2.22) (\xb-0.06,2.25) (\xb-0.12,2.28) (\xb-0.16,2.31) (\xb-0.18,2.325) (\xb-0.20,2.34) (\xb-0.22,2.37) (\xb-0.24,2.4)
    }
    -- (\xb,0);
\draw[dashed, blue!70, line width=0.55pt] plot[smooth] coordinates {
    (\xb,0) (\xb+0.02,-0.03) (\xb+0.04,-0.06) (\xb+0.06,-0.075) (\xb+0.08,-0.09) (\xb+0.12,-0.12) (\xb+0.18,-0.15) (\xb+0.24,-0.18) (\xb+0.32,-0.21) (\xb+0.36,-0.225) (\xb+0.40,-0.24) (\xb+0.50,-0.27) (\xb+0.6,-0.3)
};
\draw[dashed, blue!70, line width=0.55pt] (\xb+0.6,-0.3) -- (\xb+0.36,2.1);
\draw[dashed, blue!70, line width=0.55pt] plot[smooth] coordinates {
    (\xb+0.36,2.1) (\xb+0.26,2.13) (\xb+0.16,2.16) (\xb+0.12,2.175) (\xb+0.08,2.19) (\xb+0.0,2.22) (\xb-0.06,2.25) (\xb-0.12,2.28) (\xb-0.16,2.31) (\xb-0.18,2.325) (\xb-0.20,2.34) (\xb-0.22,2.37) (\xb-0.24,2.4)
};
\draw[dashed, blue!70, line width=0.55pt] (\xb-0.24,2.4) -- (\xb,0);
}

\fill[blue!15, opacity=0.5]
    plot[smooth] coordinates {
        (5.88,0) (5.90,-0.03) (5.92,-0.06) (5.94,-0.075) (5.96,-0.09) (6.00,-0.12) (6.06,-0.15) (6.12,-0.18) (6.20,-0.21) (6.24,-0.225) (6.28,-0.24) (6.38,-0.27) (6.48,-0.3)
    }
    -- (6.24,2.1)
    plot[smooth] coordinates {
        (6.24,2.1) (6.14,2.13) (6.04,2.16) (6.00,2.175) (5.96,2.19) (5.88,2.22) (5.82,2.25) (5.76,2.28) (5.72,2.31) (5.70,2.325) (5.68,2.34) (5.66,2.37) (5.64,2.4)
    }
    -- (5.88,0);
\draw[blue!80, line width=0.7pt] plot[smooth] coordinates {(5.88,0) (5.90,-0.03) (5.92,-0.06) (5.94,-0.075) (5.96,-0.09) (6.00,-0.12) (6.06,-0.15) (6.12,-0.18) (6.20,-0.21) (6.24,-0.225) (6.28,-0.24) (6.38,-0.27) (6.48,-0.3)};
\draw[blue!80, line width=0.7pt] (6.48,-0.3) -- (6.24,2.1);
\draw[blue!80, line width=0.7pt] plot[smooth] coordinates {(6.24,2.1) (6.14,2.13) (6.04,2.16) (6.00,2.175) (5.96,2.19) (5.88,2.22) (5.82,2.25) (5.76,2.28) (5.72,2.31) (5.70,2.325) (5.68,2.34) (5.66,2.37) (5.64,2.4)};
\draw[blue!80, line width=0.7pt] (5.64,2.4) -- (5.88,0);
\node[blue!80, above] at (6.0,2.5) {\small $C_0$};

\draw[->, black!80, line width=0.6pt] (3.3,-1.5) -- (3.3,-1.0);
\node at (3.3,-0.85) {$t$};
\draw[->, black!80, line width=0.6pt] (3.3,-1.5) -- (4.3,-1.5);
\node at (4.5,-1.5) {$\mathring{u}$};
\draw[->, black!80, line width=0.6pt] (3.3,-1.5) -- (3.8,-1.8);
\node at (4.0,-1.9) {$x_2$};
\end{tikzpicture}

\end{itemize}

\subsubsection{The boundary graphs in Theorem~\ref{theorem: stability of RVR}}
\label{subsubsec: boundary graphs theorem}
We record explicitly how the preceding construction gives the graph
description in the theorem. Write
$(c_r,v_r)(t,x_1,x_2)$ and $(c_l,v_l)(t,x_1,x_2)$ for the classical
right and left exterior solutions. Their initial spatial traces are the
two-argument functions $(c_r,v_r)(0,x_2)$ and $(c_l,v_l)(0,x_2)$ used
in the theorem. The outer graphs solve
\[
\begin{aligned}
\partial_tf_0(t,x_2)
&=\left[v_r^1-v_r^2\partial_{x_2}f_0
+c_r\sqrt{1+(\partial_{x_2}f_0)^2}\right](t,f_0(t,x_2),x_2),\\
\partial_t\overline f_0(t,x_2)
&=\left[v_l^1-v_l^2\partial_{x_2}\overline f_0
-c_l\sqrt{1+(\partial_{x_2}\overline f_0)^2}\right]
(t,\overline f_0(t,x_2),x_2),
\end{aligned}
\]
with $f_0(0,x_2)=\overline f_0(0,x_2)=0$. In these formulas only the
exterior fields are evaluated at the indicated three arguments; the graph
derivatives are evaluated at $(t,x_2)$. For example, on the right graph,
\[
\hat T=\frac{(-1,\partial_{x_2}f_0)}{\sqrt{1+(\partial_{x_2}f_0)^2}},\qquad
L(x_1-f_0)=0.
\]
Substitution into $L=\partial_t+(v-c\hat T)\cdot\nabla_x$ gives
\[
0=v_r^1-\partial_tf_0-v_r^2\partial_{x_2}f_0
+c_r\sqrt{1+(\partial_{x_2}f_0)^2},
\]
which proves the first graph equation, with the prescribed convention
$\hat T^1\approx-1$. The second follows by reflection. Setting $t=0$
gives the two initial graph speeds in the theorem. Thus the function
$f$ used in Section~\ref{subsubsec: C_0 in t ur x_2} is exactly $f_0$.
The exterior classical estimates and these graph equations give
$\|f_0-t(\mathbf v_r^1+\mathbf c_r)\|_{W^{2,\infty}(\mathbb T)}
+\|\overline f_0-t(\mathbf v_l^1-\mathbf c_l)\|_{W^{2,\infty}(\mathbb T)}
\leq C\varepsilon t$.

On the right vacuum boundary, the estimate $|Lv^i|\leq C\varepsilon c$
implies $Lv^i=0$ in the boundary trace. Since $c=0$, the boundary
generators have constant velocity. Their initial velocities, read from
$\Vr_0$, are $(v_r^1-2c_r/(\gamma-1),v_r^2)(0,y)$. Consequently,
\[
x_1=t\left(v_r^1-\frac{2c_r}{\gamma-1}\right)(0,y),\qquad
x_2=y+tv_r^2(0,y).
\]
The derivative of the second map with respect to $y$ is
$1+t\partial_yv_r^2(0,y)\geq1/2$. It is therefore an invertible
degree-one map of $\mathbb T$, and substitution of its inverse $y_r$
gives $f_{\mathrm{vac}}$. The reflected calculation gives
$\overline f_{\mathrm{vac}}$. For clarity, differentiating the inverse gives
\[
\partial_{x_2}y_r=\frac{1}{1+t\partial_yv_r^2(0,y_r)},\qquad
\partial_{x_2}^2y_r=
-\frac{t\partial_y^2v_r^2(0,y_r)}{(1+t\partial_yv_r^2(0,y_r))^3}.
\]
The same identities hold on the left. The chain rule now gives the stated
$W^{2,\infty}(\mathbb T)$ estimates for the vacuum graphs. The four
unperturbed speeds are strictly ordered; their three consecutive gaps
are
\[
\frac{\gamma+1}{\gamma-1}\mathbf c_l,\qquad
\mathbf v_r^1-\mathbf v_l^1-\frac{2(\mathbf c_r+\mathbf c_l)}{\gamma-1},\qquad
\frac{\gamma+1}{\gamma-1}\mathbf c_r.
\]
All are positive. The $C\varepsilon t$ graph estimates preserve this
ordering when $\varepsilon_0$ is sufficiently small.

\subsubsection{The left profiles and the conservative comparisons}
\label{subsubsec: left profile theorem}
The right zeroth-order profile follows from
$v^1+c=\urr$, $v^1-2c/(\gamma-1)=(v_r^1-2c_r/(\gamma-1))(0,x_2)$,
and $v^2=v_r^2(0,x_2)$. On the left these identities become
\[
v^1-c=\urr,\qquad
v^1+\frac{2c}{\gamma-1}
=\left(v_l^1+\frac{2c_l}{\gamma-1}\right)(0,x_2),\qquad
v^2=v_l^2(0,x_2).
\]
Subtracting their values at $\urr=(v_l^1-c_l)(0,x_2)$ yields
\[
c-c_l(0,x_2)=-\frac{\gamma-1}{\gamma+1}
\bigl(\urr-(v_l^1-c_l)(0,x_2)\bigr),\qquad
v^1-v_l^1(0,x_2)=\frac{2}{\gamma+1}
\bigl(\urr-(v_l^1-c_l)(0,x_2)\bigr).
\]
These are precisely the components of $\underline{\Vr}_0$ in the theorem.

For the first-order profile and all remainder estimates, use the exact
Euler symmetry
\[
(x_1,c,v^1,v^2)\longmapsto(-x_1,c,-v^1,v^2).
\]
More precisely, set $S=\operatorname{diag}(1,-1,1)$ and apply the right-hand
construction to the normalized data
\[
\left(\frac{2c_l(-x_1,x_2)}{\gamma-1},-v_l^1(-x_1,x_2),v_l^2(-x_1,x_2)\right)^T,
\qquad x_1\geq0.
\]
If $\Vr_j^{\mathrm{ref}}$ denotes the resulting right profile, then
\[
\underline{\Vr}_j(\urr,x_2)=S\Vr_j^{\mathrm{ref}}(-\urr,x_2),\qquad j=0,1.
\]
For $j=1$, $\Vr_1^{\mathrm{ref}}$ is given by
\eqref{eq: exact form of Vr_1} with these reflected data; in particular,
all boundary derivatives occurring there are computed for the reflected
exterior solution. This specifies the left first-order profile without
introducing a second lengthy component formula. Reflection preserves all
the spatial norms in the theorem.

The estimates in the two fans follow from the estimates for $V_k$ proved
above and in \eqref{eq: improved L infty bound up to vacuum}, since
$\Tr=-\partial_{\urr}$ and $\Xr=\partial_{x_2}$. To pass to conservative
variables, note that $2/(\gamma-1)\geq3$ throughout the range of this
paper. Thus $s\mapsto(\max\{s,0\})^{2/(\gamma-1)}$ is $C^2$ and has
locally Lipschitz second derivative. The same is true of $\mathcal Q$ on
bounded sets. In either fan, let $F=(2c/(\gamma-1),v^1,v^2)^T$ in blown-up
coordinates and let $G$ be its zeroth-order profile. For spatial
derivatives $D_i,D_j\in\{\partial_{\urr},\partial_{x_2}\}$, the chain rule
gives
\[
\begin{aligned}
D_i(\mathcal Q(F)-\mathcal Q(G))
&=[D\mathcal Q(F)-D\mathcal Q(G)]D_iG
+D\mathcal Q(F)D_i(F-G),\\
D_iD_j(\mathcal Q(F)-\mathcal Q(G))
&=[D^2\mathcal Q(F)-D^2\mathcal Q(G)][D_iG,D_jG]\\
&\quad+D^2\mathcal Q(F)[D_i(F-G),D_jF]
+D^2\mathcal Q(F)[D_iG,D_j(F-G)]\\
&\quad+[D\mathcal Q(F)-D\mathcal Q(G)]D_iD_jG
+D\mathcal Q(F)D_iD_j(F-G).
\end{aligned}
\]
The profiles and solutions have uniformly bounded spatial derivatives
through order two. Each displayed difference is therefore bounded by
$C\|F-G\|_{W^{2,\infty}(\Sigma_t^r)}$ on the right, and by the analogous
norm on $\Sigma_t^l$ on the left. This proves the two conservative
comparison estimates. At $c=0$, both $\mathcal Q(F)$ and
$D\mathcal Q(F)$ vanish. Hence the conservative solution and its first
spatial derivatives have zero traces at each vacuum interface. Extending
by zero across the vacuum produces no interface measure in a weak second
derivative. This proves the spatial $W^{2,\infty}(\Sigma_t^{\mathrm{cen}})$
assertion. The exterior estimates follow from the classical solution
estimates on $\mathcal D_l(t)$ and $\mathcal D_r(t)$.

\subsubsection{Time derivatives for $\gamma=7/5$ and $\gamma=5/3$}
\label{subsubsec: time regularity theorem}
In this paragraph $U=(2c/(\gamma-1),v^1,v^2)^T$ is again the normalized
unknown used throughout Section~\ref{sec: set up}, expressed in
$(t,\urr,x_2)$. Every $\partial_t$ below holds $(\urr,x_2)$ fixed.
Write $\Ur_1=\Vr_0+t\Vr_1$ and $V_1=U-\Ur_1$. On the right fan the
proved estimate is
\[
\sum_{a+b\leq2}\|\partial_{\urr}^a\partial_{x_2}^bV_1(t,\cdot)
\|_{L^\infty(\Sigma_t^r)}\leq C\varepsilon t^2.
\]
The powers in \eqref{eq: exact form of Vr_1} are integers for the two
exponents under consideration. Consequently $\Vr_1$ and its spatial
derivatives through order two are bounded by $C\varepsilon$ on these
slices. The explicit residual in Lemma~\ref{lem: bounds on Res} is
\[
\Rr_1=t^2\left(A_1(\Vr_1)\partial_{\urr}\Vr_1
+A_2(\Vr_0)\partial_{x_2}\Vr_1
+A_2(\Vr_1)\partial_{x_2}\Vr_0\right)
+t^3A_2(\Vr_1)\partial_{x_2}\Vr_1.
\]
Its coefficients are independent of $t$. In particular,
\[
\|\Rr_1(t,\cdot)\|_{W^{1,\infty}(\Sigma_t^r)}\leq C\varepsilon t^2,
\qquad
\|\partial_t\Rr_1(t,\cdot)\|_{L^\infty(\Sigma_t^r)}\leq C\varepsilon t.
\]
Here $\partial_t\Rr_1$ differentiates the displayed polynomial at fixed
$(\urr,x_2)$, not a norm on a moving domain.

Subtracting the approximate equation from the Euler equations, and using
the linearity of $A_1,A_2$ in their arguments, gives
\[
\begin{aligned}
t\partial_tV_1={}&-(A_1(U)-\urr I)\partial_{\urr}V_1
-tA_2(U)\partial_{x_2}V_1
-A_1(V_1)\partial_{\urr}\Ur_1
-tA_2(V_1)\partial_{x_2}\Ur_1-\Rr_1.
\end{aligned}
\]
All coefficients and spatial derivatives of $U,\Ur_1$ through order two
are uniformly bounded. Each term on the right is $O(\varepsilon t^2)$,
with a constant allowed to depend on $t^{*}$. Division by $t$ proves
\[
\|\partial_tV_1(t,\cdot)\|_{L^\infty(\Sigma_t^r)}\leq C\varepsilon t.
\]
For $D\in\{\partial_{\urr},\partial_{x_2}\}$, differentiation gives
\[
\begin{aligned}
t\partial_tDV_1={}&-[A_1(DU)-(D\urr)I]\partial_{\urr}V_1
-(A_1(U)-\urr I)\partial_{\urr}DV_1\\
&-tA_2(DU)\partial_{x_2}V_1-tA_2(U)\partial_{x_2}DV_1\\
&-A_1(DV_1)\partial_{\urr}\Ur_1-A_1(V_1)\partial_{\urr}D\Ur_1\\
&-tA_2(DV_1)\partial_{x_2}\Ur_1-tA_2(V_1)\partial_{x_2}D\Ur_1-D\Rr_1.
\end{aligned}
\]
Here $D\urr=1$ for $D=\partial_{\urr}$ and $D\urr=0$ for
$D=\partial_{x_2}$. Again every term on the right is
$O(\varepsilon t^2)$. Thus
\[
\|\partial_tDV_1(t,\cdot)\|_{L^\infty(\Sigma_t^r)}\leq C\varepsilon t,
\qquad
\|\partial_tU(t,\cdot)\|_{L^\infty(\Sigma_t^r)}
=\|\Vr_1+\partial_tV_1\|_{L^\infty(\Sigma_t^r)}\leq C\varepsilon.
\]
Finally, differentiation of the remainder equation with respect to $t$
gives the full identity
\[
\begin{aligned}
t\partial_t^2V_1={}&-\partial_tV_1
-A_1(\partial_tU)\partial_{\urr}V_1
-(A_1(U)-\urr I)\partial_{\urr}\partial_tV_1\\
&-A_2(U)\partial_{x_2}V_1
-tA_2(\partial_tU)\partial_{x_2}V_1
-tA_2(U)\partial_{x_2}\partial_tV_1\\
&-A_1(\partial_tV_1)\partial_{\urr}\Ur_1
-A_1(V_1)\partial_{\urr}\Vr_1\\
&-A_2(V_1)\partial_{x_2}\Ur_1
-tA_2(\partial_tV_1)\partial_{x_2}\Ur_1
-tA_2(V_1)\partial_{x_2}\Vr_1-\partial_t\Rr_1.
\end{aligned}
\]
The bounds already obtained show that every term on the right is bounded
by $C\varepsilon t$. Hence
\[
\|\partial_t^2V_1(t,\cdot)\|_{L^\infty(\Sigma_t^r)}\leq C\varepsilon,
\quad
\|\partial_tD U(t,\cdot)\|_{L^\infty(\Sigma_t^r)}\leq C\varepsilon,
\quad
\|\partial_t^2U(t,\cdot)\|_{L^\infty(\Sigma_t^r)}\leq C\varepsilon.
\]
The identities first hold in the smooth gas interior; the estimates are
uniform up to the vacuum and as $t\downarrow0$, and therefore give the
claimed weak derivatives on $\mathcal W_r$. Reflection proves the same
statements on $\mathcal W_l$. Composition with $\mathcal Q$ gives bounded
spacetime derivatives through order two of the conservative solution.
Its first spacetime derivatives vanish at the vacuum interfaces because
$D\mathcal Q=0$ there. Gluing to the zero state therefore proves the
spacetime $W^{2,\infty}(\mathcal W_{\mathrm{cen}})$ assertion.

For the other exponents treated in this paper, the available estimate
for $V_0=U-\Vr_0$ is $O(\varepsilon t)$ in the spatial
$W^{2,\infty}(\Sigma_t^r)$ norm. The first two calculations then give only
$\partial_tV_0=O(\varepsilon)$ and
$D\partial_tV_0=O(\varepsilon)$; a further time differentiation can give
$O(\varepsilon/t)$. Thus these estimates alone do not establish uniform
second time derivatives. This is why part~(iii) is stated on time slices,
whereas part~(iv) makes the additional spacetime assertion. In particular,
the argument above uses the spatially differentiated remainder estimates
and the Euler equations, not merely the undifferentiated inequality
$|V_1|\leq C\varepsilon t^2$.

\subsection{Notation and conventions}\label{subsec: notation conventions}
We collect the conventions used throughout the proof. In particular,
the four coordinate systems describe different parametrizations, and
the two characteristic charts have separately constructed geometric
fields. The same letters are reused only with the chart understood.
The rules below apply throughout the paper, including schematic estimates.
A local specialization (for example, an initial-slice value or a choice
of characteristic chart) is stated where it is used.

\subsubsection{Unknowns and initial traces}
In Theorem~\ref{theorem: stability of RVR}, $U$ is the conservative
unknown $(\rho,\rho v^1,\rho v^2)^T$. From Section~\ref{sec: set up}
onward, $U=(2c/(\gamma-1),v^1,v^2)^T$ unless conservative variables
are explicitly specified. The map $\mathcal Q$ in the theorem converts
the normalized state to the conservative state. The same convention
applies to the exterior solutions $U_r,U_l$. Bold letters denote the
constant reference states; in the normalized right-hand problem,
$\mathbf U_*=(2\mathbf c_*/(\gamma-1),0,0)^T$.
The sound speed is $c=\sqrt{p'(\rho)}=\sqrt{k\gamma}\,\rho^{(\gamma-1)/2}$.

We use the singular phrase \emph{the vacuum state} for the zero
conservative state $(\rho,\rho v^1,\rho v^2)=(0,0,0)$.
A \emph{vacuum region} is a spacetime region occupied by this state,
and a \emph{vacuum boundary} is an interface separating it from gas.
The two vacuum boundaries in the theorem adjoin the same conservative
state; the two gas-side velocity traces need not agree, and no velocity
is assigned inside the vacuum region. The phrase \emph{away from vacuum}
means that the density, equivalently the sound speed for fixed
$k,\gamma$, has a strictly positive lower bound on the region under
discussion. Pointwise positivity $\rho>0$ in a fan does not assert a
uniform lower bound as its vacuum boundary is approached.

The two-argument expression $U_r(0,x_2)$ denotes the one-sided spatial
trace of the initial data at $x_1=0$, equivalently
$U_r(t=0,x_1=0,x_2)$. The same convention applies to $c_r,v_r^i,w_r$
and to the left data. Derivatives are taken before this trace:
for example,
\[
(\partial_{x_1}U_r)(0,x_2)
:=(\partial_{x_1}U_r)(t=0,x_1=0,x_2).
\]
Likewise, $L(w_r)(0,x_2)$ means the derivative of the exterior solution
along the generator of $C_0$, followed by evaluation at its initial
boundary. On $C_0$, arguments $(t,\vartheta)$ mean restriction to $u=0$.
By contrast, $U(0,\urr,x_2)$ is the trace on the
opened initial fan, and $U(0,u,\vartheta)$ or
$U(0,\tilde u,\vartheta)$ is that trace in a characteristic chart.

All state vectors are columns. We write $(V,W)$ or $V\cdot W$ for the
Euclidean inner product and $|V|$ for its norm. Spatial component
indices $i,j$ range over $\{1,2\}$, whereas state component indices
$m$ range over $\{1,2,3\}$. Repeated component indices are summed when
they occur in contractions. A superscript on $v^i$ or $V^m$ is a
component index, not a power. Eigenvalue and eigenvector labels range
over $\{1,2,3\}$; they are not spatial component indices.
The symbols $a,b,m,n$ in derivative counts are nonnegative integers
within the displayed ranges. Summation indices in explicitly written
sums are dummy indices, and an empty sum is zero. We use
\[
\psi_i=-v^i,\qquad
\psi^{(\hat T)}=\psi_1\hat T^1+\psi_2\hat T^2=-v\cdot\hat T,\qquad
\wb=\frac{c}{\gamma-1}+\frac{v^1}{2},\quad
w=\frac{c}{\gamma-1}-\frac{v^1}{2}.
\]
Thus $c=(\gamma-1)(\wb+w)$ and $v^1=\wb-w$.
The underline in $\wb$ is part of the name of a Riemann invariant;
it does not designate the left rarefaction wave. Underlines on the
left profiles and boundaries, and bars on the left boundary graphs,
have the meanings specified in Theorem~\ref{theorem: stability of RVR}.
In particular, $f=f_0$ in Sections~\ref{sec: set up}--\ref{sec: normal jets}.
The scalar graph function $g(t,x_2)=f(t,x_2)/t$ used there is distinct
from the acoustical metric $g$ in the introduction and from $\slashed g$.
The symbol $f$ in a functional inequality denotes its arbitrary test
function, not a boundary graph.

\subsubsection{The four coordinate systems and partial derivatives}
The following table fixes the role of each coordinate system.
\begin{center}
\renewcommand{\arraystretch}{1.18}
\begin{tabular}{p{0.23\textwidth}p{0.69\textwidth}}
\hline
Coordinates & Meaning and use\\ \hline
$(t,x_1,x_2)$ &
Physical spacetime coordinates; $x_2\in\mathbb T$ is periodic.\\
$(t,\urr,x_2)$ &
The fixed coordinates $\urr=x_1/t$ for $t>0$; adjoining $t=0$
opens the initial rarefaction fan.\\
$(t,u,\vartheta)$ &
The characteristic chart on $\mathcal W_1$ with $0\leq u\leq u^*$
and $C_0=\{u=0\}$.\\
$(t,\tilde u,\vartheta)$ &
The characteristic chart on $\mathcal W_{2,\alpha}$ with
$\tilde u_*\leq\tilde u\leq\tilde u^*<0$; the limit
$\tilde u\uparrow0$ approaches the vacuum boundary.\\ \hline
\end{tabular}
\end{center}
The period of $\mathbb T$ is fixed throughout; constants may depend on
this fixed period.
We use the same letter for a function and its pullback under a coordinate
change. Its displayed arguments, the domain of its norm, or the
surrounding equation specify the chart. At $t=0$, the opened slice
$\Sigma_0$ is not identified with the physical hypersurface $\{t=0\}$:
the map $x_1=t\urr$ sends the whole opened fan to
$\mathbf S_*=\{t=0,x_1=0\}$. Expressions containing $x_1/t$ on
the opened initial slice mean their extension as the coordinate $\urr$.
The definitions using $\nabla_xu$ or $1/t$ are first made for $t>0$;
their initial values are the limits in the opened coordinates.
Unless a subscript says otherwise, $\nabla$ in physical Euler or
eikonal equations is the spatial gradient in $(x_1,x_2)$.
$L^i=Lx_i=v^i-c\hat T^i$ denotes a spatial component of $L$;
$L^m f$ denotes repeated differentiation when it acts on a function.

A partial derivative keeps the other coordinates of its chart fixed.
For $t>0$, the exact conversion rules are
\[
\begin{aligned}
\left.\partial_t\right|_{x_1,x_2}
&=\left.\partial_t\right|_{\urr,x_2}
  -\frac{\urr}{t}\partial_{\urr},&
\partial_{x_1}&=\frac1t\partial_{\urr},&
\left.\partial_{x_2}\right|_{t,x_1}
&=\left.\partial_{x_2}\right|_{t,\urr}.
\end{aligned}
\]
In either characteristic chart,
$\left.\partial_t\right|_{u,\vartheta}=L$ or
$\left.\partial_t\right|_{\tilde u,\vartheta}=L$, respectively.
These are different meanings of $\partial_t$; for example, the initial
coefficients $\Vr_j=\partial_t^jU|_{t=0}$ use
$\left.\partial_t\right|_{\urr,x_2}$, whereas
\eqref{eq: eq of V_k} and \eqref{eq: rough eq of V_k} use the physical
time derivative. When $L$ acts on a function of time alone, such as a
slice norm, it means the ordinary derivative $d/dt$.

The covector components in a Hamiltonian are also chart dependent.
In physical coordinates they are
$(\tau,\xi)=(-\partial_tu,-\nabla_xu)$; in the opened coordinates they are
$(-\partial_tu,-\partial_{\urr}u,-\partial_{x_2}u)$, with the corresponding
time derivative understood. The symbol called $\lambda_3$ in
Section~\ref{subsec: opened geometry} is the transformed symbol
displayed there. In particular, its $\xi_1$ corresponds to $d\urr$,
not to $dx_1$.
In the initial simple-wave formulas, $\lambda_3(U)$ abbreviates
$\lambda_3(U,(1,0))=v^1+c$.

The two characteristic charts are constructed separately from the
initial phases in \eqref{eq: first characteristic coordinate} and
\eqref{eq: second characteristic coordinate}. In each chart,
$\vartheta=x_2$ initially and $L\vartheta=0$ for that chart's $L$.
Thus the symbols
\[
\vartheta,\quad L,\quad T,\quad\hat T,\quad\hat X,\quad
\kappa,\quad\slashed g,\quad\Xi,\quad\theta,\quad\chi,\quad\zeta+\eta
\]
refer to the selected chart. Their values need not agree on the overlap.
The relation
\[
\tilde u=u-\frac{\gamma+1}{\gamma-1}c_r(0,x_2)
\]
holds on $\Sigma_0$; it is not asserted as a transition formula for
$t>0$. The actual transition map is obtained by composing the two maps
through $(t,\urr,x_2)$. This is also how a solution or a fixed-coordinate
derivative is compared on the overlap.
In the common initial-phase calculation of
Section~\ref{sec: high order approximate solution}, the local symbol
$u$ is explicitly allowed to stand for either phase. Its initial
form is $-\urr+h(x_2)$; the two displayed choices of $h$ specify which
chart is meant. This temporary convention does not identify the charts.

\subsubsection{Geometric fields, commutators, and profiles}
For the selected phase $u$ (with $\tilde u$ substituted in the second
chart), all unit vectors and spatial components are measured in the
physical Euclidean coordinates:
\[
\hat T=\frac{\nabla_xu}{|\nabla_xu|},\qquad
\hat X=(\hat T^2,-\hat T^1),\qquad
\kappa=|\nabla_xu|^{-1},\qquad T=\kappa\hat T.
\]
Our orientation is $\hat T^1\approx-1$ and $\hat X^2\approx1$.
The normalizations are $Tu=1$, $\hat Xu=0$, $Lu=0$, and $Lt=1$.
In the second chart they read $T\tilde u=1$, $\hat X\tilde u=0$,
$L\tilde u=0$, and $Lt=1$.
The coefficients $\slashed g$ and $\Xi$ are defined by
\[
\partial_\vartheta=\sqrt{\slashed g}\,\hat X,\qquad
\partial_u=T+\Xi\partial_\vartheta
\]
in the first chart, with $\partial_{\tilde u}$ replacing $\partial_u$
in the second. The angular coordinate $\vartheta$ is distinct from the
commutator coefficient $\theta$ in
\eqref{eq: formulas to control the geometry}. Only the combination
$\zeta+\eta$ is needed in these equations.

The fixed commutators and the reference spacing are
\[
\Tr=-\partial_{\urr}=-t\partial_{x_1},\qquad
\Xr=\partial_{x_2},\qquad
\hat{\Tr}=-\partial_{x_1},\qquad \kappar=t.
\]
The first equality for $\Tr$ is the definition on the opened fan; its
physical-coordinate expression is used for $t>0$.
In particular, $\Tr$ and $T$ are different vector fields.
We write $\Zr\in\{\Tr,\Xr\}$ for a fixed commutator and
$Z\in\{T,\hat X\}$ for a geometric spatial derivative.
The fixed fields commute. An expression $Z^\beta$ denotes an ordered
composition of $|\beta|$ geometric derivatives; their order matters,
because $[T,\hat X]$ need not vanish. Our commutator convention is
$[A,B]f=A(Bf)-B(Af)$. A sum over $|\beta|\leq m$
includes the corresponding words of length at most $m$.
For a single field, $(\hat X)^m f$ means $m$ repeated derivatives,
whereas $\hat X^i$ is a physical component. For the commuting fields,
$\Zr^\alpha$ denotes $\Tr^{\alpha_1}\Xr^{\alpha_2}$, with
$|\alpha|=\alpha_1+\alpha_2$.
The notation $Z^{\leq m}f$ denotes the family of words of length at
most $m$, and $\#(T,Z^\alpha)$ counts occurrences of $T$ in a word.
Norms of such families mean the sum of the norms of their members;
in schematic inequalities, an unindexed $Zf$ or $\Zr f$ likewise
denotes the family of the indicated derivatives. The tangential
family $\slashed{\mathfrak d}\in\{L,\hat X\}$ in
Section~\ref{sec: normal jets} uses the same ordered-word convention.

Parenthesized superscripts on a vector field denote coefficients in
the indicated frame, not derivatives or physical components. Explicitly,
\[
\begin{aligned}
T&=T^{(\Tr)}\Tr+T^{(\Xr)}\Xr,&
T^{(\Tr)}&=-\frac{\kappa}{t}\hat T^1,&
T^{(\Xr)}&=\kappa\hat T^2,\\
\hat X&=\hat X^{(\Tr)}\Tr+\hat X^{(\Xr)}\Xr,&
\hat X^{(\Tr)}&=-\frac{\hat T^2}{t},&
\hat X^{(\Xr)}&=-\hat T^1.
\end{aligned}
\]
These coefficients refer to the selected characteristic chart.

The initial coefficients and the approximations have different roles:
\[
\Vr_j=\left.\partial_t^jU\right|_{t=0},\qquad
\Ur_0=\Vr_0,\qquad \Ur_1=\Vr_0+t\Vr_1,\qquad
V_k=U-\Ur_k,\qquad V_{k;a,b}=\Tr^a\Xr^bV_k.
\]
The coefficients $\Vr_j$ are derivatives, not Taylor coefficients:
the quadratic Taylor term is $t^2\Vr_2/2$.
The residual $\Rr_k$ in Lemma~\ref{lem: bounds on Res} is the
residual after multiplying the Euler equations by $t$; the physical
residual is $t^{-1}\Rr_k$.
The ring in $\mathring c,\mathring v^i,\mathring w$ denotes the
corresponding components or invariants of $\Ur_0$.
In the initial-profile calculations, an unadorned $c$ abbreviates
$c(0,\urr,x_2)=\mathring c(\urr,x_2)$; it is not the evolving sound
speed. Exterior factors $c_r,v_r^i,w_r$ in those formulas are evaluated
at their initial spatial traces unless other arguments are displayed.
The scalar vorticity has orientation
$\omega=\partial_{x_1}v^2-\partial_{x_2}v^1$.
The matrices $A_1,A_2$ are linear in the normalized state, so their
use on $V_k$ or $\Vr_j$ means the same linear maps, even when the
argument is not a physical gas state.

\subsubsection{Domains, measures, and Sobolev norms}
For the first characteristic chart we use
\[
\begin{aligned}
C_u&=\{u=\text{constant}\},&
S_{t,u}&=\Sigma_t\cap C_u,\\
\Sigma_t^{u_1,u_2}
&=\{(t,u,\vartheta):u_1\leq u\leq u_2,\ \vartheta\in\mathbb T\},&
C_u^t&=\{(t',u,\vartheta):0\leq t'\leq t,\ \vartheta\in\mathbb T\}.
\end{aligned}
\]
The analogous notation with tildes refers to the second chart.
$\mathcal D(t_1,t_2;u_1,u_2)$ is the corresponding spacetime slab;
$\mathcal D(t;u_1,u_2)=\mathcal D(0,t;u_1,u_2)$.
The abbreviations $\mathcal D(t,u)$ and $\mathcal D(t,\tilde u)$ have
lower characteristic endpoints $0$ and $\tilde u_*$, respectively.
Correspondingly,
\[
\begin{aligned}
\mathcal E(V)(t,u)&=\mathcal E(V)(t;0,u),&
\mathcal F(V)(t,u)&=\mathcal F(V)(0,t;u),\\
\mathcal E(V)(t,\tilde u)&=\mathcal E(V)(t;\tilde u_*,\tilde u),&
\mathcal F(V)(t,\tilde u)&=\mathcal F(V)(0,t;\tilde u).
\end{aligned}
\]
The argument $V$ is retained when these abbreviations are used.
The notation $\mathcal D_r=\mathcal D_{\textbf{right}}$ denotes the
right exterior domain. The slices $\Sigma_t^r,\Sigma_t^l$ in
Theorem~\ref{theorem: stability of RVR} instead use $(\urr,x_2)$.

For $1\leq p<\infty$, the characteristic-slice norms used in the
bootstrap estimates are
\[
\begin{aligned}
\|f\|_{L^p(\Sigma_t^{u_1,u_2})}^p
&=\int_{u_1}^{u_2}\int_{\mathbb T}
 |f(t,u,\vartheta)|^p\sqrt{\slashed g(t,u,\vartheta)}\,d\vartheta\,du,\\
\|f\|_{L^p(\Sigma_t^{\tilde u_1,\tilde u_2})}^p
&=\int_{\tilde u_1}^{\tilde u_2}\int_{\mathbb T}
 |f(t,\tilde u,\vartheta)|^p\sqrt{\slashed g(t,\tilde u,\vartheta)}
 \,d\vartheta\,d\tilde u.
\end{aligned}
\]
$L^\infty$ is the essential supremum over the displayed domain.
In particular, these slice norms do not contain a factor $\kappa$.
Using absolute Jacobians, the physical and opened area elements are
\[
dx_1\,dx_2=\kappa\sqrt{\slashed g}\,du\,d\vartheta,\qquad
d\urr\,dx_2=\frac{\kappa}{t}\sqrt{\slashed g}\,du\,d\vartheta,
\]
with $d\tilde u$ replacing $du$ in the second chart.
These formulas hold for $t>0$; the opened Jacobian is extended to
$t=0$ through the controlled ratio $\kappa/t$.
Consequently,
\[
\mathcal E(V)(t;u_1,u_2)
=\int_{u_1}^{u_2}\int_{\mathbb T}|V|^2\kappa\sqrt{\slashed g}
\,d\vartheta\,du
\asymp t\|V\|_{L^2(\Sigma_t^{u_1,u_2})}^2.
\]
Here $\asymp$ means two inequalities with positive fixed constants,
under the established bounds for $\kappa/t$. Flux integrals on $C_u^t$
use $\sqrt{\slashed g}\,dt'\,d\vartheta$ and contain the explicit
sound-speed factors in \eqref{eq: linear energy estimate in t ur x_2}.
Unwritten measures in the spacetime error integrals of
Section~\ref{sec: energy estimates} mean
$(\kappa/t)\sqrt{\slashed g}\,dt'\,du\,d\vartheta$,
evaluated at time $t'$, with the analogous convention in the second chart.
All fields in an integrand, including its density and geometric
coefficients, are evaluated at the integration variables. Thus a
factor $t'$ inside $\mathcal D(t,u)$ is the running time, whereas the
argument $t$ of $\mathcal D$ is its upper endpoint.

$H^m=W^{m,2}$, and $W^{m,p}$ on a slice uses spatial derivatives in
the indicated coordinates only. Sobolev norms in characteristic coordinates
are compared with the displayed $T,\hat X$ or $\Tr,\Xr$ derivatives
using the estimates in Section~\ref{sec: Preparations for the energy estimate};
uniform equivalence at higher orders is not assumed solely from a
first-derivative bound for the coordinate map.
On coordinate rectangles or on $\mathbb T$, the measure is the usual
Lebesgue measure. On $\Sigma_t^r,\Sigma_t^l$ the derivatives and measure
are in $(\urr,x_2)$, as stated in the theorem. A spacetime Sobolev norm is
specified as such and includes time derivatives. In particular,
the time-slice $W^{2,\infty}$ conclusion is different from the
additional spacetime conclusion for $\gamma=7/5,5/3$.

\subsubsection{Regularity indices, constants, and other repeated symbols}
The data order $N_0$ is chosen sufficiently large once.
The letter $N$ denotes a sufficiently large finite regularity order
available at the stage of the argument under discussion. It may
decrease after taking traces, differentiating coefficients, dividing
a vanishing function by $t$, or estimating normal derivatives.
After such a loss, we may again call the remaining order $N$.
Within a single assertion, its repeated occurrences have the same
value. This convention does not assert absence of derivative loss
or estimates of arbitrary order from fixed finite-regularity data.
For example, the precise elementary statement is
\[
f\in C^{N+1}([0,1]),\quad f(0)=0
\quad\Longrightarrow\quad
\frac{f(t)}t=\int_0^1 f'(st)\,ds\in C^N([0,1]).
\]
The derivative losses are paid from the initial choice of $N_0$.
Whenever a statement needs $N$ derivatives of an output, its input
is understood to retain the larger order needed for that step; a
reuse of $N$ between statements is not an estimate from $C^N$ to
$C^N$ through a derivative-losing operation.

By contrast, $\Ntop$ is a fixed order in each bootstrap argument:
$\Ntop\geq6$ away from vacuum, with a larger choice when the subsequent
trace estimates require it, and $\Ntop=4$ near vacuum.
It is not silently lowered within that argument.
The temporary index $\Nt$ in Section~\ref{sec: energy estimates}
equals $\Ntop$ away from vacuum and $4$ near vacuum.

The letter $C$ denotes a positive constant whose value may change
between estimates. It may depend on fixed parameters, including
$\gamma,\mathbf c_*,t^*$ and the required regularity orders, and on
the fixed reference states and pressure normalization when these are
not normalized. Unless dependence is stated explicitly, it is
independent of $t$, $\varepsilon$, and the vacuum cutoff
$\alpha\in(0,1/100)$. No uniformity as $\gamma\downarrow1$ is intended.
Dependence on the bootstrap constant $M$ is displayed as $P(M)$ or
explicit powers of $M$; $P$ is a generic polynomial with nonnegative
coefficients and may change between estimates. A bound $P(M)\varepsilon$
may be absorbed into a fixed constant after imposing the stated
smallness condition, but it is not an $O(\varepsilon)$ bound
independent of $M$. The choice of $M$ is
made before the final choice of sufficiently small $\varepsilon$.
The notation $A\lesssim B$ means $A\leq CB$, and $O(\varepsilon t^j)$
has the corresponding pointwise meaning unless a norm is specified.
For nonnegative quantities, $A\asymp B$ (also written $A\approx B$
in the qualitative discussion of sound speed) means $C^{-1}B\leq A\leq CB$.
For a vector or a signed component, $\approx$ describes closeness to
the stated reference value, quantified by the displayed estimates.
In the profile-regularity discussion, a displayed power after $\approx$
records a possible singular term; a blow-up assertion requires that
its coefficient be nonzero.
The generic constant $C$ is distinct from a characteristic hypersurface
such as $C_u$.

Some letters have separate, locally specified uses. The scalar cutoff
$\alpha$ is distinct from a derivative multi-index $\alpha$ and from
the eigenvectors $\alpha_1,\alpha_2,\alpha_3$ in the flux decomposition;
there $\alpha_j=r_j(U,-\hat T)$.
The scalar $\beta=(\gamma+1)/(2(\gamma-1))$ in the profile formulas is
distinct from a derivative multi-index $\beta$. In a schematic
expression $c^\alpha(\ln c)^\beta$, before these exponents are
specified, $\alpha,\beta$ simply denote powers. Dummy time variables
such as $s,t',\tau$ in integrals are unrelated to the Hamiltonian
covector component $\tau$.
The pressure constant $k$ in $p=k\rho^\gamma$ is fixed and positive;
the index $k\in\{0,1\}$ on $\Ur_k,V_k,\Rr_k$ denotes the approximation
order. A scalar subtracted from a matrix means that scalar times the
identity matrix; for example, $A_1(U)-\urr=A_1(U)-\urr I$.

Finally, $E(V)$ and $F(V)$ in Section~\ref{sec: energy estimates}
abbreviate $\mathcal E(V)$ and $\mathcal F(V)$.
An expression such as $F(V,W)$ instead denotes the generic multilinear
combination specified locally, and a standalone right-hand side $F$
in a linear equation is its forcing term.
The coefficient symbols $\Wr_{O(\varepsilon)}$ and $\Wr_{O(1)}$
denote families with the differentiated bounds stated in that section;
they do not denote the spacetime region $\Wr$.
Their $O(\varepsilon)$ or $O(1)$ subscripts include the stated
derivative bounds and are stronger than an undifferentiated size bound.
The notation $V_{k;\leq a,\leq b}$ denotes the finite family
$\{V_{k;a',b'}:0\leq a'\leq a,\ 0\leq b'\leq b\}$.
Terms involving $V_{k;a-1,b+1}$ are omitted when $a=0$.

An equality described as schematic, including an expression involving
\textbf{knowns} or \textbf{good knowns} in
Section~\ref{sec: normal jets}, records a finite sum of products of
the indicated factors with the locally stated coefficient bounds.
It does not assign coefficient one to every product. A different
occurrence may represent a different such sum. In an estimate,
$F$ may be scalar-valued after contraction or vector-valued before
contraction; its arity and the surrounding inner product specify the
meaning. The auxiliary $F(t)$ in Lemma~\ref{lem: Gronwall} is defined
in that lemma and is unrelated to the flux.
The matrix $M(\hat T,U)$ in the normal-jet calculation is a locally
specified coefficient matrix, distinct from the scalar bootstrap
constant $M$.

\section{The construction of higher order approximate solution}\label{sec: high order approximate solution}

Recall that the curve $\Sigma_{0}\cap C_{0}$ in coordinate $(t, \mathring{u}, x_{2})$ is given by
\[
\mathring{u}=(v_{r}^{1}+c_{r})(0, x_{2}),
\]
and the solution $U$ on $\Sigma_{0}$ takes the form
\[
U(0, \mathring{u}, x_{2})=\Vr_{0}(\mathring{u}, x_{2})=\begin{pmatrix}
\frac{2}{\gamma-1}c(0, \mathring{u}, x_{2})\\
v^{1}(0, \mathring{u}, x_{2})\\
v^{2}(0, \mathring{u}, x_{2})
\end{pmatrix}=\begin{pmatrix}
\frac{2}{\gamma-1}\cdot \Big( c_{r}(0, x_{2})+\frac{\gamma-1}{\gamma+1}\big(\mathring{u}-(v_{r}^{1}+c_{r})(0, x_{2})\big)\Big)\\
v_{r}^{1}(0, x_{2})+\frac{2}{\gamma+1}\big(\mathring{u}-(v_{r}^{1}+c_{r})(0, x_{2})\big)\\
v_{r}^{2}(0, x_{2})
\end{pmatrix}.
\]
Throughout this section, time derivatives of $U$ are taken at fixed
$(\urr,x_2)$, while derivatives of the exterior solution $U_r$ in
$(t,x_1,x_2)$ are taken before restriction to the boundary.
In initial-profile formulas we abbreviate
$c(0,\urr,x_2)$ by $c$, and exterior factors without displayed
arguments are evaluated at $(t,x_1)=(0,0)$.
In this section, we will derive formulas for $\frac{\partial U}{\partial t}(0, \mathring{u}, x_{2})=\Vr_{1}(\mathring{u}, x_{2})$ and $\frac{\partial^{2} U}{\partial t^{2}}(0, \mathring{u}, x_{2})=\Vr_{2}(\mathring{u}, x_{2})$.

Assuming that $C_{0}$ is parametrized by $x_{1}=f(t, x_{2})$ in coordinate $(t, x_{1}, x_{2})$, then we have
\[
f(0, x_{2})=0.
\]
We also have $L^{k}\big( x_{1}-f(t, x_{2})\big)=0$ for $k=1, 2, 3$ since the generator of $C_{0}$ is
\[
L=\frac{\partial}{\partial t}+(v_{r}^{1}-c_{r}\hat{T}^{1})\frac{\partial}{\partial x_{1}}+(v_{r}^{2}-c_{r}\hat{T}^{2})\frac{\partial}{\partial x_{2}}.
\]
which implies that
\begin{equation}\notag
\begin{cases}
\frac{\partial f}{\partial t}(0, x_{2})=(v_{r}^{1}+c_{r})(0, x_{2}),\\
\frac{\partial^{2} f}{\partial t^{2}}(0, x_{2})=\big(L(v_{r}^{1})+L(c_{r})-2v_{r}^{2}\frac{\partial^{2} f}{\partial x_{2}\partial t}\big)(0, x_{2}).
\end{cases}
\end{equation}
and
\begin{equation}\notag
\begin{split}
\frac{\partial^{3} f}{\partial t^{3}}(0, x_{2})&=\Big(L^{2}(v_{r}^{1})+L^{2}(c_{r})-c_{r}(\partial_{x_2}v_{r}^{1}+\partial_{x_2}c_{r})^{2}\\
&-3v_{r}^{2}\frac{\partial^{3} f}{\partial x_{2}\partial t^{2}}-3(v_{r}^{2})^{2}\frac{\partial^{3} f}{\partial x_{2}^{2}\partial t}-3\frac{\partial^{2} f}{\partial x_{2} \partial t}\big( L(v_{r}^{2})-c_{r}\partial_{x_2}c_{r}-c_{r}\partial_{x_2}v_{r}^{1} \big)\Big)(0, x_{2}).
\end{split}
\end{equation}
We remark that $\frac{\partial^{2} f}{\partial t^{2}}(0, x_{2})$ and $\frac{\partial^{3} f}{\partial t^{3}}(0, x_{2})$ depend on $\partial^{a}_{x_1}\partial^{b}_{x_2}U_{r}(0, x_{2}), a+b\leq 2$ and have bounds
\begin{equation}\notag
\|\frac{\partial^{2} f}{\partial t^{2}}(0, x_{2})\|_{C^{N}(\mathbb{T})}+\|\frac{\partial^{3} f}{\partial t^{3}}(0, x_{2})\|_{C^{N}(\mathbb{T})}\leq C\varepsilon.
\end{equation}

In coordinate $(t, \mathring{u}, x_{2})$, $C_{0}$ is parametrized by 
\[
\mathring{u}:=g(t, x_{2})=\frac{f(t, x_{2})}{t}=\int_{0}^{1} \frac{\partial f}{\partial t}(ts, x_{2})\,ds.
\]
and we have
\begin{equation}\notag
\begin{cases}
g(0, x_{2})=\frac{\partial f}{\partial t}(0, x_{2}),\\
\frac{\partial g}{\partial t}(0, x_{2})=\frac{1}{2}\frac{\partial^{2} f}{\partial t^{2}}(0, x_{2}),\\
\frac{\partial^{2} g}{\partial t^{2}}(0, x_{2})=\frac{1}{3}\frac{\partial^{3} f}{\partial t^{3}}(0, x_{2}).
\end{cases}
\end{equation}
We remark that $\frac{\partial g}{\partial t}(0, x_{2})$ and $\frac{\partial^{2} g}{\partial t^{2}}(0, x_{2})$ depend on $\partial^{a}_{x_1}\partial^{b}_{x_2}U_{r}(0, x_{2}), a+b\leq 2$ and have bounds
\begin{equation}\notag
\|\frac{\partial g}{\partial t}(0, x_{2})\|_{C^{N}(\mathbb{T})}+\|\frac{\partial^{2} g}{\partial t^{2}}(0, x_{2})\|_{C^{N}(\mathbb{T})}\leq C\varepsilon.
\end{equation}

\subsection{Deriving formula for $\Vr_{1}$ on initial slice $\Sigma_{0}$}
Since $U=U_{r}$ along $C_{0}$, we have
\begin{equation}\label{eq: coincides on S_00}
U(t, g(t, x_{2}), x_{2})=U_{r}(t, f(t, x_{2}), x_{2}).
\end{equation}
Taking $\frac{\partial }{\partial t}$ on both side of \eqref{eq: coincides on S_00} and setting $t=0$, we have
\[
\Vr_{1}\big((v_{r}^{1}+c_{r})(0, x_{2}), x_{2}\big)=-\frac{\partial g}{\partial t}(0, x_{2})\cdot\begin{pmatrix}
\frac{2}{\gamma+1}\\ \frac{2}{\gamma+1} \\ 0
\end{pmatrix}+\big(\partial_{t}U_{r}+\partial_{x_1}U_{r}\cdot \frac{\partial f}{\partial t}\big)(0, x_{2})
\]
which determines $\Vr_{1}$ along the curve $\Sigma_{0}\cap C_{0}$ as follows,
\begin{equation}\label{eq: initial data of Vr_1}
\begin{cases}
(\Vr_{1}^{1}+\Vr_{1}^{2})\big((v_{r}^{1}+c_{r})(0, x_{2}), x_{2}\big)=\frac{3-\gamma}{\gamma+1}\cdot L(w_{r})-\frac{2(3-\gamma)}{\gamma+1}v_{r}^{2}\cdot \partial_{x_2}w_{r}-\frac{1}{2}c_{r}\cdot \partial_{x_2}v_{r}^{2},\\
(\Vr_{1}^{1}-\Vr_{1}^{2})\big((v_{r}^{1}+c_{r})(0, x_{2}), x_{2}\big)=2L(w_{r})-2v_{r}^{2}\cdot \partial_{x_2}w_{r},\\
\Vr_{1}^{3}\big((v_{r}^{1}+c_{r})(0, x_{2}), x_{2}\big)=L(v_{r}^{2})-v_{r}^{2}\cdot \partial_{x_2}v_{r}^{2}.
\end{cases}
\end{equation}
Taking $\frac{\partial}{\partial t}$ and setting $t=0$ on both side of
\[
t\cdot \frac{\partial U}{\partial t}+\big( A_{1}(U)-\mathring{u} \big)\frac{\partial U}{\partial \mathring{u}}+t\cdot A_{2}(U)\frac{\partial U}{\partial x_2}=0.
\]
we have
\[
\frac{\partial U}{\partial t}+A_{1}(\frac{\partial U}{\partial t})\frac{\partial U}{\partial \mathring{u}}+(A_{1}(U)-\mathring{u})\frac{\partial }{\partial \mathring{u}}\big( \frac{\partial U}{\partial t} \big)+A_{2}(U)\frac{\partial U}{\partial x_2}=0.
\]
which is an ODE system with $\Vr_{1}$ unknown and $\Vr_{0}$ known. The ODE system takes the form
\begin{equation}\label{eq: ODE of Vr_1}
\begin{pmatrix}
-c & c & 0 \\
c  & -c & 0\\
0 & 0 & -c
\end{pmatrix}\frac{\partial \Vr_{1}}{\partial \mathring{u}}+\Vr_{1}+\begin{pmatrix}
\frac{\gamma-1}{\gamma+1} & \frac{2}{\gamma+1} & 0 \\
\frac{\gamma-1}{\gamma+1} & \frac{2}{\gamma+1} & 0 \\
0 & 0 & 0 \\
\end{pmatrix}\Vr_{1}+A_{2}(\Vr_{0})\frac{\partial \Vr_{0}}{\partial x_2}=0
\end{equation}
Therefore, we can solve ODE \eqref{eq: ODE of Vr_1} with initial data \eqref{eq: initial data of Vr_1} for each $x_{2}\in \mathbb{T}$ as follows,
\begin{equation}\label{eq: exact form of Vr_1}
\begin{cases}
(\Vr_{1}^{1}+\Vr_{1}^{2})(\mathring{u}, x_{2})=\frac{1}{2}\frac{3-\gamma}{\gamma+1}\cdot (\Vr_{1}^{1}-\Vr_{1}^{2})(\mathring{u}, x_{2})-\frac{1}{2}\Big( c\cdot \partial_{x_2}v_{r}^{2}+\frac{2(3-\gamma)}{\gamma+1}v_{r}^{2}\cdot \partial_{x_2}w_{r} \Big),\\
(\Vr_{1}^{1}-\Vr_{1}^{2})(\mathring{u}, x_{2})=-\frac{\gamma+1}{3-\gamma}\Big( c-\big(\frac{c}{c_{r}}\big)^{\frac{1}{2}\frac{\gamma+1}{\gamma-1}}c_{r} \Big)\cdot \partial_{x_2}v_{r}^{2}+2\big( \frac{c}{c_{r}} \big)^{\frac{1}{2}\frac{\gamma+1}{\gamma-1}}\cdot L(w_{r})-2v_{r}^{2}\cdot \partial_{x_2}w_{r},\\
\Vr_{1}^{3}(\mathring{u}, x_{2})=-2c\cdot \partial_{x_2}w_{r}+\big(\frac{c}{c_r}\big)^{\frac{\gamma+1}{\gamma-1}}\cdot \Big(2c_{r}\cdot \partial_{x_2}w_{r}+L(v_{r}^{2})\Big)-v_{r}^{2}\cdot \partial_{x_2}v_{r}^{2}.
\end{cases}
\end{equation}

\begin{remark}\label{rem: remark 4.1}
We made several remarks on formulas for $\Vr_{1}(\mathring{u}, x_{2})$.
\begin{itemize}
\item The ODE \eqref{eq: ODE of Vr_1} suffers algebraic degeneration in direction $(1, 1, 0)^{T}$, which means that the initial data for $\Vr_{1}^{1}+\Vr_{1}^{2}$ are uniquely determined once the initial data of $\Vr_{1}^{1}-\Vr_{1}^{2}$ and $\Vr_{1}^{3}$ are determined. However, we should not worry about it in view of characteristic nature of $C_{0}$, which can also be verified directly.
\item The real problematic degeneration happens as $c\rightarrow 0$. In sharp contrast to $\Vr_{0}$, higher Sobolev norms of terms like $c^{\frac{1}{2}\frac{\gamma+1}{\gamma-1}}$\footnote{$c^{\frac{1}{2}\frac{\gamma+1}{\gamma-1}}$ will decrease as $\gamma\in (1, 3)$ increase.} might blow up near vacuum. When $\gamma$ approaches 3 from below, $\frac{1}{2}\frac{\gamma+1}{\gamma-1}$ approaches $1$ from above. Therefore, we have $\frac{\partial^{2} \Vr_{1}}{\partial \mathring{u}^{2}}\approx c^{-1+\alpha}, \alpha<<1$ near \textbf{vacuum}, which implies that second order Sobolev norm already blows up near vacuum. The behavior as $\gamma\to3^{-}$ requires further analysis and lies outside the scope of the present paper.
\item In this paper, we focus on the case $\gamma\in (1, 1.3]\cup \{\frac{7}{5}, \frac{5}{3}\}$. When $\gamma\in (1, 1.3]$, $\frac{1}{2}\frac{\gamma+1}{\gamma-1}\geq \frac{23}{6}$. Taking $\gamma=1.3$ as an example, we have 
\[
\frac{\partial^{5} \Vr_{1}}{\partial \mathring{u}^{5}}\approx c^{-\frac{7}{6}}, \ \ \textbf{near vacuum},
\]
while
\[
|\frac{\partial^{a}}{\partial \mathring{u}^{a}}\frac{\partial^{b}}{\partial x_{2}^{b}}\Vr_{1}| \leq C\varepsilon\cdot c^{-\frac{1}{6}}, \ \ a+b\leq 4.
\]
Therefore, the fourth order Sobolev norm of $\Vr_{1}$ is bounded while the fifth order Sobolev norm of $\Vr_{1}$ will blow up, which accounts for the choice $\Ntop=4$ in \textbf{near vacuum} case for $\gamma\in (1, 1.3]$. When $\gamma\in \{\frac{7}{5}, \frac{5}{3}\}$, it is interesting that 
\[
\begin{cases}
\frac{1}{2}\frac{\gamma+1}{\gamma-1}=3, \gamma=\frac{7}{5},\\
\frac{1}{2}\frac{\gamma+1}{\gamma-1}=2, \gamma=\frac{5}{3},\\
\end{cases}
\]
Therefore, $\Vr_{1}$ is smooth. However, we will see the restriction of regularity comes from $\Vr_{2}$ in case $\gamma\in \{\frac{7}{5}, \frac{5}{3}\}$. 
\item In \textbf{away from vacuum} case, $\Vr_{1}$ is smooth for all $\gamma\in (1, 3)$ and we have
\begin{equation}\notag
|\frac{\partial^{a}}{\partial \mathring{u}^{a}}\frac{\partial^{b}}{\partial x_{2}^{b}}\Vr_{1}| \leq C\varepsilon, \ \ a+b\leq N.
\end{equation}
\end{itemize}
\end{remark}

\subsection{The determination of $\Vr_{2}$ on $\Sigma_{0}$}
Taking $\frac{\partial^{2} }{\partial t^{2}}$ on both side of \eqref{eq: coincides on S_00} and setting $t=0$, the formulas for $\Vr_{2}\big( (v_{r}^{1}+c_{r})(0, x_{2}), x_{2} \big)$ can be derived, which depends on $\partial^{a}_{x_1}\partial^{b}_{x_2}U_{r}(0, x_{2}), a+b\leq 2$. The exact formula is too complicated and will be not used in this paper, we only need its bounds,
\begin{equation}\notag
\|\Vr_{2}\big( (v_{r}^{1}+c_{r})(0, x_{2}), x_{2} \big)\|_{C^{N}(\mathbb{T})}\leq C\varepsilon.
\end{equation}
Taking $\partial^{2}_{t}$ and setting $t=0$ on both side of
\[
t\cdot \frac{\partial U}{\partial t}+\big( A_{1}(U)-\mathring{u} \big)\frac{\partial U}{\partial \mathring{u}}+t\cdot A_{2}(U)\frac{\partial U}{\partial x_2}=0.
\]
we have
\[
\begin{split}
&2\frac{\partial^{2}U}{\partial t^{2}}+A_{1}(\frac{\partial^{2} U}{\partial t^{2}})\frac{\partial U}{\partial \mathring{u}}+(A_{1}(U)-\mathring{u})\frac{\partial }{\partial \mathring{u}}\big( \frac{\partial^{2} U}{\partial t^{2}} \big)\\
&+\Big(A_{1}(\frac{\partial U}{\partial t})\frac{\partial }{\partial \mathring{u}}(\frac{\partial U}{\partial t})+A_{1}(\frac{\partial U}{\partial t})\frac{\partial }{\partial \mathring{u}}(\frac{\partial U}{\partial t})+2A_{2}(\frac{\partial U}{\partial t})\frac{\partial U}{\partial x_2}+2A_{2}(U)\frac{\partial}{\partial x_2}(\frac{\partial U}{\partial t})\Big)=0.
\end{split}
\]
which is an ODE system with unknowns $\Vr_{2}$ and knowns $\Vr_{0}, \Vr_{1}$, which takes the form
\begin{equation}\label{eq: ODE of Vr_2}
\begin{pmatrix}
-c & c & 0 \\
c  & -c & 0\\
0 & 0 & -c
\end{pmatrix}\frac{\partial \Vr_{2}}{\partial \mathring{u}}+2\Vr_{2}+\begin{pmatrix}
\frac{\gamma-1}{\gamma+1} & \frac{2}{\gamma+1} & 0 \\
\frac{\gamma-1}{\gamma+1} & \frac{2}{\gamma+1} & 0 \\
0 & 0 & 0 \\
\end{pmatrix}\Vr_{2}+\Big( 2A_{1}(\Vr_{1})\frac{\partial \Vr_{1}}{\partial \mathring{u}}+2A_{2}(\Vr_{1})\frac{\partial \Vr_{0}}{\partial x_{2}}+2A_{2}(\Vr_{0})\frac{\partial \Vr_{1}}{\partial x_{2}}\Big)=0.
\end{equation}
Therefore, we can solve ODE \eqref{eq: ODE of Vr_2} with initial data $\Vr_{2}\big( (v_{r}^{1}+c_{r})(0, x_{2}), x_{2} \big)$ for each $x_{2}\in \mathbb{T}$ as follows (Setting $\beta:=\frac{1}{2}\frac{\gamma+1}{\gamma-1}$) ,
\begin{equation}\notag
\begin{split}
\Vr_{2}^{1}+\Vr_{2}^{2}&=c^{2\beta+1}\cdot f_{2\beta+1}(x_2)+c^{2\beta}\cdot f_{2\beta}(x_2)+c^{2\beta-1}\cdot f_{2\beta-1}(x_2)+c^{\beta}\cdot f_{\beta}(x_2)\\
&+\frac{4(3-\gamma)}{3(\gamma-1)}\big(\frac{c}{c_r}\big)^{2\beta}\ln\big(\frac{c}{c_r}\big)\partial_{x_2}w_r\big(L(v_r^2)+2c_r\partial_{x_2}w_r\big)\\
&+\underbrace{\big(\frac{c}{c_{r}}\big)^{\beta-1} \cdot \Big( -2v_{r}^{2}\partial_{x_2}w_{r}\big( \frac{3-\gamma}{\gamma+1}c_{r}^{-1}L(w_{r})+\frac{1}{2}\partial_{x_2}v_{r}^{2} \big) \Big)}_{\textbf{II}}\\
&+c^{2}\cdot f_{2}(x_2)+c\cdot f_{1}(x_{2})+f_{0}(x_2).
\end{split}
\end{equation}
where $f_{\alpha}(x_{2}), \alpha\in \{0, 1, 2, \beta, 2\beta-1, 2\beta, 2\beta+1\}$ depends on $\partial^{a}_{x_1}\partial^{b}_{x_2}U_{r}(0, x_{2}), a+b\leq 2$. And we have
\[
\|f_{\alpha}(x_2)\|_{C^{N}(\mathbb{T})}\leq C\varepsilon.
\]
\begin{equation}\label{eq: exact logarithmic term of Vr_2}
\begin{split}
\Vr_{2}^{1}-\Vr_{2}^{2}&=c^{2\beta+1}\cdot f_{2\beta+1}(x_2)\\
&+\underbrace{\big(\frac{c}{c_{r}}\big)^{2\beta}\ln\big( \frac{c}{c_{r}} \big)\Big(\frac{4(\gamma+1)}{\gamma-1}\partial_{x_2}w_{r}\big( L(v_{r}^{2})+2c_{r}\partial_{x_2}w_{r} \big)\Big)}_{\textbf{I}}\\
&+c^{2\beta}\cdot f_{2\beta}(x_2)+c^{2\beta-1}\cdot f_{2\beta-1}(x_2)+c^{\beta}\cdot f_{\beta}(x_2)\\
&+\underbrace{\big(\frac{c}{c_{r}}\big)^{\beta-1}\cdot \Big( -2v_{r}^{2}\partial_{x_2}w_{r}\big( 2c_{r}^{-1}L(w_{r})+\frac{\gamma+1}{3-\gamma}\partial_{x_2}v_{r}^{2} \big) \Big)}_{\textbf{II}}\\
&+c^{2}\cdot f_{2}(x_2)+c\cdot f_{1}(x_{2})+f_{0}(x_2).
\end{split}
\end{equation}
where $f_{\alpha}(x_{2}), \alpha\in \{0, 1, 2, \beta, 2\beta-1, 2\beta, 2\beta+1\}$ depends on $\partial^{a}_{x_1}\partial^{b}_{x_2}U_{r}(0, x_{2}), a+b\leq 2$. And we have
\[
\|f_{\alpha}(x_2)\|_{C^{N}(\mathbb{T})}\leq C\varepsilon.
\]
\begin{equation}\notag
\begin{split}
\Vr_{2}^{3}&=c^{4\beta}\cdot f_{4\beta}(x_{2})+c^{3\beta-1}\cdot f_{3\beta-1}(x_{2})+c^{2\beta}\cdot f_{2\beta}(x_{2})+c^{2\beta-1}\cdot f_{2\beta-1}(x_{2})\\
&+c^{\beta+1}\cdot f_{\beta+1}(x_{2})+c^{\beta}\cdot f_{\beta}(x_{2})+c^{2}\cdot f_{2}(x_{2})+c\cdot f_{1}(x_2)+f_{0}(x_2).
\end{split}
\end{equation}
where $f_{\alpha}(x_{2}), \alpha\in \{0, 1, 2, \beta, \beta+1, 2\beta-1, 2\beta, 3\beta-1, 4\beta\}$ depends on $\partial^{a}_{x_1}\partial^{b}_{x_2}U_{r}(0, x_{2}), a+b\leq 2$. And we have
\[
\|f_{\alpha}(x_2)\|_{C^{N}(\mathbb{T})}\leq C\varepsilon.
\]

\begin{remark}\label{rem: remark 4.2}
We will make several remarks on $\Vr_{2}$.
\begin{itemize}
\item The algebraic degeneration of ODE \eqref{eq: ODE of Vr_2} of $\Vr_{2}$ will not cause any difficulty as the case of $\Vr_{1}$. And in \textbf{away from vacuum} case, everything is smooth.
\item The logarithmic term \textbf{I} in
\eqref{eq: exact logarithmic term of Vr_2} comes from resonance in
the equation for $\Vr_2^1-\Vr_2^2$: the contribution of $\Vr_1^3$
to the quadratic source has the same power $c^{2\beta}$ as the
homogeneous solution. Its vorticity dependence follows from
\[
\omega_r(0,x_2)
:=(\partial_{x_1}v_r^2-\partial_{x_2}v_r^1)(0,x_2)
=\big(2\partial_{x_2}w_r+c_r^{-1}L(v_r^2)\big)(0,x_2).
\]
Thus its coefficient is proportional to
$c_r\partial_{x_2}w_r\,\omega_r$; see
Section~\ref{subsubsec: profiles and logarithms} for the calculation
and the scalar resonance identity. In particular, this term vanishes
in the irrotational case, and nonzero vorticity contributes to it
through the tangential variation of $w_r$.

For $\gamma=5/3$, one has $\beta=2$. When
$\partial_{x_2}w_r\,\omega_r$ is nonzero, the logarithmic term gives
a nonzero $c^{-1}$ contribution to
$\partial_{\urr}^5\Vr_2$ near vacuum, whereas
\[
\left|\partial_{\urr}^a\partial_{x_2}^b\Vr_2\right|
\leq C\varepsilon\bigl(1+|\ln c|\bigr),\qquad a+b\leq4.
\]
The four-derivative $L^2$ bounds therefore remain uniform as the
vacuum cutoff is removed, while the fifth-derivative bound generally
does not. For $\gamma=7/5$, the corresponding logarithmic factor is
$c^6\ln(c/c_r)$ and is compatible with the same four-derivative
bound. We use $\Ntop=4$ in both cases; the fifth-derivative obstruction
just described is specific to $\gamma=5/3$.

\item The term \textbf{II} with coefficient $\big( \frac{c}{c_{r}} \big)^{\beta-1}$ has also interesting implications: Take $\gamma=1.3$ as an example, the theorem \ref{theorem: a prior estimate in W_2} on a priori estimate in near vacuum case implies that the 
\[
\|\frac{\partial^{4}(U-\Vr_{0})}{\partial \mathring{u}^{4}}\|_{L^{2}(\Sigma_{t}^{\ut_{*}, \ut^{*}})}\leq C\varepsilon t.
\]
and we also have
\[
\|\frac{\partial^{4}(t\cdot \Vr_{1})}{\partial \mathring{u}^{4}}\|_{L^{2}(\Sigma_{t}^{\ut_{*}, \ut^{*}})}\leq C\varepsilon t.
\]
the term \textbf{II} with coefficient $\big( \frac{c}{c_{r}} \big)^{\beta-1}$ implies that
\[
\frac{\partial^{4}(t^{2}\cdot \Vr_{2})}{\partial \mathring{u}^{4}}\approx t^{2}c^{-\frac{7}{6}}
\]
which is not $L^2$-integrable up to $\tilde u=0$ when its coefficient is nonzero. Its $L^2(\Sigma_t^{\ut_*,\ut^*})$ norm is finite for each $\alpha>0$ but is unbounded as $\alpha\to0$. Therefore, for such data it is \textbf{impossible} to have
\[
\|\frac{\partial^{4}(U-\Vr_{0}-t\cdot \Vr_{1}-\frac{1}{2}t^{2}\Vr_{2})}{\partial \mathring{u}^{4}}\|_{L^{2}(\Sigma_{t}^{\ut_{*}, \ut^{*}})}\leq C\varepsilon t^{3}.
\]
\textbf{up to vacuum}. This means the Taylor expansion in $t$ can't hold in strong sense in coordinate $(t, \mathring{u}, x_{2})$ near \textbf{vacuum}, which is \textbf{sharp in contrast to the away from vacuum} case !
\end{itemize}
\end{remark}

\subsection{Formulas for $U$ and $L(U)$ on initial slice}
To derive formulas for $U$ and $L(U)$ on initial slice both in $\mathcal{W}_{1}$ and $\mathcal{W}_{2, \alpha}$ at the same time. We introduce $u=-\mathring{u}+h(x_{2})$ to represent both $u$ and $\tilde{u}$, where
\[
h(x_2)=\begin{cases}
(v_{r}^{1}+c_{r})(0, x_{2}), \ \ \textbf{in $\mathcal{W}_{1}$},\\
(v_{r}^{1}+c_{r})(0, x_{2})-\frac{\gamma+1}{\gamma-1}c_{r}(0, x_{2}),\quad\textbf{in $\mathcal W_{2,\alpha}$}.
\end{cases}
\]
Since $u=-\mathring{u}+h(x_{2})$, $U(0, u, \vartheta)$ can be obtained directly in view of \eqref{eq: data on t=0} as follows,
\begin{equation}\notag
U(0, u, \vartheta)=\begin{pmatrix}
\frac{2}{\gamma-1}c_{r}(0, \vartheta)+\frac{2}{\gamma+1}\big( -u+h(\vartheta)-(v_{r}^{1}+c_{r})(0, \vartheta) \big)\\
v_{r}^{1}(0, \vartheta)+\frac{2}{\gamma+1}\big( -u+h(\vartheta)-(v_{r}^{1}+c_{r})(0, \vartheta) \big)\\
v_{r}^{2}(0, \vartheta)
\end{pmatrix}
\end{equation}
Moreover, $T=\frac{\partial}{\partial u}, \hat{X}=\frac{\partial}{\partial \vartheta}$ on initial slice $\Sigma_{0}$ and we have on initial slice $\Sigma_{0}$,
\begin{equation}\notag
\begin{cases}
\wb(0, u, \vartheta)=\frac{1}{\gamma-1}c_{r}(0, \vartheta)+\frac{1}{2}v_{r}^{1}(0, \vartheta)+\frac{2}{\gamma+1}\big( -u+h(\vartheta)-(v_{r}^{1}+c_{r})(0, \vartheta) \big),\\
w(0, u, \vartheta)=\frac{1}{\gamma-1}c_{r}(0, \vartheta)-\frac{1}{2}v_{r}^{1}(0, \vartheta),\\
\psi_{2}(0, u, \vartheta)=-v_{r}^{2}(0, \vartheta).
\end{cases}
\end{equation}

Next we derive formula of $L(U)$, recall that in coordinate $(t, \mathring{u}, x_{2})$, $L$ takes the form
\[
L=\frac{\partial}{\partial t}+\frac{1}{t}\big( v^{1}-\mathring{u}-c\hat{T}^{1} \big)\frac{\partial}{\partial \mathring{u}}+\big( v^{2}-c\hat{T}^{2} \big)\frac{\partial}{\partial x_2}.
\]
and we have
\[
\begin{split}
\lim_{t\rightarrow 0} \frac{1}{t}\big( v^{1}-\mathring{u}-c\hat{T}^{1} \big)&=\lim_{t\rightarrow 0} \frac{1}{t}\big( v^{1}-\mathring{u}+c \big)\\
&=\partial_t(v^{1}+c)(0,\mathring u,x_2)
\end{split}
\]
and
\[
\lim_{t\rightarrow 0} \big( v^{2}-c\hat{T}^{2} \big)=v_{r}^{2}(0, x_{2}).
\]
Therefore, we have
\[
L(U)(0, \mathring{u}, x_{2})=\Vr_{1}+(\Vr_{1}^{2}+\frac{\gamma-1}{2}\Vr_{1}^{1})\frac{\partial \Vr_{0}}{\partial \mathring{u}}+v_{r}^{2}\frac{\partial \Vr_{0}}{\partial x_2}.
\]
We can calculate $L(\wb), L(w), L(v^{2})$ as follows
\begin{equation}\label{eq: formula for L(psi)}
\begin{cases}
L(\wb)(0, \mathring{u}, x_{2})=-\frac{1}{2}c\cdot \partial_{x_2}v_{r}^{2},\\
L(w)(0, \mathring{u}, x_{2})=-\frac{\gamma+1}{2(3-\gamma)}\Big( c-c_r\big(\frac{c}{c_{r}}\big)^{\frac{\gamma+1}{2(\gamma-1)}} \Big)\cdot\partial_{x_2}v_{r}^{2}+\big( \frac{c}{c_{r}} \big)^{\frac{1}{2}\frac{\gamma+1}{\gamma-1}}\cdot L(w_{r}),\\
L(v^{2})(0, \mathring{u}, x_{2})=-2c\cdot \partial_{x_2}w_{r}+\big(\frac{c}{c_{r}}\big)^{\frac{\gamma+1}{\gamma-1}}\cdot \big( 2c_{r}\partial_{x_2}w_{r}+L(v_{r}^{2}) \big).
\end{cases}
\end{equation}
Therefore, we can conclude that for $Z\in \{T, \hat{X}\}, |\alpha|\leq N, 1\leq |\beta|\leq N$ and $\psi\in \{\wb, w, \psi_{2}\}$, on $\Sigma_{0}^{0, u^{*}}\cup \Sigma_{0}^{\ut_{*}, \ut^{*}}$,
\begin{equation}\notag
\begin{cases}
T(\wb)+\frac{2}{\gamma+1}=0, T(w)=0, T(\psi_{2})=0, \\
TZ^{\beta}(\psi)=0,\\
|(\hat{X})^{m}\psi|\leq C\varepsilon,\quad 1\leq m\leq N, \\
|c^{-1}L(\psi)|\leq C\varepsilon.
\end{cases}
\end{equation}

\subsection{The construction of approximate solution} 

The information on $\Vr_{1}$ and $\Vr_{2}$ motivates us to construct approximate solution as follows,
\begin{equation}\notag
\begin{cases}
\Ur_{0}(t, \mathring{u}, x_{2}):=\Vr_{0}(\mathring{u}, x_{2}),\\
\Ur_{1}(t, \mathring{u}, x_{2}):=\Vr_{0}(\mathring{u}, x_{2})+t\cdot \Vr_{1}(\mathring{u}, x_{2}).
\end{cases}
\end{equation}
By direct calculation, we have
\[
t\partial_{t}\Ur_{0}+\big(A_{1}(\Ur_{0})-\mathring{u}\big)\partial_{\mathring{u}}\Ur_{0}+t\cdot A_{2}(\Ur_{0})\partial_{x_2}\Ur_{0}=t\cdot A_{2}(\Ur_{0})\partial_{x_2}\Ur_{0}.
\]
for $\Vr_{0}$. We also have
\[
\begin{split}
&t\partial_{t}\Ur_{1}+\big(A_{1}(\Ur_{1})-\mathring{u}\big)\partial_{\mathring{u}}\Ur_{1}+t\cdot A_{2}(\Ur_{1})\partial_{x_2}\Ur_{1}\\
&=t\cdot \Big( \Vr_{1}+A_{1}(\Vr_{1})\partial_{\mathring{u}}\Vr_{0}+\big( A_{1}(\Vr_{0})-\mathring{u} \big)\partial_{\mathring u}\Vr_{1}+A_{2}(\Vr_{0})\partial_{x_2}\Vr_{0} \Big)\\
&+t^{2}\cdot \Big( A_{1}(\Vr_{1})\partial_{\mathring{u}}\Vr_{1}+A_{2}(\Vr_{0})\partial_{x_2}\Vr_{1}+A_{2}(\Vr_{1})\partial_{x_2}\Vr_{0} \Big)+t^{3}\cdot A_{2}(\Vr_{1})\partial_{x_2}\Vr_{1}\\
&=t^{2}\cdot \Big( A_{1}(\Vr_{1})\partial_{\mathring{u}}\Vr_{1}+A_{2}(\Vr_{0})\partial_{x_2}\Vr_{1}+A_{2}(\Vr_{1})\partial_{x_2}\Vr_{0} \Big)+t^{3}\cdot A_{2}(\Vr_{1})\partial_{x_2}\Vr_{1}.
\end{split}
\]

In summary, we have 
\begin{lemma}\label{lem: bounds on Res}
\begin{equation}\notag
\begin{cases}
t\partial_{t}\Ur_{k}+\big(A_{1}(\Ur_{k})-\mathring{u}\big)\partial_{\mathring{u}}\Ur_{k}+t\cdot A_{2}(\Ur_{k})\partial_{x_2}\Ur_{k}=\mathring{R}_{k}, \ \ k=0, 1,\\
\mathring{R}_{0}=t\cdot A_{2}(\Ur_{0})\partial_{x_2}\Ur_{0}, \\
\mathring{R}_{1}=t^{2}\cdot \Big( A_{1}(\Vr_{1})\partial_{\mathring{u}}\Vr_{1}+A_{2}(\Vr_{0})\partial_{x_2}\Vr_{1}+A_{2}(\Vr_{1})\partial_{x_2}\Vr_{0} \Big)+t^{3}\cdot A_{2}(\Vr_{1})\partial_{x_2}\Vr_{1}.
\end{cases}
\end{equation}
Moreover, the following bounds on $\mathring{R}_{k}$, for any $a+b\leq N$, 
\begin{itemize}
\item When $k=0$, for any $\gamma\in (1, 3)$, 
\begin{equation}\notag
|\frac{\partial^{a} }{\partial \mathring{u}^{a}}\frac{\partial^{b}}{\partial x_{2}^{b}}\mathring{R}_{0}|(t, \mathring{u}, x_{2})\leq C\varepsilon t.
\end{equation}
in both \textbf{away from vacuum} case and \textbf{near vacuum} case.
\item When $k=1$, for $\gamma\in \{\frac{7}{5}, \frac{5}{3}\}$,
\begin{equation}\notag
|\frac{\partial^{a} }{\partial \mathring{u}^{a}}\frac{\partial^{b}}{\partial x_{2}^{b}}\mathring{R}_{1}|(t, \mathring{u}, x_{2})\leq C\varepsilon t^{2}.
\end{equation}
\end{itemize}
\end{lemma}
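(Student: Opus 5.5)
The identities for $\mathring R_0$ and $\mathring R_1$ are already displayed just before the lemma, so the plan is to substitute and then bound each term. For $k=0$, since $\Ur_0=\Vr_0$ is independent of $t$, only the $t A_2(\Ur_0)\partial_{x_2}\Ur_0$ term survives. For $k=1$, expanding $t\partial_t\Ur_1+(A_1(\Ur_1)-\urr)\partial_{\urr}\Ur_1+tA_2(\Ur_1)\partial_{x_2}\Ur_1$ and using the linearity of $A_1,A_2$ in their argument, the $O(1)$ coefficient is $(A_1(\Vr_0)-\urr)\partial_{\urr}\Vr_0=0$, because $\Vr_0$ solves the leading simple-wave equation on $\Sigma_0$. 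The $O(t)$ coefficient vanishes because it is exactly the left side of the ODE \eqref{eq: ODE of Vr_1} satisfied by $\Vr_1$. Only the displayed $t^2$ and $t^3$ terms remain.

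For the bounds with $k=0$, the plan is to differentiate $A_2(\Vr_0)\partial_{x_2}\Vr_0$ directly. By the explicit formula \eqref{eq: data on t=0}, $\Vr_0$ is affine in $\urr$ with coefficients that are traces $U_r(0,x_2)$. Hence $\partial_{x_2}\Vr_0$ is a combination of $\partial_{x_2}$-derivatives of these traces, and each is $O(\varepsilon)$ in $C^N(\mathbb T)$ by the $H^{N_0}$ smallness and the trace and Sobolev embedding, with $N_0$ large. The factor $A_2(\Vr_0)$ is affine in $\urr$ and bounded on the bounded $\urr$-range of $\Wr$. Therefore every $\partial_{\urr}^a\partial_{x_2}^b$ of the product is $O(\varepsilon)$, which gives $C\varepsilon t$. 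Nothing here depends on $c$ staying away from zero, since the product is polynomial in $\urr$. So the bound holds for every $\gamma\in(1,3)$, both away from and near vacuum.

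For $k=1$, each term of $\mathring R_1$ contains at least one factor $\Vr_1$ or $\partial\Vr_1$. The remaining factors are either $O(1)$ ($\Vr_0$, $A_j(\Vr_0)$) or $O(\varepsilon)$ ($\partial_{x_2}\Vr_0$). So it suffices to show that $\partial_{\urr}^a\partial_{x_2}^b\Vr_1=O(\varepsilon)$ uniformly up to $c=0$ for $a+b\le N+1$. This is the main point, and it is where $\gamma\in\{7/5,5/3\}$ enters. In \eqref{eq: exact form of Vr_1} the only non-polynomial dependence on $c$ comes through $(c/c_r)^{\beta}$ and $(c/c_r)^{2\beta}$ with $\beta=\frac{\gamma+1}{2(\gamma-1)}$, and for these $\gamma$ we have $\beta=3$ or $2$. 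Then $\Vr_1$ is a polynomial in $c=c(0,\urr,x_2)$, which is itself affine in $\urr$. Its coefficients are $c_r^{-\beta}$, $c_r^{-2\beta}$ (smooth, since $c_r\ge \mathbf c_*/2$), $L(w_r)$, $L(v_r^2)$, $\partial_{x_2}w_r$ and $\partial_{x_2}v_r^2$. The $L$-derivatives of the exterior solution at the initial boundary can be rewritten through the Euler equations as first spatial derivatives of $U_r$. All of these coefficients are $O(\varepsilon)$ in $C^N(\mathbb T)$ (the non-small constant parts cancel because the reference state is constant). The only care needed is bookkeeping of derivative losses, which is absorbed into the choice of $N_0$. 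The three terms at order $t^2$ and the term at order $t^3$ are then each $O(\varepsilon)$ with all derivatives, giving $C\varepsilon t^2$ for $t\le t^*$. For other $\gamma$ the $k=1$ claim is not asserted, and indeed it fails near vacuum because of the fractional powers noted in Remark~\ref{rem: remark 4.1}.
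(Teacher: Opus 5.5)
Your proposal is correct and follows the paper's argument. The identities come from the same expansion in powers of $t$: the $O(1)$ term vanishes because $v^1+c=\urr$ on $\Vr_0$, and the $O(t)$ term vanishes by the ODE for $\Vr_1$. The bounds rest on the explicit formulas, since $\beta=\frac{\gamma+1}{2(\gamma-1)}\in\{2,3\}$ makes $\Vr_1$ a polynomial in $c$; the paper states these bounds more tersely, through Remark~\ref{rem: remark 4.1}. One wording point: $c_r^{-\beta}$ and $c_r^{-2\beta}$ are $O(1)$, not $O(\varepsilon)$. The smallness comes from each monomial of $\Vr_1$ carrying a factor $\partial_{x_2}v_r^2$, $v_r^2$, $L(w_r)$, $\partial_{x_2}w_r$ or $L(v_r^2)$, which does not affect your argument.
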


\section{The control of normal jets of $U-\mathring{U}_{k}$ on $C_{0}$ for $k=0, 1$}\label{sec: normal jets}

We use the regularity and constant conventions of
Section~\ref{subsec: notation conventions}. In particular, $N$ may be
reduced after a trace, a division by $t$, or a normal-derivative
estimate; it remains large enough for the fixed bootstrap orders.
No preservation of the initial derivative count is asserted.

\subsection{Control of higher order tangential derivatives of $U-\Ur_{k}$ along $C_{0}$}
We use $\mathbf{U}_{*}$ to denote the constant state $(\frac{2\mathbf{c}_{*}}{\gamma-1}, 0, 0)^{T}$, recall that
\[
\|U_{r}(0,\cdot,\cdot)-\mathbf{U}_{*}\|_{H^{N}([0,\infty)\times\mathbb T)}\leq \varepsilon.
\] 
By classical theory, there exists $C^{N}$ surface $C_{0}: x_{1}=f(t, x_{2}), (t, x_{2})\in [0, t^{*}]\times \mathbb{T}$ emanating from $x_{1}=0$ and solution $U_{r}(t, x_{1}, x_{2})$ defined on the region determined by $U_{r}$ on $x_{1}\geq 0$ completely,
\[
\mathcal{D}_{r}=\{(t, x_{1}, x_{2}), (t, x_{2})\in [0, t^{*}]\times \mathbb{T}, x_{1}\geq f(t, x_{2})\}
\]
Moreover, we have
\begin{equation}\notag
\begin{cases}
|\partial^{a}_{t}\partial^{b}_{x_2}\big(\frac{f(t, x_{2})}{t}-\mathbf{c}_{*}\big)|\leq C\varepsilon,\ \ a+b\leq N,\\
\|U_{r}-\mathbf{U}_{*}\|_{C^{N}(\mathcal{D}_{r})}\leq C\varepsilon.
\end{cases}
\end{equation}
In particular, since $\mathring{u}=\frac{x_1}{t}$, $C_{0}$ is also a $C^{N}$ surface in coordinate $(t, \mathring{u}, x_{2})$ parametrized by 
\[
\mathring{u}:=g(t, x_{2})=\frac{f(t, x_{2})}{t}=\int_{0}^{1} \frac{\partial f}{\partial t}(s\cdot t, x_2)ds.
\]
and we have 
\[
|\partial^{a}_{t}\partial^{b}_{x_2}\big(g-(v_{r}^{1}+c_{r})(0, x_{2})\big)|(t, x_2)\leq C\varepsilon t^{\max\{1-a,0\}},\ \ a+b\leq N.
\]
Since $U=U_{r}$ along $C_{0}$, we have
\[
U\big(t, g(t, x_{2}), x_{2}\big)=U_{r}\big(t, f(t, x_{2}), x_{2}\big).
\]
Using $(t,x_2)$ as coordinates on $C_0$, we have for $a+b\leq N$
\[
\begin{cases}
|\partial_{t}^{a}\partial_{x_2}^{b}(U-\mathbf{U}_{*})|(t, x_{2})\leq C\varepsilon,\\
|\partial_{t}^{a}\partial_{x_2}^{b}(\Ur_{k}-\mathbf{U}_{*})|(t, x_{2})\leq C\varepsilon.
\end{cases}
\]
Moreover, in view of the construction of $\Ur_{k}$, we have
\[
\partial^{j}_{t}(U-\Ur_{k})(0, x_{2})=0, \ 0\leq j\leq k.
\] 
Therefore, we can conclude that for $a+b\leq N$ along $C_{0}$,
\[
|\partial_{t}^{a}\partial_{x_2}^{b}(U-\Ur_{k})|(t, x_{2})\leq C\varepsilon t^{\textbf{max}\{k+1-a, 0\}}.
\]
Since we are working on characteristic surface of $C_{0}$. It is more convenient to use characteristic coordinate \eqref{eq: first characteristic coordinate} $(t, u, \vartheta)$ aligned with $C_{0}$. In this coordinate, $C_{0}=\{u=0\}$ and $C_{0}$ can be parametrized by $(t, \vartheta)\in [0, t^{*}]\times \mathbb{T}$. The Jacobian matrix of $(t, \vartheta)\mapsto (t, x_{2})$ is almost identity, to be more precise, for $a+b\leq N$,
\[
|\partial^{a}_{t}\partial^{b}_{\vartheta}\Big(\begin{pmatrix}
1 & 0 \\
(v^{2}-c\hat{T}^{2}) & \sqrt{\slashed{g}}\hat{X}^{2}
\end{pmatrix}-\begin{pmatrix}
1 & 0\\
0 & 1
\end{pmatrix}\Big)|\leq C\varepsilon.
\]
Therefore, we can conclude that for $a+b\leq N$ along $C_{0}$,
\[
|\partial_{t}^{a}\partial_{\vartheta}^{b}(U-\Ur_{k})|(t, x_{2})\leq C\varepsilon t^{\textbf{max}\{k+1-a, 0\}}.
\]
To summarize, we list all the bounds in frame $\slashed{\mathfrak{d}}\in \{L, \hat{X}\}$ and $|\alpha|\leq N, a\leq N$ for later use,
\begin{equation}\notag
\begin{cases}
|\slashed{\mathfrak{d}}^{\alpha}(U-\mathbf{U}_{*})|\leq C\varepsilon,\\
|\slashed{\mathfrak{d}}^{\alpha}(U-\Ur_{k})|\leq C\varepsilon t^{\textbf{max}\{k+1-\#(L, \slashed{\mathfrak{d}}^{\alpha}), 0\}},\\
|(\hat X)^{a}(\hat{T}^{1}+1)|\leq C\varepsilon^{2} t^{2}, |(\hat X)^{a}(\hat{T}^{2})|\leq C\varepsilon t, |(\hat X)^{a}(\frac{\kappa}{t}-1)|\leq C\varepsilon t,\\
|(\hat X)^{a}(\slashed{g}-1)|\leq C\varepsilon t.
\end{cases}
\end{equation}
where $\#(L, \slashed{\mathfrak{d}})$ denotes the number of $L$ appearing in $\slashed{\mathfrak{d}}^{\alpha}$.
\begin{remark}
We remark the bounds on $\hat{T}^{1}, \hat{T}^{2}, \kappa, \slashed{g}$ can be derived exactly the same way as section \ref{sec: Preparations for the energy estimate} via transport equations and initial data \eqref{eq: formulas to control the geometry} for them.
\end{remark}

\subsection{Higher order compatibility of $U$ on $C_{0}\cap \Sigma_{0}$}

\subsubsection{The characteristic form of Euler equations}
To determine normal derivatives $T(U)$, we can rewrite Euler equations in frame $\{L, T, \hat{X}\}$ as follows,
\begin{equation}\notag
\big( A_{j}\hat{T}^{j}+\lambda_{3} \big)\cdot T(U)=
-\kappa L(U)-c\kappa\cdot \begin{pmatrix}
0&\hat{X}^{1}&\hat{X}^{2}\\
\hat{X}^{1}&0&0\\
\hat{X}^{2}&0&0
\end{pmatrix}\hat{X}(U)
\end{equation}
Here $\lambda_3=\lambda_3(U,-\hat T)$, $r_i=r_i(U,-\hat T)=\alpha_i$, and $e_1=(1,0,0)^T$. Taking projections on $r_{1}, r_{2}, r_{3}$, we have
\begin{equation}\label{eq: formula of (T(U), r_i)}
\begin{cases}
\big( T(U), r_{1} \big)=-\frac{\kappa}{2c}\big( L(U), r_{1} \big)-\frac{\sqrt{2}}{4}\kappa\cdot \big( \hat{X}(U), r_{2} \big),\\
\big( T(U), r_{2} \big)=-\frac{\kappa}{c}\big( L(U), r_{2} \big)-\kappa\cdot \big( \hat{X}(U), e_{1} \big),\\
\big( L(U), r_{3} \big)+\frac{\sqrt{2}}{2}c\cdot \big( \hat{X}(U), r_{2} \big)=0\\
\end{cases}
\end{equation}
Note that \eqref{eq: formula of (T(U), r_i)} implies that $\big( T(U), r_{1}\big)$ and $\big( T(U), r_{2}\big)$ are determined by the restriction of $U$ on $C_{0}$, while $\big( T(U), r_{3}\big)$ is free to prescribe, which reflects the characteristic of $C_{0}$. 

To study higher normal derivatives, we introduce some notations. Let $Z\in \{T, \hat{X}\}$, we define
\[
\begin{split}
\textbf{knowns}_{m}:&=\{c^{-1}, \kappa\}\cup \{Z^{\leq m-1}(\hat{T}^{i})\}\cup\{ (\hat X)^{a}T^{b}\big(\frac{\kappa}{t}\big), a+b\leq m-1, b\leq m-2 \}\\
& \cup \{(\hat X)^{\leq a}T^{\leq b}(U), a+b\leq m, b\leq m-1\}\cup \{L^{a}(\hat X)^{b}(U), a+b \leq m\}.
\end{split}
\]
and
\[
\begin{cases}
\textbf{good knowns}_{1, m}:=\textbf{knowns}_{m}\cdot \{L^{a}(\hat X)^{b}(U-\mathbf{U}_{*}), a+b\leq m\},\\
 \textbf{good knowns}_{2, m}:=\begin{cases}
\begin{aligned}[t]\textbf{knowns}_{m}\\ &\quad\cdot \{\kappa, (\hat X)^{a}T^{b}\big(\frac{\kappa}{t}-1\big), a+b\leq m-1, b\leq m-2 \} \\ &\quad\cdot \{L^{a}(\hat X)^{b}(U-\mathbf{U}_{*}), a+b\leq m\},\end{aligned}\\
\textbf{knowns}_{m}\cdot Z^{\leq m-2}T\hat{X}(U),\\
\textbf{knowns}_{m}\cdot Z^{\leq m-2}Z(\hat{T}^{i}).
\end{cases}
\end{cases}
\]
Then we have
\begin{lemma}
For $m\geq 1$ and $i\in \{1, 2\}$
\begin{equation}\label{eq: higher order formula of (T(U), r_i)}
T^{m-1}\big( T(U), r_{i}\big)=\textbf{good knowns}_{1,m}\cdot T^{\leq m-1}(\kappa)+\textbf{good knowns}_{2,m}.
\end{equation}
\end{lemma}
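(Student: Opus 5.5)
We argue by induction on $m$. The base case $m=1$ is read off from \eqref{eq: formula of (T(U), r_i)}. The right-hand sides there are $-\frac{\kappa}{2c}(L(U),r_1)-\frac{\sqrt2}{4}\kappa(\hat X(U),r_2)$ and $-\frac{\kappa}{c}(L(U),r_2)-\kappa(\hat X(U),e_1)$. Each term is $\kappa$ times a product of $c^{-1}$, components of $\hat T$ (through $r_i=r_i(U,-\hat T)$), and one factor $L(U)$ or $\hat X(U)$. Since $L(\mathbf U_*)=\hat X(\mathbf U_*)=0$, we may write $L(U)=L(U-\mathbf U_*)$ and $\hat X(U)=\hat X(U-\mathbf U_*)$. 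Hence each term is of the form $\textbf{knowns}_1\cdot\{L^a(\hat X)^b(U-\mathbf U_*),\,a+b\le1\}\cdot\kappa$. This is $\textbf{good knowns}_{1,1}\cdot T^{\le0}(\kappa)$, and it may equally be counted in $\textbf{good knowns}_{2,1}$ because of the explicit factor $\kappa$ there.

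For the induction step we apply $T$ to \eqref{eq: higher order formula of (T(U), r_i)} at level $m$. The following rules are needed.

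\begin{enumerate}
\item Applying $T$ to a factor $c^{-1}$ gives $-c^{-2}T(c)$, and $T(c)$ is a component of $T(U)$, which lies in $\textbf{knowns}_{m+1}$.
\item Applying $T$ to $Z^{\le m-1}(\hat T^i)$ gives $Z^{\le m}(\hat T^i)$, which lies in $\textbf{knowns}_{m+1}$.
\item Applying $T$ to $(\hat X)^aT^b(\kappa/t)$ raises $b$ by one, still within the allowed range $a+b\le m$, $b\le m-1$.
\item Applying $T$ to $(\hat X)^{\le a}T^{\le b}(U)$ with $b\le m-1$ produces $b\le m$, which is allowed at level $m+1$.
\item Applying $T$ to $T^{\le m-1}(\kappa)$ gives $T^{\le m}(\kappa)$, which is exactly the new factor allowed at level $m+1$.
\end{enumerate}

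The only genuinely new terms come from $T$ falling on $L^a(\hat X)^b(U-\mathbf U_*)$. We move $T$ to the right through the word using \eqref{eq: formulas to control the geometry}:
\[
[L,T]=-(\zeta+\eta)\hat X,\qquad [T,\hat X]=-\kappa\theta\hat X.
\]
The coefficients $\zeta+\eta$ and $\theta$ are polynomial in $\kappa$, $c$, $Z(\hat T^i)$, $Z(U)$ and $\hat X(\kappa)$. Every commutator term is therefore a product of knowns with a word containing one $\hat X$ acting on $U$, with fewer $L$'s. Such a word is either of the form $L^{a'}(\hat X)^{b'}(U-\mathbf U_*)$, or it contains $T\hat X(U)$ or $Z(\hat T^i)$, which are precisely the extra good-known classes in $\textbf{good knowns}_{2,m+1}$.

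After all commutations, the leading term is $L^a(\hat X)^bT(U)$. We expand $T(U)=\sum_i(T(U),r_i)r_i$. The components along $r_1$ and $r_2$ are replaced by the level-one formula \eqref{eq: formula of (T(U), r_i)}. For the $r_3$ component we use the characteristic relation $(L(U),r_3)=-\frac{\sqrt2}{2}c(\hat X(U),r_2)$, together with the fact that $T(U)$ in the $r_3$ direction appears only multiplied by $\kappa$ after using the equation. In the characteristic form, $T(U)$ is always accompanied by a prefactor from $\kappa L$. This step needs care: I would instead apply $L^a(\hat X)^b$ directly to the full characteristic system $(A_j\hat T^j+\lambda_3)T(U)=-\kappa L(U)-c\kappa M\hat X(U)$, projected on $r_1$ and $r_2$. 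That way only $(T(U),r_1)$ and $(T(U),r_2)$ ever need to be re-expressed.

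The main obstacle is the bookkeeping in two places. First, we must check that every term in which $T$ hits $\kappa$ (via $\kappa/c$, or via $\hat X(\kappa)$ inside $\zeta+\eta$) lands either in the $T^{\le m}(\kappa)$ slot or in a $(\hat X)^aT^b(\kappa/t)$ factor with $b\le m-1$. Second, we must check that the index constraints $b\le m-1$ on $T$-derivatives of $U$ are not violated. For the second point we use that $T^m(U)$ itself appears only through $T^{m-1}$ of the projections $(T(U),r_i)$, which is the quantity being computed, and through $(T^m U,r_3)$. The latter is free data, which is why it appears as a known rather than being solved for.
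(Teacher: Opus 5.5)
There is a genuine gap at the decisive step of the induction.

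When $T$ falls on a factor $L^a(\hat X)^b(U-\mathbf U_*)$ with $a\geq1$, you are left with the mixed word $L^a(\hat X)^bT(U)$ after commuting $T$ to the right. This case cannot be avoided: $(L(U),r_i)$ already occurs at $m=1$, and the schematic classes allow arbitrary contractions of such factors. The mixed word belongs to none of the classes, since $\textbf{knowns}_{m+1}$ and the extra good factors contain only pure tangential words $L^a(\hat X)^b(U)$ or pure $\{T,\hat X\}$-words.

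The $r_1,r_2$ parts can be attacked with tangential derivatives of the $m=1$ formula. The $r_3$ part, however, contains $L^a(\hat X)^b\big(T(U),r_3\big)$, that is, $L$-derivatives of the free normal derivative. This is of order one, since $(T(U),r_3)=-\frac{2\sqrt2}{\gamma+1}$ on $C_0\cap\Sigma_0$, so it is not ``multiplied by $\kappa$''.

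Your fallback, differentiating the characteristic system projected on $r_1,r_2$, cannot reach it either. The vector $r_3$ spans the kernel of the symmetric matrix $A_j\hat T^j+\lambda_3$, so $(T(U),r_3)$ never enters that system. Nor can you trade $L$ for $T$ through $\kappa L(U)=-(A_j\hat T^j+\lambda_3)T(U)+\kappa M(\hat T,U)\hat X(U)$. That requires a factor $\kappa$ in front, which is absent for terms carrying $T^j(\kappa)$ with $j\geq1$ or coming from $\textbf{knowns}_m$. Otherwise a $\kappa^{-1}$ appears, and $\kappa^{-1}$ is not a known. So the induction step is not established.

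The paper avoids mixed words altogether by applying $\kappa L$, not $T$, to the level-$m$ identity. Every $\kappa L$ acting on $U$ can be replaced through the equation above. Hence $\kappa L$ maps $\textbf{good knowns}_{1,m}\cdot T^{\le m-1}(\kappa)$ and $\textbf{good knowns}_{2,m}$ into $\textbf{good knowns}_{2,m+1}$.

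On the left side, commuting $L$ through $T^{m-1}T$ yields $-T^m(\kappa)\,(L(U),r_i)+\big(T^m(\kappa L(U)),r_i\big)$ modulo good terms. Moreover,
$\big(T^m(\kappa L(U)),r_i\big)=-(\lambda_3-\lambda_i)\,T^m\big(T(U),r_i\big)+\ldots$, with $\lambda_3-\lambda_1=2c$ and $\lambda_3-\lambda_2=c$. Solving for $T^m(T(U),r_i)$, using $c^{-1}\in\textbf{knowns}$, gives level $m+1$. The new slot $T^m(\kappa)$ comes from $-T^m(\kappa)(L(U),r_i)$.

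To salvage your direct-$T$ route you would need an additional ingredient: identities for the $r_3$-components of $TL^a(\hat X)^b(U)$. These could be obtained by applying $T$ and tangential derivatives to $(L(U),r_3)+\frac{\sqrt2}{2}c(\hat X(U),r_2)=0$, which holds in the whole region and not only on $C_0$. You would then recurse on the number of $L$'s and check the index constraints at each stage. You quote this relation but never differentiate it, and then replace it by a projection that does not see $r_3$.
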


\begin{proof}
The base case $m=1$ follows from \eqref{eq: formula of (T(U), r_i)}. And we can assume that we have
\[
T^{n-1}\big( T(U), r_{i}\big)=\textbf{good knowns}_{1,n}\cdot T^{\leq n-1}(\kappa)+\textbf{good knowns}_{2,n}.
\]
Here the coefficient matrix in the tangential term is
\[
M(\hat T,U)=-c\begin{pmatrix}
0&\hat X^1&\hat X^2\\
\hat X^1&0&0\\
\hat X^2&0&0
\end{pmatrix};
\]
it is distinct from the scalar bootstrap constant $M$.
Before handling the case $m+1$, we derive a schematic formula for $LZ^{\alpha}(U)$ with $Z^{\alpha}\not= T^{m}$,
\[
\begin{split}
\kappa\cdot LZ^{\alpha}(U)&=\kappa\cdot Z^{\alpha}L(U)+\textbf{good knowns}_{2, m+1}\\
&=\kappa\cdot Z^{\alpha}\big( \frac{1}{\kappa}\cdot \kappa L(U) \big)+\textbf{good knowns}_{2, m+1}\\
&=\kappa\cdot Z^{\alpha}\Big( \frac{1}{\kappa}\cdot \big( -(A_{j}\hat{T}^{j}+\lambda_{3})T(U)+\kappa M(\hat{T}, U)\hat{X}(U) \big) \Big)+\textbf{good knowns}_{2, m+1}\\
&=\textbf{good knowns}_{2, m+1}.
\end{split}
\]
Therefore, we have
\[
\begin{cases}
\kappa L(\textbf{good knowns}_{1, m})= \textbf{good knowns}_{2, m+1},\\
\kappa L(\textbf{good knowns}_{2, m})= \textbf{good knowns}_{2, m+1}.
\end{cases}
\]

To handle the case $m+1$, we apply $\kappa L$ on both side of
\[
T^{m-1}\big( T(U), r_{i}\big)=\textbf{good knowns}_{1,m}\cdot T^{\leq m-1}(\kappa)+\textbf{good knowns}_{2,m}.
\]
and we have
\[
\begin{split}
\textbf{good knowns}_{2, m+1}&=\kappa LT^{m-1}\big( T(U), r_{i}\big)\\
&=\kappa T^{m-1}\Big( T\big(\frac{1}{\kappa}\cdot \kappa L(U)\big), r_{i} \Big)+\textbf{good knowns}_{2, m+1}\\
&=-T^{m}(\kappa)\cdot \big( L(U)\cdot r_{i} \big)\\
&\quad+\sum_{m_{1}+m_{2}\leq m, m_{2}\leq m-1}\prod_{j, 1\leq b_{j}\leq m-1} \big(\frac{T^{b_{j}}(\kappa)}{\kappa}\big)\cdot \Big( T^{m_1}\big(\kappa L(U)\big), T^{m_2}(r_{i}) \Big)\\
&+\textbf{good knowns}_{2, m+1}\\
&=-T^{m}(\kappa)\cdot \big( L(U)\cdot r_{i} \big)+\big( T^{m}(\kappa L(U)), r_{i} \big)+\textbf{good knowns}_{2, m+1}
\end{split}
\]
It suffices to handle $\Big( T^{m}\big(\kappa L(U)\big), r_{i} \Big)$
\[
\begin{split}
\Big( T^{m}\big(\kappa L(U)\big), r_{i} \Big)&=\Big( T^{m}\big( -(A_{j}\hat{T}^{j}+\lambda_{3})T(U) \big), r_{i} \Big)+\Big( T^{m}\big( \kappa M(\hat{T}, U)\hat{X}(U) \big), r_{i} \Big)\\
&=\Big( T^{m}\big( -\sum_{j=1}^{2}(\lambda_{3}-\lambda_{j})(T(U), r_{j})r_{j} \big), r_{i} \Big)+\textbf{good knowns}_{2, m+1}\\
&=-(\lambda_{3}-\lambda_{i})\cdot \Big( T^{m}\big( T(U), r_{i} \big) \Big)+\textbf{good knowns}_{2, m+1}.
\end{split}
\]
where we have used the induction assumption for cases that $1\leq n\leq m$. Note that
\[
\lambda_{3}-\lambda_{1}=2c, \lambda_{3}-\lambda_{2}=c.
\]
Therefore, we have for $i=1, 2$,
\[
T^{m}\big( T(U), r_{i} \big)=\textbf{good knowns}_{1, m+1}\cdot T^{m}(\kappa)+\textbf{good knowns}_{2, m+1}.
\]
This proves the case $m+1$ and hence the lemma.
\end{proof}

To study the mixed derivatives, we introduce the notation $\textbf{knowns}_{m, n}$ (Roughly speaking, $n$ counts the total number of derivatives and $m$ counts the number of normal derivatives).
\[
\begin{split}
\textbf{knowns}_{m, n}:&=\{c^{-1}, \kappa\}\\
&\cup \{Z^{\leq n-1}(\hat{T}^{i}), \#(T, Z^{\leq n-1})\leq m-1\}\\
&\cup\{ Z^{\leq n-1}(\frac{\kappa}{t}), \#(T, Z^{\leq n-1})\leq m-2 \}\\
& \cup \{Z^{\leq n}(U), \#(T, Z^{\leq n})\leq m-1\}\\
&\cup \{\slashed{\mathfrak{d}}^{\leq n}(U)\}.
\end{split}
\]
where have used the convention $Z\in \{T, \hat{X}\}, \slashed{\mathfrak{d}}\in \{L, \hat{X}\}$ and $\#(T, Z^{\alpha})$ denotes the number of $T$ appeared in $Z^{\alpha}$. We also introduce the notations $\textbf{good knowns}_{1, m, n}$ and $\textbf{good knowns}_{2, m, n}$.
\[
\begin{cases}
\textbf{good knowns}_{1, m, n}:=\textbf{knowns}_{m, n}\cdot \{\slashed{\mathfrak{d}}^{\leq n}(U-\mathbf{U}_{*})\},\\
 \textbf{good knowns}_{2, m, n}:=\begin{cases}
\begin{aligned}[t]\textbf{knowns}_{m, n}\cdot \{\kappa \}\\ &\quad\cup\{ Z^{\leq n-1}(\frac{\kappa}{t}-1), \#(T, Z^{\leq n-2}Z)\leq m-2 \} \\ &\quad\cdot \{\slashed{\mathfrak{d}}^{\leq n-1}(U-\mathbf{U}_{*})\},\end{aligned}\\
\textbf{knowns}_{m, n}\cdot \{\hat{X}Z^{\leq n-1}(U), 1\leq \#(T, Z^{\leq n-1})\leq m-1\},\\
\textbf{knowns}_{m, n}\cdot \{Z^{\leq n-2}Z(\hat{T}^{i}), \#(T, Z^{\leq n-2}Z)\leq m-1\}.
\end{cases}
\end{cases}
\]
Applying $(\hat X)^{n-m}$ on \eqref{eq: higher order formula of (T(U), r_i)}, we can conclude that 
\begin{equation}\notag
(\hat X)^{n-m}T^{m-1}\big( T(U), r_{i} \big)=(\hat X)^{n-m}T^{m-1}(\kappa)\cdot \textbf{good knowns}_{1, m, n}+\textbf{good knowns}_{2, m, n}.
\end{equation}

As a corollary, 
\begin{corollary}\label{cor: compatibility on S_0,0}
In coordinate $(t, u, \vartheta)$, the restriction of $U$ on $\Sigma_{0}$ and the restriction of $U$ on $C_{0}$ are compatible up to order $N$. 
\end{corollary}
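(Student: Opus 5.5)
The plan is to compare, on the circle $S_{0,0}=\Sigma_0\cap C_0$, the full $(t,u,\vartheta)$-jet of $U$ produced by the two restrictions. The data on $C_0$ fix all tangential derivatives $\slashed{\mathfrak d}^{\alpha}U$ with $\slashed{\mathfrak d}\in\{L,\hat X\}$. Via \eqref{eq: formula of (T(U), r_i)} and its iterates \eqref{eq: higher order formula of (T(U), r_i)}, they also fix the $r_1,r_2$ components of all normal derivatives $(\hat X)^{n-m}T^{m-1}(T(U),r_i)$. The $r_3$ component $(T(U),r_3)$ and its $T$-derivatives are free, as the characteristic nature of $C_0$ dictates. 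The data on $\Sigma_0$ are the explicit profile \eqref{eq: data on t=0}. In the chart \eqref{eq: first characteristic coordinate} this profile is affine in $u$ with a constant $u$-slope $-\frac{2}{\gamma+1}(1,1,0)^T$, parallel to $r_3$, and on $\Sigma_0$ we have $T=\partial_u$ and $\hat X=\partial_\vartheta$. The time-direction data on $\Sigma_0$ are the coefficients $\Vr_1,\Vr_2,\dots$, computed inductively from the ODEs. Compatibility up to order $N$ will mean that every derivative of order at most $N$ at $S_{0,0}$ computed from either side agrees.

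\textbf{Step 1: tangential derivatives.} The $\hat X$-derivatives agree because \eqref{eq: initial matching} holds identically in $x_2=\vartheta$. For the $L$-derivatives at $t=0$, I would use that the initial values of $\Vr_j$ on $C_0\cap\Sigma_0$ were defined exactly by differentiating \eqref{eq: coincides on S_00} $j$ times in $t$; see \eqref{eq: initial data of Vr_1} and the analogue for $\Vr_2$. Continuing inductively to order $N$, the values $\partial_t^jU(0,g(0,x_2),x_2)$ are by construction the $t$-derivatives of $U_r$ along $C_0$. Converting $\partial_t|_{\urr,x_2}$ to $L$ using \eqref{eq: final form of transport equation} and the limit $\lim_{t\to0}t^{-1}(v^1-\urr+c)=\partial_t(v^1+c)$ then shows that the mixed words $L^a(\hat X)^bU$ agree.

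\textbf{Step 2: the free normal component.} On $C_0$, $(T(U),r_3)$ and its $T$-iterates are not determined by the characteristic relations. We take them from $\Sigma_0$: $T(U)=\partial_u\Vr_0$ and $T^mU=0$ for $m\ge2$. This is consistent because the third relation in \eqref{eq: formula of (T(U), r_i)} involves only tangential derivatives, which were matched in Step 1.

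\textbf{Step 3: the determined normal components.} This is the main step. On the $\Sigma_0$ side, $(T(U),r_i)=0$ for $i=1,2$, and all higher normal and mixed normal derivatives of these components vanish. We must therefore check that the right-hand side of \eqref{eq: higher order formula of (T(U), r_i)}, after applying $(\hat X)^{n-m}$, vanishes at $S_{0,0}$. The first term carries a factor $(\hat X)^{n-m}T^{\le m-1}\kappa$. Since $\kappa(0,\cdot,\cdot)\equiv0$ on all of $\Sigma_0$, every $\hat X,T$ derivative of $\kappa$ vanishes at $t=0$, so this term vanishes. For $\textbf{good knowns}_{2,m,n}$ I would check the three families in turn:
\begin{itemize}
\item Terms with a factor $\kappa$ or $Z^{\le n-1}(\kappa/t-1)$ vanish, because $\kappa/t\equiv1$ on $\Sigma_0$ (its initial value from the Hamiltonian data, consistent with the bound $|(\kappa/t)-1|\le C\varepsilon t$).
\item Terms with $\hat XZ^{\le n-1}U$ containing at least one $T$ vanish, because $\partial_\vartheta\partial_u^{\ge1}\Vr_0=0$.
\item Terms with $Z^{\le n-2}Z(\hat T^i)$ vanish, because $\hat T\equiv(-1,0)$ on $\Sigma_0$.
\end{itemize}
I expect the genuine obstacle to be the bookkeeping that justifies this. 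Each member of $\textbf{good knowns}_2$ must contain at least one such vanishing factor, with all other factors bounded; here $c^{-1}$ is harmless since $c\approx\mathbf c_*$ on $C_0$. This requires tracing through the induction in the preceding lemma that no term escapes the three families. I would re-run that induction while keeping track of the vanishing factor, using $\kappa L(\textbf{good knowns}_{2,m})=\textbf{good knowns}_{2,m+1}$ and the fact that these factors are preserved under $\hat X$-differentiation. With Steps 1–3, the jets from both sides coincide through order $N$, which gives the corollary.
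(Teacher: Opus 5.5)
Your proposal is correct, and its substantive part (Step 3) is the paper's proof, which the paper states tersely: on $\Sigma_0$ the profile is affine in $u$ with slope parallel to $r_3$, so $T^{m-1}(T(U),r_i)=0$ for $i=1,2$, while every term of \eqref{eq: higher order formula of (T(U), r_i)} vanishes at $t=0$ because $\kappa=0$, $\kappa/t=1$, $\hat T=(-1,0)$ and $T\hat X(U)=0$ there. Steps 1--2 and a re-run of the induction are unnecessary, since the vanishing factor is built into the definition of $\textbf{good knowns}_2$; if you keep Step 1, note that the $(1,1,0)$-component of each $\Vr_j$ is forced algebraically by its ODE, so its agreement at the corner with the $C_0$ value comes from the $r_3$ characteristic equation on $C_0$ (cf.\ Remark~\ref{rem: remark 4.1}), not from the definition alone.
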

\begin{proof}
On $\Sigma_{0}$, we have
\[
U(0, u, \vartheta)=\begin{pmatrix}
\frac{2}{\gamma-1}c_{r}(0, \vartheta)-\frac{2}{\gamma+1}u\\
v^{1}_{r}(0, \vartheta)-\frac{2}{\gamma+1}u\\
v_{r}^{2}(0, \vartheta)
\end{pmatrix}
\]
and $r_{1}=\frac{\sqrt{2}}{2}(1, -1, 0)^T, r_{2}=(0, 0, 1)^T$. Hence, for any $m\geq 1$,
\[
\begin{cases}
T^{m-1}\big( T(U), r_{1} \big)=\sqrt{2} (\frac{\partial}{\partial u})^{m} \big(w_{r}(0, x_{2})\big)=0, \\
T^{m-1}\big( T(U), r_{2} \big)=(\frac{\partial}{\partial u})^{m} \big(v^{2}_{r}(0, x_{2})\big)=0.
\end{cases}
\]
On the other hand, \eqref{eq: higher order formula of (T(U), r_i)} implies that for any $m\leq N$,
\[
T^{m-1}\big( T(U), r_{i} \big)=0, \ i=1, 2.
\]
since $\frac{\kappa}{t}(0, u, \vartheta)=1$. This proves the corollary.
\end{proof}

In summary, all normal jets of $U$ on $C_{0}\cap \Sigma_{0}$ can be given as follows,
\begin{equation}\notag
T(U)=\begin{pmatrix}
-\frac{2}{\gamma+1}\\
-\frac{2}{\gamma+1}\\
0
\end{pmatrix}, \ \ T^{m}(U)=\begin{pmatrix}
0\\
0\\
0
\end{pmatrix}, \ \ 2\leq m\leq N.
\end{equation}

\subsection{Estimates of higher order normal derivatives of $U$ along $C_{0}$}

\subsubsection{Transport equation for $\big( T(U), r_{3} \big)$ and $\kappa$} Recall Euler equation in frame $\{L, T, \hat{X}\}$,
\[
\big( A_{j}\hat{T}^{j}+\lambda_{3} \big)\cdot T(U)=
-\kappa L(U)-c\kappa\cdot \begin{pmatrix}
0&\hat{X}^{1}&\hat{X}^{2}\\
\hat{X}^{1}&0&0\\
\hat{X}^{2}&0&0
\end{pmatrix}\hat{X}(U)
\]
and we have
\begin{equation}\label{eq: exact form in L T Xh}
\begin{cases}
\big( T(U), r_{1} \big)=-\frac{\kappa}{2c}\big( L(U), r_{1} \big)-\frac{\sqrt{2}}{4}\kappa\cdot \big( \hat{X}(U), r_{2} \big),\\
\big( T(U), r_{2} \big)=-\frac{\kappa}{c}\big( L(U), r_{2} \big)-\kappa\cdot \big( \hat{X}(U), e_{1} \big),\\
\big( L(U), r_{3} \big)+\frac{\sqrt{2}}{2}c\cdot \big( \hat{X}(U), r_{2} \big)=0\\
\end{cases}
\end{equation}
We emphasize that the exact form of \eqref{eq: exact form in L T Xh} must be used in deriving transport equation. Applying $T$ on both side of $\big( L(U), r_{3} \big)+\frac{\sqrt{2}}{2}c\cdot \big( \hat{X}(U), r_{2} \big)=0$, we have
\begin{equation}\label{eq: qiji1}
\big( TL(U), r_{3} \big)+\big( L(U), T(r_{3}) \big)+\frac{\sqrt{2}}{2}T(c)\cdot \big( \hat{X}(U), r_{2} \big)+\frac{\sqrt{2}}{2}c\cdot \big( T\hat{X}(U), r_{2} \big)+\frac{\sqrt{2}}{2}c\cdot \big( \hat{X}(U), T(r_{2}) \big)=0.
\end{equation}
Applying $\hat{X}$ on both side of $\big( T(U), r_{2} \big)=-\frac{\kappa}{c}\big( L(U), r_{2} \big)-\kappa\cdot \big( \hat{X}(U), e_{1} \big)$, we have
\begin{equation}\label{eq: qiji2}
\begin{aligned}\big( \hat{X}T(U), r_{2} \big)&=-\big( T(U), \hat{X}(r_{2}) \big)-\hat{X}(\kappa)\cdot c^{-1}\big( L(U), r_{2} \big)\\ &\quad-\kappa \hat{X}\Big( c^{-1}\big( L(U), r_{2} \big) \Big)-\hat{X}(\kappa)\cdot \big( \hat{X}(U), e_{1} \big)-\kappa\cdot \big( (\hat X)^2(U), e_{1} \big).\end{aligned}
\end{equation}
In view of \eqref{eq: higher order formula of (T(U), r_i)}, $\big(T(U), r_{3}\big)$ has the same level of $\kappa$. Therefore, \textbf{any terms with $\hat{X}(\kappa)$ is unacceptable for deriving ODE for $T(U)$}. Luckily, if we use \eqref{eq: qiji1} and \eqref{eq: qiji2} to cancel term $\big( \hat{X}T(U), r_{2} \big)$, the terms containing all $\hat{X}(\kappa)$ also cancel out completely and we have transport equation for $\big( T(U), r_{3}\big)$ together with transport equation for $\kappa$ as follows,
\[
\begin{split}
L\big( T(U), r_{3} \big)&=\big( T(U), L(r_3) \big)-\Big( \kappa\hat{T}^{j}\hat{X}(v^{j})+\big(T(U), r_{2}\big)-\kappa\hat{X}(c) \Big)\cdot\big( \hat{X}(U), r_{3} \big)-\frac{\sqrt{2}}{2}T(c)\cdot \big( \hat{X}(U), r_{2} \big)\\
&-\frac{\sqrt{2}}{2}c\cdot \Big( -\big( T(U), \hat{X}(r_{2}) \big)-\kappa \hat{X}\Big( c^{-1}\big( L(U), r_{2} \big) \Big)-\kappa\cdot \big( (\hat X)^2(U), e_{1} \big) \Big)\\
&+\frac{\sqrt{2}}{2}c\kappa \cdot\theta \big( \hat{X}(U), r_{2} \big),\\
L(\kappa)&=-\frac{\sqrt{2}}{4}(\gamma+1)\cdot \big( T(U), r_{3} \big)-\frac{\sqrt{2}}{4}(\gamma-3)\cdot \big( T(U), r_{1} \big).
\end{split}
\]
Therefore, we can conclude that
\begin{equation}\label{eq: ODE for T(psi) and kappa}
L\begin{pmatrix}
\big( T(U), r_3 \big)\\
\kappa
\end{pmatrix}=\begin{pmatrix}
f_{1} & f_{2} \\
-\frac{\sqrt{2}}{4}(\gamma+1) & f_{3}
\end{pmatrix}\begin{pmatrix}
\big( T(U), r_3 \big)\\
\kappa
\end{pmatrix}
\end{equation}
where $f_{1}, f_{2}, f_{3}$ are intrinsic to $C_{0}$ and we have $(|f_{1}|+|f_{2}|+|f_{3}|)(t, \vartheta)\leq C\varepsilon$. Solving \eqref{eq: ODE for T(psi) and kappa} with initial data 
\[
\begin{cases}
\big( T(U), r_{3}\big)(0, \vartheta)=-\frac{2\sqrt{2}}{\gamma+1}, \\
\kappa(0, \vartheta)=0.
\end{cases}
\]
We can conclude that along $C_{0}$,
\begin{equation}\notag
\begin{cases}
|\big( T(U), r_{3} \big)+\frac{2\sqrt{2}}{\gamma+1}|(t, \vartheta)\leq C\varepsilon t,\\
|\kappa-t|(t, \vartheta)\leq C\varepsilon t^{2},\\
|\big( T(U), r_{i} \big)|(t, \vartheta)\leq C\varepsilon t,\ i=1, 2.
\end{cases}
\end{equation}
in view of \eqref{eq: higher order formula of (T(U), r_i)}. The general mixed norm $Z^{\alpha}(U)$ of solution, and $Z^{\alpha}(\hat{T}^{1}), Z^{\alpha}(\hat{T}^{2}), Z^{\alpha}(\kappa)$ for geometric quantities can be obtained by viewing \eqref{eq: ODE for T(psi) and kappa}, \eqref{eq: higher order formula of (T(U), r_i)}, together with transport equation \eqref{eq: transport eq for geometry} for $\hat{T}^{i}$ as a whole system and commuting $Z\in \{T, \hat{X}\}$ with them. In fact, we have
\begin{lemma}
for any $|\alpha|\leq N$ and $Z\in \{T, \hat{X}\}$, we have along $C_{0}$,
\begin{equation}\label{eq: first control of normal jets along C0}
\begin{cases}
|Z^{\alpha}\big(T(U)+\frac{2}{\gamma+1}(1, 1, 0)^T\big)|(t, \vartheta)\leq C\varepsilon t,\\
|Z^{\alpha}(\hat{T}^{1}+1)|(t, \vartheta)\leq C\varepsilon^{2} t^{2}, |Z^{\alpha}(\hat{T}^{2})|(t, \vartheta)\leq C\varepsilon t,\\
|Z^{\alpha}(\kappa-t)|(t, \vartheta)\leq C\varepsilon t^{2}.
\end{cases}
\end{equation}
\end{lemma}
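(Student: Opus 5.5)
We prove the lemma by induction on $|\alpha|$. At each step the quantities $Z^\alpha(T(U),r_3)$, $Z^\alpha\kappa$ and $Z^\alpha\hat T^i$ are treated as unknowns of a closed linear transport system along the generators of $C_0$, parametrized by $(t,\vartheta)$. The other two normal components, $Z^\alpha(T(U),r_i)$ for $i=1,2$, are read off algebraically from \eqref{eq: higher order formula of (T(U), r_i)} and its mixed version with $(\hat X)^{n-m}$.

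\emph{Base case.} For $|\alpha|=0$ the bounds on $(T(U),r_3)$, $\kappa$ and $(T(U),r_{1,2})$ were just obtained from \eqref{eq: ODE for T(psi) and kappa}. Since $r_3=\frac{\sqrt2}{2}(1,-\hat T^1,-\hat T^2)^T$ with $\hat T\approx(-1,0)$, these give the first and third lines. For $\hat T^i$ we use the transport law \eqref{eq: transport eq for geometry}:
\[
L\hat T^i=\bigl(\hat X c+\hat T^j\hat X\psi_j\bigr)\hat X^i .
\]
The tangential derivatives $\hat X(U)$ on $C_0$ are $O(\varepsilon)$ by the tangential bounds of the previous subsection, with zero data at $t=0$. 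This gives $|\hat T^2|\le C\varepsilon t$. The sharper bound for $\hat T^1+1$ comes from $|\hat T|=1$:
\[
\hat T^1+1=\frac{(\hat T^2)^2}{1-\hat T^1}=O(\varepsilon^2t^2).
\]

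\emph{Inductive step.} Commute $Z\in\{T,\hat X\}$ through \eqref{eq: ODE for T(psi) and kappa} and the $\hat T^i$ equation, using
\[
[L,\hat X]=-\chi\hat X,\qquad [L,T]=-(\zeta+\eta)\hat X
\]
from \eqref{eq: formulas to control the geometry}. The result is
\[
L\bigl(Z^\alpha(T(U),r_3),\,Z^\alpha\kappa,\,Z^\alpha\hat T^i\bigr)
\]
equal to a matrix with $O(1)$ entries (the $-\frac{\sqrt2}{4}(\gamma+1)$ entry and $O(\varepsilon)$ terms) acting on these unknowns, plus lower-order terms controlled by the inductive hypothesis. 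Three features make the inhomogeneity admissible.

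\begin{itemize}
\item The commutator coefficients $\chi$ and $\zeta+\eta$ are themselves $O(\varepsilon)$, or $O(\varepsilon t)$, by the inductive bounds.
\item Any $T$-derivative of $(T(U),r_{1,2})$, or of $U$ in the directions $r_{1,2}$, is replaced by \eqref{eq: higher order formula of (T(U), r_i)}. Its right-hand side contains at most $T^{\le m}\kappa$ multiplied by good knowns, which are $O(\varepsilon)$ because they carry a factor $\slashed{\mathfrak d}(U-\mathbf U_*)$, plus good knowns$_2$, which carry $\kappa$, $\frac{\kappa}{t}-1$ or an already controlled mixed derivative.
\item The $c^{-1}$ factors are harmless because $c\approx\mathbf c_*$ on $C_0$.
\end{itemize}

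The initial data at $t=0$ follow from Corollary~\ref{cor: compatibility on S_0,0} and the normal-jet summary after it: $Z^\alpha T(U)$ agrees with $Z^\alpha$ of the constant $-\frac{2}{\gamma+1}(1,1,0)^T$, and $Z^\alpha\kappa=Z^\alpha t$, $Z^\alpha\hat T^2=0$. A Gronwall argument on $[0,t^*]$ then yields the deviations $C\varepsilon t$ and $C\varepsilon t^2$.

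\emph{Main obstacle.} The hard step is the top-order $\hat X$-derivatives in the $(T(U),r_3)$ equation. Differentiating produces terms like $\hat X^{|\alpha|+1}\kappa$ and $\hat X T(U)$ in the $r_2$ direction, which would lose a derivative. We handle them as in the derivation of \eqref{eq: ODE for T(psi) and kappa}: we reuse the exact cancellation between \eqref{eq: qiji1} and \eqref{eq: qiji2}, now commuted with $Z^\alpha$, and check that all $Z^\alpha\hat X(\kappa)$ terms still cancel modulo commutator terms of lower order. Finally, the improved $t$-powers, namely $t^2$ for $\kappa-t$ and $\varepsilon^2t^2$ for $\hat T^1+1$, must survive differentiation. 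For $\kappa$, we integrate $L(\kappa-t)$, whose source is $O(\varepsilon t)$ once $Z^\alpha(T(U),r_3)$ is known to be within $C\varepsilon t$ of its limit. For $\hat T^1+1$, we differentiate the identity $\hat T^1+1=(\hat T^2)^2/(1-\hat T^1)$ by Leibniz.
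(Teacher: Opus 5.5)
Your overall strategy is the route the paper indicates: commute $Z\in\{T,\hat X\}$ through \eqref{eq: ODE for T(psi) and kappa}, the $\hat T^i$ transport law and \eqref{eq: higher order formula of (T(U), r_i)}, then apply Gronwall along the generators. Your base case is also fine; the identity $\hat T^1+1=(\hat T^2)^2/(1-\hat T^1)$ is a clean way to get the $\varepsilon^2t^2$ bound.

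The gap is in the inductive step. It is not true that, after commuting $Z^\alpha$, the inhomogeneity consists of terms controlled by the hypothesis for shorter words. The transport equation for $(T(U),r_3)$ contains the source $\frac{\sqrt2}{2}c\kappa\,((\hat X)^2(U),e_1)$, inherited from \eqref{eq: qiji2}, which carries two tangential derivatives of $U$. While only $\hat X$'s act on it, this term is boundary data on $C_0$ and is harmless. Once a $T$ falls on it, however, you get $c\kappa\,(\hat X^2T(U),e_1)$ plus lower-order terms. Since $e_1=(r_1+r_3)/\sqrt2$, this contains $\kappa\,\hat X^2(T(U),r_3)$. So already for $Z^\alpha=T$ the right-hand side involves an unknown of word length two. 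In general, a word with $m$ factors $T$ produces $\hat X^2Z^{\alpha'}T(U)$ with $|\alpha'|=|\alpha|+1$ and $m-1$ factors $T$. No cancellation removes this term. It is the genuine loss of two tangential derivatives per normal derivative that the paper records in its remark after the lemma ($N\to N/2$).

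The obstacle you single out is, by contrast, not present. The equation \eqref{eq: ODE for T(psi) and kappa} has already absorbed the $\hat X(\kappa)$ and $(\hat XT(U),r_2)$ terms, and its right-hand side contains $\kappa$ only undifferentiated. Commuting $Z^\alpha$ through it therefore produces at most $|\alpha|$ derivatives of $\kappa$, including via the $c\hat X(\kappa)$ inside $\zeta+\eta$ coming from $[L,T]$. There is nothing to re-cancel.

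To repair the proof, induct on the number $m$ of $T$'s in the word rather than on $|\alpha|$.
\begin{itemize}
\item For $m=0$, the system for $(\hat X)^b$ applied to $(T(U),r_3)$, $\kappa$, $\hat T^i$ is closed. Its inhomogeneity involves only the tangential data $(\hat X)^{\le b+2}U$ along $C_0$, so it holds for $b\le N-2$.
\item For $m\ge1$, the offending terms have $m-1$ factors $T$ and two extra $\hat X$'s. They are controlled by the previous step provided the tangential range there is two larger. They also carry the extra factor $\kappa=O(t)$, which is more than enough for the $t$-powers.
\end{itemize}
This yields the bounds for words with $m$ factors $T$ and at most $N-2m-2$ factors $\hat X$. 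Hence the lemma holds for all words of length up to roughly $N/2$, which the paper's convention on the reuse of $N$ permits.
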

\begin{remark}
We omit the proof, since the same estimates were obtained in Subsection 3.7 of \cite{Luo-YuRare2}, the only difference being that they use a slightly different transport equation for $T(U)$, derived from the acoustical wave equation. We also remark that in this step we lose half of the derivatives, $N\to N/2$, in view of terms such as $\kappa\cdot\big((\hat X)^2(U),e_1\big)$ in the source term of $L(T(U),r_3)$; that is, two tangential derivatives are needed to estimate one normal derivative. This issue also appears in \cite{Luo-YuRare2}.
\end{remark}

\subsection{Estimates of higher derivatives of $U-\Ur_{k}$ along $C_{0}$}

Since $T(\mathring{u})=\frac{\kappa\hat{T}^{1}}{t}, T(x_2)=\kappa\hat{T}^{2}, \hat{X}(\mathring{u})=\frac{\hat{T}^{2}}{t}, \hat{X}(x_{2})=\hat{X}^{2}$, \eqref{eq: first control of normal jets along C0} implies that along $C_{0}$, $Z\in \{T, \hat{X}\}$ and $|\alpha|\leq N$,
\[
|Z^{\alpha} \big( T(\Ur_{0})+\frac{2}{\gamma+1}(1, 1, 0)^T\big)|(t, \vartheta)\leq C\varepsilon t.
\]
Therefore, we have along $C_{0}$, $Z\in \{T, \hat{X}\}$ and $|\alpha|\leq N$,
\[
|Z^{\alpha}(U-\Ur_{0})|(t, \vartheta)\leq C\varepsilon t.
\]
Note also
\[
\begin{cases}
\Tr=-\frac{t}{\kappa}\hat{T}^{1}\cdot T-t\hat{X}^{1}\cdot \hat{X},\\
\Xr=\frac{\hat{T}^{2}}{\kappa}\cdot T+\hat{X}^{2}\cdot \hat{X}.
\end{cases}
\]
Therefore, we can conclude that for $a+b\leq \Ntop$ along $C_{0}$,
\begin{equation}\notag
|\Tr^{a}\Xr^{b}(U-\Ur_{0})|(t, \vartheta)\leq C\varepsilon t.
\end{equation}
which finishes the case $k=0$ of proposition \ref{prop: matching of U and Ur_k along C_0}.

To control higher order derivatives of $U-\Ur_{k}$ along $C_{0}$, we can project the equation for $V_{k}:=U-\Ur_{k}$ on direction $r_{1}, r_{2}$ as follows,
\begin{equation}\label{eq: U-Ur_1 in L T Xh}
\begin{cases}
\begin{aligned}[t]&\big( \kappa L(V_{k}), r_{1} \big)+2c\cdot \big( T(V_{k}), r_{1} \big)+\frac{\sqrt{2}}{2}c\kappa\cdot \big( \hat{X}(V_{k}), r_{2} \big)\\ &=\frac{\kappa}{t}\Big( -A_{1}(V_{k})\partial_{\mathring{u}}\Ur_{k}-t\cdot A_{2}(V_{k})\partial_{x_2}\Ur_{k}, r_{1} \Big)-\frac{\kappa}{t}(\mathring{R}_{k}, r_{1}),\end{aligned}\\
\begin{aligned}[t]&\big(\kappa L(V_{k}), r_{2} \big)+c\cdot \big( T(V_{k}), r_{2} \big)+c\kappa\cdot \big( \hat{X}(V_{k}), e_{1} \big)\\ &=\frac{\kappa}{t}\Big( -A_{1}(V_{k})\partial_{\mathring{u}}\Ur_{k}-t\cdot A_{2}(V_{k})\partial_{x_2}\Ur_{k}, r_{2} \Big)-\frac{\kappa}{t}(\mathring{R}_{k}, r_{2}),\end{aligned}\\
\begin{aligned}[t]&\big( L(V_{k}), r_{3} \big)+\frac{\sqrt{2}}{2}c\cdot \big( \hat{X}(V_{k}), r_{2} \big)\\ &=\frac{1}{t}\Big( -A_{1}(V_{k})\partial_{\mathring{u}}\Ur_{k}-t\cdot A_{2}(V_{k})\partial_{x_2}\Ur_{k}, r_{3} \Big)-\frac{1}{t}(\mathring{R}_{k}, r_{3}).\end{aligned}
\end{cases}
\end{equation}

In view of \eqref{eq: first control of normal jets along C0} and lemma \ref{lem: bounds on Res}, $Z^{\alpha}(U), Z^{\alpha}(\hat{T}^{1}), Z^{\alpha}(\hat{T}^{2}), Z^{\alpha}(\kappa)$ can be viewed as \textbf{knowns}. Schematically, \eqref{eq: U-Ur_1 in L T Xh} implies that for $i=1, 2$,
\[
\big( T(V_{k}), r_{i} \big)=\kappa L(V_{k})+\hat{X}(V_{k})+V_{k}+t^{k+1}\varepsilon.
\]
Applying $\kappa L$ and $\hat{X}$ iteratively, we have
\[
(\hat X)^{n-m}T^{m-1}\big( T(V_{k}), r_{i} \big)=\cdot \sum_{a+b\leq n} \kappa^{a}L^{a}(\hat X)^{b}(V_{k})+t^{k+1}\varepsilon.
\]
Therefore, we can conclude that for any $Z\in \{T, \hat{X}\}$, we have
\begin{equation}\label{eq: bound for T(V_k) on C_0}
|Z^{\alpha}\big( T(V_{k}), r_{i} \big)|(t, \vartheta)\leq C\varepsilon t^{k+1}, \ i=1, 2.
\end{equation}
along $C_{0}$. As for the remaining term $\big( T(V_{k}), r_{3}\big)$, we can derive transport equation for $\big( T(V_{k}), r_{3} \big)$ exactly the same way as the case for $\big( T(U), r_{3} \big)$ via \eqref{eq: U-Ur_1 in L T Xh}.
\begin{equation}\label{eq: transport equation for TV_1}
L\big( T(V_{k}), r_{3} \big)=\underbrace{-\frac{1}{t}\frac{1}{\gamma+1}(1-\hat{T}^{1})(\frac{\gamma-1}{2}-\hat{T}^{1})\cdot \big( T(V_{k}), r_{3} \big)}_{\textbf{damping terms since $\hat{T}^{1}\approx -1$}}+\sum_{a+b\leq 2, a\leq 1}L^{a}(\hat X)^{b}(V_{k})+t^{k}\varepsilon.
\end{equation}
Solving \eqref{eq: transport equation for TV_1} with initial data $\big( T(V_{k}), r_{3}\big)(0, \vartheta)=0$, we have 
\[
|\big(T(V_{k}), r_{3}\big)|(t, \vartheta)\leq C\varepsilon t^{k+1},
\]
along $C_{0}$. Commuting $(\hat X)^{n-m}T^{m-1}$ with \eqref{eq: transport equation for TV_1}, we have
\[
\begin{split}
L(\hat X)^{n-m}T^{m-1}\big( T(V_{k}), r_{3} \big)&=\underbrace{-\frac{1}{t}\frac{1}{\gamma+1}(1-\hat{T}^{1})(\frac{\gamma-1}{2}-\hat{T}^{1})\cdot (\hat X)^{n-m}T^{m-1}\big( T(V_{k}), r_{3} \big)}_{\textbf{damping terms since $\hat{T}^{1}\approx -1$}}\\
&+\sum_{a+b\leq n+1, a\leq 1, c\leq m-1}L^{a}(\hat X)^{b}T^{c}(V_{k})+t^{k}\varepsilon.
\end{split}
\]
which can be solved iteratively. Combining \eqref{eq: bound for T(V_k) on C_0}, we can conclude that: for any $Z\in \{T, \hat{X}\}, |\alpha|\leq N$, we have
\[
|Z^{\alpha}(U-\Ur_{k})|(t, \vartheta)\leq C\varepsilon t^{k+1}.
\]
along $C_{0}$. Since
\[
\begin{cases}
\Tr=-\frac{t}{\kappa}\hat{T}^{1}\cdot T-t\hat{X}^{1}\cdot \hat{X},\\
\Xr=\frac{\hat{T}^{2}}{\kappa}\cdot T+\hat{X}^{2}\cdot \hat{X}.
\end{cases}
\]
Therefore, we can conclude that: for $a+b\leq \Ntop$,
\begin{equation}\notag
|\Tr^{a}\Xr^{b}(U-\Ur_{1})|(t, \vartheta)\leq C\varepsilon t^{2}.
\end{equation}
which finishes the case $k=1$ of proposition \ref{prop: matching of U and Ur_k along C_0}.

\section{Preparations for the energy estimate and the closing of $(\mathbf{B}_{\textbf{diffeomorphism}})$ }\label{sec: Preparations for the energy estimate}

In this section, we only handle the \textbf{near vacuum} case, since the \textbf{away from vacuum} case can be handled exactly the same way. We will use $P(M)$ to denote a generic polynomial in $M$ whose exact form is not important. For example, for $t\in [0, t^{*}]$, we have $t^{2}\leq P(M)t$ since $t^{*}$ will be fixed by setting $t^{*}=1$ in this paper.

\subsection{Control of sound speed $c$}

In view of $(\mathbf{B}_{\infty})$, we have 
\[
|\partial_{t}c(t, \tilde{u}, \vartheta)|\leq M\varepsilon |c(t, \tilde{u}, \vartheta)|.
\]
By Gronwall's inequality, for $t\in [0, t^{*}]$
\[
e^{-M\varepsilon t}c(0, \tilde{u}, \vartheta)\leq c(t, \tilde{u}, \vartheta)\leq e^{M\varepsilon t}c(0, \tilde{u}, \vartheta).
\]
Recall that $c(0, \tilde{u}, \vartheta)=-\frac{\gamma-1}{\gamma+1}\tilde{u}$, we can conclude that \textbf{$c$ is almost constant along $C_{\tilde{u}}$}.
\begin{equation}\label{eq: c almost constant on C_u}
\frac{1}{2}\alpha\cdot \mathbf{c}_{*}\leq-\frac{1}{2}\frac{\gamma-1}{\gamma+1}\tilde{u} \leq c(t, \tilde{u}, \vartheta) \leq -2\frac{\gamma-1}{\gamma+1}\tilde{u}\leq 2\mathbf{c}_{*}.
\end{equation}
Similarly, $\|v^{1}\|_{L^{\infty}(\Sigma_{t}^{\ut_{*},\ut^{*}})}\leq C$ and $\|v^{2}\|_{L^{\infty}(\Sigma_{t}^{\ut_{*},\ut^{*}})}\leq CM\varepsilon$, provided that $M\varepsilon$ is sufficiently small.

\begin{remark}
The lower bound for sound speed $c$ depends on $\alpha\in (0, \frac{1}{100})$. 
\end{remark}

\subsection{$L^{\infty}$ control of $Z^{\alpha}(\hat{T}^{1}), Z^{\alpha}(\hat{T}^{2}), Z^{\alpha}(\kappa)$ for $|\alpha|\leq 1$ and the closing of $(\mathbf{B}_{\text{diffeomorphism}})$.}

\begin{proposition}
For any $t\in [0, t^{*}]$, we have
\begin{equation}\label{eq: 0-th control of the frame}
\begin{cases}
\|\big(\hat{T}^{1}+1\big)\|_{L^{\infty}(\Sigma_{t}^{\ut_{*}, \tilde{u}^{*}})}\leq P(M)\varepsilon^{2}t^{2},\\
\|\hat{T}^{2}\|_{L^{\infty}(\Sigma_{t}^{\ut_{*}, \tilde{u}^{*}})}\leq P(M)\varepsilon t.\\
\|\kappa-\kappar\|_{L^{\infty}(\Sigma_{t}^{\ut_{*}, \tilde{u}^{*}})}\leq P(M)\varepsilon t^{2}.
\end{cases}.
\end{equation}
\end{proposition}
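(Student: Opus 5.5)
The plan is to integrate the transport equations \eqref{eq: formulas to control the geometry} along the generators $L$ of $C_{\tilde u}$. The initial data $\hat T^1=-1$, $\hat T^2=0$, $\kappa=0$ are given on $\Sigma_0$, and the sources are controlled by $(\mathbf B_\infty)$. Since $\partial_t|_{\tilde u,\vartheta}=L$, each estimate reduces to an ODE inequality in $t$ at fixed $(\tilde u,\vartheta)$.

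\emph{Step 1: $\hat T^2$.} We have $L\hat T^2=\big(\hat X(c)+\hat T^j\hat X(\psi_j)\big)\hat X^2$. Using $c=(\gamma-1)(\wb+w)$, $\psi_1=\wb-w$ and $\psi_2$, the source is a bounded combination of $\hat X\wb$, $\hat X w$ and $\hat X\psi_2$. By $(\mathbf B_\infty)$ (the $|\beta|=0$ case of the $L^\infty$ bound on $\hat XZ^\beta\psi$) it is at most $P(M)\varepsilon$, with $|\hat X^2|\le 2$ by $(\mathbf B_{\rm diffeomorphism})$. Integrating from $0$ gives $|\hat T^2|\le P(M)\varepsilon t$.

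\emph{Step 2: $\hat T^1+1$.} One could integrate $L\hat T^1$ in the same way, but this only gives $O(\varepsilon t)$. Instead, use $|\hat T|=1$ and $\hat T^1<0$ (from $(\mathbf B_{\rm diffeomorphism})$) to write
\[
\hat T^1+1=1-\sqrt{1-(\hat T^2)^2}=\frac{(\hat T^2)^2}{1+\sqrt{1-(\hat T^2)^2}}\le (\hat T^2)^2 .
\]
Step 1 then gives the bound $P(M)\varepsilon^2t^2$. The same argument improves $|\hat T^1+1|\le 1/100$ in $(\mathbf B_{\rm diffeomorphism})$ for small $\varepsilon$.

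\emph{Step 3: $\kappa-\kappar$.} Since $\kappar=t$ and $L\kappar=1$, we get $L(\kappa-t)=-T(c)-\hat T^jT(\psi_j)-1$. Expand in Riemann invariants:
\[
T(c)+\hat T^jT(\psi_j)=(\gamma-1)\big(T\wb+Tw\big)+\hat T^1\big(T\wb-Tw\big)+\hat T^2T\psi_2 .
\]
Write $T\wb=-\tfrac{2}{\gamma+1}+O(M\varepsilon t)$, and use $Tw,T\psi_2=O(M\varepsilon t)$ from $(\mathbf B_\infty)$ together with $\hat T^1=-1+O(\varepsilon^2t^2)$. The leading constant is then
\[
-\tfrac{2}{\gamma+1}\big((\gamma-1)-1\big)\cdot(-1)\;-\;\ldots
\]
More carefully, $(\gamma-1)T\wb+\hat T^1T\wb=(\gamma-2)T\wb=-\tfrac{2(\gamma-2)}{\gamma+1}$ to leading order. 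This does not cancel the $-1$ unless the coefficients are reorganized, so the bookkeeping has to be checked against the normalization $T\wb=-\tfrac{2}{\gamma+1}$ of Proposition~\ref{prop: formulas for c^-1 L(psi)}. This is the step I expect to be the main obstacle: verifying that the $O(1)$ part of $-T(c)-\hat T^jT(\psi_j)$ is exactly $1$. On the initial fan $\partial_{\urr}c=\tfrac{\gamma-1}{\gamma+1}$ and $\partial_{\urr}v^1=\tfrac{2}{\gamma+1}$, and $T\approx -\kappa\,\partial_{\urr}/t$, which gives $-T(c)+T(v^1)\cdot(-\hat T^1)\dots$. I would redo this with $\kappa$ kept explicitly, since $T=\kappa\hat T$ carries a factor $\kappa$: the bootstrap quantity $T\wb+\tfrac{2}{\gamma+1}$ should then be read through the opened-coordinate identity $T=T^{(\Tr)}\Tr+T^{(\Xr)}\Xr$, with $T^{(\Tr)}=-\tfrac{\kappa}{t}\hat T^1$. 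Written correctly, the equation becomes
\[
L\!\left(\frac{\kappa}{t}\right)=\frac{1}{t}\Big(1-\frac{\kappa}{t}\Big)\big(1+O(\varepsilon)\big)+O(P(M)\varepsilon),
\]
that is, $\kappa/t-1$ satisfies a damped singular ODE. Using $\kappa(0)=0$ and the regularity of $\kappa/t$ at $t=0$, the integrating factor $t$ gives $|\kappa/t-1|\le P(M)\varepsilon t$, hence $|\kappa-t|\le P(M)\varepsilon t^2$. The error terms come from $Tw$, $T\psi_2$ and $T\wb+\tfrac{2}{\gamma+1}$, and from $\hat T^1+1$ (Step 2) and $\hat T^2$ (Step 1).

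\emph{Step 4: improving $(\mathbf B_{\rm diffeomorphism})$.} Finally, the Jacobian bounds follow from these estimates. We have $\partial_{\urr}\tilde u=t\,\partial_{x_1}\tilde u=t\hat T^1/\kappa=-1+P(M)\varepsilon t$ and $\partial_{x_2}\tilde u=\hat T^2/\kappa$. The latter needs $\hat T^2/t$ bounded, which the argument above provides only through Step 1 combined with $\kappa\approx t$. The bounds on $\partial\vartheta$ follow by transporting $L\vartheta=0$ differentiated, using the $L^\infty$ bounds on $Z\hat T^i$ from the same ODE argument with one $\hat X$ commuted.
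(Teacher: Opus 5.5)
Your Steps 1 and 2 are correct. In Step 2, the identity $\hat T^1+1=(\hat T^2)^2/\bigl(1+\sqrt{1-(\hat T^2)^2}\bigr)$, which uses $|\hat T|=1$ and $\hat T^1<0$, is a valid algebraic substitute for the paper's second integration of $L\hat T^1$. There the paper instead uses that the source carries the factor $\hat X^1=\hat T^2=O(P(M)\varepsilon t)$.

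The gap is Step 3, exactly where you flag it. The $O(1)$ cancellation is exact; your computation fails only because of two incorrect formulas. From $\wb=\frac{c}{\gamma-1}+\frac{v^1}{2}$ and $w=\frac{c}{\gamma-1}-\frac{v^1}{2}$ one gets $c=\frac{\gamma-1}{2}(\wb+w)$; the factor $(\gamma-1)$ stated among the conventions is a misprint. Likewise $\psi_1=-v^1=w-\wb$, not $\wb-w$. Hence $c+v^1=\frac{\gamma+1}{2}\wb+\frac{\gamma-3}{2}w$, and writing $\hat T^1=-1+(\hat T^1+1)$ gives
\[
L(\kappa-\kappar)=-\tfrac{\gamma+1}{2}\Bigl(T\wb+\tfrac{2}{\gamma+1}\Bigr)-\tfrac{\gamma-3}{2}\,Tw-(\hat T^1+1)\,T\psi_1-\hat T^2\,T\psi_2 .
\]
Every term is $O(P(M)\varepsilon t)$ by $(\mathbf B_\infty)$ and Steps 1--2. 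Integrating from $\kappa(0)=0$ then gives $|\kappa-\kappar|\le P(M)\varepsilon t^2$ directly, with no singular ODE; this is the paper's argument.

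Your fallback does not close the gap. The damped equation you write for $\kappa/t$ is a rewriting of $L\kappa=1+O(P(M)\varepsilon t)$, which is precisely the cancellation you did not establish, and you do not derive it. The route you suggest through $T=T^{(\Tr)}\Tr+T^{(\Xr)}\Xr$ would need $\Tr(v^1+c)=-1+O(\varepsilon t)$, which is not a bootstrap assumption. In the paper it is obtained only afterwards, from $(\mathbf B_2)$ via the $W^{2,2}\to L^\infty$ Sobolev inequality, and that inequality's proof uses the geometric bounds you are proving here. Even granting it, you would get $L\kappa\approx\kappa/t$ and hence $L(\kappa/t)=O(\varepsilon)$, not the damped equation you state.

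Step 4 is not part of this statement. The paper closes $(\mathbf B_{\rm diffeomorphism})$ only after controlling $\slashed g$ and $\Xi$, through the Jacobian of $(\ut,\vartheta)\mapsto(\urr,x_2)$.
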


\begin{proof}
In view of the transport equation for $\hat{T}^{i}$ and $(\mathbf{B}_{\infty})$, we have
\[
|L(\hat{T}^{i})|=|\big(\hat{T}^{j}\hat{X}(\psi_{j})+\hat{X}(c) \big)\hat{X}^{i}|\leq P(M)\varepsilon,
\]
Recall that 
\[
\hat{T}^{1}(0, \tilde{u}, \vartheta)=-1, \hat{T}^{2}(0, \tilde{u}, \vartheta)=0.
\]
Integrating from $0$ to $t$, we have 
\[
\|\hat{T}^{1}+1\|_{L^{\infty}(\Sigma_{t}^{\ut_*,\tilde{u}^{*}})}\leq P(M)\varepsilon t, \|\hat{T}^{2}\|_{L^{\infty}(\Sigma_{t}^{\ut_*,\tilde{u}^{*}})}\leq P(M)\varepsilon t
\]
Then $|\hat{X}^{1}|=|\hat{T}^{2}|\leq P(M)\varepsilon t$ implies that 
\[
|L(\hat{T}^{1})|=|\big(\hat{T}^{j}\hat{X}(\psi_{j})+\hat{X}(c) \big)\hat{X}^{1}|\leq P(M)\varepsilon^{2}t,
\]
Integrating from $0$ to $t$, we have 
\[
\|\hat{T}^{1}+1\|_{L^{\infty}(\Sigma_{t}^{\ut_{*}, \tilde{u}^{*}})}\leq P(M)\varepsilon^{2} t^{2}.
\]
In view of the transport equation for $\kappa$ and $(\mathbf{B}_{\infty})$ on $T(\wb), T(w), T(\psi_{2})$, we have
\[
|L(\kappa-\kappar)|=|-\big(T(c+v^{1})+1\big)-(\hat{T}^{1}+1)T(\psi_{1})-\hat{T}^{2}T(\psi_{2})|\leq P(M)\varepsilon t.
\]
Recall that 
\[
\kappa(0, \tilde{u}, \vartheta)=0.
\]
Integrating from $0$ to $t$, we have 
\[
\|\kappa-\kappar\|_{L^{\infty}(\Sigma_{t}^{\ut_{*}, \tilde{u}^{*}})}\leq P(M)\varepsilon t^{2}.
\]
\end{proof}

\begin{proposition}
For any $t\in [0, t^{*}]$, and $Z\in \{T, \hat{X}\}$, we have
\begin{equation}\label{eq: 1-th control of the frame}
\begin{cases}
\|Z\big(\hat{T}^{1}+1\big)\|_{L^{\infty}(\Sigma_{t}^{\ut_{*}, \tilde{u}^{*}})}\leq P(M)\varepsilon^{2}t^{2},\\
\|Z\big(\hat{T}^{2}\big)\|_{L^{\infty}(\Sigma_{t}^{\ut_{*}, \tilde{u}^{*}})}\leq P(M)\varepsilon t.\\
\|Z\big(\kappa-\kappar\big)\|_{L^{\infty}(\Sigma_{t}^{\ut_{*}, \tilde{u}^{*}})}\leq P(M)\varepsilon t^{2}.
\end{cases}.
\end{equation}
\end{proposition}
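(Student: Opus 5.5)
We commute $Z\in\{T,\hat X\}$ with the transport equations for $\hat T^i$ and $\kappa$ in \eqref{eq: formulas to control the geometry}, and integrate along $L$ from the initial slice. On $\Sigma_0$ we have $\hat T^1+1=0$, $\hat T^2=0$, and $\kappa=\kappar=0$. We also have $T=\partial_{\tilde u}$ and $\hat X=\partial_\vartheta$ there, so $Z(\hat T^1+1)$, $Z\hat T^2$ and $Z(\kappa-\kappar)$ all vanish at $t=0$ (note $Z\kappar=Zt=0$). The work is therefore to bound $L Z(\cdot)=Z L(\cdot)+[L,Z](\cdot)$ with the right powers of $t$ and $\varepsilon$. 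For the commutators we use $[L,\hat X]=-\chi\hat X$ and $[L,T]=-(\zeta+\eta)\hat X$. From $(\mathbf B_\infty)$ together with \eqref{eq: 0-th control of the frame} we get $|\chi|\le P(M)\varepsilon$ and $|\zeta+\eta|\le P(M)\varepsilon t$. The latter uses $\kappa\lesssim t$, $|T\psi|\le M\varepsilon t$ for $\psi\in\{w,\psi_2\}$, and $|\hat X\kappa|$ as part of the bootstrap unknown, absorbed by Gronwall.

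\emph{Step 1: $\hat T^2$.} Applying $Z$ to $L\hat T^2=(\hat X c+\hat T^j\hat X\psi_j)\hat X^2$ gives a source containing $Z\hat X c$, $Z\hat X\psi_j$, $Z\hat T^j\cdot\hat X\psi_j$ and $Z\hat X^2$. The first two are $\hat X Z^{\beta}\psi$ with $|\beta|\le1$, or can be rewritten that way using $[T,\hat X]=-\kappa\theta\hat X$; both are bounded by $M\varepsilon$ through $(\mathbf B_\infty)$. The remaining terms are linear in $Z\hat T^j$ with $O(\varepsilon)$ coefficients. Adding the commutator term $\chi\hat X\hat T^2$ or $(\zeta+\eta)\hat X\hat T^2$, we get $|LZ\hat T^2|\le P(M)\varepsilon+P(M)\varepsilon|Z\hat T|$. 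Gronwall then yields $|Z\hat T^2|\le P(M)\varepsilon t$.

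\emph{Step 2: $\hat T^1$.} The equation $L\hat T^1=(\cdots)\hat X^1$ has $\hat X^1=\hat T^2$. After applying $Z$, every source term carries either $\hat T^2$ or $Z\hat T^2$, each $O(P(M)\varepsilon t)$ by the previous proposition and Step 1, multiplied by a factor $O(P(M)\varepsilon)$. Hence $|LZ(\hat T^1+1)|\le P(M)\varepsilon^2t$, and integration gives $P(M)\varepsilon^2t^2$.

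\emph{Step 3: $\kappa$.} We write $L(\kappa-\kappar)=-(T(c+v^1)+1)-(\hat T^1+1)T\psi_1-\hat T^2T\psi_2$ and use $T(c+v^1)+1=(\gamma-1)T(\wb+w)+T(\wb-w)+1$. After applying $Z$, the main terms are $ZT\wb$, $ZTw$ and $ZT\psi_2$, each bounded by $M\varepsilon t$ through $(\mathbf B_\infty)$ with $|\beta|=1$. The remaining terms are handled by the previous proposition and Steps 1--2. This gives $|LZ(\kappa-\kappar)|\le P(M)\varepsilon t+P(M)\varepsilon|Z\kappa|$ plus the commutator contribution. Since $|Z\kappa|\le |Z(\kappa-\kappar)|$ (because $Z\kappar=0$), Gronwall yields $P(M)\varepsilon t^2$.

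\textbf{Main obstacle.} The delicate step is the $\kappa$ estimate, specifically keeping the extra factor of $t$ in $ZT\psi$. In $ZT\psi$ the order of the vector fields matters. For $Z=\hat X$ we commute, $\hat XT=T\hat X-[T,\hat X]=T\hat X+\kappa\theta\hat X$, so $\hat XT\psi$ is a $TZ^\beta\psi$ term (bounded by $M\varepsilon t$) plus $\kappa\theta\hat X\psi$. This extra term is $O(t\cdot\theta\cdot\varepsilon)$ with $\theta$ built from $\hat X\hat X^i$, which Steps 1--2 bound. A second point is that the commutator coefficient $\zeta+\eta$ contains $c\hat X\kappa$. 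We treat this as a linear term in the unknown $\hat X(\kappa-\kappar)$ with coefficient $O(\varepsilon)$, so Gronwall closes it; we do not need an a priori bound on it. The resulting constants depend on $M$ but not on $\alpha$, since no inverse power of $c$ appears.
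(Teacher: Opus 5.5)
Your route is the paper's: commute $Z$ with the transport equations for $\hat T^i$ and $\kappa$, and bound the sources by $(\mathbf B_\infty)$ and the zeroth-order frame bounds. You then integrate from the vanishing data, upgrade $\hat T^1$ to $\varepsilon^2t^2$ through $\hat X^1=\hat T^2$, and keep the factor $t$ via $\hat XT=T\hat X+\kappa\theta\hat X$.

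One justification is circular, however. You assert that $|\chi|\le P(M)\varepsilon$ follows from $(\mathbf B_\infty)$ and the zeroth-order bounds. But
\[
\chi=-\hat X^j\hat X(\psi_j)+c\hat X(\hat X^2)\hat X^1-c\hat X(\hat X^1)\hat X^2,\qquad \hat X(\hat X^1)=\hat X(\hat T^2),\quad \hat X^2\approx 1,
\]
so $\chi$ contains $-c\,\hat X(\hat T^2)$ with an $O(1)$ coefficient. That is, $\chi$ involves exactly the quantities you are estimating. The $\hat X$-commuted equations therefore contain a genuinely quadratic term $C(|\hat X\hat T^1|+|\hat X\hat T^2|)^2$ whose coefficient is not small.

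Likewise, $\zeta+\eta$ contains $c\hat X(\kappa)$. In the $T$-commuted equations this produces the terms $c(\hat X\kappa)^2$ and $c\,\hat X\kappa\,\hat X\hat T^i$. These are not ``linear terms with $O(\varepsilon)$ coefficients needing no a priori bound'': their coefficients are small only under a bootstrap bound on the unknowns themselves. They are harmless by a continuity argument, since the data vanish, $t\le t^*$, and the quadratic terms are $O(P(M)^2\varepsilon^2)$ under the bootstrap. This is exactly how the paper handles the $\chi$-term.

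The paper also avoids the coupling through $\zeta+\eta$ by ordering the argument by vector field rather than by quantity. It first closes the $\hat X$-commuted system for $(\hat T^1,\hat T^2,\kappa)$. There only $\chi$ enters, and $\chi$ depends only on $\hat X$-derivatives. Only then does it treat $Z=T$, at which point $\zeta+\eta$, including $c\hat X\kappa$, is already bounded by $P(M)\varepsilon t$.

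In your ordering, Step~1 with $Z=T$ already needs $\hat X\kappa$ from Step~3, through $[L,T]\hat T^2=-(\zeta+\eta)\hat X\hat T^2$. So Steps~1--3 must be run as one coupled continuity argument over all six first-order quantities. With that understood, or with the paper's ordering, your proof closes.
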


\begin{proof}
We will commute $\hat{X}$ before $T$.
\begin{itemize}
\item The case $Z=\hat{X}$. We commute $\hat{X}$ with the transport equation \eqref{eq: formulas to control the geometry} of $\hat{T}^{i}$, 
\[
\begin{split}
L(\hat{X}(\hat{T}^{i}))&=\hat{X}L(\hat{T}^{i})-\chi\cdot \hat{X}(\hat{T}^{i}),\\
&=\hat{X}\Big(\big(\hat{T}^{j}\hat{X}(\psi_{j})+\hat{X}(c)\big)\hat{X}^{i}\Big)-(-\hat{X}^{j}\hat{X}(\psi_{j})+c\hat{X}^{1}\hat{X}(\hat{X}^{2})-c\hat{X}^{2}\hat{X}(\hat{X}^{1}))\cdot\hat{X}(\hat{T}^{i}).
\end{split}
\]
In view of $(\mathbf{B}_{\infty})$ and \eqref{eq: 0-th control of the frame}, we have for $i=1, 2$,
\[
|L(\hat{X}(\hat{T}^{i}))|\leq P(M)\varepsilon \big(|\hat{X}(\hat{T}^{1})|+|\hat{X}(\hat{T}^{2})|\big)+C(|\hat{X}(\hat{T}^{1})|+|\hat{X}(\hat{T}^{2})|\big)^{2}+P(M)\varepsilon.
\]
Recall that
\[
\hat{X}(\hat{T}^{1}+1)(0, \tilde{u}, \vartheta)=0, \hat{X}(\hat{T}^{2})(0, \tilde{u}, \vartheta)=0.
\]
Integrating from $0$ to $t$ with Gronwall type argument\footnote{The seeming dangerous quadratic term $C(|\hat{X}(\hat{T}^{1})|+|\hat{X}(\hat{T}^{2})|\big)^{2}$ won't cause any difficulty since the initial sizes of $\hat{X}(\hat{T}^{i})$ are small and $t$ is restricted into a finite interval $[0, t^{*}]$.} , we have 
\[
\|\hat{X}(\hat{T}^{i})\|_{L^{\infty}(\Sigma_{t}^{\ut_{*}, \tilde{u}^{*}})}\leq P(M)\varepsilon t.
\]
which implies that
\[
|L(\hat{X}(\hat{T}^{1}))|\leq P(M)\varepsilon^{2} t.
\]
Integrating from $0$ to $t$, we have 
\[
\|\hat{X}(\hat{T}^{1})\|_{L^{\infty}(\Sigma_{t}^{\ut_{*}, \tilde{u}^{*}})}\leq P(M)\varepsilon^{2} t^{2}.
\] 

We commute $\hat{X}$ with transport equation \eqref{eq: formulas to control the geometry} of $\kappa$,
\[
\begin{split}
L(\hat{X}(\kappa))&=\hat{X}L(\kappa)-\chi\cdot\hat{X}(\kappa)\\
&=\hat{X}\big(-T(c)-\hat{T}^{j}T(\psi_{j})\big)-(-\hat{X}^{j}\hat{X}(\psi_{j})+c\hat{X}^{1}\hat{X}(\hat{X}^{2})-c\hat{X}^{2}\hat{X}(\hat{X}^{1}))\cdot\hat{X}(\kappa).
\end{split}
\]
In view of \eqref{eq: formulas to control the geometry}, for $\psi\in \{\wb, w, \psi_{2}\}$,
\[
\hat{X}T(\psi)=T\hat{X}(\psi)+\kappa\theta\cdot \hat{X}(\psi)=T\hat{X}(\psi)+\kappa(-\hat{X}(\hat{X}^{2})\hat{X}^{1}+\hat{X}(\hat{X}^{1})\hat{X}^{2})\cdot \hat{X}(\psi),
\]
we have $\|\hat{X}T(\psi)\|_{L^{\infty}(\Sigma_{t}^{\ut_*,\tilde{u}^{*}})}\leq P(M)\varepsilon t$. In view of estimate of $\hat{X}(\hat{T}^{i})$ and $\mathbf{B}_{\infty}$,
\[
|L(\hat{X}(\kappa))|\leq P(M)\varepsilon t+P(M)\varepsilon |\hat{X}(\kappa)|.
\]
Recall that
\[
\hat{X}(\kappa)(0, \tilde{u}, \vartheta)=0.
\]
By Gronwall's inequality, we have 
\[
\|\hat{X}(\kappa-\kappar)\|_{L^{\infty}(\Sigma_{t}^{\ut_{*}, \tilde{u}^{*}})}\leq P(M)\varepsilon t^{2}.
\]

\item The case $Z=T$. We commute $T$ with the transport equation \eqref{eq: formulas to control the geometry} of $\hat{T}^{i}$, 
\[
\begin{split}
L(T(\hat{T}^{i}))&=TL(\hat{T}^{i})-(\zeta+\eta)\cdot\hat{X}(\hat{T}^{i})\\
&=T\Big(\big(\hat{T}^{j}\hat{X}(\psi_{j})+\hat{X}(c)\big)\hat{X}^{i}\Big)-\big(-\kappa \hat{T}^{j}\hat{X}(\psi_{j})-\hat{X}^{j}T(\psi_{j})-\kappa\hat{X}(c)+c\hat{X}(\kappa)\big)\hat{X}(\hat{T}^{i})
\end{split}
\]
In view of $(\mathbf{B}_{\infty})$ and \eqref{eq: 0-th control of the frame}, the term $\hat{X}^{i}T(\psi_{i})$ can be bounded by
\[
|\hat{X}^{1}T(\psi_{1})+\hat{X}^{2}T(\psi_{2})|\leq P(M)\varepsilon t.
\]
and we have
\[
|L(T(\hat{T}^{i}))|\leq P(M)\varepsilon \big(|T(\hat{T}^{1})|+|T(\hat{T}^{2})|\big)+P(M)\varepsilon t.
\]
Recall that
\[
T(\hat{T}^{i})(0, \tilde{u}, \vartheta)=0, i=1, 2.
\]
By Gronwall's inequality, we have 
\[
\|T(\hat{T}^{i})\|_{L^{\infty}(\Sigma_{t}^{\ut_{*}, \tilde{u}^{*}})}\leq P(M)\varepsilon t.
\]
Since $|\hat{X}^{1}|\leq P(M)\varepsilon t$,
\[
|L(T(\hat{T}^{1}))|\leq P(M)\varepsilon^{2} t.
\]
Integrating from $0$ to $t$, we have 
\[
\|T(\hat{T}^{1})\|_{L^{\infty}(\Sigma_{t}^{\ut_{*}, \tilde{u}^{*}})}\leq P(M)\varepsilon^{2} t^{2}.
\] 

We commute $T$ with transport equation \eqref{eq: formulas to control the geometry} of $\kappa$,
\[
\begin{split}
L(T(\kappa))&=TL(\kappa)-(\zeta+\eta)\cdot\hat X(\kappa)\\
&=T\big(-T(c)-\hat{T}^{i}T(\psi_{i})\big)-\big(-\kappa \hat{T}^{j}\hat{X}(\psi_{j})-\hat{X}^{i}T(\psi_{i})-\kappa\hat{X}(c)+c\hat{X}(\kappa)\big)\cdot \hat{X}(\kappa).
\end{split}
\]
In view of estimate of $\hat{X}(\hat{T}^{i}), T(\hat{T}^{i})$ and $(\mathbf{B}_{\infty})$,
\[
|L(T(\kappa))|\leq P(M)\varepsilon t.
\]
Recall that
\[
T(\kappa)(0, \tilde{u}, \vartheta)=0.
\]
Integrating from $0$ to $t$, we have 
\[
|T(\kappa-\kappar)|\leq P(M)\varepsilon t^{2}.
\] 
\end{itemize}
\end{proof}

As a byproduct, we have 
\begin{proposition}
For $t\in [0, t^{*}]$, we have
\begin{equation}\label{eq: Control of g}
\begin{cases}
\|\chi\|_{L^{\infty}(\Sigma_{t}^{\ut_{*}, \tilde{u}^{*}})}\leq P(M)\varepsilon,\\
\|\zeta+\eta\|_{L^{\infty}(\Sigma_{t}^{\ut_{*}, \tilde{u}^{*}})}\leq P(M)\varepsilon t,\\
\|\slashed{g}-1\|_{L^{\infty}(\Sigma_{t}^{\ut_{*}, \tilde{u}^{*}})}\leq P(M)\varepsilon t,\\
\|\Xi\|_{L^{\infty}(\Sigma_{t}^{\ut_{*}, \tilde{u}^{*}})}\leq P(M)\varepsilon t^{2}.
\end{cases}
\end{equation} 
\end{proposition}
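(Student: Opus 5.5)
The plan is to read each bound directly off the definitions in \eqref{eq: formulas to control the geometry}. We feed in the frame estimates \eqref{eq: 0-th control of the frame} and \eqref{eq: 1-th control of the frame}, together with $(\mathbf B_\infty)$. The last two bounds then follow by integrating the transport equations for $\slashed g$ and $\Xi$ along $L$ from the initial data $\slashed g=1$ and $\Xi=0$.

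\emph{Bound for $\chi$.} We have
\[
\chi=-\hat X^j\hat X(\psi_j)+c\,\hat X(\hat X^2)\hat X^1-c\,\hat X(\hat X^1)\hat X^2 .
\]
The first term is bounded by $P(M)\varepsilon$, using $|\hat X(\psi_j)|\leq M\varepsilon$ from $(\mathbf B_\infty)$ (recall $\psi_1=w-\wb$ and $\psi_2$) and $|\hat X^j|\leq 1$. For the other terms, note that $\hat X^1=\hat T^2$ and $\hat X^2=-\hat T^1$, so \eqref{eq: 1-th control of the frame} gives $|\hat X(\hat X^i)|\leq P(M)\varepsilon t$. Since $c\leq 2\mathbf c_*$ by \eqref{eq: c almost constant on C_u}, this yields $\|\chi\|_{L^\infty}\leq P(M)\varepsilon$.

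\emph{Bound for $\zeta+\eta$.} We have
\[
\zeta+\eta=-\kappa\hat T^j\hat X(\psi_j)-\hat X^jT(\psi_j)-\kappa\hat X(c)+c\,\hat X(\kappa).
\]
\begin{itemize}
\item The terms carrying $\kappa$ are $O(P(M)\varepsilon t)$, since $\kappa\leq t+P(M)\varepsilon t^2\leq 2t$.
\item For $\hat X^jT(\psi_j)$, the $j=1$ piece is small because $|\hat X^1|\leq P(M)\varepsilon t$ while $T(\psi_1)$ is $O(1)$ (note $T\wb\approx-\frac2{\gamma+1}$). The $j=2$ piece is bounded by $|T\psi_2|\leq M\varepsilon t$.
\item Finally, $c\,\hat X(\kappa)=c\,\hat X(\kappa-\kappar)$ because $\kappar=t$ is constant on $\Sigma_t$, and this is $O(P(M)\varepsilon t^2)$.
\end{itemize}
Hence $\|\zeta+\eta\|_{L^\infty}\leq P(M)\varepsilon t$.

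\emph{Bound for $\slashed g$.} From $L\slashed g=2\chi\slashed g$ with $\slashed g(0)=1$ we get $\slashed g=\exp\bigl(2\int_0^t\chi\bigr)$. The bound on $\chi$ then gives $|\slashed g-1|\leq e^{P(M)\varepsilon t}-1\leq P(M)\varepsilon t$ for $M\varepsilon$ small. In particular $\slashed g\geq 1/2$.

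\emph{Bound for $\Xi$.} From $L\Xi=\slashed g^{-1/2}(\zeta+\eta)$ with $\Xi(0)=0$, integration along $L$ gives $|\Xi|\leq\int_0^tP(M)\varepsilon t'\,dt'\leq P(M)\varepsilon t^2$.

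\emph{Main point of care.} The only delicate step is the term $\hat X^jT(\psi_j)$ in $\zeta+\eta$. Here one must not bound $T(\psi_1)$ by $\varepsilon$. Instead, the $t$-gain has to come from the factor $\hat X^1=\hat T^2=O(\varepsilon t)$ rather than from $T\psi_1$. Everything else is uniform in $\alpha$, because only upper bounds on $c$ are used.
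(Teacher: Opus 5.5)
Your proposal is correct and follows the paper's proof. You read the bounds on $\chi$ and $\zeta+\eta$ off their defining formulas using \eqref{eq: 1-th control of the frame} and $(\mathbf B_\infty)$, taking the $t$-gain in $\hat X^1T(\psi_1)$ from $\hat X^1=\hat T^2$ exactly as the paper does, and you then integrate the transport equations for $\slashed g$ and $\Xi$ along $L$ from $\slashed g=1$, $\Xi=0$; your explicit exponential formula for $\slashed g$ is simply the linear case of the paper's Gronwall step.
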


\begin{proof}
The bound for $\chi=-\hat{X}^{i}\hat{X}(\psi_{i})+c\hat{X}(\hat{X}^{2})\hat{X}^{1}-c\hat{X}(\hat{X}^{1})\hat{X}^{2}$ and $\zeta+\eta=-\kappa\hat{T}^{j}\hat{X}(\psi_{j})-\hat{X}^{i}T(\psi_{i})-\kappa\hat{X}(c)+c\hat{X}(\kappa)$ follows from the estimate \eqref{eq: 1-th control of the frame} from the previous proposition.

And we can use transport equation for $\slashed{g}$ and $\Xi$,
\[
\begin{cases}
L(\slashed{g})=2\chi\cdot \slashed{g},\\
L(\Xi)=\frac{1}{\sqrt{\slashed{g}}}(\zeta+\eta).
\end{cases}
\]
with initial data
\[
\slashed{g}(0, \tilde{u}, \vartheta)=1, \Xi(0, \tilde{u}, \vartheta)=0.
\]
By Gronwall inequality, we have
\[
\|\slashed{g}-1\|_{L^{\infty}(\Sigma_{t}^{\ut_{*}, \tilde{u}^{*}})}\leq P(M)\varepsilon t, \|\Xi\|_{L^{\infty}(\Sigma_{t}^{\ut_{*}, \tilde{u}^{*}})}\leq P(M)\varepsilon t^{2}.
\]
providing that $\varepsilon$ is sufficiently small.
\end{proof}

Note that the Jacobian matrix of the coordinates transformation $(t, \tilde{u},\vartheta)\mapsto (t,\mathring{u},x_{2})$ is given by
\begin{align*}
\begin{pmatrix}
\frac{\partial t}{\partial t}&\frac{\partial t}{\partial \tilde{u}}&\frac{\partial t}{\partial \vartheta}\\
\frac{\partial \mathring{u}}{\partial t}&\frac{\partial \mathring{u}}{\partial \tilde{u}}&\frac{\partial \mathring{u}}{\partial \vartheta}\\
\frac{\partial x_{2}}{\partial t}&\frac{\partial x_{2}}{\partial \tilde{u}}&\frac{\partial x_{2}}{\partial \vartheta}
\end{pmatrix}=
\begin{pmatrix}
1&0&0\\
\frac{v^{1}-c\hat{T}^{1}-\mathring{u}}{\kappar}&\frac{\kappa\hat{T}^{1}+\Xi\sqrt{\slashed{g}}\hat{X}^{1}}{\kappar}&\sqrt{\slashed{g}}\frac{\hat{X}^{1}}{\kappar}\\
(v^{2}-c\hat{T}^{2})&\kappa\hat{T}^{2}+\Xi\sqrt{\slashed{g}}\hat{X}^{2}&\sqrt{\slashed{g}}\hat{X}^{2}\\
\end{pmatrix}
\end{align*}
In view of \eqref{eq: L infty control of the geometry}, we can conclude that for $t\in [0, t^{*}]$
\[
\|\begin{pmatrix}
\frac{\partial \mathring{u}}{\partial \tilde{u}}&\frac{\partial \mathring{u}}{\partial \vartheta}\\
\frac{\partial x_{2}}{\partial \tilde{u}}&\frac{\partial x_{2}}{\partial \vartheta}
\end{pmatrix}-\begin{pmatrix}
-1&0\\
0&1
\end{pmatrix}\|_{L^{\infty}(\Sigma_{t}^{\ut_*,\tilde{u}^{*}})}\leq P(M)\varepsilon.
\]
which implies 
\[
\|\begin{pmatrix}
\frac{\partial \tilde{u}}{\partial \mathring{u}}&\frac{\partial \tilde{u}}{\partial x_2}\\
\frac{\partial \vartheta}{\partial \mathring{u}}&\frac{\partial\vartheta }{\partial x_{2}}
\end{pmatrix}-\begin{pmatrix}
-1&0\\
0&1
\end{pmatrix}\|_{L^{\infty}(\Sigma_{t}^{\ut_*,\tilde{u}^{*}})}\leq P(M)\varepsilon.
\]
This improves the estimate for the Jacobian matrix of $(\mathring{u}, x_{2})\mapsto (\tilde{u}, \vartheta)$, which closes the geometry assumption $(\mathbf{B}_{\text{diffeomorphism}})$ \eqref{eq: B diff}.

\subsection{$L^{2}$ control of $Z^{\alpha}(\hat{T}^{1}), Z^{\alpha}(\hat{T}^{2})$ and $Z^{\alpha}(\kappa)$ for $2\leq |\alpha|\leq 3$}

\begin{proposition}
For any $t\in [0, t^{*}]$, $|\alpha|\leq 3$, and $Z\in \{T, \hat{X}\}$, we have
\begin{equation}\label{eq: higher order control of the frame}
\begin{cases}
\|Z^{\alpha}\big(\hat{T}^{1}+1\big)\|_{L^{2}(\Sigma_{t}^{\ut_{*}, \tilde{u}^{*}})}\leq P(M)\varepsilon^{2}t^{2},\\
\|Z^{\alpha}\big(\hat{T}^{2}\big)\|_{L^{2}(\Sigma_{t}^{\ut_{*}, \tilde{u}^{*}})}\leq P(M)\varepsilon t.\\
\|Z^{\alpha}\big(\kappa-\kappar\big)\|_{L^{2}(\Sigma_{t}^{\ut_{*}, \tilde{u}^{*}})}\leq P(M)\varepsilon t^{2}.
\end{cases}
\end{equation}
\end{proposition}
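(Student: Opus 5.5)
We propose to argue by induction on $|\alpha|$, starting from the $L^\infty$ bounds \eqref{eq: 0-th control of the frame}, \eqref{eq: 1-th control of the frame} and \eqref{eq: Control of g}. The tool is an $L^2$ version of the transport argument: for a scalar $f$ on $\Sigma_t^{\ut_*,\ut^*}$ we have
\[
\frac{d}{dt}\|f\|_{L^2(\Sigma_t^{\ut_*,\ut^*})}^2
=\int\big(2fLf+\chi f^2\big)\sqrt{\slashed g}\,d\ut\,d\vartheta .
\]
This holds because $\partial_t|_{\ut,\vartheta}=L$ and $L\sqrt{\slashed g}=\chi\sqrt{\slashed g}$. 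Since $\|\chi\|_{L^\infty}\le P(M)\varepsilon$, it gives
\[
\frac{d}{dt}\|f\|_{L^2}\le \|Lf\|_{L^2}+P(M)\varepsilon\|f\|_{L^2}.
\]
With vanishing initial data $f(0)=0$ this integrates directly. The initial vanishing holds because $\hat T^i,\kappa$ have constant initial values and $T=\partial_{\ut}$, $\hat X=\partial_\vartheta$ on $\Sigma_0$.

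For each ordered word $Z^\alpha$ with $2\le|\alpha|\le3$ we commute it through the transport equations in \eqref{eq: formulas to control the geometry}. We use $[L,\hat X]=-\chi\hat X$ and $[L,T]=-(\zeta+\eta)\hat X$, and we commute $\hat X$'s first and $T$'s last, as in the $|\alpha|=1$ case. This gives
\[
LZ^\alpha\hat T^i=Z^\alpha\big((\hat X(c)+\hat T^j\hat X(\psi_j))\hat X^i\big)+\sum Z^{\beta_1}(\chi\text{ or }\zeta+\eta)\,Z^{\beta_2}\hat X\hat T^i ,
\]
and the analogous identity for $LZ^\alpha\kappa$ with source $Z^\alpha(-T(c)-\hat T^jT(\psi_j))$. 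We expand by Leibniz. The terms containing the top-order derivatives of the fluid, $Z^\alpha\hat X\psi$ or $Z^\alpha T\psi$ with $|\alpha|\le3$, are placed in $L^2$ using the $L^2$ part of $(\mathbf B_\infty)$. These have size $M\varepsilon$ and $M\varepsilon t$ respectively, after rewriting $c,\psi_1$ through $\wb,w$. All the other factors carry at most one derivative of the geometry, or at most two derivatives of $\psi$, and are placed in $L^\infty$ using the previous propositions and $(\mathbf B_\infty)$. Products in which two factors both carry at least two derivatives occur only when $|\alpha|=3$. There one factor has exactly two derivatives, and we need an $L^\infty$ (or $L^4$) bound for it. For this we use the Sobolev inequality on $S_{t,\ut}$ and $\Sigma_t$ in characteristic coordinates: $\|f\|_{L^\infty}\lesssim\sum_{|\beta|\le2}\|Z^\beta f\|_{L^2}$. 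It is justified since $\partial_\vartheta=\sqrt{\slashed g}\hat X$ and $\partial_{\ut}=T+\Xi\sqrt{\slashed g}\hat X$ with $\slashed g,\Xi$ controlled. This inequality is fed by the already-established $|\alpha|=2$ step. The nonlinear geometric terms, such as $\chi$ containing $c\hat X\hat X^i$, are handled by a Gronwall argument exactly as in the first-order case.

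The expected powers of $t$ come from the same mechanism as before. For $\hat T^2$ the source is $O(P(M)\varepsilon)$ in $L^2$, which gives $\varepsilon t$ after integration. For $\hat T^1+1$ every source term carries a factor $\hat X^1=\hat T^2$ or a derivative of it. Once the $\hat T^2$ bound is known it is $O(\varepsilon t)$, giving $\varepsilon^2t^2$; so we close $\hat T^2$ first, then $\hat T^1$. For $\kappa-\kappar$ we note $L\kappar=1$ and $Z^\alpha\kappar=0$ for $|\alpha|\ge1$, since $T t=\hat X t=0$. The source is $Z^\alpha(T(\wb)+\tfrac{2}{\gamma+1})$-type terms plus $(\hat T^1+1)$ and $\hat T^2$ factors. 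These are $O(P(M)\varepsilon t)$ by the $T$-part of $(\mathbf B_\infty)$, which yields $\varepsilon t^2$.

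The main obstacle is the $\kappa$ equation at order three: the term $Z^\alpha T(c+v^1)$ contains $Z^{\beta}T\psi$ with $|\beta|=3$. The $T$ bound in $(\mathbf B_\infty)$ only covers words $TZ^\beta$ with $T$ first. So we must reorder $Z^\alpha T$ into $TZ^{\alpha}$ plus commutator terms $\kappa\theta\hat X$ and $Z(\hat T^i)$-terms, and check that the commutators retain the factor $t$, via $\kappa\sim t$ and $|Z\hat T|\lesssim\varepsilon t$. We also need uniformity in $\alpha$: no factor $c^{-1}$ may enter. This is ensured because the geometric transport equations contain $c$ only as a coefficient, never divided by it.
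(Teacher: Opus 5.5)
Most of your plan is the paper's argument. It uses the $L^2$ transport identity with $L\sqrt{\slashed g}=\chi\sqrt{\slashed g}$, commutation through $[L,\hat X]=-\chi\hat X$ and $[L,T]=-(\zeta+\eta)\hat X$, and Gronwall from zero data. It refines $\hat T^1+1$ through the factor $\hat X^1=\hat T^2$. For $\kappa$ it reorders $Z^\alpha T\psi$ into $TZ^\alpha\psi$ plus $\kappa\theta$-commutator terms, which the paper does explicitly for $(\hat X)^nT\psi$.

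Two bookkeeping points need sharpening.
\begin{enumerate}
\item The paper inducts on $(n,m)$, where $m$ is the number of $T$'s. This is needed because $Z^{n-1}(\zeta+\eta)$ contains $c\,Z^{n-1}\hat X(\kappa)$, an order-$n$ derivative of $\kappa$ with one fewer $T$. So ``close $\hat T$, then $\kappa$'' at a fixed order is legitimate only with this ordering, or with a single Gronwall for the coupled order-$n$ system. Your phrase ``$\hat X$'s first, $T$'s last'' should be made into exactly that ordering.
\item At top order the $\hat T^2$ equation contains $Z^\alpha\hat T^1$ with an $O(\varepsilon)$ coefficient, coming from $\hat X^2=-\hat T^1$. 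So you first need a joint Gronwall giving both components $\lesssim P(M)\varepsilon t$, and only then the upgrade of $\hat T^1+1$, as in the paper.
\end{enumerate}

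The step that fails as written is your treatment, at $|\alpha|=3$, of products of two second-order geometric factors. Examples are $c\,Z\hat X(\hat X^k)\,\hat X^l\cdot \hat XZ\hat T^i$, coming from $Z(\chi)\,\hat XZ\hat T^i$, and $c\,Z\hat X(\kappa)\cdot\hat XZ(\kappa)$, coming from $Z(\zeta+\eta)\,\hat XZ(\kappa)$.
\begin{itemize}
\item You bound one factor in $L^\infty$ by $\|f\|_{L^\infty}\lesssim\sum_{|\beta|\le2}\|Z^\beta f\|_{L^2}$, ``fed by the $|\alpha|=2$ step''. For $f=Z^2\hat T$ this requires $\|Z^4\hat T\|_{L^2}$. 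That quantity is never available in this scheme, since its transport source contains five derivatives of $\psi$, beyond what $(\mathbf{B}_{\infty})$ controls. The $|\alpha|=2$ step gives only $\|Z^2\hat T\|_{L^2}$.
\item The $L^4$ alternative is not fed by the $|\alpha|=2$ step either. By \eqref{eq: W12 to L4}, $\|Z^2\hat T\|_{L^4}\lesssim\sum_{|\beta|\le3}\|Z^\beta\hat T\|_{L^2}$, which is the quantity currently being estimated.
\end{itemize}
So the order-three estimate is genuinely nonlinear in its own top-order norm.

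The fix is to place these products in $L^4\cdot L^4$ via \eqref{eq: W12 to L4}. This is legitimate at this stage, because that lemma's proof uses only the $L^\infty$ bounds \eqref{eq: 0-th control of the frame}--\eqref{eq: Control of g}. Set $y(t)=\sum_{|\beta|=3}\big(\|Z^\beta\hat T^1\|_{L^2}+\|Z^\beta\hat T^2\|_{L^2}+\|Z^\beta\kappa\|_{L^2}\big)$. You then get $y'\le P(M)\varepsilon\,y+C\,(P(M)\varepsilon+y)^2+P(M)\varepsilon$ with $y(0)=0$. This closes by a continuity argument for $\varepsilon$ small, the $L^2$ analogue of the quadratic term in the first-order case. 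The sharper powers $\varepsilon^2t^2$ and $\varepsilon t^2$ then follow as you describe. The paper's schematic expansion does not single these terms out, but they are present and need this treatment rather than an appeal to the $|\alpha|=2$ bounds.
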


\begin{proof}
Let $1\leq n\leq 3$ denote the number $|\alpha|$. Let $m, 0\leq m\leq n$ denote the number $T$ appearing in $Z^{\alpha}$. We will prove by induction on $(n, m)$ with order
\[
(0, 0)\mapsto (1, 0)\mapsto (1, 1) \mapsto (2, 0)\mapsto (2, 1) \mapsto (2, 2) \mapsto (3, 0)\mapsto (3, 1)\mapsto (3, 2)\mapsto (3,3).
\]
The base case $(n,m)=(0, 0), (1,0), (1, 1)$ follows from \eqref{eq: 0-th control of the frame} and \eqref{eq: 1-th control of the frame} together with the $L^{\infty}\rightarrow L^{2}$ embedding\footnote{We have $\frac{1}{4}\leq \slashed{g}\leq 4$ in view of \eqref{eq: Control of g}.},
\[
\|f\|_{L^{2}(\Sigma_{t}^{\ut_*,\tilde{u}^{*}})}=\big( \int_{\tilde{u}_{*}}^{\tilde{u}^{*}}\int_{\mathbb{T}} |f|^{2}  \sqrt{\slashed{g}}d\tilde{u}d\vartheta \big)^{\frac{1}{2}}\leq \big(2\mathbf{Vol}(\mathbb{T})(\tilde{u}^{*}-\tilde{u}_{*})\big)^{\frac{1}{2}}\|f\|_{L^{\infty}(\Sigma_{t}^{\ut_*,\tilde{u}^{*}})}.
\]
providing that $\varepsilon$ is sufficiently small. 
\begin{itemize}
\item From the case $(n, n)$ to the case $(n+1, 0)$ with $1\leq n\leq 2$, we rewrite the transport equation \eqref{eq: formulas to control the geometry} for $\hat{T}^{i}$ and $\kappa$ as follows,
\[
\begin{cases}
L(\hat{T}^{i})=\hat{X}(v^{1}+c)\hat{X}^{i}+[(\hat{T}^{1}+1)\hat{X}(\psi_{1})+\hat{T}^{2}\hat{X}(\psi_{2})]\hat{X}^{i},\\
L(\kappa)=-T(v^{1}+c)-[(\hat{T}^{1}+1)T(\psi_{1})+\hat{T}^{2}T(\psi_{2})].
\end{cases}
\]
and we commute $(\hat X)^{n+1}$ with them,
\[
\begin{cases}
L((\hat X)^{n+1}(\hat{T}^{i}))=(\hat X)^{n+1}L(\hat{T}^{i})+[L, (\hat X)^{n+1}](\hat{T}^{i}),\\
L((\hat X)^{n+1}(\kappa))=(\hat X)^{n+1}L(\kappa)+[L, (\hat X)^{n+1}](\kappa).
\end{cases}
\]
In view of \eqref{eq: formulas to control the geometry}, we have
\[
\begin{split}
[L,(\hat X)^{n+1}](\hat{T}^{i})&=\sum_{j+k=n}(\hat X)^{k}\Big([L,\hat X](\hat X)^{j}(\hat{T}^{i})\Big)=\sum_{j+k=n}(\hat X)^{k}\Big(-\chi(\hat X)^{j+1}(\hat{T}^{i})\Big)\\
&=\sum_{j+k=n}(\hat X)^{k}\Big(\big(\hat X^l\hat X(\psi_l)-c\hat X^1\hat X(\hat X^2)+c\hat X^2\hat X(\hat X^1)\big)(\hat X)^{j+1}(\hat{T}^{i})\Big).
\end{split}
\]
Therefore, we can write $L(\hat X)^{n+1}(\hat{T}^{i})$ as follows,
\[
\begin{split}
L(\hat X)^{n+1}(\hat{T}^{i})&=O_{L^{\infty}(\Sigma_{t}^{\ut_*,\tilde{u}^{*}})}(P(M)\varepsilon)\cdot \big( |(\hat X)^{n+1}(\hat{T}^{1})|+|(\hat X)^{n+1}(\hat{T}^{2})| \big)\\
&+\sum_{a+b\leq n} (\hat X)^{a}(\hat{T})\cdot (\hat X)^{b+2}(\psi)+\sum_{a_{1}+a_{2}+b\leq n} \hat{X}^{a_{1}}(\hat{T})(\hat X)^{a_2}(\hat{T})\cdot (\hat X)^{b+2}(\psi).
\end{split}
\]
Therefore, we can conclude that
\[
\|L(\hat X)^{n+1}(\hat{T}^{i})\|_{L^{2}(\Sigma_{t}^{\ut_*,\tilde{u}^{*}})}\leq P(M)\varepsilon\Big( \|(\hat X)^{n+1}(\hat{T}^{1})\|_{L^{2}(\Sigma_{t}^{\ut_*,\tilde{u}^{*}})}+\|(\hat X)^{n+1}(\hat{T}^{2})\|_{L^{2}(\Sigma_{t}^{\ut_*,\tilde{u}^{*}})} \Big)+P(M)\varepsilon.
\]
We can write $L\big( \|(\hat X)^{n+1}(\hat{T}^{i})\|_{L^{2}(\Sigma_{t}^{\ut_*,\tilde{u}^{*}})}^{2} \big)$ as follows,
\[
\begin{split}
L\big( \|(\hat X)^{n+1}(\hat{T}^{i})\|_{L^{2}(\Sigma_{t}^{\ut_*,\tilde{u}^{*}})}^{2} \big)&=2\int_{\tilde{u}_{*}}^{\tilde{u}^{*}}\int_{\mathbb{T}} L(\hat X)^{n+1}(\hat{T}^{i})\cdot (\hat X)^{n+1}(\hat{T}^{i}) \sqrt{\slashed{g}}d\tilde{u}d\vartheta\\
&+\int_{\tilde{u}_{*}}^{\tilde{u}^{*}}\int_{\mathbb{T}} |(\hat X)^{n+1}(\hat{T}^{i})|^{2} L(\sqrt{\slashed{g}})d\tilde{u}d\vartheta\\
&\leq P(M)\|L(\hat X)^{n+1}(\hat{T}^{i})\|_{L^{2}(\Sigma_{t}^{\ut_*,\tilde{u}^{*}})}\cdot \|(\hat X)^{n+1}(\hat{T}^{i})\|_{L^{2}(\Sigma_{t}^{\ut_*,\tilde{u}^{*}})}\\
&+P(M)\varepsilon \cdot \|(\hat X)^{n+1}(\hat{T}^{i})\|_{L^{2}(\Sigma_{t}^{\ut_*,\tilde{u}^{*}})}^{2}.
\end{split}
\]
which implies that for $i=1, 2$, we have
\[
\begin{split}
L\big( \|(\hat X)^{n+1}(\hat{T}^{i})\|_{L^{2}(\Sigma_{t}^{\ut_*,\tilde{u}^{*}})} \big)&\leq P(M)\|L(\hat X)^{n+1}(\hat{T}^{i})\|_{L^{2}(\Sigma_{t}^{\ut_*,\tilde{u}^{*}})}+P(M)\varepsilon \cdot \|(\hat X)^{n+1}(\hat{T}^{i})\|_{L^{2}(\Sigma_{t}^{\ut_*,\tilde{u}^{*}})}\\
&\leq P(M)\varepsilon\Big( \|(\hat X)^{n+1}(\hat{T}^{1})\|_{L^{2}(\Sigma_{t}^{\ut_*,\tilde{u}^{*}})}+\|(\hat X)^{n+1}(\hat{T}^{2})\|_{L^{2}(\Sigma_{t}^{\ut_*,\tilde{u}^{*}})} \Big)+P(M)\varepsilon.
\end{split}
\]
Recall that 
\[
(\hat X)^{n+1}(\hat{T}^{1}+1)(0, \tilde{u}, \vartheta)=0, (\hat X)^{n+1}(\hat{T}^{2})(0, \tilde{u}, \vartheta)=0.
\]
By Gronwall inequality, we have 
\[
\|(\hat X)^{n+1}(\hat{T}^{i})\|_{L^{2}(\Sigma_{t}^{\ut_*,\tilde{u}^{*}})}\leq P(M)\varepsilon t.
\] 
Since $\|(\hat X)^{\leq n}(\hat{X}^{1})\|_{L^{2}(\Sigma_{t}^{\ut_*,\tilde{u}^{*}})}\leq P(M)\varepsilon t$, we have
\[
L\big(\|(\hat X)^{n+1}(\hat{T}^{1})\|_{L^{2}(\Sigma_{t}^{\ut_*,\tilde{u}^{*}})}\big)\leq P(M)\varepsilon^{2}t.
\]
Integrating from $0$ to $t$, we have 
\[
\|(\hat X)^{n+1}(\hat{T}^{1})\|_{L^{2}(\Sigma_{t}^{\ut_*,\tilde{u}^{*}})}\leq P(M)\varepsilon^{2} t^{2}.
\]

In view of \eqref{eq: formulas to control the geometry}, we also have
\[
\begin{split}
[L,(\hat X)^{n+1}](\kappa)&=\sum_{j+k=n}(\hat X)^{k}\Big([L,\hat X](\hat X)^{j}(\kappa)\Big)=\sum_{j+k=n}(\hat X)^{k}\Big(-\chi(\hat X)^{j+1}(\kappa)\Big)\\
&=\sum_{j+k=n}(\hat X)^{k}\Big(\big(\hat X^l\hat X(\psi_l)-c\hat X^1\hat X(\hat X^2)+c\hat X^2\hat X(\hat X^1)\big)(\hat X)^{j+1}(\kappa)\Big).
\end{split}
\]
In view of \eqref{eq: formulas to control the geometry}, for $\psi\in \{\wb, w, \psi_{2}\}$,
\[
\begin{split}
(\hat X)^{n}T(\psi)&=T(\hat X)^{n}(\psi)+\sum_{k_{1}+k_{2}=n-1}\hat{X}^{k_{1}}[\hat{X}, T](\hat X)^{k_2}(\psi)\\
&=T(\hat X)^{n}(\psi)+\sum_{k_1+k_2=n-1}(\hat X)^{k_1}\Big(\kappa\big(-\hat X(\hat X^2)\hat X^1+\hat X(\hat X^1)\hat X^2\big)(\hat X)^{k_2+1}(\psi)\Big).
\end{split}
\]
and we have
\[
\|L(\hat X)^{n+1}(\kappa)\|_{L^{2}(\Sigma_{t}^{\ut_*,\tilde{u}^{*}})}\leq P(M)\varepsilon\cdot \|(\hat X)^{n+1}(\kappa)\|_{L^{2}(\Sigma_{t}^{\ut_*,\tilde{u}^{*}})} +\|(\hat X)^{n+1}(\hat{T}^{2})\|_{L^{2}(\Sigma_{t}^{\ut_*,\tilde{u}^{*}})}+P(M)\varepsilon t.
\]
Therefore, we have
\[
L\big( \|(\hat X)^{n+1}(\kappa)\|_{L^{2}(\Sigma_{t}^{\ut_*,\tilde{u}^{*}})} \big)\leq P(M)\varepsilon\cdot \|(\hat X)^{n+1}(\kappa)\|_{L^{2}(\Sigma_{t}^{\ut_*,\tilde{u}^{*}})} +\|(\hat X)^{n+1}(\hat{T}^{2})\|_{L^{2}(\Sigma_{t}^{\ut_*,\tilde{u}^{*}})}+P(M)\varepsilon t.
\]
as the case of $(\hat X)^{n+1}(\hat{T}^{i})$. Recall that
\[
(\hat X)^{n+1}(\kappa)(0, \tilde{u}, \vartheta)=0
\]
By Gronwall inequality, we have 
\[
\|(\hat X)^{n+1}(\kappa-\kappar)\|_{L^{2}(\Sigma_{t}^{\ut_*,\tilde{u}^{*}})}\leq P(M)\varepsilon t^{2}.
\]

\item From the case $(n, m)$ to the case $(n, m+1)$ with $m\leq n-1$, we rewrite the transport equation \eqref{eq: formulas to control the geometry} for $\hat{T}^{i}$ and $\kappa$ as follows,
\[
\begin{cases}
L(\hat{T}^{i})=\hat{X}(v^{1}+c)\hat{X}^{i}+[(\hat{T}^{1}+1)\hat{X}(\psi_{1})+\hat{T}^{2}\hat{X}(\psi_{2})]\hat{X}^{i},\\
L(\kappa)=-T(v^{1}+c)-[(\hat{T}^{1}+1)T(\psi_{1})+\hat{T}^{2}T(\psi_{2})].
\end{cases}
\]
and we commute $Z^{\alpha}=Z_1\cdots Z_n, |\alpha|=n$ where the number of $T$ appearing in $Z^{\alpha}$ is $m+1$,
\[
\begin{cases}
LZ^{\alpha}(\hat{T}^{i})=Z^{\alpha}L(\hat{T}^{i})+[L, Z^{\alpha}](\hat{T}^{i}),\\
LZ^{\alpha}(\kappa)=Z^{\alpha}L(\kappa)+[L, Z^{\alpha}](\kappa).
\end{cases}
\] 
In view of \eqref{eq: formulas to control the geometry}, we have
\[
\begin{split}
[L,Z^\alpha](\hat T^i)
&=\sum_{j=1}^{n}Z_1\cdots Z_{j-1}\Big([L,Z_j]Z_{j+1}\cdots Z_n(\hat T^i)\Big)\\
&=-\sum_{j:Z_j=\hat X}Z_1\cdots Z_{j-1}\Big(\chi\hat XZ_{j+1}\cdots Z_n(\hat T^i)\Big)\\
&\quad\underbrace{-\sum_{j:Z_j=T}Z_1\cdots Z_{j-1}\Big((\zeta+\eta)\hat XZ_{j+1}\cdots Z_n(\hat T^i)\Big)}_{I_1}.
\end{split}
\]
Note that the numbers of $T$ in $Z^{\alpha_{1}}(\kappa)$ and $Z^{\alpha_{1}}\hat{X}(\kappa)$ from terms $I_{1}$ are no more than $m$, and we can use induction argument to bound them. Thus, 
\[
L\big( \|Z^{\alpha}(\hat{T}^{i})\|_{L^{2}(\Sigma_{t}^{\ut_*,\tilde{u}^{*}})}\big)\leq P(M)\varepsilon\sum_{|\beta|=n, \#(T, Z^{\beta})= m+1}(\|Z^{\beta}(\hat{T}^{1})\|_{L^{2}(\Sigma_{t}^{\ut_*,\tilde{u}^{*}})}+\|Z^{\beta}(\hat{T}^{2})\|_{L^{2}(\Sigma_{t}^{\ut_*,\tilde{u}^{*}})})+P(M)\varepsilon.
\]
Recall that
\[
Z^{\alpha}(\hat{T}^{1}+1)(0, \tilde{u}, \vartheta)=0, Z^{\alpha}(\hat{T}^{2})(0, \tilde{u}, \vartheta)=0.
\]
By Gronwall inequality, we have 
\[
\|Z^{\alpha}(\hat{T}^{i})\|_{L^{2}(\Sigma_{t}^{\ut_{*},\tilde{u}^{*}})}\leq P(M)\varepsilon t.
\]
Since $\|Z^{\beta}(\hat{X}^{1})\|_{L^{2}(\Sigma_{t}^{\ut_*,\tilde{u}^{*}})}\leq P(M)\varepsilon t$ with $|\beta|\leq n, \#(T, Z^{\beta})\leq m$, we have
\[
L\big( \|Z^{\alpha}(\hat{T}^{1})\|_{L^{2}(\Sigma_{t}^{\ut_*,\tilde{u}^{*}})}\big)\leq P(M)\varepsilon^{2}t.
\]
Integrating from $0$ to $t$, we have 
\[
\|Z^{\alpha}(\hat{T}^{1})\|_{L^{2}(\Sigma_{t}^{\ut_*,\tilde{u}^{*}})}\leq P(M)\varepsilon^{2} t^{2}.
\] 

In view of \eqref{eq: formulas to control the geometry}, we also have
\[
\begin{split}
[L,Z^\alpha](\kappa)
&=\sum_{j=1}^{n}Z_1\cdots Z_{j-1}\Big([L,Z_j]Z_{j+1}\cdots Z_n(\kappa)\Big)\\
&=-\sum_{j:Z_j=\hat X}Z_1\cdots Z_{j-1}\Big(\chi\hat XZ_{j+1}\cdots Z_n(\kappa)\Big)\\
&\quad-\sum_{j:Z_j=T}Z_1\cdots Z_{j-1}\Big((\zeta+\eta)\hat XZ_{j+1}\cdots Z_n(\kappa)\Big).
\end{split}
\]
Therefore, we have
\[
L(\|Z^{\alpha}(\kappa)\|_{L^{2}(\Sigma_{t}^{\ut_*,\tilde{u}^{*}})})\leq P(M)\varepsilon \sum_{|\beta|=n, \#(T, Z^{\beta})=m+1}\|Z^{\beta}(\kappa)\|_{L^{2}(\Sigma_{t}^{\ut_*,\tilde{u}^{*}})}+P(M)\varepsilon t.
\]
Recall that 
\[
Z^{\alpha}(\kappa-\kappar)(0, \tilde{u}, \vartheta)=0.
\]
By Gronwall inequality, we have 
\[
\|Z^{\alpha}(\kappa-\kappar)\|_{L^{2}(\Sigma_{t}^{\ut_*,\tilde{u}^{*}})}\leq P(M)\varepsilon t^{2}.
\] 
\end{itemize}
which closes the induction argument.
\end{proof}

As a byproduct, we have
\begin{proposition}
For $t\in [0, t^{*}], Z\in \{T, \hat{X}\}$, we have
\begin{equation}\notag
\begin{cases}
\|Z(\chi)\|_{L^{2}(\Sigma_{t}^{\ut_{*}, \tilde{u}^{*}})}\leq P(M)\varepsilon,\\
\|Z(\eta+\zeta)\|_{L^{2}(\Sigma_{t}^{\ut_{*}, \tilde{u}^{*}})}\leq P(M)\varepsilon t,\\
\|Z\big(\slashed{g}-1\big)\|_{L^{2}(\Sigma_{t}^{\ut_{*}, \tilde{u}^{*}})}\leq P(M)\varepsilon t,\\
\|Z\big(\Xi\big)\|_{L^{2}(\Sigma_{t}^{\ut_{*}, \tilde{u}^{*}})}\leq P(M)\varepsilon t^{2}.
\end{cases}
\end{equation}
\end{proposition}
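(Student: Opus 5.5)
The bounds follow the pattern of the $L^\infty$ byproduct proposition for $\chi,\zeta+\eta,\slashed g,\Xi$, now one derivative higher and measured in $L^2(\Sigma_t^{\ut_*,\ut^*})$. The inputs are \eqref{eq: higher order control of the frame}, the $L^\infty$ bounds \eqref{eq: 0-th control of the frame}, \eqref{eq: 1-th control of the frame}, \eqref{eq: Control of g}, and the bootstrap assumption $(\mathbf B_\infty)$.

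\textbf{Step 1: $Z\chi$.} Differentiate the explicit formula
\[
\chi=-\hat X^j\hat X(\psi_j)+c\hat X(\hat X^2)\hat X^1-c\hat X(\hat X^1)\hat X^2
\]
by the Leibniz rule. The factors $\hat X^j$, $c$, $Z\hat X^j$ and $Zc$ are bounded in $L^\infty$, by \eqref{eq: 0-th control of the frame}, \eqref{eq: 1-th control of the frame} and $(\mathbf B_\infty)$. The top-order factors $Z\hat X(\psi_j)$ are controlled in $L^2$ through the $\hat X Z^\beta\psi$ and $TZ^\beta\psi$ entries of $(\mathbf B_\infty)$, after commuting $Z\hat X$ into $\hat XZ$ via $[T,\hat X]=-\kappa\theta\hat X$; the commutator error is lower order and bounded in $L^\infty$. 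The second-order frame factors $Z\hat X(\hat X^i)$ are controlled in $L^2$ by \eqref{eq: higher order control of the frame}. This gives $\|Z\chi\|_{L^2}\leq P(M)\varepsilon$.

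\textbf{Step 2: $Z(\zeta+\eta)$.} Each of the four terms in
\[
\zeta+\eta=-\kappa\hat T^j\hat X(\psi_j)-\hat X^jT(\psi_j)-\kappa\hat X(c)+c\hat X(\kappa)
\]
carries a factor of size $t$. For $\kappa$ this is $\kappa\le Ct$. For $\hat X^1$ it is $O(\varepsilon t)$. The term $\hat X^2T(\psi_2)$ uses $T\psi=O(\varepsilon t)$, and $\hat X^1T(\psi_1)$ again uses $\hat X^1=O(\varepsilon t)$. For $\hat X(\kappa)$ it is $O(\varepsilon t^2)$. After applying $Z$, the $t$-weight must be kept on every product. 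This uses the $TZ^\beta\psi$ bounds carrying $t$ in $(\mathbf B_\infty)$, the bound $\|Z\hat X\kappa\|_{L^2}\le P(M)\varepsilon t^2$ from \eqref{eq: higher order control of the frame}, and $Z\kappa=O(1)$. The result is $\|Z(\zeta+\eta)\|_{L^2}\leq P(M)\varepsilon t$.

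\textbf{Step 3: $\slashed g$ and $\Xi$.} Commute $Z$ with the transport equations $L\slashed g=2\chi\slashed g$ and $L\Xi=\slashed g^{-1/2}(\zeta+\eta)$, using $[L,\hat X]=-\chi\hat X$ and $[L,T]=-(\zeta+\eta)\hat X$. This gives
\begin{align*}
L Z\slashed g&=2Z\chi\,\slashed g+O(P(M)\varepsilon)\,Z\slashed g,\\
L Z\Xi&=Z\big(\slashed g^{-1/2}(\zeta+\eta)\big)+O(P(M)\varepsilon)\,Z\Xi .
\end{align*}
The last term in each line absorbs the commutator terms, using the $L^\infty$ bounds on $\chi$, $\zeta+\eta$ and $\hat X\slashed g$, $\hat X\Xi$. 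The initial data vanish, since $\slashed g=1$ and $\Xi=0$ on $\Sigma_0$.

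Next, differentiate the slice norm in time as in the preceding proposition: $L$ acting on $\|\cdot\|_{L^2(\Sigma_t)}^2$ produces the $L(\sqrt{\slashed g})=\chi\sqrt{\slashed g}$ weight term, which is bounded by $P(M)\varepsilon$. Gronwall with Step 1 then gives $\|Z(\slashed g-1)\|_{L^2}\leq P(M)\varepsilon t$. For $\Xi$ the source is $O(\varepsilon t)$ in $L^2$ by Step 2, and integrating it gives $\|Z\Xi\|_{L^2}\le P(M)\varepsilon t^2$.

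The main obstacle is Step 2: preserving the factor $t$ in every term of $Z(\zeta+\eta)$, especially the $T$-derivative terms. For these I would write $T\hat X(\kappa)=\hat XT(\kappa)-\kappa\theta\hat X(\kappa)$ and use the $T$-derivative bounds already proved in \eqref{eq: higher order control of the frame}, rather than passing through $\kappa/t$.
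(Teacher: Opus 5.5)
Your overall route is the paper's. You expand $Z\chi$ and $Z(\zeta+\eta)$ by the Leibniz rule, using \eqref{eq: higher order control of the frame}, the first-order frame bounds and $(\mathbf B_\infty)$. You then commute $Z$ through $L\slashed g=2\chi\slashed g$ and $L\Xi=\slashed g^{-1/2}(\zeta+\eta)$, sum over $Z\in\{T,\hat X\}$, and apply Gronwall with zero initial data. The sum is needed because $[L,T]=-(\zeta+\eta)\hat X$ couples $T\slashed g$ to $\hat X\slashed g$.

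However, Step 2, which you single out as the crux, does not close with the ingredients as you list them. There are two issues.

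\textbf{The factor $Z\kappa$.} The terms $(Z\kappa)\,\hat T^j\hat X(\psi_j)$ and $(Z\kappa)\,\hat X(c)$ multiply quantities that are only $O(\varepsilon)$, with no factor of $t$. With $Z\kappa=O(1)$ they give only $O(\varepsilon)$, not $O(\varepsilon t)$. What you need is that $T$ and $\hat X$ are tangent to $\Sigma_t$, so $Z\kappar=Zt=0$. Hence $Z\kappa=Z(\kappa-\kappar)=O(P(M)\varepsilon t^2)$ by \eqref{eq: 1-th control of the frame}.

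\textbf{The term $\hat X T(\psi_2)$.} Applying $\hat X$ to $\hat X^2T(\psi_2)$ produces $\hat X^2\,\hat XT(\psi_2)$ with $\hat X^2\approx1$. But $(\mathbf B_\infty)$ gives only $\|\hat XT\psi\|_{L^\infty}\le M\varepsilon$; the factor $t$ is carried only by words beginning with $T$. You must commute,
\[
\hat XT\psi_2=T\hat X\psi_2+\kappa\theta\,\hat X\psi_2 ,
\]
as the paper does in proving \eqref{eq: 1-th control of the frame}. That is where a commutation is actually required. The one you propose for $T\hat X(\kappa)$ is unnecessary, since \eqref{eq: higher order control of the frame} already covers every ordered word of $\kappa-\kappar$ of length at most three. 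Likewise, no commutation is needed in Step 1, because $T\hat X\psi$ and $\hat X\hat X\psi$ appear directly in $(\mathbf B_\infty)$.

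In Step 3 there are two smaller points.
\begin{itemize}
\item No $L^\infty$ bound on $\hat X\slashed g$ or $\hat X\Xi$ is available at this stage, and none is needed. The commutator terms $-\chi\,\hat X\slashed g$ and $-(\zeta+\eta)\hat X\slashed g$ (and their $\Xi$ analogues) are estimated with $\chi,\zeta+\eta$ in $L^\infty$ and $\hat X\slashed g$, $\hat X\Xi$ in $L^2$, as part of the Gronwall unknown.
\item The $\Xi$ source also contains $-\tfrac12\slashed g^{-3/2}(Z\slashed g)(\zeta+\eta)$. This is handled by the $Z\slashed g$ bound just obtained together with $\|\zeta+\eta\|_{L^\infty}\le P(M)\varepsilon t$, not by Step 2 alone.
\end{itemize}
With these corrections the argument coincides with the paper's proof.
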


\begin{proof}
The bound for $Z(\chi)=Z\big(-\hat{X}^{i}\hat{X}(\psi_{i})+c\hat{X}(\hat{X}^{2})\hat{X}^{1}-c\hat{X}(\hat{X}^{1})\hat{X}^{2}\big)$ and $Z(\zeta+\eta)=Z\big(-\kappa\hat{T}^{j}\hat{X}(\psi_{j})-\hat{X}^{i}T(\psi_{i})-\kappa\hat{X}(c)+c\hat{X}(\kappa)\big)$ follows from the estimate \eqref{eq: higher order control of the frame} from previous proposition.

Commuting $Z$ with the transport equation \eqref{eq: formulas to control the geometry} of $\slashed{g}$, we have
\[
\begin{split}
L(Z(\slashed{g}))&=Z(L(\slashed{g}))+[L, Z](\slashed{g})\\
&=Z(2\chi\cdot \slashed{g})-(\textbf{$\chi$ or $\zeta+\eta$})\cdot\hat{X}(\slashed{g})\\
&=2Z(\chi)\cdot \slashed{g}+2\chi\cdot Z(\slashed{g})-(\textbf{$\chi$ or $\zeta+\eta$})\cdot \hat{X}(\slashed{g}).
\end{split}
\]
Therefore, we have
\[
L\big( \|Z(\slashed{g})\|_{L^{2}(\Sigma_{t}^{\ut_*,\tilde{u}^{*}})}\big)\leq P(M)\varepsilon \sum_{|\beta|=1}\|Z^{\beta}(\slashed{g})\|_{L^{2}(\Sigma_{t}^{\ut_*,\tilde{u}^{*}})}+P(M)\varepsilon.
\]
Recall that 
\[
Z(\slashed{g}-1)(0, \tilde{u}, \vartheta)=0.
\]
By Gronwall inequality, we have 
\[
\|Z(\slashed{g}-1)\|_{L^{2}(\Sigma_{t}^{\ut_*,\tilde{u}^{*}})}\leq P(M)\varepsilon t.
\]
providing that $\varepsilon$ is sufficiently small.

Commuting $Z$ with the transport equation \eqref{eq: formulas to control the geometry} of $\Xi$, we have
\[
\begin{split}
L(Z(\Xi))&=Z\big(L(\Xi)\big)+[L, Z](\Xi)\\
&=Z\big(\frac{1}{\sqrt{\slashed{g}}}(\zeta+\eta)\big)-(\textbf{$\chi$ or $\zeta+\eta$})\cdot\hat{X}(\Xi)\\
&=-\frac{1}{2}\frac{Z(\slashed{g})}{\slashed{g}^{\frac{3}{2}}}\cdot (\zeta+\eta)+\frac{1}{\sqrt{\slashed{g}}}\cdot Z(\zeta+\eta)-(\textbf{$\chi$ or $\zeta+\eta$})\cdot\hat{X}(\Xi).
\end{split}
\]
Therefore, we have
\[
L\big( \|Z(\Xi)\|_{L^{2}(\Sigma_{t}^{\ut_*,\tilde{u}^{*}})}\big)\leq P(M)\varepsilon \sum_{|\beta|=1}\|Z^{\beta}(\Xi)\|_{L^{2}(\Sigma_{t}^{\ut_*,\tilde{u}^{*}})}+P(M)\varepsilon t.
\]
Recall that 
\[
Z(\Xi)(0, \tilde{u}, \vartheta)=0.
\]
By Gronwall inequality, we have 
\[
\|Z(\Xi)\|_{L^{2}(\Sigma_{t}^{\ut_*,\tilde{u}^{*}})}\leq P(M)\varepsilon t^{2}.
\]
\end{proof}

\subsection{$W^{1, 2}(\Sigma_{t}^{\ut_{*}, \tilde{u}^{*}})\rightarrow L^{4}(\Sigma_{t}^{\ut_{*}, \tilde{u}^{*}})$ Sobolev inequality and $L^{2}$ estimate for $Z^{\beta}(\slashed{g}), Z^{\beta}(\Xi)$ for $|\beta|=2$}

\begin{lemma}
For all $t\in [0, t^{*}]$ and $f\in W^{1, 2}(\Sigma_{t}^{\ut_{*}, \tilde{u}^{*}})$, we have
\begin{equation}\label{eq: W12 to L4}
\begin{cases}
\|f\|_{L^{4}(\Sigma_{t}^{\ut_{*}, \tilde{u}^{*}})}\lesssim \Big( \|f\|_{L^{2}(\Sigma_{t}^{\ut_{*}, \tilde{u}^{*}})}+\|\Xr(f)\|_{L^{2}(\Sigma_{t}^{\ut_{*}, \tilde{u}^{*}})}+\|\Tr(f)\|_{L^{2}(\Sigma_{t}^{\ut_{*}, \tilde{u}^{*}})}\Big),\\
\|f\|_{L^{4}(\Sigma_{t}^{\ut_{*}, \tilde{u}^{*}})}\lesssim \Big( \|f\|_{L^{2}(\Sigma_{t}^{\ut_{*}, \tilde{u}^{*}})}+\|\hat{X}(f)\|_{L^{2}(\Sigma_{t}^{\ut_{*}, \tilde{u}^{*}})}+\|T(f)\|_{L^{2}(\Sigma_{t}^{\ut_{*}, \tilde{u}^{*}})}\Big).
\end{cases}
\end{equation}
where $C$ depends only on the fixed parameters of
Section~\ref{subsec: notation conventions}, is independent of the cutoff
$\alpha\in(0,1/100)$, and is valid for sufficiently small $\varepsilon$. 
\end{lemma}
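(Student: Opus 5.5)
The plan is to reduce the estimate to the standard two-dimensional Ladyzhenskaya/Gagliardo--Nirenberg inequality on a fixed rectangle. I would transfer $f$ to the coordinate rectangle $R=[\tilde u_*,\tilde u^*]\times\mathbb T$ in the chart $(\tilde u,\vartheta)$. By \eqref{eq: Control of g}, $\tfrac14\leq\slashed g\leq4$, so the slice norms $L^p(\Sigma_t^{\tilde u_*,\tilde u^*})$ are equivalent to the Lebesgue norms $L^p(R)$ with constants independent of $t$ and $\alpha$. The length of $R$ in $\tilde u$ lies between $\tfrac{3}{4}\cdot\frac{\gamma+1}{\gamma-1}\mathbf c_*-\tfrac1{100}\cdot\frac{\gamma+1}{\gamma-1}\mathbf c_*$ and $\tfrac34\cdot\frac{\gamma+1}{\gamma-1}\mathbf c_*$. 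This is bounded above and below uniformly in $\alpha$.

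On such a rectangle, and after extending by reflection across the two $\tilde u$-ends (again with uniform constants), the inequality
\[
\|f\|_{L^4(R)}\lesssim\|f\|_{L^2(R)}+\|\partial_{\tilde u}f\|_{L^2(R)}+\|\partial_\vartheta f\|_{L^2(R)}
\]
holds. One way to see it is to write $|f|^2\leq\int|\partial_{\tilde u}(f^2)|+C\int f^2$ in each one-dimensional direction and multiply. Alternatively, one can cite Sobolev $W^{1,2}\hookrightarrow L^4$ on a Lipschitz domain whose geometry is uniformly controlled.

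The remaining step is to express $\partial_{\tilde u},\partial_\vartheta$ through the two given families of vector fields.

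For the second inequality, \eqref{eq: formulas to control the geometry} gives
\[
\partial_\vartheta=\sqrt{\slashed g}\,\hat X,\qquad \partial_{\tilde u}=T+\Xi\sqrt{\slashed g}\,\hat X .
\]
By \eqref{eq: Control of g}, $|\Xi|\leq P(M)\varepsilon t^2$ and $\slashed g\approx1$, so $|\partial f|\lesssim|Tf|+|\hat Xf|$ pointwise.

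For the first inequality, I would use the Jacobian bound obtained when closing $(\mathbf B_{\text{diffeomorphism}})$. The entries $\partial\mathring u/\partial\tilde u$, $\partial\mathring u/\partial\vartheta$, $\partial x_2/\partial\tilde u$, $\partial x_2/\partial\vartheta$ are within $P(M)\varepsilon$ of $\operatorname{diag}(-1,1)$. Hence $\partial_{\tilde u}$ and $\partial_\vartheta$ are combinations of $\partial_{\mathring u}=-\Tr$ and $\partial_{x_2}=\Xr$ with coefficients bounded by $1+P(M)\varepsilon$. Once $\varepsilon$ is small, this gives $|\partial f|\lesssim|\Tr f|+|\Xr f|$.

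Nothing requires more than $L^\infty$ control of first derivatives of the coordinate map, which is already established, and no lower bound on $c$ enters. The constant is therefore independent of $\alpha$.

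The main obstacle is ensuring that the rectangle-based Sobolev constant is genuinely uniform: the domain $R$ shrinks or grows only within fixed bounds in $\alpha$, and the boundary at $\tilde u^*$ is not approaching a degenerate region of measure. I would handle this explicitly with the one-dimensional estimate
\[
\sup_{\tilde u}|g|^2\leq\frac{1}{\ell}\int g^2+2\int|g||\partial g|,
\]
where $\ell\geq\tfrac{1}{2}\cdot\frac{\gamma+1}{\gamma-1}\mathbf c_*$. Applying this in each variable and using Cauchy--Schwarz gives the Ladyzhenskaya bound with explicit constants, which avoids any appeal to extension operators.
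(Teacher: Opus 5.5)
Your proposal is correct and follows the paper's proof. Both arguments apply the $W^{1,2}\hookrightarrow L^4$ embedding on the coordinate rectangle $[\tilde u_*,\tilde u^*]\times\mathbb T$ with an $\alpha$-independent constant, and then convert $\partial_{\tilde u},\partial_\vartheta$ pointwise into $T,\hat X$ (via $\partial_\vartheta=\sqrt{\slashed g}\,\hat X$ and $\partial_{\tilde u}=T+\Xi\sqrt{\slashed g}\,\hat X$) and into $\Tr,\Xr$ (via the near-$\operatorname{diag}(-1,1)$ Jacobian), all with $O(1)$ coefficients. Your explicit one-dimensional argument for the uniformity of the rectangle constant, based on $\tilde u^*-\tilde u_*=(\tfrac34-\alpha)\tfrac{\gamma+1}{\gamma-1}\mathbf c_*$ staying bounded above and below, fills in a step the paper only asserts.
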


\begin{proof}
We recall the classical Sobolev embedding $W^{1, 2}\rightarrow L^{4}$ for rectangular region $[\tilde{u}_{*}, \tilde{u}^{*}]\times \mathbb{T}$, 
\[
\|f\|^{2}_{L^{4}([\tilde{u}_{*}, \tilde{u}^{*}]\times \mathbb{T})}\leq C\Big( \|f\|^{2}_{L^{2}([\tilde{u}_{*}, \tilde{u}^{*}]\times \mathbb{T})}+\|\frac{\partial f}{\partial \tilde{u}}\|^{2}_{L^{2}([\tilde{u}_{*}, \tilde{u}^{*}]\times \mathbb{T})}+\|\frac{\partial f}{\partial \vartheta}\|^{2}_{L^{2}([\tilde{u}_{*}, \tilde{u}^{*}]\times \mathbb{T})} \Big).
\]
where the constant $C$ does not depend on $\alpha\in (0, \frac{1}{100})$. Note that
\[
\begin{cases}
\frac{\partial f}{\partial \tilde{u}}=T(f)+\Xi\sqrt{\slashed{g}}\cdot \hat{X}(f),\\
\frac{\partial f}{\partial \vartheta}=\sqrt{\slashed{g}}\cdot \hat{X}(f),\\
\frac{\partial f}{\partial \tilde{u}}=-\frac{(\kappa\hat{T}^{1}+\Xi\sqrt{\slashed{g}}\hat{X}^{1})}{\kappar}\cdot \Tr(f)+(\kappa\hat{T}^{2}+\Xi\sqrt{\slashed{g}}\hat{X}^{2})\cdot \Xr(f),\\
\frac{\partial f}{\partial \vartheta}=-\frac{\sqrt{\slashed{g}}\hat{X}^{1}}{\kappar}\cdot \Tr(f)+\sqrt{\slashed{g}}\hat{X}^{2}\cdot \Xr(f).
\end{cases}
\]
Note that all the coefficients are bounded by $O(1)$ providing that $\varepsilon$ is sufficiently small. Therefore, we have
\[
\begin{cases}
|\frac{\partial f}{\partial \tilde{u}}|^{2}+|\frac{\partial f}{\partial \vartheta}|^{2}\lesssim |T(f)|^{2}+|\hat{X}(f)|^{2},\\
|\frac{\partial f}{\partial \tilde{u}}|^{2}+|\frac{\partial f}{\partial \vartheta}|^{2}\lesssim |\Tr(f)|^{2}+|\Xr(f)|^{2}.
\end{cases}
\]
This completes the proof of the lemma.
\end{proof}

\begin{proposition}
For $t\in [0, t^{*}], Z\in \{T, \hat{X}\}$ and $|\beta|=2$, we have
\begin{equation}\notag
\begin{cases}
\|Z^{\beta}(\slashed{g})\|_{L^{2}(\Sigma_{t}^{\ut_{*}, \tilde{u}^{*}})}\leq P(M)\varepsilon t,\\
\|Z^{\beta}(\Xi)\|_{L^{2}(\Sigma_{t}^{\ut_{*}, \tilde{u}^{*}})}\leq P(M)\varepsilon t^{2}.
\end{cases}
\end{equation}
\end{proposition}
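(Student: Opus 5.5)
The plan is to commute a second geometric derivative with the transport equations $L(\slashed g)=2\chi\slashed g$ and $L(\Xi)=\slashed g^{-1/2}(\zeta+\eta)$. This gives, for $|\beta|=2$,
\[
L Z^{\beta}(\slashed g)=2Z^{\beta}(\chi)\slashed g+2\chi Z^{\beta}(\slashed g)+\sum Z(\chi)\,Z(\slashed g)+[L,Z^\beta](\slashed g),
\]
where $[L,Z^\beta]$ is expanded, exactly as in the proof of \eqref{eq: higher order control of the frame}, into terms $Z^{\le 1}\big((\chi\ \text{or}\ \zeta+\eta)\,\hat X Z^{\le1}(\slashed g)\big)$. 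The equation for $Z^\beta(\Xi)$ has the same structure, with additional terms $Z^{\beta}(\zeta+\eta)$, $Z(\slashed g)Z(\zeta+\eta)$ and $Z^\beta(\slashed g)(\zeta+\eta)$ coming from the factor $\slashed g^{-1/2}$.

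For the top-order terms, $Z^\beta(\chi)$ and $Z^\beta(\zeta+\eta)$ are expanded from their explicit formulas. They involve at most $Z^{\le 2}\hat X(\psi)$, $Z^{\le 2}T(\psi)$, $Z^{\le 3}(\hat T^i)$ and $Z^{\le 3}(\kappa)$, multiplied by lower-order factors. The highest-order factors are bounded in $L^2$ by $(\mathbf B_\infty)$ and by \eqref{eq: higher order control of the frame}, which gives $\|Z^\beta\chi\|_{L^2}\le P(M)\varepsilon$. For $Z^\beta(\zeta+\eta)$ the bound is $P(M)\varepsilon t$: every term carries either $\kappa\lesssim t$, $\hat X^i$ with $\hat X^1=O(\varepsilon t)$, a $T(\psi)$-type factor of size $\varepsilon t$, or $c\,\hat X(\kappa)=O(\varepsilon t^2)$. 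The one exception is $\hat X^2 T(\psi_2)$, where the bound $\varepsilon t$ comes directly from the $TZ^\beta(\psi)$ line of $(\mathbf B_\infty)$. Products in which two factors each carry one or more derivatives, for example $Z(\chi)\,Z(\slashed g)$ or $Z(\hat T)\cdot Z\hat X(\psi)$, are not bounded in $L^\infty$ at this order. I would estimate them by H\"older's inequality $\|fg\|_{L^2}\le\|f\|_{L^4}\|g\|_{L^4}$ together with the Sobolev inequality \eqref{eq: W12 to L4} in its $\{T,\hat X\}$ form. Each $L^4$ norm is then controlled by the $L^2$ bounds already available for one more derivative: those of the previous proposition for $Z^{\le1}(\slashed g)$, $Z^{\le1}(\Xi)$ and $Z(\chi)$, $Z(\zeta+\eta)$, and those of \eqref{eq: higher order control of the frame}. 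Zeroth-order factors such as $\chi$ and $\zeta+\eta$ are bounded in $L^\infty$ by \eqref{eq: Control of g}.

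The main obstacle is that the commutator and H\"older terms contain $Z^\beta(\slashed g)$ and $Z^\beta(\Xi)$ themselves, as well as mixed products such as $Z(\chi)\,\hat X Z(\slashed g)$. Applying \eqref{eq: W12 to L4} to these second-order factors would require third derivatives of $\slashed g$ and $\Xi$. To avoid this, I would keep the second derivatives in $L^2$ and place the first-order coefficient in $L^\infty$ wherever possible. Where only $L^4$ control of a coefficient is available, for example $Z(\chi)\in L^4$, I would put the first-order factor $Z(\slashed g)$ in $L^4$ and $Z(\chi)$ in $L^4$, and never place a second-order factor of $\slashed g$ or $\Xi$ in $L^4$. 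All remaining terms are then either linear in $Z^{\le2}(\slashed g)$, $Z^{\le 2}(\Xi)$ with coefficients $O_{L^\infty}(P(M)\varepsilon)$, or source terms.

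Finally, I would differentiate $\|Z^\beta(\slashed g)\|_{L^2(\Sigma_t)}^2$ in $t$, as in the earlier propositions, using $L(\sqrt{\slashed g})=\chi\sqrt{\slashed g}=O(P(M)\varepsilon)$ to handle the measure. This yields
\[
L\|Z^{\le 2}\slashed g\|_{L^2}\le P(M)\varepsilon\|Z^{\le2}\slashed g\|_{L^2}+P(M)\varepsilon,
\]
and the analogous inequality for $\Xi$ with source $P(M)\varepsilon t$, with the $\slashed g$ estimate fed into it. On $\Sigma_0$ we have $\slashed g=1$ and $\Xi=0$, while $T=\partial_{\tilde u}$ and $\hat X=\partial_\vartheta$ there, so all initial second derivatives vanish. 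Gronwall's inequality on $[0,t^*]$ then gives the bounds $P(M)\varepsilon t$ and $P(M)\varepsilon t^2$ claimed in the statement.
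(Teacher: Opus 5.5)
Your proposal is correct and follows essentially the same route as the paper. Both arguments use the exact second-order commutator identity and place the zeroth-order coefficients $\chi,\zeta+\eta$ in $L^\infty$ against second derivatives. They apply H\"older with \eqref{eq: W12 to L4} to products of two first-order factors; note that $\|Z\slashed g\|_{L^4}$ costs $\|Z^{2}\slashed g\|_{L^2}$, so these terms are linear in the unknown with coefficient $P(M)\varepsilon$ rather than controlled by the previous proposition. Gronwall from vanishing initial data then closes the estimate.

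One correction: products like $Z(\chi)\,\hat XZ(\slashed g)$ do not occur at this order. The identity $[L,Z_1Z_2]=[L,Z_1]Z_2+Z_1[L,Z_2]$ only generates $(\chi\text{ or }\zeta+\eta)\,\hat XZ\slashed g$ and $Z(\chi\text{ or }\zeta+\eta)\,\hat X\slashed g$, so the obstacle you anticipate is absent.
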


\begin{proof}
Let $Z_1,Z_2\in\{T,\hat X\}$ be arbitrary, so that $Z_1Z_2$
includes both mixed orders as well as $T^2$ and $(\hat X)^2$.
The commutator identity
\[
[L,Z_1Z_2]f=[L,Z_1]Z_2f+Z_1([L,Z_2]f)
\]
and $L\slashed g=2\chi\slashed g$ give the exact formula
\[
\begin{aligned}
LZ_1Z_2\slashed g
&=2(Z_1Z_2\chi)\slashed g+2(Z_2\chi)Z_1\slashed g
  +2(Z_1\chi)Z_2\slashed g+2\chi Z_1Z_2\slashed g\\
&\quad+[L,Z_1]Z_2\slashed g+Z_1([L,Z_2]\slashed g).
\end{aligned}
\]
Here $[L,Z_j]=-\chi\hat X$ when $Z_j=\hat X$, and
$[L,Z_j]=-(\zeta+\eta)\hat X$ when $Z_j=T$.
The terms with one derivative on each factor are bounded by
H\"older's inequality and \eqref{eq: W12 to L4}; for example,
\[
\begin{aligned}
\|(Z_1\chi)(Z_2\slashed g)\|_{L^2(\Sigma_t^{\ut_*,\ut^*})}
&\leq \|Z_1\chi\|_{L^4(\Sigma_t^{\ut_*,\ut^*})}
       \|Z_2\slashed g\|_{L^4(\Sigma_t^{\ut_*,\ut^*})}\\
&\leq C\left(\|Z_1\chi\|_{L^2(\Sigma_t^{\ut_*,\ut^*})}
 +\sum_{Z\in\{T,\hat X\}}\|ZZ_1\chi\|_{L^2(\Sigma_t^{\ut_*,\ut^*})}\right)\\
&\quad\cdot\left(\|Z_2\slashed g\|_{L^2(\Sigma_t^{\ut_*,\ut^*})}
 +\sum_{Z\in\{T,\hat X\}}\|ZZ_2\slashed g\|_{L^2(\Sigma_t^{\ut_*,\ut^*})}\right).
\end{aligned}
\]
The same estimate applies with $\zeta+\eta$ in place of $\chi$.
Their derivatives through order two are controlled by
\eqref{eq: higher order control of the frame}, their defining formulas,
and $(\mathbf B_\infty)$. Using also the first-order estimates just
proved and $L\sqrt{\slashed g}=\chi\sqrt{\slashed g}$, we obtain
\[
\frac{d}{dt}\|Z_1Z_2\slashed g\|_{L^2(\Sigma_t^{\ut_*,\ut^*})}
\leq P(M)\varepsilon
 \sum_{|\beta|=2}\|Z^\beta\slashed g\|_{L^2(\Sigma_t^{\ut_*,\ut^*})}
 +P(M)\varepsilon.
\]
Sum over all four ordered pairs. Their initial values vanish, so
Gronwall's inequality yields
\[
\sum_{|\beta|=2}\|Z^\beta(\slashed g-1)\|_{L^2(\Sigma_t^{\ut_*,\ut^*})}
\leq P(M)\varepsilon t.
\]

For $\Xi$, differentiating $L\Xi=\slashed g^{-1/2}(\zeta+\eta)$
gives
\[
\begin{aligned}
LZ_1Z_2\Xi
&=(Z_1Z_2\slashed g^{-1/2})(\zeta+\eta)
 +(Z_2\slashed g^{-1/2})Z_1(\zeta+\eta)\\
&\quad+(Z_1\slashed g^{-1/2})Z_2(\zeta+\eta)
 +\slashed g^{-1/2}Z_1Z_2(\zeta+\eta)\\
&\quad+[L,Z_1]Z_2\Xi+Z_1([L,Z_2]\Xi).
\end{aligned}
\]
In particular,
\[
Z_1Z_2\slashed g^{-1/2}
=\frac34\slashed g^{-5/2}(Z_1\slashed g)(Z_2\slashed g)
 -\frac12\slashed g^{-3/2}Z_1Z_2\slashed g.
\]
Use $1/4\leq\slashed g\leq4$, the preceding bounds, and the same
$L^4\cdot L^4\to L^2$ estimate for the products of first derivatives.
The resulting inequality is
\[
\frac{d}{dt}\|Z_1Z_2\Xi\|_{L^2(\Sigma_t^{\ut_*,\ut^*})}
\leq P(M)\varepsilon
 \sum_{|\beta|=2}\|Z^\beta\Xi\|_{L^2(\Sigma_t^{\ut_*,\ut^*})}
 +P(M)\varepsilon t.
\]
Summing over the four ordered pairs and using their zero initial
values gives
\[
\sum_{|\beta|=2}\|Z^\beta\Xi\|_{L^2(\Sigma_t^{\ut_*,\ut^*})}
\leq P(M)\varepsilon t^2.
\]
This proves both assertions with the ordered-word convention of
Section~\ref{subsec: notation conventions}.
\end{proof}

In summary, we have
\begin{proposition}
For $t\in [0, t^{*}], Z\in \{T, \hat{X}\}$ and $|\alpha|\leq 1$, we have $L^{\infty}$ estimate on geometry,
\begin{equation}\label{eq: L infty control of the geometry}
\begin{cases}
\|Z^{\alpha}(\hat{T}^{1}+1)\|_{L^{\infty}(\Sigma_{t}^{\ut_*,\ut^*})}\leq P(M)\varepsilon^{2}t^{2},\\
\|Z^{\alpha}(\hat{T}^{2})\|_{L^{\infty}(\Sigma_{t}^{\ut_*,\ut^*})}\leq P(M)\varepsilon t,\\
\|Z^{\alpha}(\kappa-t)\|_{L^{\infty}(\Sigma_{t}^{\ut_*,\ut^*})}\leq P(M)\varepsilon t^{2},\\
\|\chi\|_{L^{\infty}(\Sigma_{t}^{\ut_*,\ut^*})}\leq P(M)\varepsilon,\\
\|(\zeta+\eta)\|_{L^{\infty}(\Sigma_{t}^{\ut_*,\ut^*})}\leq P(M)\varepsilon t,\\
\|\slashed{g}-1\|_{L^{\infty}(\Sigma_{t}^{\ut_*,\ut^*})}\leq P(M)\varepsilon t,\\
\|\Xi\|_{L^{\infty}(\Sigma_{t}^{\ut_*,\ut^*})}\leq P(M)\varepsilon t^{2}.
\end{cases}
\end{equation}
For $t\in [0, t^{*}], Z\in \{T, \hat{X}\}$ and $|\alpha|\leq 3, |\beta|\leq 2$, we have $L^{2}$ estimate on geometry,
\begin{equation}\label{eq: L 2 control of the geometry}
\begin{cases}
\|Z^{\alpha}(\hat{T}^{1}+1)\|_{L^{2}(\Sigma_{t}^{\ut_*,\ut^*})}\leq P(M)\varepsilon^{2}t^{2},\\
\|Z^{\alpha}(\hat{T}^{2})\|_{L^{2}(\Sigma_{t}^{\ut_*,\ut^*})}\leq P(M)\varepsilon t,\\
\|Z^{\alpha}(\kappa-t)\|_{L^{2}(\Sigma_{t}^{\ut_*,\ut^*})}\leq P(M)\varepsilon t^{2},\\
\|Z^{\beta}(\chi)\|_{L^{2}(\Sigma_{t}^{\ut_*,\ut^*})}\leq P(M)\varepsilon,\\
\|Z^{\beta}(\zeta+\eta)\|_{L^{2}(\Sigma_{t}^{\ut_*,\ut^*})}\leq P(M)\varepsilon t,\\
\|Z^{\beta}(\slashed{g}-1)\|_{L^{2}(\Sigma_{t}^{\ut_*,\ut^*})}\leq P(M)\varepsilon t,\\
\|Z^{\beta}(\Xi)\|_{L^{2}(\Sigma_{t}^{\ut_*,\ut^*})}\leq P(M)\varepsilon t^{2}.
\end{cases}
\end{equation}
\end{proposition}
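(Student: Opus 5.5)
This summary proposition is obtained by collecting the bounds already proved in this section and supplying the few $L^\infty$ and $L^2$ entries that are not yet stated in exactly this form. For the $L^\infty$ block \eqref{eq: L infty control of the geometry}, I would cite directly:
\begin{itemize}
\item \eqref{eq: 0-th control of the frame} for $|\alpha|=0$;
\item \eqref{eq: 1-th control of the frame} for $|\alpha|=1$, with $\kappar=t$ inserted, so that $Z(\kappa-t)=Z\kappa-Zt$;
\item \eqref{eq: Control of g} for $\chi$, $\zeta+\eta$, $\slashed g-1$ and $\Xi$.
\end{itemize}
The only point to check is the term $Zt$: $\hat X t=0$ and $Tt=0$, since $T$ and $\hat X$ are spatial fields tangent to $\Sigma_t$. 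Hence $Z(\kappa-t)=Z\kappa$, and the bound passes over unchanged.

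For the $L^2$ block \eqref{eq: L 2 control of the geometry}, I would organize by derivative order. For $\hat T^i$ and $\kappa-t$ with $|\alpha|\le3$, the bounds are exactly \eqref{eq: higher order control of the frame}. For $|\beta|=0$ they follow from the $L^\infty$ block via the $L^\infty\to L^2$ embedding; this is valid since $\tfrac14\le\slashed g\le4$ and the strip $[\ut_*,\ut^*]\times\mathbb T$ has bounded measure, uniformly in $\alpha$. For $|\beta|=1$, the bounds for $\chi,\zeta+\eta,\slashed g,\Xi$ are the previous byproduct proposition. For $|\beta|=2$, the bounds for $\slashed g$ and $\Xi$ come from the proposition just proved.

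What remains is $Z^\beta\chi$ and $Z^\beta(\zeta+\eta)$ for $|\beta|=2$. I would expand the defining formulas
\[
\chi=-\hat X^j\hat X(\psi_j)+c\hat X(\hat X^2)\hat X^1-c\hat X(\hat X^1)\hat X^2,
\]
\[
\zeta+\eta=-\kappa\hat T^j\hat X(\psi_j)-\hat X^jT(\psi_j)-\kappa\hat X(c)+c\hat X(\kappa)
\]
by the Leibniz rule and sort the resulting products into three types.
\begin{itemize}
\item \textbf{All derivatives on one factor.} If they land on $\psi$ or $c$, these terms are $Z^{\le3}\psi$ or $TZ^{\le2}\psi$, controlled in $L^2$ by $(\mathbf B_\infty)$. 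If they land on $\hat T^i$ or $\kappa$, they are $Z^{\le3}$ of the geometry, controlled by \eqref{eq: higher order control of the frame}. The remaining factors are bounded in $L^\infty$.
\item \textbf{Two factors carrying one derivative each.} I would estimate these by H\"older in $L^4\times L^4$ together with \eqref{eq: W12 to L4}, which reduces each factor to $L^2$ bounds of order at most two.
\item \textbf{Terms with $T$ acting on $\psi$.} These inherit the extra factor $t$ from $(\mathbf B_\infty)$. Terms carrying $\kappa$ or $\hat X^1=\hat T^2$ gain $t$ from $\kappa\lesssim t$ and \eqref{eq: 0-th control of the frame}.
\end{itemize}
This bookkeeping yields $P(M)\varepsilon$ for $\chi$ and $P(M)\varepsilon t$ for $\zeta+\eta$.

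The main obstacle is the $t$-weight in $\zeta+\eta$, specifically the term $c\hat X(\kappa)$. Under $Z^2$ it produces $cZ^2\hat X\kappa$. This is a third derivative of $\kappa$, which \eqref{eq: higher order control of the frame} bounds by $P(M)\varepsilon t^2$, so it is harmless. The term $-\hat X^jT(\psi_j)$ needs more care: when both derivatives hit $T\psi_1$, the coefficient is $\hat X^1=O(\varepsilon t)$; when they hit $T\psi_2$, $(\mathbf B_\infty)$ supplies $\|TZ^2\psi\|_{L^2}\le M\varepsilon t$. Both cases give the required factor $t$.

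Finally, all constants are uniform in $\alpha$. This holds because none of the cited estimates uses a lower bound on $c$; $c$ appears only as a bounded multiplier.
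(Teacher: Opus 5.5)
Your proposal is correct and is essentially the paper's argument. The paper states this proposition as a summary of \eqref{eq: 0-th control of the frame}, \eqref{eq: 1-th control of the frame}, \eqref{eq: Control of g}, \eqref{eq: higher order control of the frame} and the two byproduct propositions. It leaves the $|\beta|=2$ bounds for $\chi$ and $\zeta+\eta$ implicit, invoking them in the proof for $Z^\beta\slashed g$, $Z^\beta\Xi$ as following from the defining formulas, \eqref{eq: higher order control of the frame} and $(\mathbf B_\infty)$; that is exactly what you fill in.

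The one thing to tighten is the $t$-bookkeeping. $(\mathbf B_\infty)$ gives a factor $t$ only for words beginning with $T$, and not for the bare $T\wb$; hence $T\psi_1$ and $Tc$ are $O(1)$, though they only ever multiply $\hat X^1$, $\kappa$ or their derivatives. So a term such as $\hat X^2(\hat X)^2T\psi_2$ should first be rewritten using $[T,\hat X]=-\kappa\theta\hat X$, whose commutator terms carry $\kappa\lesssim t$.
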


\subsection{$W^{2, 2}(\Sigma_{t}^{\ut_{*}, \tilde{u}^{*}})\rightarrow L^{\infty}(\Sigma_{t}^{\ut_*,\tilde{u}^{*}})$ Sobolev inequality}

\begin{lemma}
For all $t\in [0, t^{*}]$ and $f\in W^{2, 2}(\Sigma_{t}^{\ut_{*}, \tilde{u}^{*}})$, we have
\begin{equation}\label{eq: W22 to L infty}
\|f\|_{L^{\infty}(\Sigma_{t}^{\ut_{*}, \tilde{u}^{*}})}\leq C \sum_{a+b\leq 2}\|\Tr^{a}\Xr^{b}(f)\|_{L^{2}(\Sigma_{t}^{\ut_{*}, \tilde{u}^{*}})}.
\end{equation}
where $C$ depends only on the fixed parameters of
Section~\ref{subsec: notation conventions}, is independent of the cutoff
$\alpha\in(0,1/100)$, and is valid for sufficiently small $\varepsilon$.
\end{lemma}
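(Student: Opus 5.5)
We will prove \eqref{eq: W22 to L infty} by pulling back to the fixed coordinate rectangle $[\tilde u_*,\tilde u^*]\times\mathbb T$ in the characteristic chart $(\tilde u,\vartheta)$. On that rectangle we use the classical embedding
\[
\|f\|_{L^\infty}\leq C\sum_{a+b\leq 2}\|\partial_{\tilde u}^a\partial_\vartheta^b f\|_{L^2([\tilde u_*,\tilde u^*]\times\mathbb T)}.
\]
The rectangle has width at least $\frac{3}{4}\frac{\gamma+1}{\gamma-1}\mathbf c_*-\frac{1}{100}\frac{\gamma+1}{\gamma-1}\mathbf c_*$, which is bounded below independently of $\alpha$. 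Its width is also bounded above. Hence one fixed extension operator, or a one-dimensional Sobolev inequality in $\tilde u$ combined with the $H^1(\mathbb T)\subset L^\infty$ embedding in $\vartheta$, gives a constant independent of $\alpha$. Since $\frac14\leq\slashed g\leq4$ by \eqref{eq: L infty control of the geometry}, the measure $\sqrt{\slashed g}\,d\tilde u\,d\vartheta$ is comparable to Lebesgue measure. It therefore remains to bound $\partial_{\tilde u}^a\partial_\vartheta^b f$ for $a+b\leq2$ by $\sum_{a+b\leq2}|\Tr^a\Xr^b f|$ together with lower-order error terms.

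For the first derivatives we reuse the identities from the proof of \eqref{eq: W12 to L4}:
\[
\partial_{\tilde u}=p_1\Tr+q_1\Xr,\qquad \partial_\vartheta=p_2\Tr+q_2\Xr,
\]
with coefficients
\[
p_1=-\frac{\kappa\hat T^1+\Xi\sqrt{\slashed g}\hat X^1}{\kappar},\quad q_1=\kappa\hat T^2+\Xi\sqrt{\slashed g}\hat X^2,\quad p_2=-\frac{\sqrt{\slashed g}\hat X^1}{\kappar},\quad q_2=\sqrt{\slashed g}\hat X^2.
\]
These coefficients are $O(1)$ in $L^\infty$ by \eqref{eq: L infty control of the geometry}. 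Note that $\hat X^1/\kappar=\hat T^2/t$ is bounded by $P(M)\varepsilon$, and $\kappa/\kappar$ is bounded, so no factor of $1/t$ is lost. Applying a second coordinate derivative produces terms of the form $p_ip_j\Tr^2f$ and similar second-order terms, which are bounded pointwise by $C|\Zr^{2}f|$. It also produces terms of the form $(\partial p_i)\Tr f$ and $(\partial q_i)\Xr f$, where a coordinate derivative falls on a coefficient.

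The main obstacle is these coefficient-derivative terms. The first derivatives of the coefficients are available only in $L^2$, namely $Z(\hat T^i)$, $Z\kappa$, $Z\Xi$ and $Z\slashed g$, which are controlled in $L^\infty$ only at order one and in $L^2$ at higher orders. We would use the bounds \eqref{eq: L infty control of the geometry} directly: they give $Z(\hat T^i)$, $Z(\kappa-t)$, $\slashed g-1$ and $\Xi$ in $L^\infty$. For $Z(\slashed g)$ and $Z(\Xi)$ we use the $L^2$ bounds together with \eqref{eq: W12 to L4}. Each coordinate derivative is a bounded combination of $T$ and $\hat X$, and $\partial_{\tilde u}\kappar=0$, so each $\partial p_i$ and $\partial q_i$ is a sum of bounded multiples of the quantities $Z(\hat T^i)$, $Z(\kappa)/t$, $Z\Xi$ and $Z\slashed g$.

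The terms involving $Z\slashed g$ and $Z\Xi$ appear multiplied by $\Xi$ or by $\hat X^1/\kappar$, which are small in $L^\infty$. Any term carrying only an $L^2$ or $L^4$ bound is estimated by H\"older's inequality:
\[
\|(\partial p)\,\Zr f\|_{L^2}\leq\|\partial p\|_{L^4}\|\Zr f\|_{L^4}\leq P(M)\varepsilon\,\|\Zr f\|_{L^4}.
\]
We then bound $\|\Zr f\|_{L^4}$ by \eqref{eq: W12 to L4}, which gives at most $C\sum_{a+b\leq2}\|\Tr^a\Xr^bf\|_{L^2}$. Summing these estimates and choosing $\varepsilon$ small, depending on $M$, shows that $\sum_{a+b\leq2}\|\partial_{\tilde u}^a\partial_\vartheta^bf\|_{L^2}$ is bounded by $C\sum_{a+b\leq2}\|\Tr^a\Xr^bf\|_{L^2}$. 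Combining this with the rectangle embedding gives \eqref{eq: W22 to L infty}, with a constant independent of $\alpha$.
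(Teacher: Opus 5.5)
Your proposal is correct and is essentially the paper's proof. You pull back to the rectangle in $(\tilde u,\vartheta)$ and write $\partial_{\tilde u},\partial_\vartheta$ as $O(1)$ combinations of $\Tr,\Xr$; the coefficient-derivative terms are then handled by putting $Z\slashed g, Z\Xi$ and $\Zr f$ in $L^4$ via \eqref{eq: W12 to L4} and H\"older, with your coordinate derivatives of the coefficients interchangeable with the paper's $\Zr$-derivatives.

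One claim is inaccurate but harmless: $Z\slashed g$ and $Z\Xi$ do not always carry small prefactors. For example, $\sqrt{\slashed g}\,\hat X^2\,\partial\Xi$ in $\partial q_1$ and $\hat X^2\,\partial\sqrt{\slashed g}$ in $\partial q_2$ have $O(1)$ coefficients (the paper's ``$100|\Zr(\Xi)|$'' and ``$100|\Zr(\slashed g)|$''). This does not matter, because $\|Z\slashed g\|_{L^4}+\|Z\Xi\|_{L^4}\leq P(M)\varepsilon$ already follows from \eqref{eq: W12 to L4} and the $|\beta|\leq2$ bounds in \eqref{eq: L 2 control of the geometry}.
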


\begin{proof}
We recall the classical Sobolev embedding $W^{2, 2}\rightarrow L^{\infty}$ for rectangle $[\tilde{u}_{*}, \tilde{u}^{*}]\times \mathbb{T}$,
\[
\begin{aligned}
\|f\|^2_{L^\infty([\tilde u_*,\tilde u^*]\times\mathbb T)}
&\leq C\Big(\|f\|^2_{L^2([\tilde u_*,\tilde u^*]\times\mathbb T)}
+\|\partial_{\tilde u}f\|^2_{L^2([\tilde u_*,\tilde u^*]\times\mathbb T)}\\
&\quad+\|\partial_{\tilde u}^2f\|^2_{L^2([\tilde u_*,\tilde u^*]\times\mathbb T)}
+\|\partial_\vartheta f\|^2_{L^2([\tilde u_*,\tilde u^*]\times\mathbb T)}\\
&\quad+\|\partial_\vartheta^2f\|^2_{L^2([\tilde u_*,\tilde u^*]\times\mathbb T)}\Big).
\end{aligned}
\]
In view of the Jacobian matrix of $(t, \tilde u, \vartheta)\mapsto (t, x_{1}, x_{2})$, 
\[
\begin{pmatrix}
\frac{\partial t}{\partial t}&\frac{\partial t}{\partial \tilde u}&\frac{\partial t}{\partial \vartheta}\\
\frac{\partial x_{1}}{\partial t}&\frac{\partial x_{1}}{\partial \tilde u}&\frac{\partial x_{1}}{\partial \vartheta}\\
\frac{\partial x_{2}}{\partial t}&\frac{\partial x_{2}}{\partial \tilde u}&\frac{\partial x_{2}}{\partial \vartheta}
\end{pmatrix}=
\begin{pmatrix}
1&0&0\\
L^{1}&\kappa\hat{T}^{1}+\Xi\sqrt{\slashed{g}}\hat{X}^{1}&\sqrt{\slashed{g}}\hat{X}^{1}\\
L^{2}&\kappa\hat{T}^{2}+\Xi\sqrt{\slashed{g}}\hat{X}^{2}&\sqrt{\slashed{g}}\hat{X}^{2}\\
\end{pmatrix}
\]
Note that
\[
\begin{cases}
\frac{\partial f}{\partial \tilde{u}}=-\frac{(\kappa\hat{T}^{1}+\Xi\sqrt{\slashed{g}}\hat{X}^{1})}{\kappar}\cdot \Tr(f)+(\kappa\hat{T}^{2}+\Xi\sqrt{\slashed{g}}\hat{X}^{2})\cdot \Xr(f),\\
\frac{\partial f}{\partial \vartheta}=-\frac{\sqrt{\slashed{g}}\hat{X}^{1}}{\kappar}\cdot \Tr(f)+\sqrt{\slashed{g}}\hat{X}^{2}\cdot \Xr(f).
\end{cases}
\]
Note that all the coefficients are bounded by a fixed constant providing that $\varepsilon$ is sufficiently small. Therefore, we have
\[
|\frac{\partial f}{\partial \tilde{u}}|^{2}+|\frac{\partial f}{\partial \vartheta}|^{2}\lesssim |\Tr(f)|^{2}+|\Xr(f)|^{2}.
\]
and we also have (applying the previous observation to $\frac{\partial f}{\partial \tilde{u}}$ and $\frac{\partial f}{\partial \vartheta}$ separately),
\[
|\frac{\partial^2 f}{\partial \tilde{u}^{2}}|^{2}+|\frac{\partial^2 f}{\partial \vartheta^{2}}|^{2}\lesssim |\Tr(\frac{\partial f}{\partial \tilde{u}})|^{2}+ |\Xr(\frac{\partial f}{\partial \tilde{u}})|^{2}+|\Tr(\frac{\partial f}{\partial \vartheta})|^{2}+ |\Xr(\frac{\partial f}{\partial \vartheta})|^{2}
\]
Note that
\[
\begin{cases}
\Zr \Big( \kappa\hat{T}^{i}+\Xi\sqrt{\slashed{g}}\hat{X}^{i} \Big)=\Zr(\kappa)\hat{T}^{i}+\kappa\Zr(\hat{T}^{i})+\Zr(\Xi)\sqrt{\slashed{g}}\hat{X}^{i}+\Xi\frac{1}{2}\slashed{g}^{-\frac{1}{2}}\Zr(\slashed{g})\hat{X}^{i}+\Xi\sqrt{\slashed{g}}\Zr(\hat{X}^i),\\
\Zr \Big( \sqrt{\slashed{g}}\hat{X}^{i} \Big)=\frac{1}{2}\slashed{g}^{-\frac{1}{2}}\hat{X}^{i}\Zr(\slashed{g })+\sqrt{\slashed{g}}\Zr(\hat{X}^{i}).
\end{cases}
\]
In view of \eqref{eq: L infty control of the geometry} and \eqref{eq: L 2 control of the geometry}, we have
\[
\begin{cases}
|\Zr \Big( \frac{\kappa\hat{T}^{1}+\Xi\sqrt{\slashed{g}}\hat{X}^{1}}{\kappar} \Big)|\leq P(M)\varepsilon + P(M)\varepsilon \cdot |\Zr(\slashed{g})|+ P(M)\varepsilon \cdot |\Zr(\Xi)|,\\
|\Zr \Big( \kappa\hat{T}^{2}+\Xi\sqrt{\slashed{g}}\hat{X}^{2} \Big)|\leq P(M)\varepsilon + P(M)\varepsilon \cdot  |\Zr(\slashed{g})|+ 100\cdot |\Zr(\Xi)|,\\
|\Zr \Big( \frac{\sqrt{\slashed{g}}\hat{X}^{1}}{\kappar} \Big)|\leq P(M)\varepsilon \cdot |\Zr(\slashed{g})| + P(M)\varepsilon \\
|\Zr \Big( \sqrt{\slashed{g}}\hat{X}^{2} \Big)|\leq 100\cdot |\Zr(\slashed{g})|+ P(M)\varepsilon.
\end{cases}
\]
Therefore, we have
\[
\begin{split}
&\|\frac{\partial^2 f}{\partial \tilde{u}^{2}}\|_{L^{2}(\Sigma_{t}^{\ut_{*}, \tilde{u}^{*}})}+\|\frac{\partial^2 f}{\partial \vartheta^{2}}\|_{L^{2}(\Sigma_{t}^{\ut_{*}, \tilde{u}^{*}})}\\ &\leq C\Big(\sum_{|\alpha|\leq 2}\|\Zr^{\alpha}(f)\|_{L^{2}(\Sigma_{t}^{\ut_*,\tilde{u}^{*}})}\\ &\qquad+ \|\Zr(\slashed{g})\cdot \Zr(f)\|_{L^{2}(\Sigma_{t}^{\ut_{*}, \tilde{u}^{*}})}\\ &\qquad+\|\Zr(\Xi)\cdot \Zr(f)\|_{L^{2}(\Sigma_{t}^{\ut_{*}, \tilde{u}^{*}})}\Big)\\
&\leq C\Big(\sum_{|\alpha|\leq 2}\|\Zr^{\alpha}(f)\|_{L^{2}(\Sigma_{t}^{\ut_{*}, \tilde{u}^{*}})}\\ &\qquad+ \|\Zr(\slashed{g})\|_{L^{4}(\Sigma_{t}^{\ut_{*}, \tilde{u}^{*}})}\cdot \|\Zr(f)\|_{L^{4}(\Sigma_{t}^{\ut_{*}, \tilde{u}^{*}})}\\
&+\|\Zr(\Xi)\|_{L^{4}(\Sigma_{t}^{\ut_{*}, \tilde{u}^{*}})}\cdot \|\Zr(f)\|_{L^{4}(\Sigma_{t}^{\ut_{*}, \tilde{u}^{*}})}\Big)\\
&\leq C\Big(\sum_{|\alpha|\leq 2}\|\Zr^{\alpha}(f)\|_{L^{2}(\Sigma_{t}^{\ut_{*}, \tilde{u}^{*}})}\\ &\qquad+ \|Z(\slashed{g})\|_{L^{4}(\Sigma_{t}^{\ut_{*}, \tilde{u}^{*}})}\cdot \|\Zr(f)\|_{L^{4}(\Sigma_{t}^{\ut_*,\tilde{u}^{*}})}\\
&+\|Z(\Xi)\|_{L^{4}(\Sigma_{t}^{\ut_{*}, \tilde{u}^{*}})}\cdot \|\Zr(f)\|_{L^{4}(\Sigma_{t}^{\ut_{*}, \tilde{u}^{*}})}\Big)
\end{split}
\]
Applying $W^{1, 2}\rightarrow L^{4}$ estimate \eqref{eq: W12 to L4}, we can conclude that 
\[
\|\frac{\partial^2 f}{\partial \tilde{u}^{2}}\|_{L^{2}(\Sigma_{t}^{\ut_{*}, \tilde{u}^{*}})}+\|\frac{\partial^2 f}{\partial \vartheta^{2}}\|_{L^{2}(\Sigma_{t}^{\ut_{*}, \tilde{u}^{*}})}\leq C\sum_{|\alpha|\leq 2}\|\Zr^{\alpha}(f)\|_{L^{2}(\Sigma_{t}^{\ut_{*}, \tilde{u}^{*}})}
\]
providing that $\varepsilon$ is sufficiently small. This completes the proof of the lemma.
\end{proof}

As byproducts, in view of $(\mathbf{B}_{2})$ and $W^{2, 2}\rightarrow L^{\infty}$ Sobolev embedding \eqref{eq: W22 to L infty}, we have $L^{\infty}$ bound for $\Zr^{\alpha}(V_{k})=\Zr^{\alpha}(U-\Ur_{k}), k=0, 1$ as follows, 
\begin{proposition}
For $t\in [0, t^{*}], \Zr\in \{\Tr, \Xr\}$ and $|\alpha|\leq 2$, 
\[
\|\Zr^{\alpha}(V_{k})\|_{L^{\infty}(\Sigma_{t}^{\ut_{*}, \tilde{u}^{*}})}\lesssim P(M)\varepsilon t^{k+1}.
\]
\end{proposition}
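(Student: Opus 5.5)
The plan is to apply the Sobolev inequality \eqref{eq: W22 to L infty} to $f=\Zr^{\alpha}(V_k)$ for each $|\alpha|\leq 2$. This bounds $\|\Zr^{\alpha}V_k\|_{L^\infty(\Sigma_t^{\ut_*,\ut^*})}$ by $C\sum_{a+b\leq 2}\|\Tr^a\Xr^b\Zr^{\alpha}V_k\|_{L^2(\Sigma_t^{\ut_*,\ut^*})}$. Since $\Tr$ and $\Xr$ commute, every term on the right is of the form $\|V_{k;a',b'}\|_{L^2}$ with $a'+b'\leq 4=\Ntop$. This is exactly the range controlled by $(\mathbf B_2)$ near vacuum.

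The second step converts the energy in $(\mathbf B_2)$ into the unweighted slice norm. By definition,
\[
\mathcal E(V)(t;\ut_*,\ut^*)=\int_{\ut_*}^{\ut^*}\int_{\mathbb T}|V|^2\kappa\sqrt{\slashed g}\,d\vartheta\,d\tilde u .
\]
By \eqref{eq: L infty control of the geometry} we have $|\kappa-t|\leq P(M)\varepsilon t^2$, so $\kappa\geq t/2$ once $P(M)\varepsilon$ is small, and $\slashed g\in[1/4,4]$. Hence
\[
t\|V_{k;a',b'}\|^2_{L^2(\Sigma_t^{\ut_*,\ut^*})}\leq 2\,\mathcal E(V_{k;a',b'})(t;\ut_*,\ut^*)\leq 2M\varepsilon^2t^{2k+3}.
\]
This gives $\|V_{k;a',b'}\|_{L^2}\leq CM^{1/2}\varepsilon t^{k+1}$, and summing yields the claim with $P(M)=CM^{1/2}$ (or any polynomial dominating it).

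The only point needing care is uniformity in the cutoff $\alpha$. The constant in \eqref{eq: W22 to L infty} is independent of $\alpha$, and so are the lower bound for $\kappa/t$ and the bounds on $\slashed g$, by the geometric propositions above. No inverse power of $c$ enters anywhere in the argument. I therefore expect no real obstacle. The step most worth checking is that the geometric bounds used inside the Sobolev lemma (the $L^4$ control of $Z\slashed g$ and $Z\Xi$) are available under the same bootstrap assumptions, which the preceding propositions provide.
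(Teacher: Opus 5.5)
Your proposal is correct and follows the paper's own argument, which obtains this bound directly from $(\mathbf B_2)$ and the $W^{2,2}\to L^\infty$ inequality \eqref{eq: W22 to L infty}. The step you add, converting the $\kappa$-weighted energy into the unweighted slice norm via $\kappa\geq t/2$ and $\slashed g\in[1/4,4]$, is the detail the paper leaves implicit.
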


\section{Energy estimates}\label{sec: energy estimates}
We retain the scaled residual $\mathring R_k$ defined in
Lemma~\ref{lem: bounds on Res}; hence the residual in physical
coordinates is $t^{-1}\mathring R_k$.
The remainder equations in this section are written in physical
coordinates: their $\partial_t$ keeps $(x_1,x_2)$ fixed, while
$\Tr=-t\partial_{x_1}$. The coordinate and measure conventions are
collected in Section~\ref{subsec: notation conventions}.
We write $E=\mathcal E$ and $F=\mathcal F$ for the energy and flux.
In the near-vacuum chart, $\mathcal D(t,\tilde u)=\mathcal D(t;\tilde u_*,\tilde u)$, whereas in the away-from-vacuum chart, $\mathcal D(t,u)=\mathcal D(t;0,u)$.
Unless explicitly written otherwise, spacetime integrals in this section use the measure
$(\kappa(t',u,\vartheta)/t')\sqrt{\slashed g(t',u,\vartheta)}\,dt'\,du\,d\vartheta$
(with $\tilde u$ in place of $u$ in the near-vacuum chart).
Slice integrals use $\sqrt{\slashed g}\,du\,d\vartheta$, and integrals over $C_u^t$ use $\sqrt{\slashed g}\,dt\,d\vartheta$.

Recall that $V_{k}=U-\Ur_{k}$, where $\Ur_{k}$ denotes $k$-th order approximate solution which satisfies the equations,
\[
\partial_{t}V_{k}+A_{1}(U)\partial_{x_1}V_{k}+A_{2}(U)\partial_{x_2}V_{k}=-A_{1}(V_{k})\partial_{x_1}\Ur_{k}-A_{2}(V_{k})\partial_{x_2}\Ur_{k}-t^{-1}\mathring{R}_{k}.
\]
and $k$ will be $0$ or $1$ in this section.

Recall that 
\[
\begin{cases}
\Ur_{0}(t, \mathring{u}, x_{2})=\Vr_{0}(\mathring{u}, x_{2}),\\
\Ur_{1}(t, \mathring{u}, x_{2})=\Vr_{0}(\mathring{u}, x_{2})+t\cdot \Vr_{1}(\mathring{u}, x_{2}).
\end{cases}
\]
and we lists properties of $\Vr_{k}$ and $\mathring{R}_{k}$ for reference in this section,
\begin{itemize}
\item The exact form of $\Vr_{0}$,
\[
\Ur_{0}(\mathring{u}, x_{2})=\begin{pmatrix}
\frac{2}{\gamma-1}c_{r}(0, x_{2})+\frac{2}{\gamma+1}\big( \mathring{u}-(v_{r}^{1}+c_{r})(0, x_{2}) \big)\\
v^{1}_{r}(0, x_{2})+\frac{2}{\gamma+1}\big( \mathring{u}-(v_{r}^{1}+c_{r})(0, x_{2}) \big)\\
v_{r}^{2}(0, x_{2})
\end{pmatrix}
\]
\item When $k=1$ (for all $\gamma\in(1,3)$ away from vacuum, and for $\gamma\in\{7/5,5/3\}$ near vacuum), the rough bound for $\Vr_{1}(\mathring{u}, x_{2})$, for any $a+b\leq \Ntop+1$, 
\[
|\frac{\partial^{a}}{\partial \mathring{u}^{a}}\frac{\partial^{b}}{\partial x_{2}^{b}}\Vr_{1}|\leq C\varepsilon.
\]
which is equivalent to
\[
|\Tr^{a}\Xr^{b}\big( \Vr_{1}(\frac{x_{1}}{t}, x_{2}) \big)|\leq C\varepsilon.
\]
\item The rough bound for $\mathring{R}_{k}(t, \mathring{u}, x_{2})$, for any $a+b\leq \Ntop$, 
\[
|\frac{\partial^{a}}{\partial \mathring{u}^{a}}\frac{\partial^{b}}{\partial x_{2}^{b}}\Rr_{k}|\leq C\varepsilon t^{k+1}.
\]
which is equivalent to
\[
|\Tr^{a}\Xr^{b}\big( \Rr_{k}(t, \frac{x_{1}}{t}, x_{2}) \big)|\leq C\varepsilon t^{k+1}.
\]
\end{itemize}

In view of the basic energy estimate, we commute $\Tr^{a}\Xr^{b}$ with $a+b\leq\Nt$, where
$\Nt=\Ntop$ away from vacuum and $\Nt=4$ near vacuum, with 
\[
t\big(\partial_{t}V_{k}+A_{1}(U)\partial_{x_1}V_{k}+A_{2}(U)\partial_{x_2}V_{k}\big)=t\big(-A_{1}(V_{k})\partial_{x_1}\Ur_{k}-A_{2}(V_{k})\partial_{x_2}\Ur_{k}-t^{-1}\mathring{R}_{k}\big),
\]
or equivalently,
\begin{equation}\label{eq: eq of V_k}
t\partial_{t}(V_{k})-A_{1}(U)\Tr(V_{k})+t\cdot A_{2}(U)\partial_{x_2}V_{k}=A_{1}(V_{k})\Tr(\Ur_{k})-t\cdot A_{2}(V_{k})\partial_{x_2}\Ur_{k}-\mathring{R}_{k}.
\end{equation}
We can decompose the seeming dangerous source term $A_{1}(V_{k})\Tr(\Ur_{k})$ (without $t$ factor) as follows,
\[
A_{1}(V_{k})\Tr(\Ur_{k})=\begin{cases}
A_{1}(V_{0})\Tr(\Vr_{0}),\ \ k=0,\\
A_{1}(V_{1})\Tr(\Vr_{0})+t\cdot A_{1}(V_{1})\Tr(\Vr_{1}), \ \ k=1.
\end{cases}
\]
Note that
\[
A_{1}(V_{k})\Tr(\Vr_{0})=-\frac{2}{\gamma+1}\begin{pmatrix}
\frac{\gamma-1}{2}&1&0\\
\frac{\gamma-1}{2}&1&0\\
0&0&0
\end{pmatrix}V_{k}.
\]
Therefore, we can handle the case $k=0, 1$ in a uniform way by rewriting \eqref{eq: eq of V_k} as follows
\begin{equation}\label{eq: rough eq of V_k}
t\partial_{t}(V_{k})-A_{1}(U)\Tr(V_{k})+t\cdot A_{2}(U)\partial_{x_2}V_{k}=-\frac{2}{\gamma+1}\begin{pmatrix}
\frac{\gamma-1}{2}&1&0\\
\frac{\gamma-1}{2}&1&0\\
0&0&0
\end{pmatrix}V_{k}-t\cdot F(V_{k}, \Wr_{O(\varepsilon)})-\Rr_{k}.
\end{equation}
Here $F$ denotes a finite sum of bilinear contractions with fixed
coefficients, and $\Wr_{O(\varepsilon)}$ denotes the corresponding
finite family of profile coefficients. The particular contraction
may change from one schematic formula to the next. In the remainder
equation it is vector-valued; in a scalar bulk integral it is
scalar-valued after contraction. For any $a+b\leq\Ntop$, each member
of the coefficient family satisfies
\begin{equation}\notag
|\Tr^{a}\Xr^{b}(\Wr_{O(\varepsilon)})|\leq C\varepsilon.
\end{equation}
The notation $\Wr_{O(1)}$ has the same meaning with the bound $C$
in place of $C\varepsilon$. Both coefficient families are unrelated
to the region $\Wr$ introduced in Section~\ref{subsec: opened fan}.

\subsection{Deriving equations for $\Tr^{a}\Xr^{b}(U-\Ur_{k})$}
For simplicity, we use $V_{k; a, b}$ to denote $\Tr^{a}\Xr^{b}(V_{k})=\Tr^{a}\Xr^{b}(U-\Ur_{k})$ with $a+b\leq \Nt$, where 
\[
\Nt=\begin{cases}
\Ntop, \ \ \textbf{away from vacuum case},\\
4, \ \ \textbf{up to vacuum case}.
\end{cases}
\]

We Commute $\Xr^{b}$ with the equations \eqref{eq: rough eq of V_k} and obtain
\begin{equation}\label{eq: eq of V_k 0 b}
\begin{split}
&t\partial_{t}(V_{k; 0, b})-A_{1}(U)\Tr(V_{k; 0, b})+t\cdot A_{2}(U)\partial_{x_2}V_{k; 0, b}\\
&=-\frac{2}{\gamma+1}\begin{pmatrix}
\frac{\gamma-1}{2}&1&0\\
\frac{\gamma-1}{2}&1&0\\
0&0&0
\end{pmatrix}V_{k; 0, b}\\
&-t\cdot \sum_{b_{1}+b_{2}=b} \binom{b}{b_{1}} F\big(V_{k; 0, b_{1}}, \Xr^{b_2}(\Wr_{O(\varepsilon)})\big)-\Xr^{b}(\Rr_{k})+I_{k; 0, b}.
\end{split}
\end{equation}
where
\[
I_{k; 0, b}:=\sum_{b_{1}+b_{2}=b, b_{1}\geq 1}\binom{b}{b_{1}}\Xr^{b_1}(U^{m})\frac{\partial A_{1}(U)}{\partial U^{m}}V_{k; 1, b_{2}}-\kappar\sum_{b_{1}+b_{2}=b, b_{1}\geq 1}\binom{b}{b_{1}}\Xr^{b_1}(U^{m})\frac{\partial A_{2}(U)}{\partial U^{m}}V_{k; 0, b_{2}+1}.
\]

Then we commute $\Tr^{a}$ with the equations \eqref{eq: eq of V_k 0 b} and obtain
\begin{equation}\label{eq: eq of V_k a b}
t\partial_{t}(V_{k; a, b})-A_{1}(U)\Tr(V_{k; a, b})+t\cdot A_{2}(U)\partial_{x_2}V_{k; a, b}=\textbf{Source terms}_{k; a, b}
\end{equation}
where terms containing the index $a-1$ are omitted when $a=0$, and
\[
\begin{split}
&\textbf{Source terms}_{k; a, b}\\
&:=\underbrace{-\frac{2}{\gamma+1}\begin{pmatrix}
\frac{\gamma-1}{2}&1&0\\
\frac{\gamma-1}{2}&1&0\\
0&0&0
\end{pmatrix}V_{k; a, b}+aV_{k; a, b}+a\Tr(U^{m})\frac{\partial A_{1}(U)}{\partial U^{m}}V_{k; a, b}-a\kappar\Tr(U^{m})\frac{\partial A_{2}(U)}{\partial U^{m}}V_{k; a-1, b+1}}_{\mathbf{S}_{1; a, b}}\\
&\underbrace{-t\cdot \sum_{a_{1}+a_{2}=a}\sum_{b_{1}+b_{2}=b} \binom{b}{b_{1}}\binom{a}{a_{1}} F\big(V_{k; a_{1}, b_{1}}, \Tr^{a_{2}}\Xr^{b_2}(\Wr_{O(\varepsilon)})\big)}_{\mathbf{S}_{2; a, b}}\underbrace{-\Tr^{a}\Xr^{b}(\Rr_{k})}_{\mathbf{S}_{3; a, b}}+\underbrace{\Tr^{a}(I_{k; 0, b})+I_{k; a, b}}_{\mathbf{S}_{4; a, b}}.
\end{split}
\]
and
\[
I_{k; a, b}:=\sum_{a_{1}+a_{2}=a, a_{1}\geq 2}\binom{a}{a_1}\Tr^{a_{1}}(U^{m})\frac{\partial A_{1}(U)}{\partial U^{m}}V_{k; a_{2}+1, b}-\sum_{a_{1}+a_{2}=a, a_{1}\geq 2}\binom{a}{a_1}\kappar\Tr^{a_{1}}(U^{m})\frac{\partial A_{2}(U)}{\partial U^{m}}V_{k; a_{2}, b+1}.
\]
Applying basic energy estimate for $V_{k; a, b}$ with equations \eqref{eq: eq of V_k a b}, we have
\begin{equation}\label{eq: higher order energy estimate}
\begin{split}
&E(V_{k; a, b})(t, \tilde{u})+F(V_{k; a, b})(t, \tilde{u})\leq E(V_{k; a, b})(0, \tilde{u})+F(V_{k; a, b})(t, \tilde{u}_{*})\\
&+\sum_{j=1}^{4}\underbrace{\int_{\mathcal{D}(t, \tilde{u})} 2\big( V_{k; a, b}, \mathbf{S}_{j; a, b} \big)  }_{\textbf{bulk terms from source terms}}\\
&+\underbrace{\int_{\mathcal{D}(t, \tilde{u})} \Big( -(V_{k; a, b}, \frac{\partial A_{1}(U)}{\partial U^{m}}V_{k; a, b})\Tr(U^{m})+\kappar (V_{k; a, b}, \frac{\partial A_{2}(U)}{\partial U^{m}}V_{k; a,b})\Xr(U^{m}) \Big)}_{\textbf{bulk terms from integration by part}}.
\end{split}
\end{equation}

We make some reductions of bulk terms.
\begin{itemize}
\item The bulk term associated with $\mathbf{S}_{1; a, b}$. We decompose $a\Tr({U^{m}})\frac{\partial A_{1}(U)}{\partial U^{m}}V_{k ; a, b}$ as follows,
\[
\begin{split}
a\Tr({U^{m}})\frac{\partial A_{1}(U)}{\partial U^{m}}V_{k ; a, b}&=a\Tr(\Ur_{k}^{m})\frac{\partial A_{1}(U)}{\partial U^{m}}V_{k ; a, b}+aV_{k; 1, 0}^{m}\frac{\partial A_{1}(U)}{\partial U^{m}}V_{k ; a, b}\\
&=a\Tr(\Vr_{0}^{m})\frac{\partial A_{1}(U)}{\partial U^{m}}V_{k ; a, b}+\underbrace{akt\cdot \Tr(\Vr_{1}^{m})\frac{\partial A_{1}(U)}{\partial U^{m}}V_{k ; a, b}}_{t\cdot F(V_{k; a, b}, \Wr_{O(\varepsilon)})}+\underbrace{aV_{k; 1, 0}^{m}\frac{\partial A_{1}(U)}{\partial U^{m}}V_{k ; a, b}}_{F(V_{k; 1, 0}, V_{k; a, b})}\\
&=a\cdot \begin{pmatrix}
-\frac{2}{\gamma+1} & -\frac{\gamma-1}{\gamma+1} & 0\\
-\frac{\gamma-1}{\gamma+1} & -\frac{2}{\gamma+1} & 0\\
0 & 0 & -\frac{2}{\gamma+1}
\end{pmatrix}V_{k; a, b}+t\cdot F(V_{k; a, b}, \Wr_{O(\varepsilon)})+F(V_{k; 1, 0}, V_{k; a, b}).
\end{split}
\]
and note that
\[
aV_{k; a, b}+a\cdot \begin{pmatrix}
-\frac{2}{\gamma+1} & -\frac{\gamma-1}{\gamma+1} & 0\\
-\frac{\gamma-1}{\gamma+1} & -\frac{2}{\gamma+1} & 0\\
0 & 0 & -\frac{2}{\gamma+1}
\end{pmatrix}V_{k; a, b}=\underbrace{a\cdot \begin{pmatrix}
\frac{\gamma-1}{\gamma+1} & -\frac{\gamma-1}{\gamma+1} & 0\\
-\frac{\gamma-1}{\gamma+1} & \frac{\gamma-1}{\gamma+1} & 0\\
0 & 0 & \frac{\gamma-1}{\gamma+1}
\end{pmatrix}V_{k; a, b}}_{\textbf{Null structure in commutators}}.
\]

We also decompose $-a\kappar\Tr(U^{m})\frac{\partial A_{2}(U)}{\partial U^{m}}V_{k; a-1, b+1}$  as follows,
\[
\begin{split}
-a\kappar\Tr(U^{m})\frac{\partial A_{2}(U)}{\partial U^{m}}V_{k; a-1, b+1}&=-a\kappar\Tr(\Ur_{k}^{m})\frac{\partial A_{2}(U)}{\partial U^{m}}V_{k; a-1, b+1}-a\kappar\cdot V_{k; 1, 0}^{m}\frac{\partial A_{2}(U)}{\partial U^{m}}V_{k; a-1, b+1}\\
&=t\cdot F(V_{k; a-1, b+1}, \Wr_{O(1)})+t\cdot F(V_{k; 1, 0}, V_{k; a-1, b+1}).
\end{split}
\]
and note that
\begin{equation}\label{eq: damping mechanism}
\begin{split}
2\big( V_{k; a, b}, -\frac{2}{\gamma+1}\begin{pmatrix}
\frac{\gamma-1}{2} & 1 & 0\\
\frac{\gamma-1}{2} & 1 & 0\\
0 & 0 & 0
\end{pmatrix}V_{k; a, b}\big)&=\big( V_{k; a, b}, \begin{pmatrix}
-\frac{2(\gamma-1)}{\gamma+1} & -1 & 0\\
-1 & -\frac{4}{\gamma+1} & 0 \\
0 & 0 & 0
\end{pmatrix}V_{k; a, b} \big)\\
&=\underbrace{\big( V_{k; a, b}, \begin{pmatrix}
-\frac{2(\gamma-1)}{\gamma+1}-1 & 0 & 0\\
0 & -\frac{4}{\gamma+1}-1 & 0 \\
0 & 0 & 0
\end{pmatrix}V_{k; a, b} \big)}_{\textbf{Damping by subtracting approximate solution}}\\ &\quad+\underbrace{\big( V_{k; a, b}, \begin{pmatrix}
1 & -1 & 0\\
-1 & 1 & 0 \\
0 & 0 & 0
\end{pmatrix}V_{k; a, b} \big)}_{\textbf{Null structure}}.
\end{split}
\end{equation}

Therefore, we can rewrite the bulk term associated with $\mathbf{S}_{1; a, b}$ as follows,
\begin{equation}\label{eq: null structure in commutators}
\begin{split}
&\int_{\mathcal{D}(t, \tilde{u})}\Big( \underbrace{\big(V_{k; a, b}, \begin{pmatrix}
-\frac{2(\gamma-1)}{\gamma+1}&-1&0\\
-1&-\frac{4}{\gamma+1}&0\\
0&0&0
\end{pmatrix}V_{k; a, b} \big)}_{\textbf{Damping by subtracting approximate solution}}+\underbrace{2a\big(V_{k; a, b}, \begin{pmatrix}
\frac{\gamma-1}{\gamma+1}&-\frac{\gamma-1}{\gamma+1}&0\\
-\frac{\gamma-1}{\gamma+1}&\frac{\gamma-1}{\gamma+1}&0\\
0&0&\frac{\gamma-1}{\gamma+1}
\end{pmatrix}V_{k; a, b} \big)}_{\textbf{Null structure in commutators}} \Big)\\
&+\int_{\mathcal{D}(t, \tilde{u})}\Big( t'\cdot \big(V_{k; a, b}, F(V_{k; a, b}, \Wr_{O(\varepsilon)})\big)+t'\cdot \big(V_{k; a, b}, F(V_{k; a-1, b+1}, \Wr_{O(1)})\big)\Big)\\
&+\int_{\mathcal{D}(t, \tilde{u})}\Big( \big(V_{k; a, b}, F(V_{k; 1, 0}, V_{k; a, b})\big)+t'\cdot \big(V_{k; a, b}, F(V_{k; 1, 0}, V_{k; a-1, b+1})\big)\Big).
\end{split}
\end{equation}
\item \textbf{Bulk terms from integration by parts}. For the normal-derivative contribution, we have
\begin{equation}\label{eq: anti damping by integration by parts}
\begin{split}
&-\big( V_{k; a, b}, \frac{\partial A_{1}(U)}{\partial U^{m}}V_{k; a, b} \big)\Tr(U^{m})\\ &=-\big( V_{k; a, b}, \frac{\partial A_{1}(U)}{\partial U^{m}}V_{k; a, b} \big)\Tr(\Ur_{k}^{m})\\ &\quad-\big( V_{k; a, b}, \frac{\partial A_{1}(U)}{\partial U^{m}}V_{k; a, b} \big)V_{k; 1, 0}^{m}\\
&=-\big( V_{k; a, b}, \frac{\partial A_{1}(U)}{\partial U^{m}}V_{k; a, b} \big)\Tr(\Vr_{0}^{m})\\
&-kt\cdot \big( V_{k; a, b}, \frac{\partial A_{1}(U)}{\partial U^{m}}V_{k; a, b} \big)\Tr(\Vr_{1}^{m})-\big( V_{k; a, b}, \frac{\partial A_{1}(U)}{\partial U^{m}}V_{k; a, b} \big)V_{k; 1, 0}^{m}\\
&=\big(V_{k; a, b}, \begin{pmatrix}
\frac{2}{\gamma+1}&\frac{\gamma-1}{\gamma+1}&0\\
\frac{\gamma-1}{\gamma+1}&\frac{2}{\gamma+1}&0\\
0&0&\frac{2}{\gamma+1}
\end{pmatrix} V_{k; a, b}\big)+t\Wr_{O(\varepsilon)}\cdot F(V_{k; a, b}, V_{k; a, b})+\big(V_{k; a, b}, F(V_{k; a, b}, V_{k; 1, 0}) \big)\\
&=\underbrace{\big(V_{k; a, b}, \begin{pmatrix}
-\frac{\gamma-1}{\gamma+1}&\frac{\gamma-1}{\gamma+1}&0\\
\frac{\gamma-1}{\gamma+1}&-\frac{\gamma-1}{\gamma+1}&0\\
0&0&\frac{2}{\gamma+1}
\end{pmatrix} V_{k; a, b}\big)}_{\textbf{Null structure}}\\ &\quad+\underbrace{\big(V_{k; a, b}, \begin{pmatrix}
1&0&0\\
0&1&0\\
0&0&0
\end{pmatrix} V_{k; a, b}\big)}_{\textbf{Anti damping by integration by parts}}\\
&+t\Wr_{O(\varepsilon)}\cdot F(V_{k; a, b}, V_{k; a, b})+\big(V_{k; a, b}, F(V_{k; a, b}, V_{k; 1, 0}) \big)
\end{split}
\end{equation}
We can also decompose $\kappar\cdot \big( V_{k; a, b}, \frac{\partial A_{2}(U)}{\partial U^{m}}V_{k; a, b} \big)\Xr(U^{m})$ as follows,
\[
\begin{split}
\kappar\cdot \big( V_{k; a, b}, \frac{\partial A_{2}(U)}{\partial U^{m}}V_{k; a, b} \big)\Xr(U^{m})&=\kappar\cdot \big( V_{k; a, b}, \frac{\partial A_{2}(U)}{\partial U^{m}}V_{k; a, b} \big)\Xr(\Ur_{k}^{m})+\kappar\cdot \big( V_{k; a, b}, \frac{\partial A_{2}(U)}{\partial U^{m}}V_{k; a, b} \big)V_{k; 0, 1}^{m}\\
&=t\Wr_{O(\varepsilon)}\cdot F(V_{k; a, b}, V_{k; a, b})+t\cdot \big( V_{k; a, b}, F(V_{k; a, b}, V_{k; 0, 1}) \big).
\end{split}
\]
Therefore, \textbf{bulk terms from integration by part} can be written as
\[
\begin{split}
&\int_{\mathcal{D}(t, \tilde{u})}\Big(\underbrace{\big(V_{k; a, b}, \begin{pmatrix}
-\frac{\gamma-1}{\gamma+1}&\frac{\gamma-1}{\gamma+1}&0\\
\frac{\gamma-1}{\gamma+1}&-\frac{\gamma-1}{\gamma+1}&0\\
0&0&\frac{2}{\gamma+1}
\end{pmatrix} V_{k; a, b}\big)}_{\textbf{Null structure}}+\underbrace{\big(V_{k; a, b}, \begin{pmatrix}
1&0&0\\
0&1&0\\
0&0&0
\end{pmatrix} V_{k; a, b}\big)}_{\textbf{Anti damping by integration by parts}}\Big)\\
&+\int_{\mathcal{D}(t, \tilde{u})}\Big(t'\Wr_{O(\varepsilon)}\cdot F(V_{k; a, b}, V_{k; a, b})+\big(V_{k; a, b}, F(V_{k; a, b}, V_{k; 1, 0}) \big)+t'\cdot \big( V_{k; a, b}, F(V_{k; a, b}, V_{k; 0, 1}) \big)\Big).
\end{split}
\]
\item The bulk term associated with $\mathbf{S}_{2; a, b}$ can be rewritten as
\[
\int_{\mathcal{D}(t,\tilde u)}2(V_{k;a,b},\mathbf{S}_{2;a,b})=\int_{\mathcal{D}(t, \tilde{u})} t'\cdot \big(V_{k; a, b}, F(V_{k; \leq a, \leq b}, \Wr_{O(\varepsilon)})\big).
\]
\item The bulk term associated with $\mathbf{S}_{3; a, b}$ can be rewritten as
\[
\int_{\mathcal{D}(t,\tilde u)}2(V_{k;a,b},\mathbf{S}_{3;a,b})=\int_{\mathcal{D}(t, \tilde{u})} t'^{1+k} \cdot \big(V_{k; a, b}, \Wr_{O(\varepsilon)}\big).
\]
\item The bulk term associated with $\mathbf{S}_{4; a, b}$. Note that a generic term in $\mathbf{S}_{4; a, b}$ takes the form ( by decomposing $U$ as $\Ur_{k}+V_{k}$)
\[
F\big(\Zr^{\leq \Nt-2}(V_{k}), \Zr^{\leq \Nt}(V_{k})\big)+F\big(\Wr_{O(\varepsilon)}, \Zr^{\leq \Nt}(V_{k})\big)
\]
The bulk term associated with $\mathbf{S}_{4; a, b}$ can be rewritten as
\[
\int_{\mathcal{D}(t, \tilde u)}\Big( \Big( V_{k; a, b},F\big(\Zr^{\leq \Nt-2}(V_{k}), \Zr^{\leq \Nt}(V_{k})\big) \Big)+\Big( V_{k; a, b}, F\big(\Wr_{O(\varepsilon)}, \Zr^{\leq \Nt}(V_{k})\big)\Big)\Big).
\]
\end{itemize}

In summary, \eqref{eq: higher order energy estimate} can be written in a simplified form (using the uniform bounds for $\kappa/t$ and $\slashed g$ in terms estimated by generic constants, while retaining the stated spacetime measure) as follows, for $a+b\leq \Nt$,
\begin{equation}\label{eq: simple higher order energy estimate}
\begin{split}
&E(V_{k; a, b})(t, \tilde{u})+F(V_{k; a, b})(t, \tilde{u})\leq E(V_{k; a, b})(0, \tilde{u})+F(V_{k; a, b})(t, \tilde{u}_{*})\\
+&\underbrace{\int_{\mathcal{D}(t, \tilde{u})}\Big(\big(V_{k; a, b}, \begin{pmatrix}
-\frac{2(\gamma-1)}{\gamma+1}&0&0\\
0&-\frac{4}{\gamma+1}&0\\
0&0&0
\end{pmatrix}V_{k; a, b} \big)+(2a\frac{\gamma-1}{\gamma+1}+\frac{2}{\gamma+1})\cdot \big(V_{k; a, b}, \begin{pmatrix}
1&-1&0\\
-1&1&0\\
0&0&1
\end{pmatrix}V_{k; a, b} \big)\Big)}_{\mathbf{J}_{1}}\\
&+\underbrace{C\int_{\mathcal{D}(t, \tilde{u})}t'\cdot F\big(V_{k; a, b}, V_{k; a-1, b+1}, \Wr_{O(1)}\big)}_{\mathbf{J}_{2}}+\underbrace{C\int_{\mathcal{D}(t, \tilde{u})} t'^{k+1}\cdot F\big( V_{k; a, b}, \Wr_{O(\varepsilon)} \big)}_{\mathbf{J}_{3}}\\
&+\underbrace{C\int_{\mathcal{D}(t, \tilde{u})} F\Big( V_{k; a, b}, \Zr^{\leq \Nt}(V_{k}), \Zr^{\leq \Nt-2}(V_{k}) \Big)}_{\mathbf{J}_{4}}+\underbrace{C\int_{\mathcal{D}(t, \tilde{u})}F\Big( V_{k; a, b}, \Wr_{O(\varepsilon)}, \Zr^{\leq \Nt}(V_{k})\Big)}_{\mathbf{J}_{5}}.
\end{split}
\end{equation}
where we have neglected all the easier terms (with more $t$ weight) and $F(\cdot, \cdot, \cdot): (\mathbb{R}^{3})^{3}\rightarrow \mathbb{R}$ denotes generic trilinear functions.

To prepare for the closing of $(\mathbf{B}_{2})$, we also need a Gronwall inequality with singular coefficients.
\subsection{A Gronwall inequality with singular coefficients}

\begin{lemma}\label{lem: Gronwall}
Let $f(t)\geq0$, $f\in C^{0}([0, t^{*}]), k=0, 1$ satisfies
\[
\begin{cases}
f(t)\leq C\varepsilon^{2}t^{2k+3}+C\int_{0}^{t} f(t')dt'+B\int_{0}^{t} \frac{f(t')}{t'} dt',\\
\overline{\lim}_{\delta\rightarrow 0} \frac{f(\delta)}{\delta^{3+2k}}<+\infty.
\end{cases}
\]
where the two constants on the right may be enlarged to one fixed
$C>0$ and $B\in(0,2k+3)$. In this lemma, keep that $C$ fixed and set
\[
F(t)=\int_0^t\frac{f(t')}{t'}\,dt'
     +\frac CB\int_0^t f(t')\,dt'.
\]
Then we have
\[
f(t)\leq C\varepsilon^{2}t^{2k+3}+B\cdot F(t)\leq \Big(C+\frac{(CB+C^{2})e^{Ct}}{2k+3-B}\Big)\varepsilon^{2} t^{2k+3}.
\]
\end{lemma}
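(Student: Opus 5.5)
The plan is to reduce the integral inequality to a linear differential inequality for the auxiliary function $F(t)$ defined in the statement, and then integrate it with the integrating factor $t^{-B}e^{-Ct}$. First I check that $F$ is well defined and differentiable. The hypothesis $\limsup_{\delta\to0} f(\delta)/\delta^{3+2k}<\infty$ together with continuity gives $f(t')\leq Kt'^{2k+3}$ near $0$, so $f(t')/t'$ is integrable at $0$. Hence $F\in C^1((0,t^*])\cap C^0([0,t^*])$, with $F(0)=0$ and
\[
F'(t)=\frac{f(t)}{t}+\frac{C}{B}f(t).
\]
The hypothesis reads exactly $f(t)\leq C\varepsilon^2t^{2k+3}+BF(t)$, which is the first inequality of the conclusion.

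Next I insert this bound into $F'$:
\[
F'(t)\leq\Bigl(\frac1t+\frac CB\Bigr)\bigl(C\varepsilon^2t^{2k+3}+BF(t)\bigr)
=\Bigl(\frac Bt+C\Bigr)F(t)+C\varepsilon^2t^{2k+2}+\frac{C^2}{B}\varepsilon^2t^{2k+3}.
\]
Multiplying by $t^{-B}e^{-Ct}$ gives
\[
\frac{d}{dt}\bigl(t^{-B}e^{-Ct}F\bigr)\leq\varepsilon^2\Bigl(Ct^{2k+2-B}+\frac{C^2}{B}t^{2k+3-B}\Bigr)e^{-Ct}.
\]
I then integrate from $\delta$ to $t$ and let $\delta\to0$. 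The boundary term $\delta^{-B}F(\delta)$ vanishes, because $F(\delta)=O(\delta^{2k+3})$ by the a priori bound on $f$ and $B<2k+3$. This is the only place where both the growth hypothesis and $B<2k+3$ enter, and it is the step I regard as the crux: without the vanishing at $0$, the homogeneous solution $t^B$ could not be excluded.

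Bounding $e^{-Ct'}\leq1$ and, on $[0,t^*]$ with $t^*=1$, $t'^{2k+3-B}\leq t'^{2k+2-B}$, I get
\[
\int_0^t t'^{2k+2-B}\,dt'=\frac{t^{2k+3-B}}{2k+3-B}.
\]
Hence
\[
F(t)\leq\frac{(C+C^2/B)e^{Ct}}{2k+3-B}\,\varepsilon^2t^{2k+3}.
\]
Multiplying by $B$ and adding $C\varepsilon^2t^{2k+3}$ yields the stated bound
\[
f(t)\leq\Bigl(C+\frac{(CB+C^2)e^{Ct}}{2k+3-B}\Bigr)\varepsilon^2t^{2k+3}.
\]
If $t^*\neq1$, the extra powers of $t$ are absorbed into the constant, consistent with the paper's convention on $C$.
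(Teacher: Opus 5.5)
Your proof is correct and follows the paper's argument essentially step for step. You use the same auxiliary $F$, the same differential inequality $F'\leq (B/t+C)F+(C+C^2/B)\varepsilon^2t^{2k+2}$ (using $t^*=1$), the same integrating factor $t^{-B}e^{-Ct}$, and the vanishing of $\delta^{-B}F(\delta)$ coming from the growth hypothesis and $B<2k+3$. Your version justifies that boundary limit somewhat more explicitly than the paper does.
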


\begin{proof}
Set
\[
F(t)=\int_{0}^{t} \frac{f(t')}{t'} dt' +\frac{C}{B}\int_{0}^{t} f(t') dt'.
\]
Then we have
\[
f(t)\leq C\varepsilon^{2}t^{2k+3}+B\cdot F(t),
\]
and
\[
\begin{split}
F'(t)&=\frac{f(t)}{t}+\frac{C}{B}f(t)\\
&\leq \frac{C\varepsilon^{2}t^{2k+3}+B\cdot F(t)}{t}+\frac{C}{B}\cdot \big( C\varepsilon^{2}t^{2k+3}+B\cdot F(t) \big)\\
&\leq (C+\frac{C^{2}}{B})\cdot \varepsilon^{2}t^{2k+2}+B\cdot \frac{F(t)}{t}+C\cdot F(t).
\end{split}
\]
since $t^{*}=1$ are fixed in this paper. Hence, we have
\[
\frac{d}{dt}\big( F(t)t^{-B}e^{-Ct} \big)\leq  (C+\frac{C^{2}}{B})\cdot \varepsilon^{2}t^{2k+2-B}
\]
since $\lim_{\delta\rightarrow 0} F(\delta)\delta^{-B}=0$ and $t^{2k+2-B}$ is integrable in [0, t], integrating from $0$ to $t$,
\[
F(t)\leq \frac{(C+C^{2}/B)e^{Ct}}{2k+3-B}\cdot \varepsilon^{2}t^{2k+3}.
\]
Therefore
\[
f(t)\leq C\varepsilon^{2}t^{2k+3}+B\cdot F(t)\leq \Big(C+\frac{(CB+C^{2})e^{Ct}}{2k+3-B}\Big)\varepsilon^{2} t^{2k+3}.
\]
This completes the proof of the lemma.
\end{proof}

\subsection{The close of $(\mathbf{B}_{2})$ in {\bf away from vacuum case}} 

In this subsection, we are handling the \textbf{away from vacuum case} and $\Nt=\Ntop$, we emphasize there is no restriction on $\gamma$ in this case.

The higher energy estimate \eqref{eq: simple higher order energy estimate} holds by formally replacing $\tilde{u}$ with $u$ and replacing $\tilde{u}\in [\tilde{u}_{*}, \tilde{u}^{*}]$ by $u\in [0,u^{*}]$,
\[
\begin{split}
&E(V_{k; a, b})(t, u)+F(V_{k; a, b})(t, u)\leq E(V_{k; a, b})(0, u)+F(V_{k; a, b})(t, 0)\\
+&\underbrace{\int_{\mathcal{D}(t, u)}\Big(\big(V_{k; a, b}, \begin{pmatrix}
-\frac{2(\gamma-1)}{\gamma+1}&0&0\\
0&-\frac{4}{\gamma+1}&0\\
0&0&0
\end{pmatrix}V_{k; a, b} \big)+(2a\frac{\gamma-1}{\gamma+1}+\frac{2}{\gamma+1})\cdot \big(V_{k; a, b}, \begin{pmatrix}
1&-1&0\\
-1&1&0\\
0&0&1
\end{pmatrix}V_{k; a, b} \big)\Big)}_{\mathbf{J}_{1}}\\
&+\underbrace{C\int_{\mathcal{D}(t, u)}t'\cdot F\big(V_{k; a, b}, V_{k; a-1, b+1}, \Wr_{O(1)}\big)}_{\mathbf{J}_{2}}+\underbrace{C\int_{\mathcal{D}(t, u)} t'^{k+1}\cdot F\big( V_{k; a, b}, \Wr_{O(\varepsilon)} \big)}_{\mathbf{J}_{3}}\\
&+\underbrace{C\int_{\mathcal{D}(t, u)} F\Big( V_{k; a, b}, \Zr^{\leq \Nt}(V_{k}), \Zr^{\leq \Nt-2}(V_{k}) \Big)+C\int_{\mathcal{D}(t, u)}F\Big( V_{k; a, b}, \Wr_{O(\varepsilon)}, \Zr^{\leq \Nt}(V_{k})\Big)}_{\mathbf{J}_{4}}.
\end{split}
\]
where
\[
\begin{cases}
\mathcal{E}(V_{k; a, b})(t, u)=\int_{\Sigma_{t}^{0,u}} \kappa\cdot (V_{k; a, b}, V_{k; a, b}),\\
\begin{aligned}[t]\mathcal{F}(V_{k; a, b})(t, u)&=\int_{C_{u}^{t}}\Big(c\cdot \big(V_{k; a, b}, \begin{pmatrix}
1&\hat{T}^{1}&\hat{T}^{2}\\
\hat{T}^{1}&(\hat{T}^{1})^{2}&\hat{T}^{1}\hat{T}^{2}\\
\hat{T}^{2}&\hat{T}^{1}\hat{T}^{2}&(\hat{T}^{2})^{2}
\end{pmatrix}V_{k; a, b}\big)\\ &\quad+c\cdot \big(V_{k; a, b}, \begin{pmatrix}
0&0&0\\
0&(\hat{X}^{1})^{2}&\hat{X}^{1}\hat{X}^{2}\\
0&\hat{X}^{1}\hat{X}^{2}&(\hat{X}^{2})^{2}
\end{pmatrix}V_{k; a, b}\big)\Big).\end{aligned}
\end{cases}
\]

\begin{remark}\label{re: how to use null structure}
Though the first linear term in $\mathbf{J}_{1}$ has favorable sign, the second linear term in $\mathbf{J}_{1}$ is nonnegative and hence unfavorable, which can be bounded by $E(V_{k;a,b})(t,u)$ as follows (if we try to use $E(V_{k;a,b})(t,u)$ to bound it),
\[
\int_{\mathcal{D}(t, u)} (2a\frac{\gamma-1}{\gamma+1}+\frac{2}{\gamma+1})\cdot \big(V_{k; a, b}, \begin{pmatrix}
1&-1&0\\
-1&1&0\\
0&0&1
\end{pmatrix}V_{k; a, b} \big)\leq C_{1}\cdot \int_{0}^{t} \frac{E(V_{k; a, b})(t', u)}{t'} dt'.
\]
This gives an inequality of the form
\[
E(V_{k;a,b})(t,u)\leq C_1\int_0^t\frac{E(V_{k;a,b})(t',u)}{t'}\,dt'+C\varepsilon^2t^{2k+3}.
\]
For sufficiently large $a$, the coefficient $C_1$ exceeds $2k+3$.
Such an integral inequality alone cannot give the required uniform bound.
Indeed, for any $K>0$, the nonnegative function
\[
f(t)=\varepsilon^2t^{2k+3}+K\varepsilon^2t^{C_1}
\]
satisfies $\lim_{t\to0}f(t)/t^{2k+3}=\varepsilon^2$ and
\[
C_1\int_0^t\frac{f(t')}{t'}\,dt'
=\frac{C_1}{2k+3}\varepsilon^2t^{2k+3}+K\varepsilon^2t^{C_1}\geq f(t).
\]
Nevertheless, $f(1)=(1+K)\varepsilon^2$ is arbitrarily large relative to
$\varepsilon^2$. Thus this estimate does not improve $(\mathbf B_2)$.

Luckily, the flux $\mathcal{F}(V_{k; a, b})$ plays an important role in controlling the dangerous linear terms appearing in $J_{1}$ since \textbf{The dangerous terms have almost the same structure} with the flux (which can be understood as \textbf{Null structure}), 
\[
\begin{gathered}\begin{pmatrix}
1&\hat{T}^{1}&\hat{T}^{2}\\
\hat{T}^{1}&(\hat{T}^{1})^{2}&\hat{T}^{1}\hat{T}^{2}\\
\hat{T}^{2}&\hat{T}^{1}\hat{T}^{2}&(\hat{T}^{2})^{2}
\end{pmatrix}-\begin{pmatrix}
1&-1&0\\
-1&1&0\\
0&0&0
\end{pmatrix}=O(P(M)\varepsilon t),\\ \begin{pmatrix}
0&0&0\\
0&(\hat{X}^{1})^{2}&\hat{X}^{1}\hat{X}^{2}\\
0&\hat{X}^{1}\hat{X}^{2}&(\hat{X}^{2})^{2}
\end{pmatrix}-\begin{pmatrix}
0&0&0\\
0&0&0\\
0&0&1
\end{pmatrix}=O(P(M)\varepsilon t).\end{gathered}
\]
and
\[
\frac{1}{4}\mathbf{c}_{*}\leq c\leq 4\mathbf{c}_{*}, \ \ \textbf{in away from vacuum case}.
\]
Therefore, the dangerous term can be bounded by
\[
\int_{\mathcal{D}(t, u)} (2a\frac{\gamma-1}{\gamma+1}+\frac{2}{\gamma+1})\cdot \big(V_{k; a, b}, \begin{pmatrix}
1&-1&0\\
-1&1&0\\
0&0&1
\end{pmatrix}V_{k; a, b} \big)\leq C_{a}\int_{0}^{u} F(V_{k;a,b})(t,u')du'+P(M)\varepsilon^{3} t^{2k+3}.
\]
results in a safe Gronwall inequality in $u$ since there is no degeneration in $u$ direction in \textbf{away from vacuum case}.
\end{remark}

We treat corresponding $\mathbf{J}_{1}, \mathbf{J}_{2}, \mathbf{J}_{3}, \mathbf{J}_{4}$ one by one.
\begin{itemize}
\item The term $\mathbf{J}_{1}$ takes the form
\[
\int_{\mathcal{D}(t, u)}\Big(\big(V_{k; a, b}, \begin{pmatrix}
-\frac{2(\gamma-1)}{\gamma+1}&0&0\\
0&-\frac{4}{\gamma+1}&0\\
0&0&0
\end{pmatrix}V_{k; a, b} \big)+(2a\frac{\gamma-1}{\gamma+1}+\frac{2}{\gamma+1})\cdot \big(V_{k; a, b}, \begin{pmatrix}
1&-1&0\\
-1&1&0\\
0&0&1
\end{pmatrix}V_{k; a, b} \big)\Big)
\]
which can be bounded by Cauchy--Schwarz as follows,
\[
C_{a}\cdot \int_{0}^{u} F(V_{k; a, b})(t, u')du'+P(M)\varepsilon^{3}\cdot t^{2k+3}.
\]
\item The term $\mathbf{J}_{2}$ can be bounded by Cauchy--Schwarz,
\[
\begin{split}
C\int_{\mathcal{D}(t, u)}t'\cdot F\big(V_{k; a, b}, V_{k; a-1, b+1}, \Wr_{O(1)}\big)&\leq C\int_{0}^{t} t' \|V_{k; a, b}\|_{L^{2}(\Sigma_{t'}^{0,u})}\|V_{k; a-1, b+1}\|_{L^{2}(\Sigma_{t'}^{0,u})} dt'\\
&\leq C\int_{0}^{t} \mathcal{E}(V_{k; a, b})(t', u) dt'+C\int_{0}^{t} \mathcal{E}(V_{k; a-1, b+1})(t', u) dt'
\end{split}
\]
\item The term $\mathbf{J}_{3}$ can be bounded by
\[
\begin{split}
&C\int_{\mathcal{D}(t, u)} t'^{k+1}\cdot F\big( V_{k; a, b}, \Wr_{O(\varepsilon)} \big)\\
&\leq C\int_{0}^{t} t'^{k+1}\varepsilon\cdot \|V_{k; a, b}\|_{L^{1}(\Sigma_{t'}^{0,u})} dt'\\
&\leq C\int_{0}^{t} t'^{k+\frac{1}{2}}\varepsilon \mathcal{E}(V_{k; a, b})(t', u)^{\frac{1}{2}} dt'\\
&\leq \mu\cdot \int_{0}^{t} \frac{\mathcal{E}(V_{k; a, b})(t', u)}{t'}dt'+ \frac{C}{\mu}\cdot \varepsilon^{2}t^{2k+3}.
\end{split}
\]
where $\mu\in (0, 1)$ is a parameter to choose later (see \eqref{eq: choice of mu away from vacuum}).
\item The term $\mathbf{J}_{4}$ can be bounded by $P(M)\varepsilon^{3}t^{2k+3}$ via Sobolev inequality \eqref{eq: W22 to L infty}.
\end{itemize}

In summary, we have for $a+b\leq \Ntop$,
\[
\begin{split}
&\sum_{a+b\leq \Ntop}\big(\mathcal{E}(V_{k; a, b})(t, u)+\mathcal{F}(V_{k; a, b})(t, u)\big)\\
&\leq \big( C+P(M)\varepsilon \big)\cdot\varepsilon^{2}t^{2k+3}\\
&+C \int_{0}^{u}\sum_{a+b\leq \Ntop}\mathcal{F}(V_{k; a, b})(t, u') du'\\
&+C \int_{0}^{t}\sum_{a+b\leq \Ntop}\mathcal{E}(V_{k; a, b})(t', u) dt'\\
&+\mu\cdot \int_{0}^{t} \frac{\sum_{a+b\leq \Ntop}\mathcal{E}(V_{k; a, b})(t', u)}{t'} dt'.
\end{split}
\]
Apply Gronwall in $u\in [0, u^{*}]$, we have
\[
\begin{split}
&\sum_{a+b\leq \Ntop} \mathcal{F}(V_{k; a, b})(t, u)\\
&\leq \big( C+P(M)\varepsilon \big)e^{Cu^{*}}\cdot\varepsilon^{2}t^{2k+3}\\
&+Ce^{Cu^{*}} \int_{0}^{t}\sum_{a+b\leq \Ntop}\mathcal{E}(V_{k; a, b})(t', u) dt'\\
&+\mu e^{Cu^{*}}\cdot \int_{0}^{t} \frac{\sum_{a+b\leq \Ntop}\mathcal{E}(V_{k; a, b})(t', u)}{t'} dt'.
\end{split}
\]
Then we have
\[
\begin{split}
&\sum_{a+b\leq \Ntop}\mathcal{E}(V_{k; a, b})(t, u)\\
&\leq 2\big( C+P(M)\varepsilon \big)e^{Cu^{*}}\cdot\varepsilon^{2}t^{2k+3}\\
&+2Ce^{Cu^{*}} \int_{0}^{t}\sum_{a+b\leq \Ntop}\mathcal{E}(V_{k; a, b})(t', u) dt'\\
&+2e^{Cu^{*}}\mu\cdot \int_{0}^{t} \frac{\sum_{a+b\leq \Ntop}\mathcal{E}(V_{k; a, b})(t', u)}{t'} dt'.
\end{split}
\]
and we choose $\mu$ such that 
\begin{equation}\label{eq: choice of mu away from vacuum}
 2e^{Cu^{*}}\mu=1<2k+3.
\end{equation}
Applying Gronwall inequality with singular coefficients \ref{lem: Gronwall} with $B=1$ and we have
\[
\sum_{a+b\leq \Ntop}\Big(\mathcal{E}(V_{k; a, b})(t, u)+\mathcal{F}(V_{k; a, b})(t, u)\Big) \leq \big(C+P(M)\varepsilon\big)\cdot \varepsilon^{2}t^{2k+3}. 
\]
providing that $\varepsilon$ is sufficiently small.

To summarize, we have closed the assumption $(\mathbf{B}_{2})$ in \textbf{away from vacuum case} as follows, for all $t\in [0, t^{*}]$ and $a+b\leq \Ntop$, we have
\begin{equation}\notag
\mathcal{E}(V_{k; a, b})(t, u^{*})+\mathcal{F}(V_{k; a, b})(t, u^{*})\leq C\varepsilon^{2}t^{2k+3}.
\end{equation}
In view of Sobolev inequality, we also have improved $L^{\infty}$ estimates for $V_{k; a, b}, a+b\leq \Ntop-2$,
\begin{equation}\notag
\|V_{k; a, b}\|_{L^{\infty}(\Sigma_{t}^{0,u^{*}})}\leq C \varepsilon t^{k+1}.
\end{equation}

\subsection{The close of $(\mathbf{B}_{2})$ in {\bf up to vacuum case}}
In this subsection, we are handling the \textbf{up to vacuum case} and $\Nt=4$, we emphasize there we have restriction on $\gamma$ in this case. 
\[
\begin{cases}
k=0, \ \ \gamma\in (1, 1.3],\\
k=1, \ \ \gamma\in \{\frac{7}{5}, \frac{5}{3}\}.
\end{cases}
\]

And we recall the higher energy estimate \eqref{eq: simple higher order energy estimate} on $\mathcal{D}(t, \tilde{u}^{*}): (t', \tilde{u}, \vartheta)\in[0, t]\times [\tilde{u}_{*}, \tilde{u}^{*}]\times \mathbb{T}$,
\[
\begin{split}
&E(V_{k; a, b})(t, \tilde{u})+F(V_{k; a, b})(t, \tilde{u})\leq E(V_{k; a, b})(0, \tilde{u})+F(V_{k; a, b})(t, \tilde u_*)\\
+&\underbrace{\int_{\mathcal{D}(t, \tilde{u})}\Big(\big(V_{k; a, b}, \begin{pmatrix}
-\frac{2(\gamma-1)}{\gamma+1}&0&0\\
0&-\frac{4}{\gamma+1}&0\\
0&0&0
\end{pmatrix}V_{k; a, b} \big)+(2a\frac{\gamma-1}{\gamma+1}+\frac{2}{\gamma+1})\cdot \big(V_{k; a, b}, \begin{pmatrix}
1&-1&0\\
-1&1&0\\
0&0&1
\end{pmatrix}V_{k; a, b} \big)\Big)}_{\mathbf{J}_{1}}\\
&+\underbrace{C\int_{\mathcal{D}(t, \tilde{u})}t'\cdot F\big(V_{k; a, b}, V_{k; a-1, b+1}, \Wr_{O(1)}\big)}_{\mathbf{J}_{2}}+\underbrace{C\int_{\mathcal{D}(t, \tilde{u})} t'^{k+1}\cdot F\big( V_{k; a, b}, \Wr_{O(\varepsilon)} \big)}_{\mathbf{J}_{3}}\\
&+\underbrace{C\int_{\mathcal{D}(t, \tilde{u})} F\Big( V_{k; a, b}, \Zr^{\leq \Nt}(V_{k}), \Zr^{\leq \Nt-2}(V_{k}) \Big)+C\int_{\mathcal{D}(t, \tilde{u})}F\Big( V_{k; a, b}, \Wr_{O(\varepsilon)}, \Zr^{\leq \Nt}(V_{k})\Big)}_{\mathbf{J}_{4}}.
\end{split}
\]
where
\[
\begin{cases}
\mathcal{E}(V_{k; a, b})(t, \tilde{u})=\int_{\Sigma_{t}^{\ut_*,\tilde{u}}} \kappa\cdot (V_{k; a, b}, V_{k; a, b}),\\
\begin{aligned}[t]\mathcal{F}(V_{k; a, b})(t, \tilde{u})&=\int_{C_{\tilde{u}}^{t}}\Big(c\cdot \big(V_{k; a, b}, \begin{pmatrix}
1&\hat{T}^{1}&\hat{T}^{2}\\
\hat{T}^{1}&(\hat{T}^{1})^{2}&\hat{T}^{1}\hat{T}^{2}\\
\hat{T}^{2}&\hat{T}^{1}\hat{T}^{2}&(\hat{T}^{2})^{2}
\end{pmatrix}V_{k; a, b}\big)\\ &\quad+c\cdot \big(V_{k; a, b}, \begin{pmatrix}
0&0&0\\
0&(\hat{X}^{1})^{2}&\hat{X}^{1}\hat{X}^{2}\\
0&\hat{X}^{1}\hat{X}^{2}&(\hat{X}^{2})^{2}
\end{pmatrix}V_{k; a, b}\big)\Big).\end{aligned}
\end{cases}
\]

\begin{remark}
In sharp contrast to \textbf{away from vacuum case} \ref{re: how to use null structure} , the linear terms with \textbf{null structure} appearing in $\mathbf{J}_{1}$ can't be controlled by the flux $\mathcal{F}(V_{k; a, b})(t, \tilde{u})$ uniformly in $\alpha\in (0, \frac{1}{100})$, since $c\approx -\frac{\gamma-1}{\gamma+1}\tilde{u}$ for $\tilde{u}\in [\tilde{u}_{*}, \tilde{u}^{*}]$ in view of \eqref{eq: c almost constant on C_u}, 
\[
\begin{gathered}\int_{\mathcal{D}(t, \tilde{u})} (2a\frac{\gamma-1}{\gamma+1}+\frac{2}{\gamma+1})\cdot \big(V_{k; a, b}, \begin{pmatrix}
1&-1&0\\
-1&1&0\\
0&0&1
\end{pmatrix}V_{k; a, b} \big)\\ \leq (8a+\frac{8}{\gamma-1})\int_{\tilde{u}_{*}}^{\tilde{u}} \frac{1}{-\tilde{u}'}\cdot \mathcal{F}(V_{k; a, b})(t, \tilde{u}') d\tilde{u}'+P(M)\varepsilon^{3}t^{2k+3}.\end{gathered}
\]
Here the displayed coefficient is one convenient upper bound; its exact value is immaterial for this remark. A Gronwall argument using this positive kernel introduces the factor
\[
\exp\left((8a+\frac{8}{\gamma-1})\int_{\tilde u_*}^{\tilde u}\frac{d\tilde u'}{-\tilde u'}\right)
=\left(\frac{-\tilde u_*}{-\tilde u}\right)^{8a+8/(\gamma-1)}.
\]
At $\tilde u=\tilde u^*=-\alpha\frac{\gamma+1}{\gamma-1}\mathbf c_*$ this equals
$(3/(4\alpha))^{8a+8/(\gamma-1)}$, which diverges as $\alpha\to0$.
Thus this flux-based estimate does not give a bound uniform in $\alpha$.
We instead use the energy and the precise maximal eigenvalue of the full quadratic form below.
\end{remark}

We treat corresponding $\mathbf{J}_{1}, \mathbf{J}_{2}, \mathbf{J}_{3}, \mathbf{J}_{4}$ one by one.
\begin{itemize}
\item The term $\mathbf{J}_{1}$ takes the form
\[
\int_{\mathcal{D}(t, \tilde u)}\Big(\big(V_{k; a, b}, \begin{pmatrix}
-\frac{2(\gamma-1)}{\gamma+1}&0&0\\
0&-\frac{4}{\gamma+1}&0\\
0&0&0
\end{pmatrix}V_{k; a, b} \big)+(2a\frac{\gamma-1}{\gamma+1}+\frac{2}{\gamma+1})\cdot \big(V_{k; a, b}, \begin{pmatrix}
1&-1&0\\
-1&1&0\\
0&0&1
\end{pmatrix}V_{k; a, b} \big)\Big)
\]
The maximal eigenvalue of
\[
\begin{pmatrix}
-\frac{2(\gamma-1)}{\gamma+1}&0&0\\
0&-\frac{4}{\gamma+1}&0\\
0&0&0
\end{pmatrix}+(2a\frac{\gamma-1}{\gamma+1}+\frac{2}{\gamma+1})\cdot\begin{pmatrix}
1&-1&0\\
-1&1&0\\
0&0&1
\end{pmatrix}
\]
is 
\[
\lambda_{\textbf{max}}=\text{max}\{\frac{(2a-1)(\gamma-1)}{\gamma+1}+\frac{\sqrt{(\gamma-3)^{2}+4+4a^{2}(\gamma-1)^{2}+8a(\gamma-1)}}{\gamma+1}, \frac{\gamma-1}{\gamma+1}(2a-1)+1\}.
\]
and we have uniformly for $0\leq a\leq 4$,
\begin{equation}\notag
\begin{cases}
\lambda_{\textbf{max}}\leq\frac{21+5\sqrt{89}}{23}\leq 2.97<3, \ \ \gamma\in (1, 1.3],\\
\lambda_{\textbf{max}}\leq\frac{7+5\sqrt{5}}{4}\leq 4.6<5,\ \ \gamma\in \{\frac{7}{5}, \frac{5}{3}\}.
\end{cases}
\end{equation}
In the remaining estimates, $\lambda_{\max}$ denotes the maximum over $0\leq a\leq4$. Therefore, we can conclude that $\mathbf{J}_{1}$ can be bounded by
\[
\lambda_{\max}\cdot\int_{0}^{t} \frac{\mathcal{E}(V_{k; a, b})(t', \tilde{u})}{t'}\,dt'
\]
where \textbf{$\lambda_{\max}<2k+3$ for both the case $k=0, \gamma\in (1, 1.3]$ and the case $k=1, \gamma\in \{\frac{7}{5}, \frac{5}{3}\}$}.

\item The term $\mathbf{J}_{2}$ can be bounded by Cauchy--Schwarz,
\[
\begin{split}
C\int_{\mathcal{D}(t, \tilde{u})}t'\cdot F\big(V_{k; a, b}, V_{k; a-1, b+1}, \Wr_{O(1)}\big)&\leq C\int_{0}^{t} t' \|V_{k; a, b}\|_{L^{2}(\Sigma_{t'}^{\ut_*,\tilde{u}})}\|V_{k; a-1, b+1}\|_{L^{2}(\Sigma_{t'}^{\ut_*,\tilde{u}})} dt'\\
&\leq C\int_{0}^{t} \mathcal{E}(V_{k; a, b})(t', \tilde{u}) dt'+C\int_{0}^{t} \mathcal{E}(V_{k; a-1, b+1})(t', \tilde{u}) dt'
\end{split}
\]
\item The term $\mathbf{J}_{3}$ can be bounded by
\[
\begin{split}
&C\int_{\mathcal{D}(t, \tilde{u})} t'^{k+1}\cdot F\big( V_{k; a, b}, \Wr_{O(\varepsilon)} \big)\\
&\leq C\int_{0}^{t} t'^{k+1}\varepsilon\cdot \|V_{k; a, b}\|_{L^{1}(\Sigma_{t'}^{\ut_*,\tilde{u}})} dt'\\
&\leq C\int_{0}^{t} t'^{k+\frac{1}{2}}\varepsilon \mathcal{E}(V_{k; a, b})(t', \tilde{u})^{\frac{1}{2}} dt'\\
&\leq \mu\cdot \int_{0}^{t} \frac{\mathcal{E}(V_{k; a, b})(t', \tilde{u})}{t'}dt'+ \frac{C}{\mu}\cdot \varepsilon^{2}t^{2k+3}.
\end{split}
\]
where $\mu\in (0, 1)$ is a parameter to be chosen later (see \eqref{eq: choice of mu up to vacuum}).
\item The term $\mathbf{J}_{4}$ can be bounded by $P(M)\varepsilon^{3}t^{2k+3}$ via Sobolev inequality \eqref{eq: W22 to L infty}.
\end{itemize}

In summary, we have for $a+b\leq 4$,
\[
\begin{split}
&\sum_{a+b\leq \Ntop}\big(\mathcal{E}(V_{k; a, b})(t, \tilde{u})+\mathcal{F}(V_{k; a, b})(t, \tilde{u})\big)\\
&\leq \big( C+P(M)\varepsilon \big)\cdot\varepsilon^{2}t^{2k+3}\\
&+C \int_{0}^{t}\sum_{a+b\leq \Ntop}\mathcal{E}(V_{k; a, b})(t', \tilde u) dt'\\
&+\big(\mu+\lambda_{\textbf{max}}\big)\cdot \int_{0}^{t} \frac{\sum_{a+b\leq \Ntop}\mathcal{E}(V_{k; a, b})(t', \tilde u)}{t'} dt'.
\end{split}
\]
and we choose $\mu$ such that 
\begin{equation}\label{eq: choice of mu up to vacuum}
 \mu=\frac{1}{2} (2k+3-\lambda_{\textbf{max}}),
\end{equation}
Applying Gronwall inequality with singular coefficients \ref{lem: Gronwall} with $B=\lambda_{\textbf{max}}+\frac{1}{2} (2k+3-\lambda_{\textbf{max}})<2k+3$ and we have
\[
\sum_{a+b\leq \Ntop}\Big(\mathcal{E}(V_{k; a, b})(t, \tilde u)+\mathcal{F}(V_{k; a, b})(t, \tilde u)\Big) \leq \big(C+P(M)\varepsilon\big)\cdot \varepsilon^{2}t^{2k+3}. 
\]
providing that $\varepsilon$ is sufficiently small.

To summarize, we have closed $(\mathbf{B}_{2})$ in \textbf{up to vacuum case} as follows, for all $t\in [0, t^{*}]$ and $a+b\leq 4$,
\begin{equation}\label{eq: close of B_2 up to vacuum}
\mathcal{E}(V_{k; a, b})(t, \tilde{u}^{*})+\mathcal{F}(V_{k; a, b})(t, \tilde{u}^{*})\leq C\varepsilon^{2}t^{2k+3}.
\end{equation}
and improved $L^{\infty}$ estimates for $V_{k; a, b}, a+b\leq 2$ via Sobolev inequality \eqref{eq: W22 to L infty}.
\begin{equation}\label{eq: improved L infty bound up to vacuum}
\|V_{k; a, b}\|_{L^{\infty}(\Sigma_{t}^{\ut_*,\tilde{u}^{*}})}\leq C \varepsilon t^{k+1}.
\end{equation}

\section{The closing of $(\mathbf{B}_{\infty})$ assumption}\label{sec: closing B infinity}
In this section, we only handle the \textbf{up to vacuum case}, since the \textbf{away from vacuum} case can be handled exactly the same way. For a derivative of total order $n$, we use $p=\infty$ when $n\leq2$ and $p=2$ when $3\leq n\leq4$; the corresponding lower-order $L^2$ estimates follow from the bounded slice volume. In this section, we will only use 
\[
\|\Pi_{j=1}^{m} f_{j}\|_{L^{p}(\Sigma_t^{\tilde u_*,\tilde u^*})}\leq \Pi_{j=1}^{m-1} \|f_{j}\|_{L^{\infty}(\Sigma_t^{\tilde u_*,\tilde u^*})}\cdot \|f_{m}\|_{L^{p}(\Sigma_t^{\tilde u_*,\tilde u^*})}.
\]
type inequality and not use
\[
\|f_{1}\cdot f_{2}\|_{L^{2}(\Sigma_t^{\tilde u_*,\tilde u^*})}\leq \|f_{1}\|_{L^{4}(\Sigma_t^{\tilde u_*,\tilde u^*})}\|f_{2}\|_{L^{4}(\Sigma_t^{\tilde u_*,\tilde u^*})}\leq \|f_{1}\|_{W^{1, 2}(\Sigma_t^{\tilde u_*,\tilde u^*})}\|f_{2}\|_{W^{1, 2}(\Sigma_t^{\tilde u_*,\tilde u^*})}
\]
type inequality since $3-2\geq \lfloor \frac{3}{2} \rfloor$.

In view of \eqref{eq: improved L infty bound up to vacuum}, we have for $t\in [0, t^{*}], \Zr\in \{\Tr, \Xr\}$,
\begin{equation}\notag
\begin{cases}
\|\Zr^{\alpha}(U-\Ur_{0})\|_{L^{\infty}(\Sigma_{t}^{\ut_*,\tilde{u}^{*}})}\leq C\varepsilon t, \ \ |\alpha|\leq 2,\\
\|\Zr^{\beta}(U-\Ur_{0})\|_{L^{2}(\Sigma_{t}^{\ut_*,\tilde{u}^{*}})}\leq C \varepsilon t, \ \ 3\leq |\beta|\leq 4.
\end{cases}
\end{equation}
for \textbf{both the case $k=0, \gamma\in (1, 1.3]$ and the case $k=1, \gamma\in \{\frac{7}{5}, \frac{5}{3}\}$}\footnote{Write $U-\Ur_{0}=U-\Ur_{1}+t\cdot \Vr_{1}.$}. 

Recall that 
\[
\begin{cases}
\mathring{c}=c_{r}(0, x_{2})+\frac{\gamma-1}{\gamma+1}\big( \frac{x_1}{t}-(v_{r}^{1}+c_{r})(0, x_{2}) \big),\\
\mathring{v}^{1}=v^{1}_{r}(0, x_{2})+\frac{2}{\gamma+1}\big( \frac{x_1}{t}-(v_{r}^{1}+c_{r})(0, x_{2}) \big),\\
\mathring{v}^{2}=v_{r}^{2}(0, x_{2}).
\end{cases}
\]
which implies that
\[
\begin{cases}
\lambda_{3}(\Ur_{0}, -\hat{\Tr})=\mathring{v}^{1}+\mathring{c}=\frac{x_{1}}{t},\\
\mathring{w}=w_{r}(0, x_{2}),\\
\mathring{v}^{2}=v_{r}^{2}(0, x_{2}).
\end{cases}
\]
Therefore, we have for $t\in [0, t^{*}]$ and $\Zr\in \{\Tr, \Xr\}$,
\begin{equation}\label{eq: improved estimates for Zr(psi)}
\begin{cases}
\|\Zr^{\alpha}(v^{1}+c-\frac{x_1}{t})\|_{L^{\infty}(\Sigma_{t}^{\ut_*,\tilde{u}^{*}})}\leq C \varepsilon t, \ \ |\alpha|\leq 2,\\
\|\Zr^{\beta}(v^{1}+c-\frac{x_1}{t})\|_{L^{2}(\Sigma_{t}^{\ut_*,\tilde{u}^{*}})}\leq C\varepsilon t, \ \ 3\leq |\beta|\leq 4,\\
\|\Tr\Zr^{\alpha}(w)\|_{L^{\infty}(\Sigma_{t}^{\ut_*,\tilde{u}^{*}})}+\|\Tr\Zr^{\alpha}(\psi_2)\|_{L^{\infty}(\Sigma_{t}^{\ut_*,\tilde{u}^{*}})}\leq C\varepsilon t, \ \ |\alpha|\leq 1,\\
\|\Tr\Zr^{\beta}(w)\|_{L^{2}(\Sigma_{t}^{\ut_*,\tilde{u}^{*}})}+\|\Tr\Zr^{\beta}(\psi_2)\|_{L^{2}(\Sigma_{t}^{\ut_*,\tilde{u}^{*}})}\leq C \varepsilon t, \ \ 2\leq |\beta|\leq 3,\\
\|\Xr^{n}(w)\|_{L^{\infty}(\Sigma_{t}^{\ut_*,\tilde{u}^{*}})}+\|\Xr^{n}(\psi_2)\|_{L^{\infty}(\Sigma_{t}^{\ut_*,\tilde{u}^{*}})}\leq C\varepsilon , 1\leq n\leqslant 2,\\
\|\Xr^{n}(w)\|_{L^{2}(\Sigma_{t}^{\ut_*,\tilde{u}^{*}})}+\|\Xr^{n}(\psi_2)\|_{L^{2}(\Sigma_{t}^{\ut_*,\tilde{u}^{*}})}\leq C\varepsilon , 3\leq n\leqslant 4.
\end{cases}
\end{equation}

\begin{remark}
Note that 
\[
\lambda_{3}(U, -\hat{\Tr})-\lambda_{3}(\Ur_{0}, -\hat{\Tr})=O(\varepsilon t). 
\]
corresponds to the \textbf{extra vanishing} in Yu and Luo's work \cite{Luo-YuRare1, Luo-YuRare2}.
\end{remark}

In view of \eqref{eq: L infty control of the geometry}, \eqref{eq: L 2 control of the geometry} and
\[
\begin{cases}
\Tr=-(\frac{\kappar}{\kappa}\hat{T}^{1})\cdot T-(\kappar\hat{X}^{1})\cdot \hat{X},\\
\Xr=(\frac{\hat{T}^{2}}{\kappa})\cdot T+\hat{X}^{2}\cdot \hat{X},\\
T=-(\frac{\kappa}{\kappar}\hat{T}^{1})\cdot \Tr+(\kappa\hat{T}^{2})\cdot \Xr,\\
\hat{X}=-(\frac{\hat{X}^{1}}{\kappar})\cdot \Tr+\hat{X}^{2}\cdot \Xr.
\end{cases}
\]
we also have for $t\in [0, t^{*}], \Zr\in \{\Tr, \Xr\}$ and $|\alpha|\leq 1, 2\leq |\beta|\leq 3$, we have 
\begin{equation}\label{eq: Zr control of the geometry}
\begin{cases}
\|\Zr^{\alpha}(\hat{T}^{1}+1)\|_{L^{\infty}(\Sigma_{t}^{\ut_*,\ut^*})}\leq P(M)\varepsilon^{2}t^{2},\\
\|\Zr^{\alpha}(\hat{T}^{2})\|_{L^{\infty}(\Sigma_{t}^{\ut_*,\ut^*})}\leq P(M)\varepsilon t,\\
\|\Zr^{\alpha}(\kappa-t)\|_{L^{\infty}(\Sigma_{t}^{\ut_*,\ut^*})}\leq P(M)\varepsilon t^{2},\\
\|\Zr^{\beta}(\hat{T}^{1}+1)\|_{L^{2}(\Sigma_{t}^{\ut_*,\ut^*})}\leq P(M)\varepsilon^{2}t^{2},\\
\|\Zr^{\beta}(\hat{T}^{2})\|_{L^{2}(\Sigma_{t}^{\ut_*,\ut^*})}\leq P(M)\varepsilon t,\\
\|\Zr^{\beta}(\kappa-t)\|_{L^{2}(\Sigma_{t}^{\ut_*,\ut^*})}\leq P(M)\varepsilon t^{2}.
\end{cases}
\end{equation}

\subsection{Improved estimates for $\psi\in \{w, \psi_{2}\}$.}
For a general $\psi\in \{w, \psi_{2}\}$ and $Z\in \{T, \hat{X}\}, |\alpha|=n, n\in \{1, 2, 3, 4\}$, we have schematically,
\begin{equation}\label{eq: fuzhu}
\begin{split}
Z_{1}\cdots Z_{n}(\psi)&=\sum_{\Zr_1,\ldots,\Zr_n\in\{\Tr,\Xr\}}\big(Z_{1}^{(\Zr_{1})}\Zr_{1}\big)\cdots \big(Z_{n}^{(\Zr_{n})}\Zr_{n}\big)(\psi)\\
&=\sum_{Z_j^{(\Zr_j)}\in\{T^{(\Tr)},\hat X^{(\Xr)}\}\ \forall j}Z_{1}^{(\Zr_{1})}\cdots Z_{n}^{(\Zr_{n})} \Zr_{1}\cdots \Zr_{n}(\psi)\\
&+\sum_{\textbf{$Z_{k}^{(\Zr_{k})}\notin \{T^{(\Tr)}, \hat{X}^{(\Xr)}\}$ for some k}} Z_{1}^{(\Zr_{1})}\cdots Z_{n}^{(\Zr_{n})} \Zr_{1}\cdots \Zr_{n}(\psi)\\
&+\sum \Zr^{ \alpha_{1}}(\textbf{coefficient})\cdots \Zr^{ \alpha_{m}}(\textbf{coefficient})\cdot \Zr^{\beta_{m+1}}(\psi)
\end{split}
\end{equation}
where the summation is for $\sum_{k=1}^{m}|\alpha_{k}|+|\beta_{m+1}|\leq n, |\alpha_{k}|\leq n-1, |\beta_{m+1}|\leq n-1$ and 
\[
\textbf{coefficient}\in \{ T^{(\Tr)}, T^{(\Xr)}, \hat X^{(\Tr)}, \hat X^{(\Xr)} \}.
\]

\begin{itemize}
\item If $Z_{1}\cdots Z_{n}(\psi)=(\hat X)^{n}(\psi)$, then in view of \eqref{eq: improved estimates for Zr(psi)}, \eqref{eq: Zr control of the geometry}, and \eqref{eq: fuzhu}, we have 
\[
\|(\hat X)^{n}(\psi)\|_{L^{p}(\Sigma_{t}^{\ut_*,\tilde{u}^{*}})}\leq C\varepsilon+P(M)\varepsilon^{2}\leq C\varepsilon, 
\]
providing that $\varepsilon$ is sufficiently small.
\item If $Z_{1}\cdots Z_{n}(\psi)\not=(\hat X)^{n}(\psi)$, we can replace $Z_{1}\cdots Z_{n}(\psi)$ by $Z_{1}\cdots Z_{n-1}T(\psi)$ (by relabeling the index) and
\[
\|Z_{1}\cdots Z_{n}(\psi)-Z_{1}\cdots Z_{n-1}T(\psi)\|_{L^{p}(\Sigma_{t}^{\ut_*,\tilde{u}^{*}})}\leq P(M)\varepsilon^{2} t.
\]
Then in view of \eqref{eq: improved estimates for Zr(psi)}, \eqref{eq: Zr control of the geometry}, and \eqref{eq: fuzhu}, we have
\[
\|Z_{1}\cdots Z_{n-1}T(\psi)\|_{L^{p}(\Sigma_{t}^{\ut_*,\tilde{u}^{*}})}\leq C\varepsilon t+P(M)\varepsilon^{2}t\leq C\varepsilon t,
\]
providing that $\varepsilon$ is sufficiently small.
\end{itemize}

Therefore, we have for $t\in [0, t^{*}]$ and $\psi\in \{w, \psi_{2}\}$,
\begin{equation}\label{eq: improved estimates for w and psi_2}
\begin{cases}
\|(\hat X)^n(\psi)\|_{L^{\infty}(\Sigma_{t}^{\ut_*,\tilde{u}^{*}})}\leq C\varepsilon, 1\leq n\leq 2\\
\|(\hat X)^n(\psi)\|_{L^{2}(\Sigma_{t}^{\ut_*,\tilde{u}^{*}})}\leq C\varepsilon, 3\leq n\leq 4\\
\|TZ^{\alpha}(\psi)\|_{L^{\infty}(\Sigma_{t}^{\ut_*,\tilde{u}^{*}})}\leq C\varepsilon t, 0\leq |\alpha|\leq 1,\\
\|TZ^{\beta}(\psi)\|_{L^{2}(\Sigma_{t}^{\ut_*,\tilde{u}^{*}})}\leq C\varepsilon t, 2\leq |\beta|\leq 3.
\end{cases}
\end{equation}

\subsection{Improved estimates for $\wb$}
In view of \eqref{eq: improved estimates for w and psi_2}, it suffices to bound the maximal characteristic speed $v^{1}+c$ in place of $\underline{w}$ since $v^{1}+c=\frac{\gamma+1}{2}\wb+\frac{\gamma-3}{2}w$.

\begin{itemize}
\item If $Z^{\alpha}(v^{1}+c)=(\hat X)^{n}(v^{1}+c), n=1, 2, 3, 4$. Since $\hat{X}=-\hat{T}^{1}\Xr-\frac{\hat{T}^{2}}{\kappar}\Tr$, we can rewrite $(\hat X)^{n}(v^{1}+c)$ as
\[
(\hat{X}^{(\Tr)}\Tr+\hat{X}^{(\Xr)}\Xr)^{n}(v^{1}+c)
\]
The summation of which consists of terms like
\[
\Zr_{1}^{\beta_{1}}(f_{1})\cdots \Zr^{\beta_{m}}_{m}(f_{m})\Zr^{\alpha_{1}}(v^{1}+c), \sum_{k=1}^{m}|\beta_{k}|+|\alpha_{1}|=n, |\alpha_{1}|\geq 1,
\]
where $f_{i}\in \{\hat{X}^{(\Xr)}, \hat{X}^{(\Tr)}\}$. The terms can be divided into three cases:
\begin{itemize}
\item main term: $\beta_{1}=...=\beta_{m}=0, f_{1}=\cdots=f_{m}=\hat{X}^{(\Xr)}$ 
\[
\|(\hat{X}^{(\Xr)})^{n}\Xr^n(v^{1}+c)\|_{L^{p}(\Sigma_{t}^{\ut_*,\tilde{u}^{*}})}\leq C\varepsilon t.
\]
\item \textbf{main error term}: $(\hat X)^{n-1}(\hat{X}^{(\Tr)})\Tr(v^{1}+c)$. We only have
\[
\|(\hat X)^{n-1}(\hat{X}^{(\Tr)})\Tr(v^{1}+c)\|_{L^{p}(\Sigma_t^{\ut_*,\ut^*})}\leq P(M)\varepsilon.
\]
which is not enough to close $(\mathbf{B}_{\infty})$.
\item For the remaining terms: we have $\Zr^{\alpha_{1}}(v^{1}+c)\not=\Tr(v^{1}+c)$ and we have
\[
\|\Zr_{1}^{\beta_{1}}(f_{1})\cdots \Zr^{\beta_{m}}_{m}(f_{m})\Zr^{\alpha_{1}}(v^{1}+c)\|_{L^{p}(\Sigma_{t}^{\ut_*,\tilde{u}^{*}})}\leq P(M)\varepsilon^{2} t.
\]
\end{itemize}

In view of $\|\Tr(v^{1}+c)+1\|_{L^{\infty}(\Sigma_{t}^{\ut_*,\tilde{u}^{*}})}\leq C\varepsilon t$, we can rewrite the \textbf{main error term} as 
\[
\|(\hat X)^{n-1}(\hat X^{(\Tr)})\Tr(v^1+c)+(\hat X)^{n-1}(\hat X^{(\Tr)})\|_{L^p(\Sigma_t^{\ut_*,\ut^*})}\leq P(M)\varepsilon^2t.
\]
and we have
\[
\|(\hat X)^{n}(v^{1}+c)+(\hat X)^{n-1}(\hat{X}^{(\Tr)})\|_{L^{p}(\Sigma_{t}^{\ut_*,\tilde{u}^{*}})}\leq C\varepsilon t+P(M)\varepsilon^{2}t.
\]
In view of the transport equation for $\hat{T}^{2}$, we also have
\[
\|L(\hat X)^{n-1}(\hat{T}^{2})\|_{L^{p}(\Sigma_{t}^{\ut_*,\tilde{u}^{*}})}\leq \|(\hat X)^{n}(v^{1}+c)\|_{L^{p}(\Sigma_{t}^{\ut_*,\tilde{u}^{*}})}+ P(M)\varepsilon^{2}t,
\]
Integrating from $0$ to $t$, we have
\[
\|(\hat X)^{n-1}(\hat{T}^{2})\|_{L^{p}(\Sigma_{t}^{\ut_*,\tilde{u}^{*}})}\leq \int_{0}^{t}\|(\hat X)^{n}(v^{1}+c)\|_{L^{p}(\Sigma_{\tau}^{\ut_*,\tilde{u}^{*}})}d\tau+ P(M)\varepsilon^{2}t^{2}.
\]
Since $\hat{X}^{(\Tr)}=-\frac{\hat{T}^{2}}{\kappar}$, we have
\[
\|(\hat X)^{n}(v^{1}+c)\|_{L^{p}(\Sigma_{t}^{\ut_*,\tilde{u}^{*}})}\leq \frac{1}{t}\int_{0}^{t}\|(\hat X)^{n}(v^{1}+c)\|_{L^{p}(\Sigma_{\tau}^{\ut_*,\tilde{u}^{*}})}d\tau+P(M)\varepsilon^{2}t+C\varepsilon t.
\]
By Gronwall's inequality, we have
\[
\|(\hat X)^{n}(v^{1}+c)\|_{L^{p}(\Sigma_{t}^{\ut_*,\tilde{u}^{*}})}\leq \lim_{\delta\rightarrow 0}\frac{\|(\hat X)^{n-1}(\hat{T}^{2})\|_{L^{p}(\Sigma_{\delta}^{\ut_*,\tilde{u}^{*}})}}{\delta}+P(M)\varepsilon^{2}t+C\varepsilon t.
\]
Recall that 
\[
\lim_{\delta\rightarrow 0}\frac{\|(\hat X)^{n-1}(\hat{T}^{2})\|_{L^{p}(\Sigma_{\delta}^{\ut_*,\tilde{u}^{*}})}}{\delta}= \lim_{\delta\rightarrow 0} \| (\hat X)^{n}(v^{1}+c) \|_{L^{p}(\Sigma_{\delta}^{\ut_*,\tilde{u}^{*}})}\leq C\varepsilon.
\]
since $(\hat X)^{n}(v^{1}+c)(0, \tilde{u}, \vartheta)=\partial_{x_2}^{n}(v_{r}^{1}-\frac{2}{\gamma-1}c_{r})(0, \vartheta)$, which implies that 
\[
\|(\hat X)^{n}(v^{1}+c)\|_{L^{p}(\Sigma_{t}^{\ut_*,\tilde{u}^{*}})}\leq C\varepsilon+P(M)\varepsilon^{2}t+\varepsilon t\leq C\varepsilon,
\]  
providing that $\varepsilon$ is sufficiently small.

\item The case $Z^{\alpha}(v^{1}+c), Z^{\alpha}\not=(\hat X)^{|\alpha|}, 1\leq |\alpha|\leq n, n=1, 2, 3, 4$. 

We first deal with the case $T(v^{1}+c)+1$,
\[
\begin{split}
T(v^{1}+c)+1&=-\frac{\kappa}{\kappar}\hat{T}^{1}\Tr(v^{1}+c)+\kappa\hat{T}^{2}\Xr(v^{1}+c)+1\\
&=-\frac{\kappa}{\kappar}+1+O(C\varepsilon t+P(M)\varepsilon^{2}t)\\
\end{split}
\]
In view of the transport equation for $\kappa$,
\[
\big|L(\kappa-\kappar)+T(v^1+c)+1\big|\leq P(M)\varepsilon^2t.
\]
Integrating from $0$ to $t$,
\[
\left|\kappar-\kappa-\int_0^t\big(T(v^1+c)+1\big)\,d\tau\right|\leq P(M)\varepsilon^2t^2.
\]
Therefore we have
\[
\big|T(v^1+c)+1\big|\leq\frac1t\int_0^t\big|T(v^1+c)+1\big|\,d\tau+C\varepsilon t+P(M)\varepsilon^2t.
\]
By Gronwall inequality with $\big(T(v^{1}+c)+1\big)(0, \tilde{u}, \vartheta)=0$, we have
\[
\big|T(v^1+c)+1\big|\leq C\varepsilon t+P(M)\varepsilon^2t\leq C\varepsilon t,
\]
providing that $\varepsilon$ is sufficiently small.

It remains to consider the case $Z^{\beta}\big(T(v^{1}+c)\big)$ with $|\beta|=m, m=1, 2, 3$. We can rewrite $Z^{\beta}T(v^{1}+c)$ as
\[
(Z_{1}^{(\Tr)}\Tr+Z_{1}^{(\Xr)}\Xr)\cdots (Z_{m}^{(\Tr)}\Tr+Z_{m}^{(\Xr)}\Xr)\Big((T^{(\Xr)}\Xr+T^{(\Tr)}\Tr)(v^{1}+c)\Big),
\]
the summation of which consists of terms like
\[
\Zr_{1}^{\beta_{1}}(f_{1})\cdots \Zr^{\beta_{s}}_{s}(f_{s})\Zr^{\alpha_{1}}\Big((T^{(\Xr)}\Xr+T^{(\Tr)}\Tr)(v^{1}+c)\Big), \sum_{k=1}^{s}|\beta_{k}|+|\alpha_{1}|=m, |\alpha_{1}|\geq 1,
\]
where $f_{i}\in \{\hat{X}^{(\Xr)}, \hat{X}^{(\Tr)}, T^{(\Xr)}, T^{(\Tr)}\}$. The terms can be divided into three cases:
\begin{itemize}
\item main term: $\beta_{1}=...=\beta_{s}=0, f_{k}\in \{T^{(\Tr)}, \hat{X}^{(\Xr)}\}$, 
\[
\|Z_{1}^{(\Zr_{1})}\cdots Z_{m}^{(\Zr_{m})}T^{(\Tr)}\Zr_{1}\cdots \Zr_{m}\Tr(v^{1}+c)\|_{L^{p}(\Sigma_{t}^{\ut_*,\tilde{u}^{*}})}\leq C\varepsilon t.
\]
\item \textbf{main error term}: $Z^{\beta}(T^{(\Tr)})\Tr(v^{1}+c)$. The bound $\|Z^{\beta}(T^{(\Tr)})\Tr(v^{1}+c)\|_{L^{p}(\Sigma_{t}^{\ut_*,\tilde{u}^{*}})}\leq P(M)\varepsilon t$ for \textbf{main error term} is insufficient to close the $(\mathbf{B}_{\infty})$
\item For the remaining terms:  Note that $\Zr^{\alpha_{1}}(v^{1}+c)\not=\Tr(v^{1}+c)$ and we have
\[
\|\Zr_{1}^{\beta_{1}}(f_{1})\cdots \Zr^{\beta_{s}}_{s}(f_{s})\Zr^{\alpha_{1}}(v^{1}+c)\|_{L^{p}(\Sigma_{t}^{\ut_*,\tilde{u}^{*}})}\leq P(M)\varepsilon^{2} t.
\]
\end{itemize}
In view of $\|\Tr(v^{1}+c)+1\|_{L^{\infty}(\Sigma_{t}^{\ut_*,\tilde{u}^{*}})}\leq C\varepsilon t$, we can write the main error term as
\[
\|Z^{\beta}(T^{(\Tr)})\Tr(v^{1}+c)+Z^{\beta}(T^{(\Tr)})\|_{L^{p}(\Sigma_{t}^{\ut_*,\tilde{u}^{*}})}\leq P(M)\varepsilon^{2}t,
\]
which implies that
\[
\|Z^{\beta}T(v^{1}+c)+Z^{\beta}(T^{(\Tr)})\|_{L^{p}(\Sigma_{t}^{\ut_*,\tilde{u}^{*}})}\leq C\varepsilon t+ P(M)\varepsilon^{2}t.
\]

We also have
\[
\begin{cases}
\|L(Z^{\beta}(\kappa\hat{T}^{1}))-Z^{\beta}T(v^{1}+c)\|_{L^{p}(\Sigma_{t}^{\ut_*,\tilde{u}^{*}})}\leq P(M)\varepsilon^{2}t,\\
\|Z^{\beta}(\kappa\hat{T}^{1})+Z^{\beta}(\kappa)\|_{L^{p}(\Sigma_{t}^{\ut_*,\tilde{u}^{*}})}\leq P(M)\varepsilon^{2}t^{2}.
\end{cases}
\]
Integrating $LZ^{\beta}(\kappa\hat{T}^{1})$ from $0$ to $t$,
\[
\|Z^{\beta}(\kappa\hat{T}^{1})\|_{L^{p}(\Sigma_{t}^{\ut_*,\tilde{u}^{*}})}\leq \int_{0}^{t}\|Z^{\beta}T(v^{1}+c)\|_{L^{p}(\Sigma_{\tau}^{\ut_*,\tilde{u}^{*}})}d\tau+P(M)\varepsilon^{2}t^{2}.
\]
Since $T^{(\Tr)}=-\frac{\kappa}{\kappar}\hat{T}^{1}$, we can obtain
\[
\|Z^{\beta}T(v^{1}+c)\|_{L^{p}(\Sigma_{t}^{\ut_*,\tilde{u}^{*}})}\leq\frac{1}{t}\int_{0}^{t}\|Z^{\beta}T(v^{1}+c)\|_{L^{p}(\Sigma_{\tau}^{\ut_*,\tilde{u}^{*}})}d\tau+P(M)\varepsilon^{2}t+C\varepsilon t.
\]
By Gronwall's inequality with $Z^{\beta}T(v^{1}+c)(0, \tilde{u}, \vartheta)=0$, we have
\[
\|Z^{\beta}T(v^{1}+c)\|_{L^{p}(\Sigma_{t}^{\ut_*,\tilde{u}^{*}})}\leq P(M)\varepsilon^{2}t+C\varepsilon t\leq C\varepsilon t,
\]
providing that $\varepsilon$ is sufficiently small. 
\end{itemize}

Finally, in view of Euler equation \eqref{Euler equations:form 2}
\[
\begin{cases}
L (\wb) &= -c \widehat{T}(\wb)(\widehat{T}^1+1)+\frac{1}{2}c \widehat{T}(\psi_2)\widehat{T}^2 +\frac{1}{2}c \Xh(\psi_2)\Xh^2-c\Xh(\wb)\Xh^1,\\
L (w) &= 	c \widehat{T}(w)(\widehat{T}^1-1)	+\frac{1}{2}c \widehat{T}(\psi_2)\widehat{T}^2 +c\Xh(w)\Xh^1+\frac{1}{2}c\Xh(\psi_2)\Xh^2,\\
L (\psi_2) &= -c \widehat{T}(\psi_2)+c\widehat{T}( w+\wb)\widehat{T}^2+c\Xh( w+\wb)
\Xh^2.
\end{cases}
\]
we have 
\[
\|c^{-1}L(\wb)\|_{L^{\infty}(\Sigma_{t}^{\ut_*,\tilde{u}^{*}})}+\|c^{-1}L(w)\|_{L^{\infty}(\Sigma_{t}^{\ut_*,\tilde{u}^{*}})}+\|c^{-1}L(\psi_2)\|_{L^{\infty}(\Sigma_{t}^{\ut_*,\tilde{u}^{*}})}\leq C\varepsilon.
\]

To summarize, we have closed $(\mathbf{B}_{\infty})$ assumption and complete the a priori estimates in both \textbf{away from vacuum case} and \textbf{up to vacuum case}.

\section*{Acknowledgment}

\thanks{JJ is supported by Starting Research Fund from Hunan University under Grant No. 531118010918. TL is supported by  a grant from the Research Grants Council of the Hong Kong Special
Administrative Region, China (Project No. 11313025).}

\end{document}